\definecolor{darkred}{rgb}{1,0,0} 
\definecolor{darkgreen}{rgb}{0,0.8,0}
\definecolor{darkblue}{rgb}{0,0,1}
\definecolor{customcolor}{rgb}{.5,.1,6}
\definecolor{lime}{rgb}{0.1,1,0}
\newcommand{\zbar}{\ensuremath{\overline{z}}}
\newcommand{\cA}{\ensuremath{\mathcal A}}
\newcommand{\cB}{\ensuremath{\mathcal B}}
\newcommand{\cE}{\ensuremath{\mathcal E}}
\newcommand{\cF}{\ensuremath{\mathcal F}}
\newcommand{\cL}{\ensuremath{\mathcal L}}
\newcommand{\cM}{\ensuremath{\mathcal M}}
\newcommand{\cN}{\ensuremath{\mathcal N}}
\newcommand{\cO}{\ensuremath{\mathcal O}}
\newcommand{\cP}{\ensuremath{\mathcal P}}
\newcommand{\cS}{\ensuremath{\mathcal S}}
\newcommand{\cV}{\ensuremath{\mathcal V}}
\newcommand{\cW}{\ensuremath{\mathcal W}}
\newcommand{\sslash}{\mathbin{/\mkern-5.5mu/}}
\newcommand{\de}{\mathrm{d}}
 \numberwithin{equation}{section}
\newtheorem {Theorem}{Theorem}
\numberwithin{Theorem}{section}
\newtheorem {Lemma}[Theorem]    {Lemma}
\newtheorem {Proposition}[Theorem]{Proposition}
\newtheorem {Corollary}[Theorem]{Corollary}
\theoremstyle{definition}
\newtheorem{Definition}[Theorem]{Definition}
\newtheorem{Remark}[Theorem]{Remark}
\newtheorem{Example}[Theorem]{Example}
\newtheoremstyle{MyTheorem}
        {.6em}{.6em}              
        {\itshape}                      
        {}                              
        {\bfseries}                     
        {. }                             
        { }                             
        {\thmname{#1}\thmnumber{\addtocounter{MyTheorem}{-4}#2}\thmnote{ \bfseries #3}}
\theoremstyle{MyTheorem}
\newtheorem{MyTheorem}{Theorem}
\newtheoremstyle{TheoremForIntro} 
        {.6em}{.6em}              
        {\itshape}                      
        {}                              
        {\bfseries}                     
        {.}                             
        { }                             
        {\thmname{#1}\thmnote{ \bfseries #3}}
    \theoremstyle{TheoremForIntro}
    \newtheorem{Assumption}[MyTheorem]{Assumption}
\chardef\csname pre amssym.def at\endcsname=\the\catcode`\@
\def\undefine#1{\let#1\undefined}
\def\newsymbol#1#2#3#4#5{\let\next@\relax
 \ifnum#2=\@ne\let\next@\msafam@\else
 \ifnum#2=\tw@\let\next@\msbfam@\fi\fi
 \mathchardef#1="#3\next@#4#5}
\def\mathhexbox@#1#2#3{\relax
 \ifmmode\mathpalette{}{\m@th\mathchar"#1#2#3}%
 \else\leavevmode\hbox{$\m@th\mathchar"#1#2#3$}\fi}
\def\hexnumber@#1{\ifcase#1 0\or 1\or 2\or 3\or 4\or 5\or 6\or 7\or 8\or
 9\or A\or B\or C\or D\or E\or F\fi}
\font\teneufm=eufm10
\font\seveneufm=eufm7
\font\fiveeufm=eufm5
\newcommand{\supp}{\operatorname{supp}}
\newcommand{\fl}{{\mathfrak l}}
\newcommand{\sSL}{{\mathsf {SL}}}
\newcommand{\sGL}{{\mathsf {GL}}}
\newcommand{\sG}{{\mathsf G}}
\newcommand{\sP}{{\mathsf P}}
\newcommand{\sL}{{\mathsf L}}
\newcommand{\sU}{{\mathsf U}}
\newcommand{\Hod}{{\mathrm {Hod}}}
\newcommand{\DR}{{\mathsf{dR}}} 
\newcommand{\BB}{{\textsf{BB}}} 
\newcommand{\Hom}{{\mathrm{Hom}}}
\newcommand{\End}{{\mathrm{End}}}
\newcommand{\Id}{{\mathrm{Id}}}
\newcommand{\Tr}{{\mathrm{Tr}}}
\newcommand{\res}{{\mathrm{Res}}}
\newcommand{\Aa}{{\mathcal A}}
\newcommand{\Ll}{{\mathcal L}}
\newcommand{\Ee}{{\mathcal E}}
\newcommand{\Mm}{{\mathcal M}}
\newcommand{\Nn}{{\mathcal N}}
\newcommand{\Ss}{{\mathcal S}}
\def    \C      {{\mathbb C}}
\def    \R      {{\mathbb R}}
\def    \N      {{\mathbb N}}
\def    \P    {{\mathbb P}}
\def    \CP     {{\mathbb C}{\mathbb P}}
\def    \lra     {{\longrightarrow}}
\def    \rk     {\operatorname{rk}}
\def    \tr     {\operatorname{tr}}
\def    \U     {\operatorname{U}}
\def    \CL     {\operatorname{CL}}
\def    \H  {\operatorname{\scriptscriptstyle{H}}}
\newcommand{\An}{\xymatrix{ *{\circ} \ar@{-}[r]|*\dir{ } & *{\circ}\ar@{-}[r]&{\cdots}&*{\circ}\ar@{-}[l]|*\dir{ }\ar@{-}[r]|*\dir{ }&*{\circ} }}
\newcommand{\Anlabel}{\xymatrix@R=.25em{ *{\circ}\ar@<-1ex>@{}[d]^{\alpha_{1}} \ar@{-}[r]|*\dir{ } & *{\circ}\ar@<-1ex>@{}[d]^{\alpha_{2}}\ar@{-}[r]&{\cdots}&*{\circ}\ar@{-}[l]|*\dir{ }\ar@{-}[r]|*\dir{ }\ar@<-2ex>@{}[d]^{\alpha_{n-1}}&*{\circ}\ar@<-1ex>@{}[d]^{\alpha_{n}}\\&&&& }}
\newcommand{\Bn}{\xymatrix{ *{\circ} \ar@{-}[r]|*\dir{ } & *{\circ}\ar@{-}[r]&{\cdots}&*{\circ}\ar@{-}[l]|*\dir{ }\ar@{=}[r]|*\dir{>}&*{\circ} }}
\newcommand{\Bnlabel}{\xymatrix@R=.25em{ *{\circ}\ar@<-1ex>@{}[d]^{\alpha_{1}} \ar@{-}[r]|*\dir{ } & *{\circ}\ar@<-1ex>@{}[d]^{\alpha_{2}}\ar@{-}[r]&{\cdots}&*{\circ}\ar@{-}[l]|*\dir{ }\ar@{=}[r]|*\dir{>}\ar@<-2ex>@{}[d]^{\alpha_{n-1}}&*{\circ}\ar@<-1ex>@{}[d]^{\alpha_{n}}\\&&&& }}
\newcommand{\Cn}{\xymatrix{ *{\circ} \ar@{-}[r]|*\dir{ } & *{\circ}\ar@{-}[r]&{\cdots}&*{\circ}\ar@{-}[l]|*\dir{ }\ar@{=}[r]|*\dir{<}&*{\circ} }}
\newcommand{\Cnlabel}{\xymatrix@R=.25em{ *{\circ}\ar@<-1ex>@{}[d]^{\alpha_{1}} \ar@{-}[r]|*\dir{ } & *{\circ}\ar@<-1ex>@{}[d]^{\alpha_{2}}\ar@{-}[r]&{\cdots}&*{\circ}\ar@{-}[l]|*\dir{ }\ar@{=}[r]|*\dir{<}\ar@<-2ex>@{}[d]^{\alpha_{n-1}}&*{\circ}\ar@<-1ex>@{}[d]^{\alpha_{n}}\\&&&& }}
\newcommand{\Dn}{\xymatrix@R=.25em{&&&&*{\circ} \\ *{\circ} \ar@{-}[r]|*\dir{ } & *{\circ}\ar@{-}[r]&{\cdots}&*{\circ}\ar@{-}[l]|*\dir{ }\ar@{-}[ur]|*\dir{ }\ar@{-}[dr]|*\dir{ }& \\ &&&&*{\circ} }}
\newcommand{\Dnlabel}{\xymatrix@R=.25em{&&&&*{\circ}\ar@<-1ex>@{}[d]^{\alpha_{n-1}} \\ *{\circ}\ar@<-1ex>@{}[d]^{\alpha_{1}} \ar@{-}[r]|*\dir{ } & *{\circ}\ar@<-1ex>@{}[d]^{\alpha_{2}}\ar@{-}[r]&{\cdots}&*{\circ}\ar@{-}[l]|*\dir{ }\ar@{-}[ur]|*\dir{ }\ar@{-}[dr]|*\dir{ }\ar@<-4ex>@{}[d]^{\alpha_{n-2}}& \\ &&&&*{\circ}\ar@<-2ex>@{}[u]^(-.5){\alpha_{n}} }}
\newcommand{\Hbold}{{\bf H}}
\newcommand{\Ibold}{{\bf I}}
\newcommand{\Lbold}{{\bf L}}
\newcommand{\Mbold}{{\bf M}}
\newcommand{\bbold}{{\bf b}}
\newcommand{\kbold}{{\bf k}}
\newcommand{\rbold}{{\bf r}}
\newcommand{\ubold}{{\bf u}}
\newcommand{\CBbb}{\mathbb C}
\newcommand{\NBbb}{\mathbb N}
\newcommand{\PBbb}{\mathbb P}
\newcommand{\RBbb}{\mathbb R}
\newcommand{\ZBbb}{\mathbb Z}
\newcommand{\Acal}{\mathcal A}
\newcommand{\Bcal}{\mathcal B}
\newcommand{\Ccal}{\mathcal C}
\newcommand{\Ecal}{\mathcal E}
\newcommand{\Fcal}{\mathcal F}
\newcommand{\Gcal}{\mathcal G}
\newcommand{\Hcal}{\mathcal H}
\newcommand{\Kcal}{\mathcal K}
\newcommand{\Lcal}{\mathcal L}
\newcommand{\Mcal}{\mathcal M}
\newcommand{\Ncal}{\mathcal N}
\newcommand{\Ocal}{\mathcal O}
\newcommand{\Pcal}{\mathcal P}
\newcommand{\Rcal}{\mathcal R}
\newcommand{\Scal}{\mathcal S}
\newcommand{\Ucal}{\mathcal U}
\newcommand{\Wcal}{\mathcal W}
\newcommand{\hfrak}{\mathfrak h}
\newcommand{\lfrak}{\mathfrak l}
\newcommand{\nfrak}{\mathfrak n}
\newcommand{\pfrak}{\mathfrak p}
\newcommand{\ufrak}{\mathfrak u}
\newcommand{\Dscr}{\mathscr D}
\newcommand{\Hscr}{\mathscr H}
\newcommand{\Nscr}{\mathscr N}
\newcommand{\SL}{\mathsf{SL}}
\newcommand{\SU}{\mathsf{SU}}
\newcommand{\PU}{\mathsf{PU}}
\newcommand{\slfrak}{\mathfrak{sl}}
\newcommand{\glfrak}{\mathfrak{gl}}
\newcommand{\sufrak}{\mathfrak{su}}
\DeclareMathOperator{\imag}{im}
\DeclareMathOperator{\rank}{rank}
\DeclareMathOperator{\coker}{coker}
\DeclareMathOperator{\Lie}{Lie}
\DeclareMathOperator{\diag}{diag}
\DeclareMathOperator{\ind}{index }
\DeclareMathOperator{\charpoly}{\mathsf{char}}
\newcommand{\sbt}{\,\begin{picture}(-1,1)(-1,-1)\circle*{2}\end{picture}\ }
\newcommand*{\dt}[1]{\overset{\sbt}{#1}}
\newcommand{\dbar}{\bar\partial}
\newcommand{\Dol}{{\mathsf{Dol}}}
\newcommand{\onebold}{{\bf 1}}
\newcommand{\isorightarrow}{\xrightarrow{
   \,\smash{\raisebox{-0.5ex}{\ensuremath{\sim}}}\,}}
\newcommand\widefrown[1]{%
\savestack{\tmpbox}{\stretchto{%
  \scaleto{%
    \scalerel*[\widthof{\ensuremath{#1}}]{\kern-.6pt\frown\kern-.6pt}%
    {\rule[-\textheight/2]{1ex}{\textheight}}
  }{\textheight}%
}{0.5ex}}%
\stackon[1pt]{#1}{\tmpbox}%
}
\begin{document}
\title[Conformal limits for parabolic $\sSL(n,\C)$-Higgs bundles]{Conformal limits for parabolic $\sSL(n,\C)$-Higgs bundles}

 \author{Brian Collier}
\address{Department of Mathematics,
University of California, Riverside,
   Riverside, CA 92521, USA}
    \email{brianc@ucr.edu}
   
\author{Laura Fredrickson}

\address{Department of Mathematics,
   University of Oregon,
   Eugene, OR 97403, USA}
\email{lfredric@uoregon.edu}
   
\author{Richard   Wentworth}

\address{Department of Mathematics,
   University of Maryland,
   College Park, MD 20742, USA}
\email{raw@umd.edu}

\subjclass[2010]{Primary: 58D27; Secondary: 14D20, 14D21, 32G13}
\keywords{parabolic Higgs bundle, nonabelian Hodge correspondence, conformal limit, lambda connection}
\date{\today}

\begin{abstract}
In this paper we generalize the conformal limit correspondence between Higgs
bundles and holomorphic connections to  the parabolic setting.
Under mild genericity assumptions
on the parabolic weights, we prove  that the conformal limit always exists and
that it defines holomorphic sections of the space of parabolic
$\lambda$-connections which preserve a natural stratification and foliate the
moduli space. Along the way, we give a careful gauge theoretic construction of
the moduli space of parabolic Higgs bundles with full flags which allows the
eigenvalues of the residues of the Higgs field to vary.
A number of new
phenomena arise in the parabolic setting. In particular, in the generality we
consider,  unlike the nonparabolic  case, the
nonabelian Hodge correspondence does not define sections of the space of
logarithmic $\lambda$-connections,  and the
conformal limit does not define a
one-parameter family in any given moduli space.
 \end{abstract}

 \maketitle
\tableofcontents

\section{Introduction}
The moduli spaces of Higgs bundles and holomorphic connections on a compact Riemann surface $X$ are  holomorphic symplectic spaces. 
Both moduli spaces have natural stratifications with strata foliated by holomorphic affine Lagrangians \cite{SimpsonDeRhamStrata}.
The nonabelian Hodge correspondence (\textsf{NAH}) and the conformal limit (\textsf{CL}) give two
very different ways of identifying these moduli spaces.
For example, in rank $2$, \textsf{NAH} identifies the Hitchin section with those
holomorphic connections which arise as holonomies of hyperbolic structures on
the underlying topological surface while \textsf{CL}  identifies the Hitchin
section with those holomorphic connections which arise as  holonomies of
$\C\mathbb P^1$-structures with underlying Riemann surface $X$.

In general, \textsf{NAH} is a real analytic map on the entire moduli space but does not
preserve the strata; it is not holomorphic and the two complex structures
combine to define a hyperk\"ahler structure.
On the other hand, \textsf{CL} is defined on each stratum and holomorphically identifies
the affine Lagrangian fibers of each stratum \cite{CollierWentworth:19}; it does
not however extend to a continuous map on the entire moduli space.
Hitchin sections, which are the closed strata, define geometrically
interesting strata in every rank.
Under CL, Hitchin sections are identified with the space of opers
\cite{GaiottoLimitsOPERS}, whereas applying \textsf{NAH} to a Hitchin
section produces connections with real holonomy known as Hitchin
representations \cite{liegroupsteichmuller}.

It was argued by Gaiotto in \cite{GaiottoConj} that the ``conformal limit'' of
the TBA equations from \cite{GMN} should take the form of generalized
Schr\"odinger operators.
This led him to conjecture a correspondence between the Higgs bundles in the Hitchin section  and the locus of holomorphic connections known as \emph{opers}. 
The context of Gaiotto's conjecture was superconformal field
theories of ``class $\mathcal{S}$'', arising from compactifications on a Riemann surface
$X$ cross a circle of radius $R$ (see \cite{GMN}).
In this situation, it is natural and essential to allow for decorated punctures on $X$. 

Without punctures, Gaiotto's  conjecture was proven in
\cite{GaiottoLimitsOPERS} and generalized in \cite{CollierWentworth:19}.
In order to construct the  analogous moduli spaces of Higgs bundles and holomorphic connections on noncompact Riemann surfaces, one must equip the objects with extra weighted flag data at the punctures called a parabolic  structure. 
There is a parabolic version of the nonabelian Hodge correspondence, and the purpose of this paper is to extend the conformal limit correspondence to the parabolic setting. 

A number of new phenomena arise in the conformal limit of parabolic Higgs bundles. 
For example, \textsf{NAH} changes the parabolic structure
and certain relevant eigenvalues according to Simpson's table \eqref{eq simpson
table intro}. We prove that \textsf{CL} changes the parabolic structure
and eigenvalues  according to a different table \eqref{eq CL table intro}. As a
result, the conformal limit takes place on a larger moduli space than
does nonabelian
Hodge, and the targets are different. Interestingly, this implies that the
conformal limit cannot be defined as a limit of a 1-parameter family in a moduli
space of parabolic logarithmic connections; rather, it must be defined as a
limit in an
infinite dimensional configuration space. Unlike the nonparabolic setting,
nonabelian Hodge does not in general define sections of parabolic logarithmic
$\lambda$-connection moduli space; we show the conformal limit does define such
sections.
Rank two stable parabolic Higgs bundles on the four-punctured sphere are
particularly simple to describe, and in this case we
describe the various strata explicitly.

\subsection{Parabolic conformal limits}
Fix a pair $(X,D)$, where $X$ is a compact Riemann surface with holomorphic
cotangent bundle $K$ and a divisor  $D=p_1+\cdots + p_d$.
Let us briefly define notions of different parabolic objects and
refer to \S 2 for more details.
A parabolic bundle $\cE(\alpha)$ on $(X,D)$ is a holomorphic vector bundle $\cE\to X$ and a choice of weighted flag $\cE_p=\cE_{p,1}\supset \cE_{p,2}\supset\cdots \cE_{p,n_p}\supset \{0\}$ for each $p\in D$, where $\cE_{p,j}$ is given a weight $\alpha_{j}(p)$ satisfying $0\leq \alpha_{1}(p)<\cdots <\alpha_{n_p}(p)<1.$ The case where $\dim(\cE_{p,j})-\dim(\cE_{p,j+1})=1$ for all $j$ and all $p\in D$ is called the \emph{full flags case}.

A Higgs field on $\cE(\alpha)$ is a holomorphic bundle map
$\Phi:\cE\to\cE\otimes K(D)$, while a logarithmic connection on $\cE(\alpha)$ is
a holomorphic differential operator $\nabla:\cE\to\cE\otimes K(D)$ satisfying
the Leibniz rule $\nabla(fs)=s\otimes \partial f +f\nabla s.$ When the
residues of $\Phi$ and $\nabla$ additionally preserve the fixed flag in $\cE_p$
for each $p\in D$, $(\cE(\alpha),\Phi)$ and $(\cE(\alpha),\nabla)$ are called
parabolic Higgs bundles and parabolic logarithmic connections, respectively.
The weights on the flags are necessary to define a notion of stability with
respect to which one can form the coarse moduli spaces of
semistable parabolic Higgs bundles $\cP_0(\alpha)$, semistable strongly
parabolic Higgs bundles $\Scal\Pcal_0(\alpha)$,   and parabolic logarithmic
connections $\cP_1(\alpha)$ with fixed parabolic data
\cite{YokogawaParModuli}.\footnote{Throughout the paper we work over the
complex numbers and will
identify moduli schemes with their analytifications as complex analytic spaces.
The universal properties will play no role except in establishing isomorphisms
with the gauge theoretic constructions of Section \ref{sec:analytic}.}

For a stable parabolic Higgs bundle $(\cE(\alpha),\Phi)$ of parabolic degree 0,
Simpson \cite{SimpsonNoncompactcurves} proved there is a unique hermitian metric
$h$ on $\cE|_{X\setminus D}$ which is adapted to the parabolic structure  and
solves the Hitchin equations (see \S\ref{sec nonabelian hodge} below).
A consequence of solving the Hitchin equations is that 
\begin{equation}
        \label{eq flat connection intro}
D(\cE(\alpha),\Phi)=\bar\partial_E+\Phi^{*_h}+\partial_E^{h}+\Phi
\end{equation}  
is a flat connection on the restriction of the underlying smooth bundle of $\cE$
to $X\setminus D$, where $\bar\partial_E+\partial_E^h$ is the Chern connection
of $h$ and $\Phi^{*_h}$ is the hermitian adjoint of $\Phi$ with respect to $h$.

The $(0,1)$ and $(1,0)$ parts of $D(\cE(\alpha),\Phi))$ define a holomorphic
bundle with connection on $ X\setminus D$. As described in
\cite{SimpsonNoncompactcurves}, the metric $h$ determines an extension of the
bundle $\cV$ to all of $X$ such that the connection extends to a logarithmic
connection $\nabla$ on $\cV$.
By construction, $\cV$ has a natural parabolic structure $\cV(\beta)$ on $(X,D)$ and $(\cV(\beta),\nabla)$ is a stable parabolic logarithmic connection. 
At a point  $p\in D$, the parabolic weights $\alpha$ and $\beta$, and the
eigenvalues of the residues of $\Phi$ and $\nabla$ (referred
to henceforth as \emph{complex masses}) are all related by Simpson's table:
\begin{equation}
        \label{eq simpson table intro}
        \begin{tabular}{|c|c|c|}\hline
                &$(\cE(\alpha),\Phi)$& $(\cV(\beta),\nabla)$\\\hline parabolic weights& $\alpha$& $\beta= \alpha-(\mu+\bar\mu)$ \\\hline complex masses & $\mu$& $\nu=\alpha+\mu-\bar\mu$\\\hline
        \end{tabular}
\end{equation} 
Unlike the nonparabolic setting, the above correspondence does not define a map on moduli spaces since the weights of $(\cV(\beta),\nabla)$ vary with $(\cE(\alpha),\Phi)\in\cP_0(\alpha)$. 
By contrast, we show the conformal limit determines a map
$\CL:\cP_0(\alpha)\to\cP_1(\alpha)$.

We now explain the conformal limit of a stable parabolic Higgs bundle
$(\cE(\alpha),\Phi).$
 The metric $h$ above
 actually defines a $\C^*$-family of flat connections on $E|_{X\setminus D}$ 
\begin{equation}
        \label{eq flat conn lambda
intro}D_\lambda(\cE(\alpha),\Phi)=\bar\partial_E+\lambda\Phi^{*_h}+\partial_E^{h
} +\lambda^{-1}\Phi~, \ \ \ \ \ \ \lambda\in\C^*.
\end{equation}
 For $R>0$, consider $D_\lambda(\cE(\alpha),R\cdot \Phi))$  as a two parameter family of flat connections associated to $(\cE(\alpha),\Phi)$. If $h_R$ is the solution to the Hitchin equations for $(\cE(\alpha),R\Phi)$ and $\hbar=\lambda R^{-1}$, then this two parameter family can written as
\begin{equation}
        \label{eq R hbar fam
intro}D_{R,\hbar}(\cE(\alpha),\Phi)=\bar\partial_E+\hbar
R^2\Phi^{*_{h_R}}+\partial_E^{h_R}+\hbar^{-1} \Phi.
\end{equation}
The $\hbar$-conformal limit $\CL_\hbar(\cE(\alpha),\Phi)$ of  $(\cE(\alpha),\Phi)$ is defined to be 
\[\CL_\hbar(\cE(\alpha),\Phi)=\lim_{R\to 0} D_{R,\hbar}(\cE(\alpha),\Phi)\]
in the case such a limit exists.

The goal, then,  concerns the existence of the conformal limit. Recall
that in the moduli space $\cP_0(\alpha)$ of parabolic Higgs bundles,
$\lim_{\lambda\to 0}(\cE(\alpha),\lambda \Phi)$ exists and is a special type of
parabolic Higgs bundle which we call the \emph{Hodge bundle associated to}
$(\cE(\alpha),\Phi)$. Recall also that $(\cE(\alpha),\Phi)$ is called {\em
strongly parabolic} if the residue of $\Phi$ maps $\cE_{p,j}$ into $\cE_{p,j+1}$
for all $p\in D$ and all $j$.
In many points of the paper, we will require a certain technical hypothesis,
which we state here:
\begin{Assumption}[A] \label{Assumption A}
       The objects being considered are either parabolic Higgs bundle with
{\bf full flags or strongly parabolic} Higgs bundles.
\end{Assumption}
The relevance of Assumption A is discussed below.
First, we state the main
result.

\begin{Theorem} \label{thm:conformal-limit} 
    Let $(\cE(\alpha),\Phi)$ be a stable parabolic Higgs bundle satisfying Assumption A whose associated Hodge bundle is stable. Then for any $\hbar\in\C^*$, the $\hbar$-conformal limit $\CL_\hbar(\cE(\alpha),\Phi)$ exists.  
\end{Theorem}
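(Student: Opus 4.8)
The plan is to reduce the existence of the conformal limit to a convergence statement for solutions of the Hitchin equations as the Higgs field is rescaled to zero, and to control the behavior near the punctures using local model computations. Recall that $\CL_\hbar(\cE(\alpha),\Phi) = \lim_{R\to 0} D_{R,\hbar}(\cE(\alpha),\Phi)$ where $D_{R,\hbar} = \bar\partial_E + \hbar R^2\Phi^{*_{h_R}} + \partial_E^{h_R} + \hbar^{-1}\Phi$ and $h_R$ solves the Hitchin equations for $(\cE(\alpha),R\Phi)$. The only $R$-dependent terms are $\partial_E^{h_R}$ and $R^2\Phi^{*_{h_R}}$, so the whole problem is to show that, after writing $h_R = h_0 \cdot g_R$ relative to the metric $h_0$ adapted to the \emph{Hodge bundle} $\lim_{\lambda\to 0}(\cE(\alpha),\lambda\Phi)$, the gauge transformations $g_R$ converge (in a suitable topology, including appropriate weighted Sobolev/H\"older control near $D$) to a limit $g_0$ as $R\to 0$, and that $R^2 \Phi^{*_{h_R}} \to 0$ in the same topology. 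Granting this, $D_{R,\hbar}$ converges to $\bar\partial_E + \partial_E^{h_0\cdot g_0} + \hbar^{-1}\Phi$, which is the desired parabolic $\lambda$-connection; its logarithmic extension and parabolic structure are then read off via the local analysis, giving the ``CL table'' \eqref{eq CL table intro}.

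The key steps, in order: (1) Set up the problem on $X\setminus D$ with the global-to-local machinery: since the Hodge bundle is stable by hypothesis, it carries a unique adapted harmonic metric $h_0$, and one studies the family $h_R$ via the associated self-duality equation for $g_R$, a second-order elliptic PDE with a parameter $R$. (2) Away from the punctures this is exactly the argument of \cite{GaiottoLimitsOPERS,CollierWentworth:19}: one shows the rescaled family is a continuous deformation of harmonic metrics and that $g_R$ converges on compact subsets of $X\setminus D$ as $R\to 0$, using the maximum principle / a priori estimates and the fact that the grading element of the Hodge bundle makes $R^2\Phi^{*_{h_R}}$ subleading. (3) The genuinely new work is the behavior near each $p\in D$: here one invokes the local normal forms for adapted harmonic metrics on parabolic Higgs bundles from Simpson \cite{SimpsonNoncompactcurves} (and the refined local models needed for varying complex masses, constructed earlier in the paper), decomposes $\Phi$ near $p$ into its nilpotent part and the part carrying the complex masses, and shows that the rescaling $R\to 0$ interacts with the local model so that $h_R$ converges to a model metric whose associated connection is logarithmic with the stated residue eigenvalues and parabolic weights. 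Assumption A (full flags or strongly parabolic) is exactly what makes these local models tractable — it controls the Jordan-type structure of $\res_p\Phi$ so that the limiting local connection is genuinely logarithmic rather than irregular. (4) Finally, patch the local and global convergence together (a partition of unity argument in the appropriate weighted norms) to conclude that $D_{R,\hbar}$ converges in the infinite-dimensional configuration space to a parabolic logarithmic $\hbar$-connection.

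The main obstacle I expect is step (3): establishing uniform-in-$R$ control of the harmonic metrics $h_R$ near the punctures, and identifying the limiting parabolic/residue data. Away from $D$ one has the well-developed compact-surface technology, but near a puncture the Higgs field has a pole, the harmonic metric degenerates, and the rescaling $\Phi \mapsto R\Phi$ changes the effective ``size'' of the singular term in a way that must be matched against Simpson's local models uniformly as $R\to 0$. Concretely, one needs a priori estimates on $g_R$ in weighted spaces on a punctured disc that are uniform in $R$, which requires knowing that the linearized operator (a Laplace-type operator twisted by the rescaled Higgs field) has uniformly bounded inverse on the relevant weighted Sobolev spaces — this is where the genericity assumptions on the weights enter, ensuring no indicial roots cross the relevant weight as $R$ varies. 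Once this uniform local analysis is in place, the convergence of $D_{R,\hbar}$ and the extraction of the CL table should follow by now-standard arguments.
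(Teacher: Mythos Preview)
Your general picture is reasonable, but the route you outline diverges from the paper's in a way that matters. The paper does \emph{not} prove existence by a local-to-global patching argument with separate analysis near the punctures. Instead, it first invests heavily in setting up weighted Sobolev spaces $L^2_{k,\delta}$, $\Dscr_\delta$, $\Rcal_\delta$ on $X^\times$ that encode the parabolic structure globally (this is the content of \S\ref{sec:analytic}). In these spaces the relevant Laplacian $(D'')^\ast D''$ is Fredholm of index zero (Proposition \ref{prop:nude-index}), and its invertibility follows from stability of the Hodge bundle (Lemma \ref{lem:simple}). The existence of the conformal limit is then a single application of the implicit function theorem to the map $N_{(\ubold,R)}(f_R)=0$ on these weighted spaces (Proposition \ref{prop:conformallimit}); no partition-of-unity patching is needed, and the ``uniform-in-$R$ local estimates near punctures'' that you flag as the main obstacle never appear as a separate step, because the function spaces already absorb the singular behavior. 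Assumption A enters earlier, in making the analytic setup of \S\ref{sec:analytic} work (especially Proposition \ref{prop:strong}), not as a local-model ingredient at the limit stage.

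There is also a structural step you omit: before any of this, the paper represents the given stable Higgs bundle as a point $(\beta,\varphi)$ in the \emph{Bia{\l}ynicki-Birula slice} $\Scal^{par,+}_\delta(A,\Phi)$ through its associated Hodge bundle (Definition \ref{def:bb-slice}, Propositions \ref{prop:good-gauge} and \ref{prop:pH}). The conformal limit is then computed \emph{in the slice}, and the answer is explicitly $\nabla^0_\delta\otimes(D_\delta+\beta+\varphi)$. Relatedly, your ansatz $h_R=h_0\cdot g_R$ is not quite what is used: one first modifies $h_0$ by the diagonal gauge transformation $g=\diag(R^{m_1/2},\ldots,R^{m_\ell/2})$ built from the Hodge grading, obtaining $h'_R$, and only then writes $h(\ubold,R)=\exp(f_R)(h'_R)$. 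Without this rescaling the limiting connection is not the one you describe, and $g_R$ need not converge. Your sketch of the identification of the limiting parabolic weights and complex masses via Simpson's local models is closer in spirit to how the CL table \eqref{eq CL table intro} is derived, but that is a separate (and easier) computation once convergence in the weighted spaces is established.
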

Let us note that semistability implies stability for an open and dense set of
the weights $\alpha$ for the parabolic structure. In such a setting,
$\cP_0(\alpha)$ is smooth, and Theorem \ref{thm:conformal-limit} applies to
every semistable parabolic Higgs bundle satisfying Assumption A.

As in the nonparabolic case of \cite{CollierWentworth:19}, 
existence is proven by identifying every such $(\cE(\alpha),\Phi)$ with a point in a particular slice through its associated Hodge bundle, and the conformal limit is computed in the slice. 
While the strategy here is the same, the analysis in the parabolic setting is
significantly more subtle. 
In particular, a careful gauge theoretic construction of the moduli space
is necessary. This is carried out in \S 3; specifically, Theorems
\ref{thm:analytic-moduli} and \ref{thm:analytic-DRmoduli}. 
We use $L^2_\delta$ theory of Lockhart-McOwen, adapted to gauge theory on
manifolds with cylindrical ends by Taubes, Mrowka, and others. 
In this context,  many previous results exist in the literature
 (cf.\
\cite{Biquard:91,Konno:93,Nakajima:96,DaskalWentworth:97,Biquard:97,
BiquardBoalch:04} and \cite{BGMparabolic,SimpsonNoncompactcurves}).
To extend to arbitrary residues, we have found it necessary to make the full
flags assumption. This is mostly related to the fact, discovered first by
Simpson,  that in the general case the harmonic metric  needs to be adapted to
the Jordan type of the residue, whereas the $L^2_\delta$ theory essentially
requires semisimple residues.
We have chosen to circumvent this
issue through Assumption A. How to extend the analytic results in \S\ref{sec:analytic} beyond Assumption A is not immediately clear to us.

Let $\Pcal_0^s(\alpha)\subset\Pcal_0(\alpha)$ denote the open subset of stable points, and we
assume this is a nonempty.
The main result of \S\ref{sec:analytic} is the following. 

\begin{Theorem} \label{mainthm:analytic-moduli}
Under Assumption A, for a sufficiently small parameter $\delta>0$, there is a complex manifold
    $\Mbold^{par,s}_{\Dol}(\alpha,\delta)$  constructed as an infinite dimensional
    quotient with the following significance: 
    \begin{enumerate}
        \item there is a biholomorphism
            $\Mbold^{par,s}_{\Dol}(\alpha,\delta)\isorightarrow \Pcal_0^s(\alpha)$; 
        \item $\Mbold^{par,s}_{\Dol}(\alpha,\delta)$ admits a Poisson structure;
        \item the symplectic leaves of this Poisson structure are
            hyperk\"ahler manifolds, and they  correspond to
            fixing the complex masses.
    \end{enumerate}
\end{Theorem}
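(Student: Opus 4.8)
The plan is to build $\Mbold^{par,s}_{\Dol}(\alpha,\delta)$ as a quotient of an infinite-dimensional configuration space by a gauge group, following the standard Kuranishi/slice picture adapted to the weighted Sobolev setting on $X\setminus D$ with cylindrical ends. Concretely, I would fix a smooth reference metric $h_0$ adapted to the parabolic structure, take the configuration space $\mathcal{C}$ to consist of pairs $(\bar\partial_E,\Phi)$ where $\bar\partial_E$ lies in an $L^2_\delta$-neighborhood of a model holomorphic structure with the prescribed flag behavior at $D$ and $\Phi\in L^2_\delta(\Endcal E\otimes K(D))$ has residues preserving the flags (the residues' eigenvalues being the complex masses, allowed to vary). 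The gauge group $\mathcal{G}^{\C}$ is the complexified group of $L^2_{2,\delta}$ automorphisms preserving the parabolic structure. The first step is to prove that the $\mathcal{G}^{\C}$-action on the stable locus is free and proper with local slices — this is where the weighted-Sobolev analysis enters: one shows the linearized operator (a $\bar\partial$-type operator twisted by the Higgs field, i.e. the deformation complex of the parabolic Higgs bundle) is Fredholm on the $L^2_\delta$ spaces for generic small $\delta>0$ (here Assumption A is essential, to guarantee the residues are semisimple so the indicial roots miss $\delta$), and that stability forces the zeroth cohomology to vanish. Granting this, $\Mbold^{par,s}_{\Dol}(\alpha,\delta):=\mathcal{C}^s/\mathcal{G}^{\C}$ is a complex manifold, and this is precisely the content assembled in Theorems~\ref{thm:analytic-moduli} and~\ref{thm:analytic-DRmoduli}.

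For part (1), the biholomorphism to $\Pcal_0^s(\alpha)$: every element of $\mathcal{C}^s$ defines, by restriction and the standard prolongation results of Simpson on noncompact curves together with the choice of parabolic extension, a stable parabolic Higgs bundle over $(X,D)$, giving a holomorphic map $\Mbold^{par,s}_{\Dol}(\alpha,\delta)\to\Pcal_0^s(\alpha)$. Conversely, Simpson's existence theorem for the harmonic metric attaches to each stable parabolic Higgs bundle a Hermitian metric adapted to the parabolic structure, hence (after a gauge transformation into the chosen slice around $h_0$) a point of $\mathcal{C}^s$; one checks this is well-defined up to $\mathcal{G}^{\C}$ and inverse to the first map. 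Holomorphicity of both directions follows from the holomorphic dependence of solutions to the Hitchin equations on the Higgs data, and bijectivity plus the fact that both sides are smooth of the same dimension upgrades it to a biholomorphism (or one invokes the universal property with the analytification, as the footnote indicates).

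For parts (2) and (3): the configuration space $\mathcal{C}$ carries a natural holomorphic symplectic form $\omega(\dot{a}_1,\dot\varphi_1;\dot{a}_2,\dot\varphi_2)=\int_{X\setminus D}\tr(\dot{a}_1\wedge\dot\varphi_2-\dot{a}_2\wedge\dot\varphi_1)$, convergent precisely because of the $L^2_\delta$ decay; since the complex masses are not fixed, the fibers of the moment-type map ``residue eigenvalues'' foliate $\mathcal{C}$, and $\omega$ is degenerate transverse to these fibers — this gives the Poisson structure descending to $\Mbold^{par,s}_{\Dol}(\alpha,\delta)$, with symplectic leaves exactly the level sets of the complex masses. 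On each such leaf the complex masses are fixed, so one is in the situation of a fixed moduli space of parabolic Higgs bundles with prescribed residue eigenvalues; there the hyperkähler structure is the classical one, obtained by recognizing the leaf as a hyperkähler quotient of $\mathcal{C}_{\mathrm{fixed}}\times(\text{coadjoint orbit data at }D)$ by $\mathcal{G}$ (real gauge group) at the Hitchin-equation moment map level — the Kähler, symplectic and complex structures all match the metric-side description, using that the $L^2$ metric on the leaf is complete and the three symplectic forms are parallel.

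The main obstacle will be the Fredholm/slice analysis in the weighted spaces with \emph{varying} residue eigenvalues: one must choose $\delta>0$ small enough to avoid all indicial roots of the deformation operator \emph{uniformly} as the complex masses vary over the relevant range, verify that the harmonic-metric asymptotics of Simpson are compatible with these weights, and confirm that the $\mathcal{G}^{\C}$-orbits are closed in the $L^2_\delta$-topology on the stable locus (properness). This is exactly the point at which semisimplicity of the residues — hence Assumption A — cannot be dispensed with, since in the presence of nontrivial Jordan blocks the harmonic metric picks up logarithmic corrections that fall outside any fixed $L^2_\delta$ space; handling this carefully is the technical heart of \S\ref{sec:analytic}.
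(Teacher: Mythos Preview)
Your overall architecture is right, but there are two genuine gaps and one place where the paper takes a different route.

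\textbf{Function space for the Higgs field.} You put $\Phi\in L^2_\delta$, but a Higgs field with nonzero complex masses looks like $\ell_p\cdot dz/z$ near $p$ in the cylindrical frame, which is bounded and \emph{not} in $L^2_\delta$ for any $\delta>0$. So your configuration space as written only sees the strongly parabolic locus. The paper instead introduces the graph-norm space
\[
\Dscr_\delta=\{\Phi\in L^2_{-\delta}\mid \dbar_{A_0}\Phi\in L^2_\delta\},
\]
and proves a decomposition $\Phi=\Phi_0+\Phi_1$ with $\Phi_1\in L^2_{1,\delta}$ and $\Phi_0$ bounded with a well-defined limit in $\bigoplus_p\lfrak_p$ (Proposition~\ref{prop:higgs-decomposition}). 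This is what makes the residue map and the variation of complex masses work.

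\textbf{Role of Assumption A.} You attribute it to keeping the indicial roots away from $\delta$, but the indicial spectrum of the model operator depends only on the parabolic weights $\alpha$ (Remark~\ref{rem:spectrum}, eq.~\eqref{eqn:delta-assumption}), not on the complex masses; in particular $\delta$ need not be chosen uniformly in $\mu$. The actual use of Assumption A is that full flags make each $\lfrak_p$ abelian, so for the residue parts $\ell,m\in\lfrak_p$ one has $[\ell^\ast,m]=0$; this is exactly what is needed to show $[\varphi^\ast,\psi]\in L^2_\delta$ and related multiplication estimates (Proposition~\ref{prop:strong}), without which the deformation complex, the Hitchin equation, and the $d_1^\ast$-slice condition do not make sense on $\Dscr_\delta$. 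Your closing remark about logarithmic corrections to the harmonic metric is the correct intuition for why one cannot go beyond Assumption~A, but it is a separate point from the Fredholm analysis.

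\textbf{Poisson structure.} Your route---a degenerate closed $2$-form on the quotient whose null foliation gives the Poisson structure---is not quite what the paper does, and as stated it is incomplete (a degenerate closed $2$-form need not have constant rank or integrable kernel). The paper instead first constructs the \emph{framed} moduli space $\Mbold^{par,s}_{\Dol,\ast}$ by quotienting only by the based gauge group $\Gcal_{\delta,\ast}$, shows that the Atiyah--Bott--Goldman form $\Omega$ descends to a genuine holomorphic \emph{symplectic} form $\Omega_\ast$ there (Proposition~\ref{prop:based-symplectic}), and then obtains the Poisson structure on $\Mbold^{par,s}_{\Dol}$ as the quotient of this symplectic manifold by the residual Levi action $\overline\Lbold$ (Proposition~\ref{prop:poisson}). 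The symplectic leaves are then identified with the fibers of $\mathrm{Res}$ by a dimension count plus an explicit nondegeneracy check using $(\beta,\varphi)\mapsto(-\varphi^\ast,\beta^\ast)$; the hyperk\"ahler structure on each leaf is established directly via the triple $I,J,K$ on harmonic representatives (Theorem~\ref{thm:hyperkahler}), rather than by an infinite-dimensional hyperk\"ahler quotient with coadjoint-orbit factors.
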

\begin{Remark}
The Poisson structure on $\cP_0(\alpha)$ was constructed in
\cite{LogaresMartens}. We show that this Poisson structure  arises as a
quotient of the holomorphic symplectic structure induced by an
Atiyah-Bott-Goldman form on the moduli space of \emph{framed} parabolic Higgs
bundles (see \S\ref{sec:hyperkahler}). We also note that the
hyperk\"ahler metric in (3) is the natural $L^2$-metric. However, neither the
Atiyah-Bott-Goldman form nor the $L^2$-metric descend to the full moduli space
$\Mbold^{par,s}_{\Dol}(\alpha,\delta)$.
\end{Remark}

\subsection{Parabolic logarithmic $\lambda$-connections}
A $\lambda$-connection on a holomorphic vector bundle $\cV$ is a holomorphic
differential operator $\nabla^\lambda:\cV\to\cV\otimes K$ which satisfies a
$\lambda$-scaled Leibniz rule $\nabla^\lambda(fs)=\lambda\, s\otimes  \partial f
+f\nabla^\lambda s.$
In particular, when $\lambda=0$, we get a Higgs field, and when $\lambda=1$ we
get a holomorphic connection.
In the nonparabolic setting, it is useful to think of the flat connection
$D_\lambda(\cE,\Phi)$ from \eqref{eq flat conn lambda intro} as a map from $\C$
into the space of $\lambda$-connections defined by $\lambda\mapsto
(\bar\partial_E+\lambda\Phi^{*_h},\lambda\partial_E^h+\Phi)$. Indeed, there is
a moduli space of semistable $\lambda$-connections which naturally fibers over
$\C$, and these maps define sections which foliate the moduli space. Moreover,
these sections are related to the twistor lines of the hyperk\"ahler structure
on the moduli space of Higgs bundles.

There is a straightforward parabolic generalization of $\lambda$-connections which we refer to as parabolic logarithmic $\lambda$-connections. 
Again, there is a moduli space $\cP(\alpha)\to \C$  (resp.\
$\Scal\Pcal(\alpha)\to\CBbb$)
 of semistable parabolic (resp.\ strongly parabolic) logarithmic
$\lambda$-connections with fixed parabolic structure, and the fibers over $0$
and $1$ are the moduli spaces $\cP_0(\alpha)$ and $\cP_1(\alpha)$ of parabolic
Higgs bundles and parabolic logarithmic connections, respectively.
If $(\cV^\lambda(\beta),\nabla^\lambda)$ is the associated parabolic logarithmic $\lambda$-connection associated to the flat connection $D_\lambda(\cE(\alpha),\Phi))$ from \eqref{eq flat conn lambda intro}, the $\lambda$-connection analogue of Simpson's table \eqref{eq simpson table intro} has weights $\beta=\alpha-\lambda\mu-\overline{\lambda\mu}$ and complex masses $\nu=\lambda\alpha+\mu-\lambda\bar\mu$.
In particular, the  parabolic logarithmic $\lambda$-connections $(\cV^\lambda(\beta),\nabla^\lambda)$ does not define a section of $\cP(\alpha)\to\C$ unless the complex masses of the Higgs field all vanish, a condition that does not hold for all points of $\cP_0(\alpha)$. 

The $\hbar$-conformal limit has a natural interpretation as a limit of $\hbar$-connections. Namely, the associated $\hbar$-connection to $D_{R,\hbar}$ from \eqref{eq R hbar fam intro} is $(\bar\partial_E+R^2\hbar\Phi^{*_{h_R}},\hbar\partial_{h_R}+\Phi)$. We note that the both the complex masses and parabolic weights of the family $D_{R,\hbar}$ depend on $R$ and $\hbar$, see Table \ref{eq cf fam simpson table}. However, in the limit $R\to 0,$ i.e., the $\hbar$-conformal limit, we prove the following.

\begin{Theorem}
Let $(\cE(\alpha),\Phi)$ be a stable parabolic Higgs bundles satisfying Assumption A whose associated Hodge bundle is stable. Then the $\hbar$-conformal limit $\CL_\hbar(\cE(\alpha),\Phi)$ naturally extends to a stable parabolic logarithmic $\hbar$-connection on $X$ whose parabolic weights and complex masses are determined by the following table
\begin{equation}
        \label{eq CL table intro}
        \begin{tabular}{|c|c|c|}\hline
                &$(\cE(\alpha),\Phi)$& $\CL_\hbar(\cE(\alpha),\Phi)$\\\hline parabolic weights& $\alpha$& $\beta= \alpha$ \\\hline complex masses & $\mu$& $\nu=\hbar\alpha+\mu$\\\hline
  \end{tabular}
  \end{equation}
In particular, the map $\hbar\mapsto \CL_\hbar(\cE(\alpha),\Phi)$ defines a section of the moduli space of parabolic logarithmic $\lambda$-connection $\cP(\alpha)\to\C$ through $(\cE(\alpha),\Phi)\in\cP_0(\alpha).$ 
\end{Theorem}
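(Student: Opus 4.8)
The plan is to deduce the statement from the existence of the conformal limit (Theorem~\ref{thm:conformal-limit}) together with the gauge-theoretic picture of \S\ref{sec:analytic}, by recognizing each member $D_{R,\hbar}(\cE(\alpha),\Phi)$ of the defining family as an honest stable parabolic logarithmic $\hbar$-connection on $X$ and then passing to the limit $R\to 0$.

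\emph{Step 1: the approximants are parabolic logarithmic $\hbar$-connections.} Substituting $\lambda=\hbar R$ into \eqref{eq flat conn lambda intro} for the stable parabolic Higgs bundle $(\cE(\alpha),R\Phi)$, whose harmonic metric is $h_R$, recovers $D_{R,\hbar}(\cE(\alpha),\Phi)$ as given by \eqref{eq R hbar fam intro}. Now $(\cE(\alpha),R\Phi)$ has parabolic weights $\alpha$ and complex masses $R\mu$, and (by Assumption~A) its nonabelian Hodge theory is governed by Theorems~\ref{mainthm:analytic-moduli} and \ref{thm:analytic-DRmoduli}; in particular $D_{R,\hbar}(\cE(\alpha),\Phi)$ extends over $X$ to a stable parabolic logarithmic connection, which, viewed via the pair $\bigl(\bar\partial_E+\hbar R^2\Phi^{*_{h_R}},\ \hbar\partial_E^{h_R}+\Phi\bigr)$ attached to \eqref{eq R hbar fam intro}, is a stable parabolic logarithmic $\hbar$-connection — on a curve, flatness of $D_{R,\hbar}$ is precisely the statement that this pair is an $\hbar$-connection. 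Its parabolic weights $\beta_R$ and complex masses $\nu_R$ are the entries of Table~\ref{eq cf fam simpson table}: applying the $\lambda$-connection form of Simpson's table \eqref{eq simpson table intro} with $\lambda=\hbar R$ and Higgs masses $R\mu$, and then rescaling the connection operator by $R^{-1}$ to pass from an $\hbar R$-connection to an $\hbar$-connection — this multiplies masses by $R^{-1}$ and leaves the bundle extension, hence the weights, unchanged — yields $\beta_R=\alpha-\hbar R^2\mu-\overline{\hbar R^2\mu}$ and $\nu_R=\hbar\alpha+\mu-\hbar R\bar\mu$. In particular $\beta_R\to\alpha$ and $\nu_R\to\hbar\alpha+\mu$ as $R\to 0$.

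\emph{Step 2: the limit and its parabolic data.} Following the strategy of \cite{CollierWentworth:19} already used for Theorem~\ref{thm:conformal-limit}, one works in a slice through the associated Hodge bundle $(\cE_0(\alpha),\Phi_0)$, which is stable by hypothesis: the diagonal gauge transformation implementing the $\C^*$-action identifies $(\cE(\alpha),R\Phi)$ with a family $(\bar\partial_0,\Phi_0+\Psi_R)$ converging to the Hodge bundle, whose harmonic metrics converge — in the weighted $L^2_\delta$ topology of \S\ref{sec:analytic}, and in the appropriate model form near each $p\in D$ — to that of the Hodge bundle; the computation of \cite{CollierWentworth:19} then shows that $D_{R,\hbar}(\cE(\alpha),\Phi)$ converges, with the limit assembled from the Hodge bundle's harmonic metric, its grading, $\Phi_0$, and the slice deformation $\psi$, and in particular the $(1,0)$-part of the limiting $\hbar$-connection has Higgs component $\Phi_0+\psi=\Phi$, with $\hbar$-dependent terms holomorphic in $\hbar$. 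Because this convergence takes place in a topology fine enough to control the asymptotic model at each $p\in D$, and since $\beta_R\to\alpha$ and $\nu_R\to\hbar\alpha+\mu$, the limit $\CL_\hbar(\cE(\alpha),\Phi)$ extends across $D$ to a parabolic logarithmic $\hbar$-connection on $X$ with parabolic weights $\beta=\alpha$ and complex masses $\nu=\hbar\alpha+\mu$, as recorded in \eqref{eq CL table intro}. Stability of the limit follows as in \cite{CollierWentworth:19} from stability of the Hodge bundle — equivalently, from the filtration the limit inherits from the Hodge grading, along which the $\hbar$-connection strictly shifts, generalizing stability of opers — together with the fact that the slice construction of \S\ref{sec:analytic} stays in the smooth locus.

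\emph{Step 3: the section property, and the main obstacle.} By construction $\hbar\mapsto\CL_\hbar(\cE(\alpha),\Phi)$ takes values in the fiber of $\cP(\alpha)\to\C$ over $\hbar\in\C^*$. It is holomorphic in $\hbar$: $D_{R,\hbar}$ depends holomorphically on $\hbar$ and the slice estimates of Step~2 are locally uniform in $\hbar$, so the limit is holomorphic on $\C^*$; since $\beta_R$ and $\nu_R$ stay bounded as $\hbar\to 0$, with limiting values $\alpha$ and $\mu$ matching $(\cE(\alpha),\Phi)$, the map extends holomorphically to $\hbar=0$, where the $(1,0)$-Higgs part is $\Phi_0+\psi=\Phi$ and the connection part degenerates, so that $\CL_0(\cE(\alpha),\Phi)=(\cE(\alpha),\Phi)$ in $\cP_0(\alpha)$. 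Thus $\hbar\mapsto\CL_\hbar(\cE(\alpha),\Phi)$ is a holomorphic map $\C\to\cP(\alpha)$ covering the identity of $\C$, i.e.\ a section through $(\cE(\alpha),\Phi)$. The main obstacle is Step~2: controlling the $R\to 0$ limit uniformly near the punctures (and in $\hbar$) so that the limiting flat connection genuinely extends to a stable parabolic logarithmic $\hbar$-connection over $X$ with the asserted weights and masses. The difficulty is intrinsic — the approximants $D_{R,\hbar}$ have $R$-varying parabolic weights and masses and so do not lie in any single finite-dimensional moduli space, which is exactly why the argument must be carried out in the infinite-dimensional configuration space of \S\ref{sec:analytic}, where the weighted analysis and the slice construction carry the weight of the proof.
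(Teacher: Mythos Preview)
Your overall strategy matches the paper's: identify $(\cE(\alpha),\Phi)$ with a point $(\beta,\varphi)$ in the \BB-slice at its Hodge bundle $(\dbar_E,\Phi_0)$, apply the implicit function theorem (as in \cite{CollierWentworth:19}, here Proposition~\ref{prop:conformallimit}) to control the harmonic metric, and read off the limit. However, the way you derive Table~\eqref{eq CL table intro} is not the right mechanism, and Step~1 is a detour that the proof does not actually use.

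The gap is in Step~2. You argue that the limit's parabolic weights and masses are obtained as the $R\to 0$ limits of the approximants' data, $\beta_R\to\alpha$ and $\nu_R\to\hbar\alpha+\mu$. But the parabolic extension of each $D_{R,\hbar}$ is determined by the metric $h_R$, and these metrics do \emph{not} converge to the Hodge bundle's metric $h_0$ in any naive sense---they degenerate along with the diagonal gauge. So ``the approximants' extensions converge to the limit's extension'' is precisely the statement that needs justification, not something you can invoke. The paper sidesteps this entirely: the slice computation shows the limit is explicitly
\[
\bigl(\hbar,\ \dbar_E+\hbar\Phi_0^{\ast_{h_0}}+\beta,\ \hbar\partial_E^{h_0}+\Phi_0+\varphi\bigr),
\]
which is already an element of the configuration space $\Fcal^{par}_\delta$ built with the fixed model weights $\alpha$. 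By Theorem~\ref{thm:analytic-DRmoduli} (and the discussion after Definition~\ref{def:analytic-flat-connection}), this automatically gives a parabolic logarithmic $\hbar$-connection with weights $\alpha$; the complex mass is read off directly as $\res_p(\hbar\partial_E^{h_0})+\res_p(\Phi_0+\varphi)=\hbar\alpha+\mu$, since the Hodge bundle is strongly parabolic under Assumption~A. The convergence of $\beta_R,\nu_R$ is then a consistency check, not the argument.

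The same remark applies to Step~3: the section property follows immediately from the displayed formula above, which is visibly polynomial (hence holomorphic) in $\hbar$ and specializes at $\hbar=0$ to $(\dbar_E+\beta,\Phi_0+\varphi)=(\cE(\alpha),\Phi)$. There is no need to argue via uniform-in-$\hbar$ estimates on the family $D_{R,\hbar}$.
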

\begin{Remark}
The natural extension of $\CL_\hbar(\cE(\alpha),\Phi)$ from a holomorphic $\hbar$-connection on $X\setminus D$ to a parabolic logarithmic $\hbar$-connection on $X$ is determined by the hermitian metric $h_0$ at the Hodge bundle associated to $(\cE(\alpha),\Phi)$. 
\end{Remark}

Scaling the parabolic logarithmic $\lambda$-connection defines a $\C^*$-action
on $\cP(\alpha)\to\C$ which covers the standard action on $\C$. For
$\lambda\neq0$, this action gives a biholomorphism
$\xi\cdot\cP_\lambda(\alpha)\cong\cP_{\xi\lambda}(\alpha)$ for all
$\xi\in\C^*.$
Generalizing Simpson's work \cite{SimpsonDeRhamStrata}, we show the limits $\xi\to0$ always exist and hence are Hodge bundles in $\cP_0(\alpha)$. As in \cite{SimpsonDeRhamStrata}, this is done by showing every semistable parabolic logarithmic $\lambda$-connection admits a Griffiths transverse filtration whose associated graded parabolic Higgs bundle is polystable, see Appendix \ref{appendix Simpson}.
As a result, $\cP(\alpha)$ acquires a Bia{\l}ynicki-Birula stratification
\begin{equation}\label{eq bb strat intro}
        \cP(\alpha)=\coprod_{a\in \pi_0(\cP(\alpha)^{\C^*})} \cW^a,
\end{equation}
where $\cW^a$ is all points whose $\C^*$-limit is in the component
labeled by $a\in \pi_0(\cP_0(\alpha)^{\C^*})$,  and it is a vector bundle over
the smooth locus.
Denote the subset of $\cW^a$ with fixed $\lambda$ by
$\cW^a_\lambda.$ As in the nonparabolic setting, we  prove that the
conformal limit foliates $\cP(\alpha)$ or $\cS\cP(\alpha)$  with strata
preserving sections and biholomorphically identifies the fibers of
$\cW_0^a$ with the fibers of $\cW_\hbar^a.$

\begin{Theorem}\label{thm:CLintro}
Consider the spaces $\cP(\alpha)$ satisfying Assumption A. For each $\hbar\in
\C$ and each connected component $\cF^a\subset\cP(\alpha)^{\C^*}$,
consider the natural projection map $\cW^a_\hbar\to \cF^a.$
For each stable $x\in \cF^a$ denote the fiber over $x$ by
$\cW_\hbar(x)$. Then,  the $\hbar$-conformal limit
\[\mathrm{CL}_\hbar:\cW_0(x)\to \cP_\hbar(\alpha)\] is a biholomorphism
onto $\cW_\hbar(x).$ In particular, each $y\in \cW_0(x)$
defines a section of $\cP(\alpha)\to \C$  defined by $\hbar\to
\mathrm{CL}_\hbar(y)$, and these sections foliate $\cW(x).$
\end{Theorem}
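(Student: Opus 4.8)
The plan is to reduce everything to the two preceding theorems (existence of the $\hbar$-conformal limit and the fact that it defines a section of $\cP(\alpha)\to\C$ with the parabolic data recorded in \eqref{eq CL table intro}), combined with the Bia{\l}ynicki-Birula structure \eqref{eq bb strat intro} and the gauge-theoretic slice construction of \S\ref{sec:analytic}. First I would fix a connected component $\cF^a\subset\cP(\alpha)^{\C^*}$ and a stable fixed point $x\in\cF^a$; since the $\C^*$-action covers the standard action on $\C$, any fixed point lies over $\lambda=0$, so $x$ is a (stable) Hodge bundle in $\cP_0(\alpha)$. The fiber $\cW_0(x)$ consists of all stable parabolic Higgs bundles whose $\lambda$-limit (as $\lambda\to 0$ via the scaling $\C^*$-action) is $x$; by the results recalled before Assumption A, this is exactly the set of $(\cE(\alpha),\Phi)$ whose associated Hodge bundle is $x$. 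Thus, by the conformal limit existence theorem (Theorem \ref{thm:conformal-limit}) together with the stability statement of the $\hbar$-connection theorem, $\mathrm{CL}_\hbar$ is defined on all of $\cW_0(x)$ and lands in $\cP_\hbar(\alpha)$; moreover Table \eqref{eq CL table intro} shows it preserves the parabolic weights ($\beta=\alpha$) and shifts the complex masses by $\hbar\alpha$, so the image actually lies in the single moduli space $\cP_\hbar(\alpha)$ with the ambient parabolic data, consistent with staying within $\cW^a_\hbar$.

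The core of the argument is to identify the image of $\mathrm{CL}_\hbar$ with the fiber $\cW_\hbar(x)$ and to show the map is a biholomorphism. For this I would run the slice argument of \cite{CollierWentworth:19} in the gauge-theoretic model $\Mbold^{par,s}_{\Dol}(\alpha,\delta)$ (and its $\lambda$-connection analogue from Theorem \ref{thm:analytic-DRmoduli}): near the Hodge bundle $x$ one has a $\C^*$-invariant holomorphic slice, and the negative weight spaces for the $\C^*$-action on the slice parametrize $\cW_0(x)$; the family $D_{R,\hbar}(\cE(\alpha),\Phi)$ of \eqref{eq R hbar fam intro}, after the gauge transformation implicit in passing from $D_\lambda$ to $D_{R,\hbar}$, is precisely the family used to flow a point of the negative weight space to $R\to 0$, and the limit is computed inside the slice. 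The upshot is that in slice coordinates $\mathrm{CL}_\hbar$ becomes an explicit holomorphic map (essentially linear in the leading order, as in the nonparabolic case) from the negative weight space at $\lambda=0$ to the corresponding negative weight space at $\lambda=\hbar$, with holomorphic inverse given by the reverse flow $R\to\infty$ rescaled; this simultaneously shows the map is injective, that its image is all of $\cW_\hbar(x)$, and that it and its inverse are holomorphic. That $\mathrm{CL}_\hbar$ depends holomorphically on $\hbar$ as well — so that $\hbar\mapsto \mathrm{CL}_\hbar(y)$ is a genuine holomorphic section of $\cP(\alpha)\to\C$ and these sections sweep out $\cW(x)$ disjointly — follows from the holomorphic dependence of the harmonic metrics $h_R$ on parameters (the $L^2_\delta$-theory of \S\ref{sec:analytic} gives smooth, in fact holomorphic, dependence) together with uniqueness of solutions to the Hitchin equations; the sections are disjoint because $\mathrm{CL}_\hbar$ is injective for each $\hbar$ and the value at $\hbar=0$ recovers the original Higgs bundle $y$.

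The main obstacle, as in the nonparabolic setting but more acutely here, is the analytic control of the limit $R\to 0$ in \eqref{eq R hbar fam intro}: one must show that $D_{R,\hbar}(\cE(\alpha),\Phi)$ converges in the appropriate $L^2_\delta$ (or weighted $C^\infty$) topology on the configuration space, uniformly enough that the limit is a well-defined point of the gauge-theoretic moduli space $\Mbold^{par,s}_{\DR}$ at $\lambda=\hbar$, and that the limiting parabolic/logarithmic structure at the punctures is governed by $h_0$ as in the Remark following \eqref{eq CL table intro}. Because the parabolic weights and complex masses of the intermediate family $D_{R,\hbar}$ vary with $R$ (Table \ref{eq cf fam simpson table}), this convergence genuinely takes place in an infinite-dimensional configuration space rather than a fixed finite-dimensional moduli space, and one cannot simply invoke properness of a moduli map; instead the estimates must be done by hand in the slice, controlling the off-diagonal (with respect to the Hodge grading) terms of $\Phi^{*_{h_R}}$ and the metric variation near the punctures. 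Once this estimate is in place — which is exactly the content that Theorems \ref{thm:conformal-limit} and the $\hbar$-connection theorem package up — the biholomorphism and foliation statements are formal consequences of the $\C^*$-equivariant slice picture, carried out fiberwise over $\C$.
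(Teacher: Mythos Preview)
Your overall architecture --- reduce to a slice at the Hodge bundle and compute the conformal limit there --- is correct and is what the paper does. But two of your mechanisms are wrong, and without fixing them the argument does not close.

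First, the biholomorphism. You propose that the inverse of $\mathrm{CL}_\hbar$ is ``the reverse flow $R\to\infty$ rescaled.'' That is not how the paper (or \cite{CollierWentworth:19}) proceeds, and it is unclear such a limit makes sense or lands where you want. The actual point is that the \emph{same} \BB-slice $\Scal^{par,+}_\delta(A,\Phi)$ through the Hodge pair $x=(A,\Phi)$ parametrizes \emph{both} fibers, via two different maps: $p_{\H}$ sends $(\beta,\varphi)$ to the Higgs pair $(\dbar_A+\beta,\Phi+\varphi)$ and is a biholomorphism onto $\cW_0(x)$ (Proposition~\ref{prop:pH}); $p_{\DR}$ sends $(\beta,\varphi)$ to the flat connection $\nabla^0_\delta\otimes(D_\delta+\beta+\varphi)$ and is a biholomorphism onto $\cW_1(x)$ (Proposition~\ref{prop:dR}). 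The conformal limit computation (Proposition~\ref{prop:conformallimit}) shows precisely that $\mathrm{CL}_\hbar\circ p_{\H}=p_{\DR}$ (up to the obvious $\hbar$-scaling), so $\mathrm{CL}_\hbar=p_{\DR}\circ p_{\H}^{-1}$ is a composition of biholomorphisms. Surjectivity onto $\cW_\hbar(x)$ is thus not obtained from any limit; it comes from the separate fact that every $\nabla$ whose associated graded Hodge bundle is $x$ can be gauged, by a unipotent gauge transformation, into the \BB-slice (Lemma following Proposition~\ref{prop:dR}, using the Griffiths transverse filtration of Appendix~\ref{appendix Simpson}).

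Second, and more seriously, your holomorphicity argument is incorrect. Harmonic metrics do \emph{not} depend holomorphically on parameters --- the Hitchin equation involves $\Phi^{\ast_h}$, so solutions are at best real-analytic in complex directions. The reason $\hbar\mapsto\mathrm{CL}_\hbar(y)$ is holomorphic is that once $(\beta,\varphi)$ is in the \BB-slice, the limit is computed with the harmonic metric $h$ of the \emph{fixed} Hodge bundle $x$, not with the varying $h_R$: explicitly (cf.\ \eqref{eq confomral limit rk 2}),
\[
\mathrm{CL}_\hbar(\dbar_E+\beta,\Phi+\varphi)=\bigl(\hbar,\ \dbar_E+\hbar\Phi^{\ast_h}+\beta,\ \hbar\partial_E^h+\Phi+\varphi\bigr),
\]
which is visibly affine in $\hbar$. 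All the $R$-dependence, and hence all the nonholomorphic content, disappears in the limit $R\to 0$; what survives is polynomial in $\hbar$ with coefficients depending only on the fixed data at $x$ and the slice coordinates $(\beta,\varphi)$. That is the source of holomorphicity and of the foliation by sections.
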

     
     Finally, the fibers  $\Wcal_{\hbar}(x)$ have a  ``brane''
interpretation  which generalizes the nonparabolic case.

\begin{Theorem}  \label{thm:coisotropic}
Let $x\in \cP^s(\alpha)^{\C^*}$. With respect to the Poisson
structure in Theorem \ref{mainthm:analytic-moduli}, the fiber
$\Wcal_0(x)\subset \Pcal_0^s(\alpha)$ is a holomorphically embedded coisotropic
submanifold. Moreover,  the intersections of   $\Wcal_0(x)$ with
    the symplectic leaves of $\Pcal_0^s(\alpha)$ are holomorphic 
    Lagrangian submanifolds.
\end{Theorem}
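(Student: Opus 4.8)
The strategy is to lift the statement to the moduli space of \emph{framed} parabolic Higgs bundles of \S\ref{sec:hyperkahler}, where the Atiyah--Bott--Goldman form gives a genuine holomorphic symplectic structure, prove the corresponding assertion there by Bia{\l}ynicki--Birula arguments, and then push the conclusion down through the Poisson reduction that recovers $\Pcal_0^s(\alpha)$. I will use three facts: the symplectic leaves of $\Pcal_0^s(\alpha)$ are the level sets of the complex-mass map (Theorem~\ref{mainthm:analytic-moduli}); the $\C^*$-action $(\cE(\alpha),\Phi)\mapsto(\cE(\alpha),\xi\Phi)$ rescales the Atiyah--Bott--Goldman form and the complex masses linearly in $\xi$; and any $\C^*$-fixed point is a system of Hodge bundles, hence has nilpotent residues, so $x$ lies in the $\mu=0$ leaf $\Scal_0$, the strongly parabolic locus.

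Concretely, let $\widetilde{\mathcal P}_0$ be the framed moduli space with holomorphic symplectic form $\widetilde\omega$ of weight one under the lifted $\C^*$-action, let $\mathbb T=\prod_{p\in D}\mathbb T_p$ be the framing torus with weight-one equivariant moment map $\mu$ equal to the complex masses, so that $\Pcal_0^s(\alpha)=\widetilde{\mathcal P}_0/\mathbb T$ as Poisson manifolds and $\Scal_c=\mu^{-1}(c)/\mathbb T$. Since $\mathbb T$ commutes with $\C^*$, the attracting set $\widetilde W:=\{\widetilde y:\lim_{\xi\to 0}\xi\cdot\widetilde y\in\mathbb T\cdot\widetilde x\}$ of the $\C^*$-fixed submanifold $\mathbb T\cdot\widetilde x$ is $\mathbb T$-invariant, $\C^*$-invariant, contains the $\mathbb T$-orbits, and its $\mathbb T$-quotient is the preimage of $\Wcal_0(x)$; it is a locally closed holomorphic submanifold with $T_{\widetilde x}\widetilde W=T_{\widetilde x}(\mathbb T\cdot\widetilde x)\oplus V_{>0}$, where $V_{>0}$ is the sum of the strictly positive $\C^*$-weight subspaces of $T_{\widetilde x}\widetilde{\mathcal P}_0$. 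The key linear-algebra input is that at any $\C^*$-fixed point $\widetilde\omega$ pairs the weight-$w$ subspace with the weight-$(1-w)$ subspace, so by nondegeneracy $\dim V_w=\dim V_{1-w}$; hence $V_{>0}$ is isotropic of half the dimension, i.e.\ Lagrangian, and in fact $(T_{\widetilde x}\widetilde W)^{\perp_{\widetilde\omega}}=V_{\ge 2}\subset T_{\widetilde x}\widetilde W$, so $\widetilde W$ is coisotropic at $\widetilde x$. Coisotropy propagates: it holds along $\mathbb T\cdot\widetilde x$, hence on a neighbourhood in $\widetilde W$ by lower semicontinuity of $\dim\big((T\widetilde W)^{\perp}\cap T\widetilde W\big)$, hence on all of $\widetilde W$ because every point flows into that neighbourhood. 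Moreover $\mu|_{\widetilde W}$ is a submersion at $\widetilde x$ onto $\C^r$ (its differential kills the weight-$0$ directions $T(\mathbb T\cdot\widetilde x)$ but is surjective on the weight-$1$ subspace $V_1\subset V_{>0}$, since $d\mu_{\widetilde x}$ is supported on $V_1$ and $\mu$ is a submersion at the stable point $\widetilde x$), hence a submersion everywhere by $\C^*$-equivariance, so its image is a $\C^*$-invariant open cone, namely all of $\C^r$.

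Now I descend. Poisson reduction turns the coisotropic, $\mathbb T$-invariant submanifold $\widetilde W$ (which contains the $\mathbb T$-orbits) into a coisotropic submanifold of $\Pcal_0^s(\alpha)$, namely $\Wcal_0(x)$: functions vanishing on $\Wcal_0(x)$ pull back to $\mathbb T$-invariant functions vanishing on $\widetilde W$, their bracket vanishes on $\widetilde W$ by coisotropy, and it descends to the bracket on $\Pcal_0^s(\alpha)$; this gives the first assertion and, with it, that $\Wcal_0(x)$ is holomorphically embedded. For a leaf $\Scal_c$, the intersection $\Wcal_0(x)\cap\Scal_c=(\widetilde W\cap\mu^{-1}(c))/\mathbb T$ is a clean regular fibre of the submersion $\mu|_{\widetilde W}$, hence a submanifold of dimension $\dim\widetilde W-2r=\tfrac12\dim\Scal_c$; the reduction of the coisotropic $\widetilde W$ at the moment level $c$ is again coisotropic in $\Scal_c$ (the hypotheses $T_p(\mathbb T\cdot p)\subset T_p\widetilde W$ and cleanness suffice for the standard coisotropic-reduction computation), and a coisotropic submanifold of half the dimension is Lagrangian. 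The case $c=0$ is the familiar statement that the Bia{\l}ynicki--Birula fibre over a Hodge bundle is Lagrangian inside the symplectic manifold $\Scal_0$, and the general case exhibits the ``brane'' $\Wcal_0(x)$ as sweeping out a Lagrangian in every symplectic leaf.

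The main obstacle is analytic, not symplectic. One must justify the Bia{\l}ynicki--Birula local structure in the infinite-dimensional $L^2_\delta$ model of \S\ref{sec:analytic}: a $\C^*$-equivariant local slice at a system of Hodge bundles, the weight decomposition of its tangent space, smoothness of the attracting set $\widetilde W$ and of its fibres, and the clean (Morse--Bott) nature of the fixed locus so that $T_{\widetilde x}\widetilde W$ is exactly $T_{\widetilde x}(\mathbb T\cdot\widetilde x)\oplus V_{>0}$. One must also verify that the Atiyah--Bott--Goldman form on the framed moduli space is $\C^*$-equivariant of weight exactly one, restricts to the symplectic form of each leaf as used above, and is compatible with the decay conditions at the punctures --- the formula $\widetilde\omega((\dot a_1,\dot\varphi_1),(\dot a_2,\dot\varphi_2))=\int_X\operatorname{tr}(\dot a_1\wedge\dot\varphi_2-\dot a_2\wedge\dot\varphi_1)$ makes all of this plausible, but the integrability bookkeeping at the punctures is where the parabolic analysis of \S\ref{sec:analytic} genuinely enters. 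Once that package is in place, the symplectic-geometric arguments above go through verbatim.
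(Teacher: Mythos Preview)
Your overall strategy—lift to the framed moduli space, establish a Lagrangian/coisotropic property there, then descend via Poisson reduction—is precisely the paper's; see Remark~\ref{rem:coisotropic}. The paper's execution is concrete rather than abstract: the explicit $\BB$-slice $\Scal_\delta^{par,+}(A,\Phi)$ of Definition~\ref{def:bb-slice} has tangent vectors $(\beta,\varphi)$ with $\beta\in\nfrak_E^+$ and $\varphi\in\hfrak_E\oplus\nfrak_E^+$, and the form $\Omega$ of \eqref{eqn:symplectic-form} vanishes on such pairs because $\tr(\varphi_2\wedge\beta_1)$ is the trace of a (block) upper-triangular endomorphism times a strictly upper-triangular one; the index computation of Lemma~\ref{lem:half-dimensional} then shows the slice is half-dimensional in the framed space, hence Lagrangian. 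Your weight argument ($\widetilde\omega$ pairs $V_w$ with $V_{1-w}$) is a valid alternative for this linear-algebra step, but the analytic content you correctly flag as the ``main obstacle'' is exactly what the paper supplies via the construction of the $\BB$-slice and Proposition~\ref{prop:good-gauge}.

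One point needs correcting: $\mathbb T\cdot\widetilde x$ is \emph{not} pointwise $\C^*$-fixed in the framed space. The paper observes (in the Remark following Proposition~\ref{prop:parabolic-hodge}) that the only $\C^*$-fixed framed points have $\Phi\equiv 0$; for a Hodge pair with $\Phi\neq 0$ the gauge transformation $g_\xi$ implementing $\xi\cdot(A,\Phi)\sim(A,\Phi)$ has nontrivial boundary value in $\sL_p$. To run your argument you must twist the $\C^*$-action by the homomorphism $\C^*\to\mathbb T$ recording those boundary values; the twisted action fixes $\widetilde x$ and still gives $\widetilde\omega$ weight one (since $\mathbb T$ acts symplectically), so the weight decomposition and the rest go through. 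Relatedly, $V_0$ may strictly contain $T(\mathbb T\cdot\widetilde x)$—it also contains deformations along the fixed-point component through $x$—so your equality $(T_{\widetilde x}\widetilde W)^\perp=V_{\ge 2}$ need not hold; but any discrepancy lies in $V_1\subset V_{>0}\subset T_{\widetilde x}\widetilde W$, so coisotropy is unaffected.
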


\subsection{Organization of paper}
This paper is organized as follows. In \S \ref{sec:moduli} we introduce
parabolic Higgs bundles and their generalization to parabolic
$\lambda$-connections. We also briefly discuss the nonabelian Hodge
correspondence and Simpson's table. In \S \ref{sec:analytic} we give a
self-contained exposition of the gauge theoretic construction of  moduli spaces
of parabolic Higgs bundles and the de Rham moduli space of flat connections
using weighted Sobolev spaces. The details are required to give a precise
description of the Bia{\l}ynicki-Birula stratification and the proof of the
existence of a conformal limit in \S \ref{sec:strat}. Finally, in \S
\ref{sec:example} we illustrate the results of the paper in the particular case
of the four-punctured sphere.

 \subsubsection*{Acknowledgements} The authors thank
 David Alfaya, Dylan Allegretti, Sebastian Heller, Rafe Mazzeo, Motohico Mulase,
Carlos Simpson and Hartmut Wei\ss\  for helpful discussions. B.C.'s research is
supported by  NSF grants DMS-2103685 and DMS-2337451. L.F.'s research is
supported by  NSF grant DMS-2005258.
R.W.’s research is supported by  NSF grant DMS-2204346.
He is also grateful to the Max-Planck Institute for Mathematics, where a
portion of this work was completed during Spring 2023. 
This project was initiated while all of us were visiting the Mathematical Sciences Research Institute (MSRI), now becoming the Simons Laufer Mathematical Sciences Institute (SLMath), for the semester-long program \emph{Holomorphic Differentials in Mathematics and Physics} in 2019; the institute is supported by the National Science Foundation (Grant No. DMS-1440140).


\section{Parabolic objects}\label{sec:moduli}
For background we mostly follow \cite{SimpsonNoncompactcurves,LogaresMartens}. 
Fix a closed Riemann surface $X$ of genus $g$ and with structure sheaf
$\Ocal_X$ and  canonical bundle $K_X$. 
Let $\{p_1,\cdots,p_d\}$ be a set of $d$ distinct points in $X$ 
such that $2g-2+d>0$, and let $D=p_1+\cdots +p_d$ be the associated
effective divisor. Let $K_X(D)$ be the line bundle whose sections are
meromorphic $1$-forms on $X$ with at most simple poles at the points of
$D$.  Denote the residue map by
\[\res: K_X(D)\to \bigoplus_{p\in D}\cO_{p},\]
and denote the projection onto $\cO_p$ by $\res_p.$

\subsection{Parabolic bundles} \label{sec:parabolic-bundles}
\begin{Definition}[{\sc parabolic bundle}] \label{def mero filt}
Let $\cE\to X$ be a rank $n$ holomorphic vector bundle. For each point $p\in D$, a parabolic structure at $p$ is a filtration $\{\cE_\alpha\}_{\alpha\in\R}$ of the germs of 
meromorphic sections of $\cE$ at $p$ such that $\cE_0$ consists of germs of holomorphic sections at $p$ and
\begin{enumerate}
    \item $\alpha\leq\beta$ implies $\cE_\alpha\supset \cE_\beta$, 
    \item for each $\alpha\in\R$ there is $\epsilon>0$ so that $\cE_{\alpha-\epsilon}=\cE_\alpha,$ and
    \item $\cE_{\alpha+1}=\cE_\alpha(-p)$ for all $\alpha.$
\end{enumerate}
\end{Definition}
There are natural parabolic structures induced on subbundles, quotients, direct sums, tensor product and exterior products. By property (3), the evaluation map $\cE_0\to \cE_p$ defined by $s\mapsto s(p)$ has kernel $\cE_1$. This defines an isomorphism $\cE_0/\cE_1\cong\cE_p$ and identifies $\cE_{\alpha_i}$ with a linear subspace $\cE_{p,i}\subset\cE_p.$ In particular, a parabolic structure at $p$ is equivalent to a weighted filtration
\begin{equation}
    \label{eq parabolic filt}\cE|_{p}=\cE_{p,1}\supset \cE_{p,2}\supset\cdots\supset \cE_{p,n_p}\supset\cE_{p,n_p+1}=\{0\}, \ \ 0\leq \alpha_{1}(p)<\cdots<\alpha_{n(p)}(p)<1.
\end{equation}
Define the integers 
\[m_j(p)=\dim(\cE_{p,j})-\dim(\cE_{p,j+1}).\]
When $m_j(p)=1$ for all $j$ and $p\in D$, we say the parabolic structure is given by {\em full flags}. 
We will use the notation $\cE(\alpha)$ for a vector bundle $\cE$ equipped with a parabolic structure.

\begin{Definition}[{\sc parabolic map}] 
         \label{def parabolic morph}Given two parabolic vector bundles $\cE(\alpha)$ and $\cF(\beta)$, a holomorphic bundle map $f:\cE\to \cF$ is called {\em parabolic} if, $\alpha_j(p)> \beta_{k}(p)$ implies $f(\cE_{p,j})\subset \cF_{p,k+1}$ for all $p\in D$, and {\em strongly parabolic} if $\alpha_j(p)\geq \beta_k(p)$ implies $f(\cE_{p,j})\subset \cF_{p,k+1}$ for all $p\in D$.
 \end{Definition} 

If $\cF\subset \cE$ is a holomorphic subbundle, then the $j^{th}$ part of the filtration $\cF_{p,j}$ of the induced parabolic structure on $\cF$ is given by 
\[\cF_{p,j}=\cF_p\cap\cE_{p,j}~, \ \ \alpha_j^\cF(p)=\max\limits_k\{\alpha_k(p) ~|~ \cF_p\cap \cE_{p,k}=\cF_{p,j}\}.\]
The induced parabolic structure on the determinant bundle $\Lambda^n\cE$ is just a weight 
\begin{equation}
    \|\alpha(p)\|=\sum_{j=1}^{n_p}m_j\alpha_{j}(p).
\end{equation} 
To normalize the weights, use Property (3) of Definition \ref{def mero filt}. Namely, 
\begin{equation}\label{eq:tensorofpar}\det(\cE(\alpha))=\Big(\Lambda^n\cE\otimes \bigotimes_{p\in D}\cO\left(\left\lfloor \|\alpha(p)\|\right\rfloor\right)\Big)(\beta),\end{equation}
where $\beta(p)=\|\alpha(p)\|-\lfloor\|\alpha(p)\|\rfloor.$ 
A rank $n$ parabolic bundle $\cE(\alpha)$ together with an isomorphism
$\det(\cE(\alpha))\cong\cO(0)$ between $\det(\cE(\alpha))$ and
the trivial bundle with the trivial parabolic structure is an $\sSL(n,\C)$-parabolic bundle. In this case,  $||\alpha(p)||\in\N$ for all $p\in D.$

The {\em parabolic degree} of $\cE(\alpha)$ will be denoted by $\deg(\cE(\alpha))$ and is defined by 
\begin{equation}
    \label{eq: parabolic degree}\deg(\cE(\alpha))=\deg(\cE)+\sum_{p\in D}\|\alpha(p)\|~.
\end{equation}
Define the parabolic slope by $\mu(\cE(\alpha))=\deg(\cE(\alpha))/\rk(\cE)$. A parabolic bundle $\cE(\alpha)$ is {\em semistable} if 
\begin{equation}
        \label{eq parabolic ss}\mu(\cF(\alpha))\leq\mu(\cE(\alpha))~
\end{equation}
for every proper holomorphic subbundles $\cF\subset \cE$. A parabolic
bundle $\cE(\alpha)$ is stable if the above inequity is always strict.  The
parabolic degree of an $\sSL(n,\C)$-parabolic bundle is zero.  

There is a moduli space $\cN(\alpha)=\cN(\alpha,\sSL(n,\C))$  whose closed points
correspond to $\cS$-equivalence classes of semistable parabolic bundles
$\sSL(n,\C)$-bundles \cite{MehtaSeshadri}. 
Let $\Ncal^s(\alpha)\subset\Ncal(\alpha)$ denote the open subset of stable
parabolic bundles.
For generic values of the weights, semistability implies stability, and
$\Ncal^s(\alpha)=\Ncal(\alpha)$. In general,
$\cN(\alpha)$, if nonempty,  is a smooth projective variety of dimension
    \[\dim(\cN(\alpha))=(g-1)(n^2-1)+\frac{1}{2}\sum_{p\in D}\left(n^2-\sum_{j=1}^{n_p}m_j^2\right).\]
  Note that the dimension depends on the parabolic structure but not the weights. 
There is a variant where one replaces
the trivial bundle $\cO(0)$ with any parabolic line bundle $\cL(\beta)$ and considers the moduli space $\cN(\alpha,\cL(\beta))$ of semistable parabolic bundles with fixed parabolic structure together with an isomorphism $\det(\cE(\alpha))\cong \cL(\beta)$ . 
\subsection{Parabolic Higgs bundles}
We now define the notion of a $\sSL(n,\C)$-parabolic Higgs bundle. 
 
\begin{Definition}[{\sc parabolic Higgs bundle}]  \label{def:parabolic-higgs}
    An $\sSL(n,\C)$-parabolic Higgs bundle is a pair $(\cE(\alpha),\Phi)$ on $(X,D)$, where 
    \begin{itemize}
        \item $\cE(\alpha)$ is a parabolic $\sSL(n,\C)$-bundle on $X$, and
        \item $\Phi\in H^0(\End(\cE)\otimes K(D))$ such that $\Tr(\Phi)=0$ and the residue $\res(\Phi)$ preserves the flag in $\cE_p$ for all $p\in D$, i.e., $\res_p(\Phi)(\cE_{p,j})\subset \cE_{p,j}$ for all $p\in D$ and all $j$.
    \end{itemize} 
    A parabolic Higgs bundle $(\cE(\alpha),\Phi)$ is called \emph{strongly parabolic} if $\res_p(\Phi)$ is zero on each graded piece $\cE_{p,j}/\cE_{p,j+1}$, i.e., $\res_p(\Phi)(\cE_{p,j})\subset\Ee_{p,j+1}$ for all $p\in D$ and all $j.$
\end{Definition}

There are natural notions of stability for parabolic Higgs bundles. Namely, $(\cE(\alpha),\Phi)$ is semistable if \eqref{eq parabolic ss} holds for all proper subbundles $\cF\subset\cE$ such that $\Phi(\cF)\subset\cF\otimes K(D)$. 
There is a moduli space $\cP_0(\alpha)=\cP_0(\alpha,\sSL(n,\C))$ whose
closed points correspond to $\cS$-equivalence classes of semistable $\sSL(n,\C)$-parabolic Higgs bundles with fixed parabolic structure \cite{YokogawaParModuli}, the subscript will be explained in the next section.
We again let $\Pcal_0^s(\alpha)$ denote the locus of stable points, and it
is again true that
for generic choices of the weights, semistability implies stability. When
nonempty, the
moduli space $\cP_0^s(\alpha)$ is a smooth quasi-projective variety of dimension
\begin{equation} \label{eqn:dim-par}
\dim(\cP_0^s(\alpha))=(n^2-1)(2g-2+d)\ .
\end{equation}
In particular, the dimension is independent of the fixed parabolic
structure. On the other hand, an open subset of  
semistable strongly parabolic Higgs bundles
$\Ss\cP_0(\alpha)\subset\cP_0(\alpha)$ is identified with the cotangent
bundle of $\Nn^s(\alpha)$. In particular,
$\dim(\cS\cP_0^s(\alpha))=2\dim(\Nn^s(\alpha))$.

We emphasize that the only condition on the residues of the Higgs fields in
$\cP_0(\alpha)$ is that they preserve the flag structure of the parabolic
bundle. For a given parabolic Higgs bundle, the residue map can be
projected onto the diagonal terms of the associated graded
$$\fl_p=\biggl[\, \bigoplus_{j=0}^{n_p-1}\End(\cE_{p,j}/\cE_{p,j+1})\, \biggr]_0~,$$
where $[\,]_0$ indicates the traceless part.
Set $\sL_p=[\, \prod_{j=0}^{n_p-1}\sGL(\cE_{p,j}/\cE_{p,j+1})\, ]_1,$ where
$[\,]_1$ indicates overall determinant $=1$. On the moduli space $\cP_0(\alpha)$ this induces a map
 \begin{equation}
         \label{eq big Res on moduli}\mathrm{Res}:\cP_0(\alpha)\to \bigoplus_{p\in D}\fl_p/\sL_p~.
 \end{equation}
By definition, $\mathrm{Res}^{-1}(0)=\cS\cP_0(\alpha)$ is the strongly parabolic moduli space. There is also map to the GIT quotient $\bigoplus_{p\in D}\fl_p\sslash \sL_p$ which records the ordered eigenvalues of the residue.
Under some assumptions, which are always satisfied in the full flags case, the fibers of $\mathrm{Res}$ were shown to be the symplectic leaves of a natural Poisson structure on $\cP_0(\alpha)$ \cite[\S3.2.4]{LogaresMartens}. 
\begin{Remark}[{\sc residue for full flags}] \label{rem:Resfullflag}
        In the full flags case, the action of $\sL_p$ on $\fl_p$ is trivial, so $\mathrm{Res}$ records the eigenvalues of the residue of the Higgs field at each $p\in D$
        \[\mathrm{Res}:\cP_0(\alpha)\to\bigoplus_{p\in
        D}\fl_p\simeq\bigoplus_{p\in D}\C^{n-1}~.\] 
\end{Remark}
As in the nonparabolic case, choosing a basis of invariant polynomials and evaluating the Higgs field defines the Hitchin map 
\begin{equation}
        \label{eq Hitchin map} \cP_0(\alpha)\to \cB\cong \bigoplus_{j=2}^n H^0(K^j(j D))~.
\end{equation}
Note that the strongly parabolic moduli space $\cS\cP_0(\alpha)$ fibers over $\bigoplus_{j=2}^n H^0(K^j((j-1) D))$. 

There is a natural $\C^*$-action on $\cP_0(\alpha)$, given by $\xi\cdot (\cE(\alpha),\Phi)=(\cE(\alpha),\xi\Phi).$ This action preserves the moduli space $\cS\cP_0(\alpha)$ of strongly parabolic Higgs bundles but does not preserve the other fibers of $\mathrm{Res}$.
In \cite{YokogawaParModuli}, the analogue of the Hitchin map for $\cP_0(\alpha)$ is shown to be proper. 
This implies the $\C^*$-limits $\lim_{\xi\to0}[\cE(\alpha),\xi\Phi]$ always exist in $\cP_0(\alpha)$.
The $\C^*$-fixed points are systems of Hodge bundles \cite{SimpsonNoncompactcurves}. That is, $(\cE(\alpha),\Phi)$ is a $\C^*$-fixed point if and only if the parabolic bundle $\cE(\alpha)$ decomposes as a direct sum of parabolic bundles
\[\cE(\alpha)=\cE_1(\beta_1)\oplus\cdots\oplus\cE_\ell(\beta_\ell)~,\]
 and there are $\phi_j\in H^0(\Hom(\cE_j(\beta_j),\cE_{j+1}(\beta_{j+1}))\otimes K(D))$ such that
\[\Phi=\begin{pmatrix}
    0&\\\phi_1&0\\&\ddots&\ddots\\&&\phi_{\ell-1}&0
\end{pmatrix}.\]

\begin{Remark}
\label{remark full flag implies strongly parabolic fixed points}
Even though the Higgs field at a $\C^*$-fixed point is nilpotent, the $\C^*$-fixed points in $\cP_0(\alpha)$ are not always strongly parabolic, i.e., not necessarily in $\mathrm{Res}^{-1}(0).$ However, crucial for our later analysis, every $\C^*$-fixed point is strongly parabolic in the full flags case. 
\end{Remark}

\begin{Example}[{\sc $\C^*$-fixed points in rank 2}] \label{C* fixed points in rank 2}
For $\rk(\cE)=2,$ the $\C^*$-fixed points are easy to describe. Either
    $\Phi=0$ and $\cE(\alpha)$ is a semistable parabolic $\sSL(2,\C)$-bundle, or $\cE(\alpha)$ is not stable and $\cE(\alpha)$ is a direct sum of two parabolic line bundles $\cE(\alpha)\cong \Ll_1(\beta_1)\oplus \Ll_2(\beta\_2)$. 
The filtration of $\cE_p$ as $\cE_p=\cE_{p,1} \supset \cE_{p,2} \supset \{0\}$ is compatible with these line subbundles.  In particular, for each $p\in D$, $\beta_1\neq\beta_2$ if and only if $\cE_{p}=\cE_{p,1}\supset\cE_{p,2}\supset\{0\}$, and  
$\beta_i(p)>\beta_j(p)$ if and only if $\Ll_i(p)=\cE_{p,2}$, $\beta_{i}(p)=\alpha_2(p)$ and $\beta_j(p)=\alpha_1(p).$ Moreover, with respect to this decomposition
\[\Phi=\begin{pmatrix}
        0&0\\\phi_0&0
\end{pmatrix}:\Ll_1\oplus \Ll_2\to(\Ll_1\otimes K(D))\oplus (\Ll_2\otimes
    K(D))\ ,\]
    where $\phi_0\in H^0(\Hom(\Ll_1,\Ll_2\otimes K(D)))$ is nonzero and satisfies $\res_p(\phi_0)=0$ for each $p\in D$ such that $\beta_1(p)>\beta_2(p).$ Such a Higgs bundle is stable exactly when $\deg(\cL_2(\beta_2))<0$.
\end{Example}

\begin{Remark}
 \label{rem:invariant-notation}
It will be convenient to use invariant notation to denote the structure
    group, parabolics, Levi factors, and unipotent subgroups. 
To this end,  we henceforth employ the following notation.
    $\sG:= \SL(n,\CBbb)$.
    The parabolic subgroup defined by the flag structure at $\Ecal_p$ will
    be denoted by $\sP_p$, with unipotent subgroup $\sU_p$, and Lie algebras
    $\pfrak_p$ and $\ufrak_p$. For convenience, we summarize the relevant dimension
    formulas: 
    \begin{align}
        \begin{split} \label{eqn:dimensions}
            \dim\sG &= \dim\sL_p+2\dim\sU_p= n^2-1 \\
            \dim\sL_p &= -1+\sum_{j=0}^{n_p-1} m_j(p)^2 \\
            \dim\sU_p&= \sum_{0\leq i<j\leq n_p-1} m_i(p)m_j(p) \\
            \dim\sP_p&=-1 +\sum_{0\leq i\leq j\leq n_p-1}
            m_i(p)m_j(p)
        \end{split}
    \end{align}

\end{Remark}

\subsection{Parabolic logarithmic $\lambda$-connections}

We now equip parabolic bundles with holomorphic differential operators known as $\lambda$-connections for $\lambda\in\C$.  

\begin{Definition}[{\sc parabolic logarithmic $\lambda$-connection}] 
    An $\sSL(n,\C)$ parabolic logarithmic $\lambda$-connection is a triple $(\lambda,\cE(\alpha),\nabla)$ on $(X,D)$, where  $\lambda\in\C$, $\cE(\alpha)$ is a parabolic bundle on $(X,D)$, and $\nabla:\cE\to\cE\otimes K(D)$ is a $\C$-linear sheaf map such that 
        \begin{enumerate}
                \item $\nabla(fs)=\lambda\partial f\otimes s+f\nabla s$ for any locally defined holomorphic function $f$ and section $s,$ 
                \item for all $p\in D,$ the residue preserves the flag in $\cE_p$, i.e., $\res_p(\nabla)(\cE_{p,j})\subset\cE_{p,j}\otimes K(D)$ for all $j$,
            \item via the isomorphism $\det(\cE(\alpha))\cong\cO(0),$ the induced operator on $\det(\cE(\alpha))$ is  $\lambda\partial$. 
         \end{enumerate}  
    An $\sSL(n,\C)$ parabolic $\lambda$-connection $(\lambda,\cE(\alpha),\nabla)$ is called \emph{strongly parabolic} if $\res(\nabla)$ acts as multiplication by $\lambda\alpha_j(p)$ on $\cE_{p,j+1}/\cE_{p,j}$. That is, for all $p\in D$ and all $j$
        \[(\res_p(\nabla)-\lambda\alpha_{j}(p)\Id)(\cE_{p,j})\subset\cE_{p,j+1}.\]
\end{Definition}

\begin{Remark}
When $\lambda=1$, we simply refer to these objects as parabolic logarithmic connections since dropping the second and third conditions on $\nabla$ recovers the notion of a logarithmic connection. Note that for $\lambda\neq 0,$ parabolic logarithmic $\lambda$-connections can be identified with parabolic logarithmic connections via the map $(\lambda,\cE(\alpha),\nabla)\mapsto (1,\cE(\alpha),\lambda^{-1}\cdot \nabla)$. One the other hand, a parabolic logarithmic $0$-connection is a parabolic Higgs bundle.
\end{Remark}

The stability conditions for parabolic Higgs bundles generalize immediately
to parabolic $\lambda$-connections, and there is a moduli space
$\cP(\alpha)$ of $\sSL(n,\C)$ parabolic logarithmic $\lambda$-connections
with fixed parabolic structure \cite{AlfayalambdaModuli} whose points correspond
to $\cS$-equivalence classes of semistable parabolic logarithmic
$\lambda$-connections.\footnote{In \cite[Theorem 8.4]{AlfayalambdaModuli} a
moduli space $\cM_{\Hod}(\xi,\alpha,\bar r)$ is constructed which is the
analogue of the strongly parabolic condition on Higgs bundles. This is a closed
subvariety of $\cP(\alpha)$, the moduli space $\cP(\alpha)$ is a
special instance of the $\Lambda$-module moduli space constructed \cite[Theorem
5.8]{AlfayalambdaModuli}.} Note that there is a projection map
\[\Lambda:\cP(\alpha)\to\C\ \ ;\ \ \Lambda([\lambda,\cE(\alpha),\nabla])=\lambda.\]
We will denote the fibers $\Lambda^{-1}(\lambda)$ by $\cP_\lambda(\alpha)$.
In particular, $\Lambda^{-1}(0)=\cP_0(\alpha)$ is the moduli space of
$\sSL(n,\C)$ parabolic Higgs bundles while $\cP_1(\alpha)$ is the moduli
space of $\sSL(n,\C)$ parabolic logarithmic connections. 
There is also a moduli space $\cS\cP(\alpha)$ of strongly parabolic logarithmic connections. 

The moduli space $\cP(\alpha)$ has a natural $\C^*$-action which is given by 
\begin{equation}
        \label{eq C*action}\xi\cdot[\lambda,\cE(\alpha),\nabla]=[\xi\lambda,\cE(\alpha),\xi\nabla].
\end{equation}
This action preserves the Higgs bundle moduli space $\cP_0(\alpha)$ and defines an isomorphism  $\cP_\lambda(\alpha)\cong\cP_{\lambda\cdot \xi}(\alpha)$ for $\lambda\neq0.$ In particular, the $\C^*$-fixed points are exactly systems the Hodge bundles in $\cP_0(\alpha)$ described above. The action also preserves the strongly parabolic locus $\cS\cP(\alpha).$

Generalizing the map $\mathrm{Res}$ from \eqref{eq big Res on moduli}, there is a residue map $\mathrm{Res}:\cP(\alpha)\to \bigoplus_{p\in D}\fl_p/\sL_p,$
where $\mathrm{Res}([\lambda,\cE(\alpha),\nabla])$ is the Levi projection of the residue of $\nabla.$ 
There is also a map $\cP(\alpha)\to\bigoplus_{p\in D}\fl_p\sslash\sL_p$ which records the eigenvalues of the residue of $\nabla.$ 
\begin{Definition}[{\sc complex masses}] \label{def complex masses}
        The eigenvalues of the residue of $\nabla$ will be referred to as the \emph{complex masses} of $(\lambda,\cE(\alpha),\nabla).$
\end{Definition}

In the nonparabolic setting, Simpson showed that the $\C^*$-limits $\lim_{\xi\to 0}[\xi\lambda,\cE(\alpha),\xi\cdot\nabla]$ always exist in $\cP(\alpha)$, and thus correspond to the $\C^*$-fixed points \cite{SimpsonDeRhamStrata}. In Appendix \ref{appendix Simpson}, we show that Simpson's methods can be extended to the parabolic Higgs and logarithmic $\lambda$-connection setting. In particular, we prove the following.
\begin{Proposition}[{\sc $\C^*$-limits exists}] 
        For any $[\lambda,\cE(\alpha),\nabla]\in \cP(\alpha)$, the limit $\lim_{\xi\to 0}[\xi\lambda,\cE(\alpha),\xi\nabla]$
        exists.
\end{Proposition}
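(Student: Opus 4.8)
The plan is to follow Simpson's strategy from \cite{SimpsonDeRhamStrata} for the de Rham moduli space, adapting each step to the parabolic setting. The key point is that the $\C^*$-action \eqref{eq C*action} has a natural fixed-point locus consisting of systems of Hodge bundles, and one must show that, given any semistable parabolic logarithmic $\lambda$-connection $(\lambda,\cE(\alpha),\nabla)$, the orbit $\xi\mapsto[\xi\lambda,\cE(\alpha),\xi\nabla]$ extends over $\xi=0$ to a point of $\cP(\alpha)$. By properness of the analogue of the Hitchin map for $\cP(\alpha)$ established in \cite{AlfayalambdaModuli} (valuative criterion), it suffices to exhibit, for each such $(\lambda,\cE(\alpha),\nabla)$, a \emph{limiting family} over the punctured disc (equivalently over $\Spec \C((\xi))$) which is semistable; then properness provides the limit in the fiber $\cP_0(\alpha)$, and the $\C^*$-equivariance forces that limit to be a fixed point, i.e.\ a system of Hodge bundles.

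First I would produce the limiting family explicitly. As in \cite{SimpsonDeRhamStrata}, the mechanism is a \emph{Griffiths-transverse filtration}: one shows every semistable parabolic logarithmic $\lambda$-connection $(\lambda,\cE(\alpha),\nabla)$ admits a filtration $0=F_0\subset F_1\subset\cdots\subset F_k=\cE$ by parabolic subbundles such that $\nabla(F_i)\subset F_{i+1}\otimes K(D)$ (Griffiths transversality) and such that the associated graded $\bigoplus_i F_i/F_{i-1}$, equipped with the induced $\nabla$-action (which drops $\lambda$ to $0$ on the graded since $\nabla$ shifts the filtration by one), is a polystable parabolic Higgs bundle. Given such a filtration, the Rees-type construction $\cE_\xi := \sum_i \xi^{-i} F_i \subset \cE \otimes \C((\xi))$ together with the operator $\xi\nabla$ defines a $\C^*$-equivariant family over $\Spec\C[[\xi]]$ whose special fiber is precisely the associated graded Higgs bundle. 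The existence of this filtration is the heart of the matter and is exactly what is deferred to Appendix \ref{appendix Simpson}; the construction is the parabolic analogue of the canonical weight filtration Simpson extracts from the Harder--Narasimhan-type iteration applied to the pair $(\cE,\nabla)$, where one must track parabolic weights and check that the residue's flag-preservation (and, in the strongly parabolic case, the scalar action on graded pieces) is inherited by the graded object.

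The main obstacle I anticipate is the \textbf{construction of the Griffiths-transverse filtration with polystable parabolic graded}, together with verifying that the resulting family is \emph{semistable} over the whole disc (not merely generically). In the nonparabolic case Simpson handles this via a delicate study of the behavior of subobjects under the iteration and a boundedness argument; in the parabolic setting one must additionally control how the induced parabolic structure on subbundles $\cF\subset\cE$ (defined by $\cF_{p,j}=\cF_p\cap\cE_{p,j}$ with the weight conventions of \S\ref{sec:parabolic-bundles}) interacts with the filtration steps, and ensure that parabolic slopes behave upper-semicontinuously in the family so that semistability of the generic fiber is not destroyed in the limit. A secondary technical point is that $\cP(\alpha)$ is only a coarse moduli space, so one works with $\cS$-equivalence classes; but since the limit is a polystable Higgs bundle this causes no ambiguity. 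Once the filtration and the family are in hand, properness of the Hitchin-type map from \cite{AlfayalambdaModuli} closes the argument immediately: the morphism $\Spec\C((\xi))\to\cP(\alpha)$ extends over $\Spec\C[[\xi]]$, and evaluating at $\xi=0$ gives the desired limit, which lies in $\cP_0(\alpha)$ because $\Lambda$ is $\C^*$-equivariant covering the standard action on $\C$.
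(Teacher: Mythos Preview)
Your core strategy---produce a Griffiths transverse filtration on $(\cE(\alpha),\nabla)$ whose associated graded is a semistable parabolic Higgs bundle, then read off the limit as that graded object via the Rees construction---is exactly the paper's approach (Appendix~\ref{appendix Simpson}, Propositions~\ref{prop:simpson-griffiths} and~\ref{prop Griff trans filtration}).

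Two points deserve correction. First, the properness framing is both unnecessary and not quite well-posed. There is no Hitchin map on $\cP(\alpha)$ for $\lambda\neq 0$ (the characteristic polynomial of a connection is not a well-defined invariant), and \cite{AlfayalambdaModuli} does not supply one; the properness result in the paper is Yokogawa's, and it concerns only $\cP_0(\alpha)$. More to the point, once you have the Rees family over $\Spec\C[[\xi]]$ with semistable special fiber, you have already exhibited the limit in $\cP_0(\alpha)$---there is nothing left for a valuative criterion to do. You should drop the properness wrapper entirely.

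Second, you correctly flag the existence of the filtration as the heart of the matter, but you skip the actual argument, which is where the paper does its work. The paper follows Simpson's iteration: start with any Griffiths transverse filtration $\cA^\bullet$ (e.g.\ coming from the Harder--Narasimhan filtration of $\cE(\alpha)$); if the graded Hodge bundle is not semistable, take its maximal destabilizing sub-Hodge-bundle $\bigoplus_j\hat\cA_j$ and build a refined filtration $\cB^\bullet$ by $\cB^j=\ker(\cE\to(\cE/\cA^j)/\hat\cA_{j-1})$; repeat. Termination is controlled by the pair $(\zeta,\eta)=(\text{slope},\text{rank})$ of the destabilizer, which decreases lexicographically (Proposition~\ref{Prop: decreasing inv}). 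The parabolic-specific input is that $\zeta$ takes only finitely many values because the weights $\alpha$ are fixed, so parabolic degrees of subbundles lie in a discrete, bounded set. Finally, if $(\zeta,\eta)$ stays constant, the destabilizer shifts one step in the Hodge grading at each iteration and eventually sits in a block separated by a gap from the rest; the gap forces the corresponding $\phi_j$ to vanish, so the destabilizer lifts to a $\nabla$-invariant parabolic subbundle of positive slope, contradicting semistability of $(\cE(\alpha),\nabla)$. This last step is where semistability of the original object is actually used, and your proposal does not mention it.
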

As an immediate consequence, the moduli space $\cP(\alpha)$ admits a Bia{\l}ynicki-Birula stratification
\begin{equation}\label{eq bb strat}
        \cP(\alpha)=\coprod_{a\in \pi_0(\cP_0(\alpha)^{\C^*})} \cW^a,
\end{equation}
where $\cW^a$ consists of all points whose $\C^*$-limit is in the
connected component of the $\C^*$-fixed point corresponding to $a\in
\pi_0(\cP_0(\alpha)^{\C^*}).$

\begin{Example}[{\sc stratification for rank 2}] \label{rk 2 example}
        In rank 2, the limit  as $\xi\to0$ and the stratification \eqref{eq bb strat} are easy to describe since they are determined by the Harder--Narasimhan stratification of the underlying parabolic bundle. 
Namely, consider a stable $\lambda$-connection $(\lambda,\cE(\alpha),\nabla)$. 
\begin{itemize}
        \item If $\cE(\alpha)$ is a stable parabolic bundle, then $\lim_{\xi\to0}[\xi\lambda,\cE(\alpha),\nabla]=[0,\cE(\alpha),0]$. 
        \item If $\cE(\alpha)$ is a stable parabolic bundle,  let $\cL(\beta)$ be the maximal destabilize subbundle. Then 
        \[\lim_{\xi\to0}[\xi\lambda,\cE(\alpha),\nabla]=[0,\cL(\beta)\oplus\cE(\alpha)/\Ll(\beta),\begin{pmatrix}
                0&0\\\phi_0&0
        \end{pmatrix}],\]
        where $\phi_0:\cL(\beta)\to\cE(\alpha)/\cL(\beta)\otimes K(D)$ is a nonzero holomorphic section.
\end{itemize}
\end{Example}
\subsection{Nonabelian Hodge, Simpson's table and the Conformal Limit}\label{sec nonabelian hodge}
The nonabelian Hodge correspondence defines a one-to-one correspondence between polystable parabolic Higgs bundles and polystable parabolic logarithmic connections. In both directions, the correspondence is through the existence of a hermitian metric $h$ on the underlying smooth complex vector bundle which is singular at $p\in D$. In this section we recall the main features of the correspondence, all of which were established by Simpson in \cite{SimpsonNoncompactcurves}, and define the conformal limit. More details on the analytical set-up are provided in \S \ref{sec:analytic}.

Let $F_{(\Ecal,h)}$ denote the curvature of the Chern connection $A=(\Ecal,h)$
of $\Ecal$ with respect to $h$.
There is a class of hermitian metrics $h$ on $\cE|_{X\setminus D}$ called
\emph{acceptable} which is a condition involving an upper bound on the norm of
$F_{(\Ecal,h)}$, see \cite[p. 736]{SimpsonNoncompactcurves}.
The key property of acceptable metrics is that the growth rates of local
meromorphic sections induces filtration of  $\cE|_{X\setminus D}$ by coherent
subsheaves, see Definition \ref{def mero filt}, and hence
 a parabolic structure on $\cE|_p$ for each $p\in D.$  
 Let $h$ be an acceptable metric and $z$ be local holomorphic coordinate centered at $p\in D$. A local meromorphic section $s$ of $\cE_{X\setminus D}$ is in $\cE_{\alpha,p}$ if, for all $\epsilon>0$
\[|s(z)|^2_h\in O(|z|^{2\alpha-\epsilon})~.\]

An acceptable metric $h$ is said to be compatible with a parabolic vector bundle $\cE(\alpha)$ if the parabolic structure it induces agrees with $\cE(\alpha).$ 
Suppose the parabolic structure at $p$ is given by 
\[\cE|_{p}=\cE_{p,1}\supset \cE_{p,2}\supset\cdots\supset \cE_{p,n_p}\supset\{0\}, \ \ 0\leq \alpha_{1}(p)<\cdots<\alpha_{n(p)}(p)<1,\]
and $m_j=\dim(\cE_{p,j})-\dim(\cE_{p,j+1})$. Then, in a basis of the associated graded, one particularly nice adapted metric is
\begin{equation}
         \label{eq local model no logs}h = \diag\Big(\underbrace{|z|^{2 \alpha_{1}}, \cdots,|z|^{2 \alpha_{1}} }_{m_1},
\underbrace{|z|^{2 \alpha_{2}}, \cdots,|z|^{2 \alpha_{2}} }_{m_2}, \cdots, 
\underbrace{|z|^{2 \alpha_{n_p}}, \cdots,|z|^{2 \alpha_{n_p}} }_{m_{n_p}}
\Big).
 \end{equation} 
One can also have log terms on the diagonal. For example, if $\rk(\cE)=3$ and the parabolic structure is given by $\cE_p=\cE_{p,1}\supset \cE_{p,2}\supset 0$ with $m_1=2$, then the following metric is also compatible 
\[h=\diag(|z|^{2\alpha_1}(\log|z|)^2,~|z|^{2\alpha_1}(\log|z|)^{-2},~|z|^{2\alpha_2}).\]

\begin{Theorem}[\sc{Nonabelian Hodge Correspondence} {\cite{SimpsonNoncompactcurves}}] \label{thm hitchin eq}
       Let $(\cE(\alpha),\Phi)$ be a parabolic Higgs bundle with
$\deg(\cE(\alpha))=0$. Then $(\cE(\alpha),\Phi)$ is polystable if and only if
there exists a acceptable compatible metric $h$ on $\cE(\alpha)$ with Chern
connection $A$ such that
        \begin{equation}
                \label{eq hitchin eq}F_{A}+[\Phi,\Phi^{*_h}]=0.
        \end{equation}
       Moreover, $D(\cE(\alpha),\Phi)=d_A+\Phi+\Phi^{*_h}$ defines a flat
connection on the underlying smooth bundle restricted to $X\setminus D.$
\end{Theorem}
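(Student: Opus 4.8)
This is the nonabelian Hodge correspondence for parabolic Higgs bundles on curves, due to Simpson \cite{SimpsonNoncompactcurves}; the plan is to treat the two implications and the flatness statement in turn. For the direction ``a solution of \eqref{eq hitchin eq} exists $\Rightarrow$ $(\cE(\alpha),\Phi)$ is polystable'', together with the last sentence: suppose $h$ is acceptable, compatible with $\cE(\alpha)$, and solves \eqref{eq hitchin eq}. On the curve $X\setminus D$ there are no $(2,0)$- or $(0,2)$-forms, so the curvature of $D=d_A+\Phi+\Phi^{*_h}$ has only a $(1,1)$-component; holomorphicity of $\Phi$ gives $\bar\partial_E\Phi=0$, hence $\partial_A\Phi^{*_h}=(\bar\partial_E\Phi)^{*_h}=0$, and therefore $D^2=F_A+[\Phi,\Phi^{*_h}]=0$ by \eqref{eq hitchin eq}; this already proves flatness of $D$ on $X\setminus D$. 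For polystability I would take any holomorphic subbundle $\cF\subset\cE$ with $\Phi(\cF)\subset\cF\otimes K(D)$, restrict $h$, and run the Chern--Weil computation for the induced sub-connection: acceptability and compatibility are exactly what is needed to identify the boundary-corrected curvature integral with $\deg(\cF(\alpha))$, and \eqref{eq hitchin eq} together with the $\Phi$-invariance of $\cF$ then forces $\deg(\cF(\alpha))\le 0$, with equality only when the second fundamental form vanishes. Since the $\SL(n,\C)$, degree-$0$ normalization makes $\mu(\cE(\alpha))=0$, this is semistability, and the equality case upgrades it to polystability.

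For the converse, assume $(\cE(\alpha),\Phi)$ is polystable of parabolic degree $0$. First I would build an initial acceptable metric $h_0$ compatible with $\cE(\alpha)$ by gluing the diagonal model \eqref{eq local model no logs} near each $p\in D$ --- inserting powers of $\log|z|$ adapted to the Jordan type of $\res_p(\Phi)$ when that residue is not semisimple --- to an arbitrary smooth metric over the compact part of $X$; by construction $h_0$ is acceptable and induces the prescribed weighted flags. Writing $h=h_0\,H$ with $H$ positive self-adjoint with respect to $h_0$, with $H$ and $H^{-1}$ bounded and $H\to\Id$ fast enough at $D$, equation \eqref{eq hitchin eq} becomes a nonlinear second-order elliptic equation for $H$, of Hermitian--Einstein type with an extra Higgs term and with vanishing Einstein constant (again because of the degree-$0$ and $\SL(n,\C)$ normalization).

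Next I would solve this equation by Donaldson's heat flow (or by the continuity method), adapted to the noncompact/parabolic setting. Short-time existence and the maximum principle controlling $\sup|\Lambda F|$ along the flow are routine; the essential point --- and where polystability enters --- is the a priori $C^0$ estimate on $H$. If that estimate were to fail, I would rescale $\log H$, pass to a weak limit in an appropriate weighted Sobolev space, and carry out the Uhlenbeck--Yau/Simpson argument to produce a nonzero $\Phi$-invariant coherent, hence parabolic, subsheaf of $\cE$ of nonnegative parabolic degree, contradicting (poly)stability; the weighted function theory on the cylindrical-end surface is used both to form the weak limit and to evaluate its parabolic degree correctly. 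With the $C^0$ bound in hand, near-puncture barrier comparisons with $h_0$ keep $H$ and its derivatives under control so that the flow stays in the admissible class, interior elliptic bootstrapping gives higher regularity, and the flow converges to a solution $h$ which is acceptable and still compatible with $\cE(\alpha)$. When $(\cE(\alpha),\Phi)$ is polystable but not stable, I would use that it is a direct sum of stable parabolic Higgs subbundles, each of parabolic degree $0$, solve the equation on each summand, and take the direct-sum metric.

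The hard part, and what distinguishes this from the compact theory, is the analysis at the punctures: one must show that the solution produced on $X\setminus D$ is acceptable and induces \emph{exactly} the prescribed parabolic structure $\cE(\alpha)$, which requires sharp decay estimates for $H-\Id$ near each $p\in D$, exploiting that $\res_p(\Phi)$ preserves the flag and is adapted to $h_0$, together with the corresponding parabolic Chern--Weil bookkeeping inside the destabilization step. This local-to-global control near $D$ is the technical core of \cite{SimpsonNoncompactcurves}.
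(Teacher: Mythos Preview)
The paper does not give a proof of this theorem: it is stated as a citation of Simpson's result \cite{SimpsonNoncompactcurves} and used as a black box. Your outline is a faithful sketch of Simpson's original approach (Chern--Weil for the easy direction and flatness; initial adapted metric plus heat flow/continuity and the Uhlenbeck--Yau destabilizing-subsheaf argument for the hard direction, with the parabolic analysis at the punctures as the technical core), so there is nothing to compare against in this paper itself. If you want a point of contact, note that later in \S\ref{sec:analytic} the paper restates a gauge-theoretic version (Theorem~\ref{thm:hk}) and gives a short proof sketch that instead invokes Biquard's weighted-Sobolev framework \cite{Biquard:97}; that is a different analytic packaging of the same existence statement, but again the paper does not reprove it.
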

 
\begin{Remark}
\label{Remark local form of solution}
         Equation \eqref{eq hitchin eq} is called the Hitchin equation. The local form of the metric $h$ solving the Hitchin equation for $(\cE(\alpha),\Phi)$ depends on the Jordan type of the Levi projection of the residue of the Higgs field, i.e., on $\mathrm{Res}(\cE(\alpha),\Phi)$ defined in \eqref{eq big Res on moduli}. In particular, it is of the form \eqref{eq local model no logs} if and only if the Levi projection of the residue is diagonalizable \cite{SimpsonNoncompactcurves}. Thus, in the cases of full flags or strongly parabolic Higgs fields, the solution metric will be of the form \eqref{eq local model no logs}. 
 \end{Remark} 
Let $E$ be the underlying smooth bundle of $\cE(\alpha)$, $h$ be a compatible
metric solving the Hitchin equations and let $d_A=\bar\partial_E+\partial_E^h$
be the $(0,1)$ and $(1,0)$ parts of the Chern connection.
The $(0,1)$ and $(1,0)$ parts of the flat connection $D(\cE(\alpha),\Phi)$ on
$E|_{X\setminus D}$ are given by $\bar\partial_E+\Phi^{*_h}$ and
$\partial_E^h+\Phi$, respectively.
In \cite[Theorem 2]{SimpsonNoncompactcurves} it is proven that the metric $h$ is
acceptable for the holomorphic structure $\bar\partial_E+\Phi^{*_h}$.
Hence, $h$
determines a parabolic structure $\cV(\beta)$ on an extension $\cV\to X$ of the
holomorphic bundle $(E_{X\setminus D},\bar\partial_E+\Phi^{*_{h}})$. Moreover,
it is shown that on which $\nabla=\partial_E^h+\Phi$ extends to a polystable
parabolic logarithmic connection connection on $\cV(\beta).$

The parabolic weights of $\cE(\alpha)$ and $\cV(\beta)$ and complex masses (see
Definition \ref{def complex masses}) of $\Phi$ and $\nabla$ are related by
Simpson's table \eqref{eq simpson table body}
\begin{equation}
        \label{eq simpson table body}
        \begin{tabular}{|c|c|c|}\hline
                &$(\cE(\alpha),\Phi)$& $(\cV(\beta),\nabla)$\\\hline parabolic weights& $\alpha$& $\beta= \alpha-2\mathrm{Re}(\bar\mu)$ \\\hline complex masses & $\mu$& $\nu=\alpha+\mu-\bar\mu$\\\hline
        \end{tabular}
\end{equation} 
\begin{Remark}\label{rem deriving the table}
This table is derived in \cite[\S 5]{SimpsonNoncompactcurves}. The key point is
the parabolic weight changes by subtracting twice the real part of the
eigenvalue of the residue of the term being added to $\bar\partial_E$
($\Phi^{*_h}$ in this case), and the complex mass changes by adding the
eigenvalue of the residue of $\partial_E^h$ ($\alpha$ in this case) and the
eigenvalue of the term being added to $\bar\partial_E$ ($\Phi^{*_h}$ in this
case).
It turns out the Jordan type of $\mathrm{Res}(\cE(\alpha),\Phi)$ and $\mathrm{Res}(\cV(\beta),\nabla)$ are the same. This finer structure is not be present in the strongly parabolic and full flag situations which we restrict to later. 
\end{Remark}

\begin{Remark}
        Simpson also proved the converse of Theorem \ref{thm hitchin eq}.
        Namely, given a polystable parabolic logarithmic connection
$(\cV(\beta),\nabla)$ on $(X,D)$, there is a compatible acceptable metric $h$
on $\cV(\beta)$ such that
$(\bar\partial_\cV,\nabla)=(\bar\partial_E+\Phi^{*_h},\partial_E^h+\Phi)$ for a
polystable parabolic Higgs bundle $(\bar\partial_E,\Phi).$ This direction of the
correspondence will not play a large role in this paper, so we do not say more
about it. As mentioned in the introduction, this correspondence does not define
a map from the moduli space $\cP_0(\alpha)$ to a moduli space $\cP_1(\beta)$ for
any fixed $\alpha,\beta.$
\end{Remark}

There is a $\lambda$-connection version of the above story. Namely, for $\lambda\in\C,$ the metric $h$ solving the Hitchin equations for $(\cE(\alpha),\Phi)$  defines a holomorphic $\lambda$-connection on $E|_{X\setminus D}$ given by
\[(\lambda,\widehat\cV_\lambda,\widehat\nabla_\lambda)=(\lambda,\bar
\partial_E+\lambda \Phi^{*_h},\lambda\partial_E^h+\Phi)\]
The metric $h$ determines a parabolic structure $\cV_\lambda(\beta_\lambda)$ on
the extension of $\widehat\cV_\lambda$, and $\widehat\nabla_\lambda$ extends to
a polystable parabolic logarithmic $\lambda$-connection $\nabla_\lambda$ on
$\cV_\lambda(\beta_\lambda).$ By Remark \ref{rem deriving the table}, the
parabolic weights and complex masses are related by the following
$\lambda$-connection analogue of Table \eqref{eq simpson table body}
\begin{equation}
        \label{eq lambda simpson table}
        \begin{tabular}{|c|c|c|}\hline
                &$(\cE(\alpha),\Phi)$& $(\cV_\lambda(\beta_\lambda),\nabla^\lambda)$\\\hline parabolic weights& $\alpha$& $\beta_\lambda=\alpha-2\mathrm{Re}(\lambda\bar\mu) $ \\\hline complex masses & $\mu$& $\nu_\lambda=\lambda\alpha+\mu-\lambda\bar\mu$\\\hline
        \end{tabular}
\end{equation} 

We are now ready to define the conformal limit of a stable Higgs bundle $(\cE(\alpha),\Phi)$. Since polystability is preserved by scaling the Higgs field, for $R\in\R^{>0}$, there is an compatible acceptable metric $h_R$ solving the Hitchin equations for $(\cE(\alpha),R\Phi)$. Hence, for each $\lambda\in\C$, we have the following family of holomorphic $\lambda$-connections defined on $E|_{X\setminus D}$
\[(\lambda,\bar\partial_E+\lambda R\Phi^{*_{h_R}},\lambda\partial_{h_R}+R\Phi)~.\]
 Scaling this family by $R^{-1}$ defines a family of holomorphic $R^{-1}\lambda$-connections
\[(R^{-1}\lambda,\bar\partial_E+\lambda R\Phi^{*_{h_R}},R^{-1}\lambda\partial_{h_R}+\Phi)~.\]
In terms of $\hbar=R^{-1}\lambda$, the above family is given by 
\begin{equation}
        D_{\hbar,R}(\cE(\alpha),\Phi)=(\hbar, \bar\partial_E+R^2\hbar\Phi^{*_{h_R}}, \hbar\partial_{h_R}+\Phi).
\end{equation}

\begin{Definition}[{\sc $\hbar$-conformal limit}]
        The $\hbar$-conformal limit of the polystable parabolic Higgs bundle $(\cE(\alpha),\Phi)$ is 
        \[\CL_\hbar(\cE(\alpha),\Phi)=\lim_{R\to 0}D_{\hbar,R}(\cE(\alpha),\Phi)~.\]
\end{Definition}

It is important to note that the above limit is taken in the space of holomorhpic $\hbar$-connections on $E|_{X\setminus D}$. Thus, if the limit exists, it is a holomorphic $\hbar$-connection on $E|_{X\setminus D}$. By Remark \ref{rem deriving the table}, the analogue of Simpson's table for $D_{\hbar,R}$ is given by
\begin{equation}
        \label{eq cf fam simpson table}
        \begin{tabular}{|c|c|c|}\hline
                &$(\cE(\alpha),\Phi)$& $D_{\hbar,R}$\\\hline parabolic weights& $\alpha$& $\beta_{\hbar,R}=\alpha-\mathrm{Re}(\hbar R^2\bar\mu) $ \\\hline complex masses & $\mu$& $\nu_{\hbar,R}=\hbar\alpha+\mu-\hbar R^2\bar\mu$\\\hline
        \end{tabular}
\end{equation} 
\begin{Remark}
Note that the extension of $D_{\hbar,R}$ from a holomorphic $\hbar$-connection on $X\setminus D$ to a parabolic logarithmic $\hbar$-connection on $X$ is determined by the  metric $h_R$. The extension of the conformal limit is determined by the metric $h_0$ at the Hodge bundle associated to $(\cE(\alpha),\Phi)$. 
\end{Remark}

\section{Analytic theory} \label{sec:analytic}

The aim of this section is to prove Theorem \ref{thm:analytic-moduli} where we provide a gauge theoretic
construction of the  moduli spaces, as complex manifolds,  of stable strongly parabolic bundles in general,
and  stable parabolic bundles \emph{in the case of full flags}, i.e.\
Assumption A. The Kuranishi slice
method gives local universal families, showing that these moduli spaces are
analytically isomorphic to the moduli spaces   discussed in Section
\ref{sec:moduli}. 
We follow the same method to produce in Theorem \ref{thm:analytic-DRmoduli}
the de Rham space of logarithmic connections. 

Such a construction has previously been carried out 
for parabolic vector bundles, strongly parabolic Higgs bundles, rank $2$
parabolic Higgs bundles,  and wild Higgs bundles under certain assumptions 
(see
\cite{Biquard:91,Konno:93,Nakajima:96,DaskalWentworth:97,Biquard:97,BiquardBoalch:04,Mochizuki:06}).
The constructions generally use weighted Sobolev spaces, whose introduction into
gauge theory goes back to Mrowka and Taubes \cite{MrowkaThesis,Taubes:87,Taubes:93}.  
The main result below is an extension of these constructions
 to the case of parabolic Higgs bundles of arbitrary rank  where the
 complex masses are allowed to vary. We note that the variation of
 parabolic weights has been treated in \cite{KimWilkin:18}.

An intermediate construction is that of the \emph{framed} moduli space of
parabolic bundles. We show that  this admits a holomorphic symplectic
form that is invariant under the change of framing. The moduli space of parabolic bundles 
therefore inherits a Poisson structure as a quotient, and this recovers earlier results of 
Logares--Martens \cite{LogaresMartens} (see also
\cite{Bottacin:95,Markman:94}).
We prove that the symplectic leaves are exactly the moduli spaces with
fixed complex masses. Moreover, we show that the holomorphic symplectic structure is
actually hyperk\"ahler, thus generalizing the result of Konno and
Nakajima in the cases cited above.

\subsection{Gauge theory with weighted Sobolev spaces}

\subsubsection{Connections and gauge groups} \label{sec:gauge-groups}
In this section we define the space of connections and  the group of gauge
transformations.
The material here is largely based on \cite{Matic:88},
\cite{BiquardThese, Biquard:91},  and 
\cite[Sec.\ 3]{DaskalWentworth:97}, but specific details are required for the
application to conformal limits.
 For the sake of completeness,  we therefore give a  precise and
 self-contained presentation. 

We continue with the notation of \S\ref{sec:moduli}. 
Set $X^\times:=X\setminus D$, and 
let $E\to X^\times$ be a trivial
rank $n$ bundle with hermitian metric $h_0$.  Denote the endomorphism bundle
of $E$ by $\glfrak_E$, and the bundle of skew-hermitian endomorphisms
by $\ufrak_E$. Their traceless versions are denoted by
$\slfrak_E$ and $\sufrak_E$, respectively.
For each $p\in D$,
  choose  local conformal coordinates $z$
on open  disks $\Delta(p)$ centered at $p$. We assume $\overline\Delta(p)\cap
\overline\Delta(p')=\emptyset $ for $p\neq p'$, 
so that $X_0:=X\setminus \cup_{p\in D}\Delta(p)$ is a Riemann surface with
boundary. 
Let $\Delta^\times(p):=\Delta(p)\setminus\{p\}$. 
Write $z=re^{i\theta}$, and identify $\Delta^\times(p)$ with a
semi-infinite cylinder $C(p)$ via: $(r,\theta)\mapsto
(\tau,\theta)$, where $\tau=-\log r$ (see  Figure
\ref{fig:cylindricalend}).
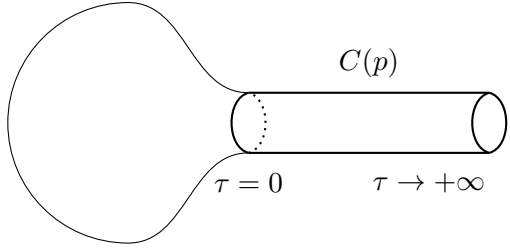
\begin{figure}
    \begin{tikzpicture}[scale=0.8]
\draw  
(0,.5) to [out=180, in=0] (-2,2) to [out=180, in=90] (-4,0) to
    [out=-90, in=180]  (-2,-2)  to [out=0, in=180] (0,-.5);
\draw [thick] (0,0.5) -- (4,0.5);
\draw [thick] (0,-0.5) -- (4,-0.5);
\draw [thick, dotted] (0,-0.5) to [out=15, in=-15] (0,0.5);
\draw [thick] (0,-0.5) to [out=165, in=-165] (0,0.5);
\draw [thick] (4,-0.5) to [out=15, in=-15] (4,0.5);
\draw [thick] (4,-0.5) to [out=165, in=-165] (4,0.5);
\node at (0, -1) {$\tau=0$};
\node at (3, -1) {$\tau\to+\infty$};
    \node at (2, 1) {$C(p)$};
\end{tikzpicture}
\caption{\label{fig:cylindricalend} Cylindrical end}
\end{figure}
We suppose that $X^\times $ is endowed with a fixed conformal metric 
that is euclidean $d\tau^2+d\theta^2$ on every $C(p)$. Let $d\mu$
denote the associated area form. The Lefschetz operator
$\Lambda:\Omega^2(X^\times)\to \Omega^0(X^\times)$ is the complex linear
extension obtained from setting $\Lambda(d\mu)=1$, and by convention $\Lambda$   
vanishes on functions and $1$-forms.
We also extend  the collection of coordinates $\tau$ on the union of
cylinders $C(p)$ to a smooth function, also denoted $\tau$,  on  all of $X^\times$.
For a function $f$ on $X^\times$, by 
$\displaystyle
\lim_{\tau\to +\infty} f(\tau,\theta)
$
we mean the collective limits on each $C(p)$, $p\in D$, when they exist.

We also choose a fixed framing of $E$ on each $C(p)$; that is, a unitary
frame $\{e_i(p)\}_{i=1}^n$ for each $p\in D$ (we will typically denote this
simply $\{e_i\}_{i=1}^n$ when the point is implicit). 
Fix the data $\alpha(p)$ for each $p\in D$ as in \S\ref{sec:parabolic-bundles}. 
Let $A_0$ denote a fixed unitary  connection  which,
in each local  frame $e_1,\ldots, e_n$  on $C(p)$,
has the form $d_{A_0}=d+\sqrt{-1}\hat\alpha(p) d\theta$, where 
$$
\hat\alpha(p)=\left(\begin{matrix}\alpha_1(p)&& \\
&\ddots&\\&&\alpha_{n_p}(p)\end{matrix}\right)
$$
As before, $0\leq \alpha_1(p)< \cdots< \alpha_{n_p}(p)<1$, and in the
matrix above each $\alpha_j(p)$ is repeated $m_j(p)$ times. 
The frame $\{e_i(p)\}$ gives an identification of the restriction of the
bundle $\glfrak_E$ to $C(p)$ with $\glfrak_n$. The 
collections $\{\alpha(p), m(p)\}$ define  parabolic,  Levi, and unipotent
subalgebras $\pfrak_p,\lfrak_p,\ufrak_p \subset\glfrak_n$, 
respectively.

We denote by $\nabla_0$ the covariant derivative on $E$-valued
tensors obtained from the Levi-Civita connection on $X^\times$ and $A_0$. 
For $\delta\in \RBbb$, define the weighted Sobolev spaces
$L^p_{k,\delta}$ of sections of $E$ (and associated bundles)
by completing the space $C^\infty_0(E)$ of smooth compactly  supported
sections on $X^\times$ in the norm
$$
\Vert
\sigma\Vert_{L^p_{k,\delta}}=\left\{\int_{X^\times}d\mu\,
e^{\tau\delta}\left(
|\nabla_0^{(k)}\sigma|^p+\cdots+|\nabla_0\sigma|^p+|\sigma|^p\right)
\right\}^{1/p}
$$
where norms $|\cdot|$ will be understood  to be taken with respect to the
background hermitian structure on $E$ and the conformal metric on $X$. 
Weighted Sobolev spaces have the usual embedding and
multiplication properties (cf.\ \cite[Lemma 7.2]{LockhartMcOwen:85}). 

Define the space of unitary connections:
\begin{equation} \label{eqn:connection-space}
\Acal_{\delta}=d_{A_0}+ L^2_{1,\delta}\left(\sufrak_E\otimes T^\ast X^\times\right)
\end{equation}
The complexification of this bundle splits
$$(\sufrak_E\otimes T^\ast X)\otimes \CBbb\simeq (\slfrak_E\otimes K_X)\oplus
(\slfrak_E\otimes\overline K_X)$$
and for $A\in \Acal_\delta$, 
the decomposition into $(1,0)$ and $(0,1)$ type 
will be denoted: $d_A=\partial_A+\dbar_A$. 
Formal $L^2$-adjoints are denoted $d_A^\ast$, $\partial_A^\ast$, and $\dbar_A^\ast$, 
and we have the K\"ahler identities:
\begin{equation} \label{eqn:kahler}
    \dbar_A^\ast=-i[\Lambda,\partial_A]\quad ,\quad
    \partial_A^\ast=+i[\Lambda,\dbar_A] \ .
\end{equation}
On $\Delta^\times(p)$, a holomorphic frame for $\dbar_{A_0}$ is given
by $s_i=|z|^{\alpha_k(p)}e_i$, $k=1,\ldots, n_p$,  for 
$$
\sum_{j\leq k-1} m_j(p) < i\leq\sum_{j\leq k} m_j(p) 
$$
(we have set $m_0(p)=0$).

Consider the following spaces.
\begin{align*}
    \Rcal_{\delta}&=\left\{ \eta\in L^2_{2,loc.}(\glfrak_E)
    \mid \Vert d_{A_0}\eta\Vert_{L^2_{1,\delta}}<+\infty\right\}\\
    \Hcal_{\delta}&=\left\{ \eta\in \Rcal_{\delta} \mid
d_{A_0}^\ast(e^{\tau\delta}d_{A_0}\eta)=0\right\}
\end{align*}
We refer to $\Hcal_{\delta}$ as infinitesimal \emph{harmonic} gauge
transformations.
Let $\displaystyle\nabla_{\partial_p}=\frac{d}{d\theta}+i\hat\alpha(p)$ denote the
boundary operator (i.e.\ the restriction of $d_{A_0}$) acting on sections of $E$ (and the associated bundle $\glfrak_E$)
over the component of $\partial X_0$ intersecting $C(p)$. 
\begin{Lemma} \label{lem:boundary-kernel}
The kernel $\ker\nabla_{\partial_p}$ consists of the constant sections in
    $\lfrak_p\subset\glfrak_E$.
\end{Lemma}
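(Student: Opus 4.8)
The plan is to reduce the statement to a constant-coefficient linear ODE on the circle and read off the kernel explicitly. Since $d_{A_0}$ acts on sections of $\glfrak_E$ by the adjoint action, over the boundary circle of $C(p)$ the operator is $\nabla_{\partial_p}\eta=\frac{d\eta}{d\theta}+i[\hat\alpha(p),\eta]$, a first-order linear system with constant coefficients. Hence every solution of $\nabla_{\partial_p}\eta=0$ has the form $\eta(\theta)=\exp(-i\theta\,\ad\hat\alpha(p))\,\eta_0$ with $\eta_0=\eta(0)\in\glfrak_E$, and the only further constraint is single-valuedness on $S^1$, namely $\exp(-2\pi i\,\ad\hat\alpha(p))\,\eta_0=\eta_0$.

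Next I would diagonalize $\ad\hat\alpha(p)$. In the chosen unitary frame $\{e_i(p)\}$ the matrix $\hat\alpha(p)$ is diagonal with entries $\alpha_1(p)\le\cdots\le\alpha_{n_p}(p)$ (each $\alpha_j(p)$ occurring $m_j(p)$ times), so on an elementary matrix $E_{ij}$ one has $\ad\hat\alpha(p)\,E_{ij}=(\alpha_{(i)}-\alpha_{(j)})E_{ij}$, where $\alpha_{(i)}$ is the weight attached to the $i$-th basis vector. The single-valuedness condition therefore forces the $(i,j)$ component of $\eta_0$ to vanish unless $\alpha_{(i)}-\alpha_{(j)}\in\Z$. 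Because $0\le\alpha_1(p)<\cdots<\alpha_{n_p}(p)<1$, the difference of two weights is strictly between $-1$ and $1$, so $\alpha_{(i)}-\alpha_{(j)}\in\Z$ if and only if $\alpha_{(i)}=\alpha_{(j)}$. Thus $\eta_0$ is block-diagonal with respect to the decomposition of $\C^n$ into the eigenspaces of $\hat\alpha(p)$, which is precisely the statement $\eta_0\in\lfrak_p$.

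Finally, once $\eta_0\in\lfrak_p$ one has $[\hat\alpha(p),\eta_0]=0$ (on each eigenblock $\hat\alpha(p)$ is scalar), so $\exp(-i\theta\,\ad\hat\alpha(p))\,\eta_0=\eta_0$ and the solution $\eta(\theta)\equiv\eta_0$ is constant; conversely every constant section valued in $\lfrak_p$ clearly lies in the kernel. This identifies $\ker\nabla_{\partial_p}$ with the constant sections valued in $\lfrak_p$.

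I do not expect any real obstacle: this is an elementary computation. The one point that genuinely uses a hypothesis is the normalization $0\le\alpha_j(p)<1$ of the parabolic weights, which is exactly what rules out accidental integer differences between distinct weights and makes the kernel land in the Levi subalgebra rather than in a larger subalgebra. It is worth noting in passing that the same argument, without the commutator, gives the analogous fact for $E$ itself (the kernel is the eigenspace of $\hat\alpha(p)$ on which the weight is an integer, i.e. the $\alpha_1(p)=0$ eigenspace or $\{0\}$), but the version recorded here for $\glfrak_E$ is the one needed in the sequel.
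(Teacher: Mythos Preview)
Your argument is correct and essentially the same as the paper's: the paper expands $\psi$ in Fourier modes $\psi(\theta)=\sum_m\psi_m e^{im\theta}$ and obtains $(m+\alpha_j(p)-\alpha_k(p))(\psi_m)_{jk}=0$, while you solve the constant-coefficient ODE by exponentiation and impose periodicity, but both routes immediately reduce to the same eigenvalue condition for $\ad\hat\alpha(p)$ together with the normalization $0\le\alpha_j(p)<1$.
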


\begin{proof}
The Lie algebra $\lfrak_p$ may be identified with the
centralizer in $\glfrak_n$ of $\hat\alpha(p)$.  If 
$\psi\in \ker\nabla_{\partial_p}$, then 
 write $\psi(\theta)=\sum_{m\in \ZBbb}\psi_m e^{im\theta}$, 
 where $\psi_m\in \glfrak_n$. It follows that
    $m\psi_m+[\widehat\alpha(p), \psi_m]=0$,
 or in terms of the unitary frame $\{e_i\}$, 
    $(m+\alpha_j(p)-\alpha_k(p))(\psi_m)_{jk}=0$. 
 This implies $\psi_m=0$ if $m\neq 0$, i.e.\ $\psi(\theta)$ is constant, and it 
 commutes with $\widehat\alpha(p)$. 
\end{proof}

\begin{Remark} \label{rem:spectrum}
 More generally,  the spectrum  of $\nabla_{\partial_p}$ acting on sections
    of the bundle $\glfrak_E$ restricted to the boundary consists of 
    $$\{ \sqrt{-1}\lambda^m_{ij}\mid
    \lambda^m_{ij}=m+\alpha_i(p)-\alpha_j(p) \ ,\  m\in \ZBbb\}\ .$$ 
In particular, the spectrum is symmetric about the origin.
\end{Remark}

 Let $\lambda(p)$ denote the smallest (positive) nonzero eigenvalue
 of $\nabla_{\partial_p}$.
 Explicitly, 
\begin{align*} 
    \lambda(p) &=\min \left\{ |\lambda^m_{ij}|\mid m,i,j,\
    \lambda^m_{ij}\neq 0\right\} \\
    &=\min\left\{ \{|\alpha_i(p)-\alpha_j(p)|\mid  \alpha_i(p)\neq\alpha_j(p) \}
, \{ 1-|\alpha_i(p)-\alpha_j(p)|\mid i,j=1,\ldots,n \}\right\}
\end{align*}
 We shall  choose
$\delta$ to satisfy
 \begin{equation} \label{eqn:delta-assumption}
     0<\delta< \min_{p\in D} \lambda(p)
 \end{equation}
 This is the  key assumption made  throughout this section.
The definition of gauge groups relies on the following result.

\begin{Proposition} \label{prop:boundary-map}
There is a direct sum decomposition 
$
    \Rcal_{\delta}=L^2_{2,\delta}(\glfrak_E)\oplus
    \Hcal_{\delta}
$.
Moreover, there is a continuous map 
    $$\bbold:\Rcal_{\delta}\lra
\ker\nabla_\partial : \eta\mapsto\lim_{\tau\to
+\infty}\eta(\tau,\theta)$$ 
The map $\bbold$ satisfies
$\bbold^{-1}(0)=L^2_{2,\delta}(\glfrak_E)$, and
    the restriction  $\bbold:\Hcal_{\delta}\to
\ker\nabla_\partial$ is an isomorphism. 
\end{Proposition}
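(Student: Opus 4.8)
The plan is to analyze the ODE defining $\Rcal_\delta$ near each puncture via a Fourier decomposition on the cylindrical end, using the spectral description of $\nabla_{\partial_p}$ from Lemma \ref{lem:boundary-kernel} and Remark \ref{rem:spectrum}, and then to patch local analysis with a compact-region estimate. The key point is that on each cylinder $C(p)$, writing $\eta(\tau,\theta)=\sum_m \eta_m(\tau)e^{im\theta}$ with values in $\glfrak_n$, the equation $d_{A_0}^\ast(e^{\tau\delta}d_{A_0}\eta)=0$ becomes, mode by mode, a second-order constant-coefficient ODE in $\tau$ whose indicial roots are $\tfrac{-\delta\pm\sqrt{\delta^2+4(\lambda^m_{ij})^2}}{2}$ for the $(i,j)$ matrix entry in Fourier mode $m$ (up to conventions). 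For $\lambda^m_{ij}\neq 0$ both roots are nonzero; the assumption \eqref{eqn:delta-assumption} that $0<\delta<\min_p\lambda(p)$ guarantees that for every nonzero $\lambda^m_{ij}$ one indicial root lies below $-\delta$ and the other strictly above $0$, so that the only solutions with finite $L^2_{1,\delta}$ norm of $d_{A_0}\eta$ are the ones whose asymptotics are controlled. For the zero modes $\lambda^m_{ij}=0$, i.e. the $\lfrak_p$-component in Fourier mode $0$, the ODE is $(e^{\tau\delta}u')'=0$, whose solutions are $u=c_1+c_2 e^{-\tau\delta}$; exactly these contribute a nonzero limit as $\tau\to+\infty$, namely the constant $c_1\in\lfrak_p=\ker\nabla_{\partial_p}$.

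Concretely I would proceed as follows. \textbf{Step 1:} On a single cylinder, perform the separation of variables and solve the indicial equation, isolating for each Fourier/matrix mode the two-dimensional solution space and identifying which one-dimensional subspace is admissible (finite weighted norm of the gradient together with $L^2_{2,loc}$ regularity). Record that the limit $\lim_{\tau\to+\infty}\eta(\tau,\theta)$ exists and lies in $\ker\nabla_{\partial_p}$, with the map $\eta\mapsto$ (this limit) continuous because the admissible solution in the nonzero modes decays while the $c_1 e^{0\cdot\tau}$ and $c_2 e^{-\tau\delta}$ terms in the zero mode converge and depend continuously on the data (e.g. via a trace/restriction estimate on a slice $\{\tau=\text{const}\}$ plus exponential convergence). \textbf{Step 2:} Globalize: given $\eta\in\Rcal_\delta$, elliptic regularity for $d_{A_0}^\ast d_{A_0}$ on the compact part $X_0$ together with the cylinder analysis gives the decomposition $\eta = \eta_{L^2_{2,\delta}} + \eta_{\mathrm{harm}}$. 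The summand $\eta_{\mathrm{harm}}\in\Hcal_\delta$ is constructed by solving $d_{A_0}^\ast(e^{\tau\delta}d_{A_0}\xi)=0$ with the prescribed boundary limit in $\ker\nabla_\partial$; existence and uniqueness is a Fredholm/variational statement for the weighted Laplacian $\xi\mapsto e^{-\tau\delta}d_{A_0}^\ast(e^{\tau\delta}d_{A_0}\xi)$, which under \eqref{eqn:delta-assumption} has closed range and the expected index, with kernel the constants in $\lfrak_p$ summed over $p$ (this is exactly $\Hcal_\delta$, giving the isomorphism $\bbold\colon\Hcal_\delta\xrightarrow{\sim}\ker\nabla_\partial=\bigoplus_{p}\lfrak_p$). \textbf{Step 3:} Check $\bbold^{-1}(0)=L^2_{2,\delta}(\glfrak_E)$: if $\eta\in\Rcal_\delta$ has zero limit on every cylinder, then in the zero mode $c_1=0$, so $\eta$ decays like $e^{-\tau\delta'}$ for some $\delta'$ with $\delta<\delta'$, hence $\eta\in L^2_{2,\delta}$; conversely $L^2_{2,\delta}$ sections clearly have vanishing limit. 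The directness of the sum $\Rcal_\delta = L^2_{2,\delta}\oplus\Hcal_\delta$ follows since $\Hcal_\delta\cap L^2_{2,\delta}=0$ (a harmonic gauge transformation in $L^2_{2,\delta}$ has zero boundary limit, hence is $0$ by the injectivity part) and any $\eta$ can be written as $(\eta-\xi)+\xi$ with $\xi=\bbold|_{\Hcal_\delta}^{-1}(\bbold(\eta))$.

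\textbf{Main obstacle.} The routine ODE bookkeeping is not the difficulty; the crux is the functional-analytic input: showing that the weighted operator $\Delta_\delta \xi = e^{-\tau\delta} d_{A_0}^\ast(e^{\tau\delta}d_{A_0}\xi)$ is Fredholm between the right weighted Sobolev spaces with precisely the index and kernel claimed, so that the boundary-value problem in Step 2 is solvable with the expected cokernel. This is exactly where the Lockhart--McOwen theory for operators on manifolds with cylindrical ends enters, and where the hypothesis $0<\delta<\min_p\lambda(p)$ is essential — it ensures $\delta$ avoids the set of "critical weights" (values of $\mathrm{Re}$ of the indicial roots), so that the relevant operator is Fredholm and the jump in kernel/cokernel dimensions as one crosses weights is accounted for entirely by the zero-mode $\lfrak_p$ solutions. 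I would cite \cite{LockhartMcOwen:85} for the abstract Fredholm package and do the rest by the explicit cylinder computation above; some care is also needed to confirm that the continuity of $\bbold$ holds in the $\Rcal_\delta$-topology (defined via $\Vert d_{A_0}\eta\Vert_{L^2_{1,\delta}}$ plus $L^2_{2,loc}$), which again reduces to exponential-convergence estimates on the cylinders.
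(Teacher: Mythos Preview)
Your overall strategy---Fourier/ODE analysis on the cylinders for the weighted-harmonic equation, with Lockhart--McOwen providing the Fredholm input---is the same as the paper's, and your indicial-root computation and identification of the zero mode with $\lfrak_p$ match exactly.

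There is, however, a genuine gap in how you pass from $\Hcal_\delta$ to all of $\Rcal_\delta$. The ODE solution $u=c_1+c_2e^{-\tau\delta}$ for the zero mode (and the corresponding exponentials for nonzero modes) applies only to \emph{harmonic} elements. A general $\eta\in\Rcal_\delta$ satisfies no equation on the cylinder, so in Step~3 you cannot conclude from ``$c_1=0$'' that $\eta$ decays like $e^{-\tau\delta'}$ with $\delta'>\delta$; indeed, for the zero-mode component $\eta_0$ one only knows $\eta_0'\in L^2_{1,\delta}$, and the naive Cauchy estimate gives decay like $e^{-\tau\delta/2}$, which is \emph{not} enough to put $\eta_0$ in $L^2_\delta$. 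The same circularity infects Step~2: you speak of constructing $\eta_{\mathrm{harm}}$ with ``the prescribed boundary limit'' of $\eta$, but $\bbold(\eta)$ has not yet been defined for non-harmonic $\eta$.

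The paper resolves this by an intermediate reduction you are missing: for general $\eta\in\Rcal_\delta$, it applies the Hodge decomposition for $\dbar_{A_0}$ (closed range via Lockhart--McOwen) to write $\dbar_{A_0}\eta=\beta+\dbar_{A_0}u$ with $\beta$ harmonic for the weighted adjoint and $u\in L^2_{1,\delta}$. Then $\eta-u$ satisfies $\dbar_{A_0}^\ast(e^{\tau\delta}\dbar_{A_0}(\eta-u))=0$, so the cylinder ODE \emph{does} apply to $\eta-u$, and one defines $\bbold(\eta):=\lim_{\tau\to\infty}(\eta-u)$. For the decomposition $\Rcal_\delta=L^2_{2,\delta}\oplus\Hcal_\delta$ the paper does not invoke the index of the second-order operator but instead runs a direct minimization: minimize $\Vert d_{A_0}(\eta+v)\Vert_{L^2_{1,\delta}}$ over $v\in L^2_{2,\delta}$, using a Poincar\'e inequality $\Vert v\Vert_{L^2_\delta}\leq C\Vert d_{A_0}v\Vert_{L^2_\delta}$ (proved by a cut-off plus compactness argument) to extract a weak limit. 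Your proposed Fredholm route for $\Delta_\delta$ could be made to work, but showing that $\Delta_\delta\eta$ lies in the range of $\Delta_\delta\vert_{L^2_{2,\delta}}$ again requires controlling boundary terms in an integration by parts---which brings you back to needing $\bbold(\eta)$ first.
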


\begin{proof}
 We first define  
 the isomorphism 
 \begin{equation}\label{eqn:b}
     \bbold: \Hcal_{\delta}\isorightarrow \bigoplus_{p\in D} \lfrak_p
 \end{equation}
    Let $\eta\in \Hcal_{\delta}$ and consider its restriction to $C(p)$.
 Write $\eta$ locally as $\sum_{\lambda}
    f_\lambda(\tau)\psi_\lambda(\theta)$, where $\psi_\lambda$ is an
    eigenfunction of the boundary operator with eigenvalue
    $\sqrt{-1}\lambda$, $\lambda\in \RBbb$ (we suppress the notation for
    multiplicities in the eigenvalues). Then harmonicity of $\eta$ implies
 $$
(e^{\tau\delta}f'_\lambda)'=\lambda^2 e^{\tau\delta}f_\lambda\ .
 $$
    The general solution is:
$$f_\lambda =
    c_\lambda^\pm\exp\left(\frac{\tau}{2}(-\delta\pm\sqrt{4\lambda^2+\delta^2})\right)\
    ,\ c^\pm_\lambda \text{ constant.}
    $$
 Now since $d_{A_0}\eta\in L^2_\delta$, we have $f'_\lambda\in
L^2_\delta$. 
This means that $c^+_\lambda=0$, unless $\lambda=0$. Hence, we may define 
$$
\bbold(\eta):=c_0^+=
\lim_{\tau\to +\infty}\eta(\tau,\theta)
$$
which exists on
each $C(p)$ and lies in $\lfrak_p$. 
 Notice that $e^{\tau\delta}d_{A_0}\eta$ is also  bounded. If $\eta\in
    \ker \bbold$ we can integrate by parts
 $$
    0=\langle \eta, d^\ast_{A_0}(e^{\tau\delta}d_{A_0}\eta)\rangle_{L^2}
=\Vert d_{A_0}\eta\Vert_{L^2_\delta}^2
 $$
    where $d_{A_0}^\ast$ denotes the formal $L^2$ adjoint of $d_{A_0}$, 
 so $\eta$ is covariantly constant. But since it also vanishes on the boundary
    it must vanish identically. Therefore, $\bbold$ is injective. 

    Now suppose $\eta\in \Rcal_{\delta}$. 
The operator $\dbar_{A_0}$ 
    has closed range (cf.\ \cite[Thm.\ 1.3]{LockhartMcOwen:85}, and note the restriction
    \eqref{eqn:delta-assumption}), and so
we may apply the Hodge theorem to write:
$$
\dbar_{A_0}\eta=\beta+ \dbar_{A_0}u 
$$
where $\beta$ is harmonic with respect to the weighted  adjoint:
$\dbar_{A_0}^\ast(e^{\tau\delta}\beta)=0$,
and $u\in L^2_{1,\delta}$. 
Then 
$\dbar_{A_0}^\ast(e^{\tau\delta}\dbar_{A_0}(\eta-u))
=0$.
Since the curvature $F_{A_0}=0$ on the cylinders, 
    a comparison of the Laplacians for $\dbar_{A_0}$ and $d_{A_0}$
    on the cylinder involves only the boundary
operator. 
An argument like the one above then  says that $\eta-u$ has a well-defined
limit as $\tau\to +\infty$.  Hence, $\bbold(\eta)$ is well defined on all of
    $\Rcal_{\delta}$ by setting 
$$
\bbold(\eta):=
\lim_{\tau\to +\infty}(\eta-u)(\tau,\theta)
$$
This agrees with the previous definition in the case $\eta\in
\Hcal_{\delta}$, since then $\dbar_{A_0}u=0$, and  it is
straightforward to show that  $\lim_{\tau\to
+\infty}u(\tau,\theta)=0$.
If $c\in \lfrak_p$, then let $\eta$ be a smooth section of $\glfrak_E$ that
is constant $=c$ on $C(p)$, and vanishes on $C(p')$ for all $p'\neq p$.
Since $d_{A_0}\eta$ is then compactly supported, it follows that $\eta\in
\Rcal_{\delta}$. Since $u\in L^2_{1,\delta}$, it is easy to show that
$\bbold(\eta)=c$,  and
this proves surjectivity of $\bbold$. 
Now if $\eta\in \Rcal_{\delta}\cap\ker\bbold$, then $\eta\in L^2_{2,\delta}$.
It is also easy to show that  elements $\eta\in L^2_{2,\delta}$ are
continuous and $\displaystyle\lim_{\tau\to+\infty}e^{\tau\delta}\eta(\tau,\theta)=0$ (cf.\ \cite[p.\
222, \emph{Remarque}]{Biquard:91}).  
Hence, the kernel of $\bbold$ is exactly $L^2_{2,\delta}$. 

Finally, we claim that for any $\eta\in \Rcal_{\delta}$  there is
$u\in L^2_{2,\delta}$ such that $\eta+u\in \Hcal_{\delta}$. To
show this, let us first note the Poincar\'e inequality: there is
a constant $C>0$ so that
\begin{equation} \label{eqn:poincare}
\Vert u\Vert_{L^2_\delta}\leq C\Vert d_{A_0} u\Vert_{L^2_\delta}
\end{equation}
for all $u\in L^2_{1,\delta}$. For if such an inequality did not
hold, then 
 we could find a sequence $u_j\in L^2_{1,\delta}$
with $\Vert u_j\Vert_{L^2_\delta}=1$, and $\Vert d_{A_0}u_j\Vert_{L^2_\delta}\to 0$. 
Notice that this implies that  $\Vert u_j\Vert_{L^2_{1,\delta}}$ is
uniformly bounded. We may therefore assume 
that $u_j\to u$ weakly in $L^2_{1, \delta}$. 

We shall use the following several times:
Fix a smooth function $\phi$ on $\RBbb$ such that
$$
\phi(x)=\begin{cases} 0 & x\leq 0 \\ 1 & x\geq 1
\end{cases}
$$
    For $R\geq 0$,  define the cut-off function $\phi_R\in C^\infty(X^\times)$ by
    \begin{equation} \label{eqn:phiR}
    \phi_R(z):=\phi(\tau(z)-R)
    \end{equation}

    For $\varepsilon>0$,
we may choose $R$ sufficiently large  so that $\Vert
(d\phi_R)u\Vert_{L^2_\delta}\leq \varepsilon$.
By the compact embedding $L^2_1\hookrightarrow L^4$ on the
complement of the cylinders, we may assume 
$(1-\phi_R)u_j\to (1-\phi_R)u$ strongly in $L^2_\delta$.
We also have
$$
\Vert d_{A_0}(\phi_R u_j)\Vert_{L^2_{\delta}}\lra \Vert
(d\phi_R)u\Vert_{L^2_\delta}\ .
$$
Now the inequality \eqref{eqn:poincare} holds on
each $C(p)$ by an integration by parts argument (cf.\ \cite[Th\'eor\`eme
1.2]{Biquard:91}). 
Applied to $\phi_R u_j$,  we see that if $\varepsilon$ is chosen sufficiently
small, and $R$ accordingly, then $\Vert\phi_R u_j\Vert_{L^2_\delta}\leq 1/2$
for large $j$, and
therefore $\Vert (1-\phi_R)u\Vert\geq 1/2$.
This now is a contradiction, because we must have
$d_{A_0}u=0$, so $|u|$ is constant in $L^2_\delta$, and therefore zero. 

With this understood, choose $u_j\in L^2_{2,\delta}$
so that
$$
\Vert d_{A_0}(\eta+u_j)\Vert_{L^2_{1,\delta}}\lra L:= \inf_{v\in L^2_{2,\delta}}
\Vert d_{A_0}(\eta+v)\Vert_{L^2_{1,\delta}}
$$
Using \eqref{eqn:poincare}, we have a uniform bound on $\Vert
u_j\Vert_{L^2_{2,\delta}}$.  We can therefore extract a 
subsequence (denoted the same) so that $u_j\to u$ weakly in
$L^2_{2,\delta}$. 
Let $\widetilde \eta=\eta+u$.  
Then $\Vert d_{A_0}(\widetilde\eta)\Vert_{L^2_{1,\delta}}=L$. 
In particular, $\langle d_{A_0}\widetilde \eta,
d_{A_0}v\rangle_{L^2_{1,\delta}}=0$ for all smooth, compactly
supported $v$.
It follows that $\widetilde\eta\in \Hcal_{\delta}$. 
This completes the proof of  Proposition \ref{prop:boundary-map}.
\end{proof}

Later on we will need  the following remark. Clearly, the definition of
the boundary map $\bbold$ is valid for any $\eta\in L^2_{2,loc.}$,
$d_{A_0}\eta\in L^2_{1,\delta}$, that is harmonic on each portion of
$C(p)$ where $\tau>R$, for some $R\geq 0$. 
Then elements  $\ell\in \lfrak_p$ may be extended to smooth sections of
$\glfrak_E$ as $\phi_R\cdot\ell$. 
Note that $\bigoplus_{p\in D}\lfrak_p$ has an
invariant metric. The next result is
straightforward from the eigensection expansion used above. We omit the proof.

\begin{Lemma} \label{lem:delta-minus-delta}
    Suppose
    $\eta\in L^2_{2,loc.}(\glfrak_E)$,
$d_{A_0}\eta\in L^2_{1,\delta}(\glfrak_E)$, is harmonic on each portion of
    $C(p)$ where $\tau>R$, for some $R\geq 0$. Then there is a constant
    $C(R)$ depending upon $R$ but not $\eta$, so that:
\begin{enumerate}
    \item $|\bbold(\eta)|\leq C(R)\Vert\eta\Vert_{L^2_{-\delta}}$;
    \item $\Vert \eta-\phi_R\cdot \bbold(\eta)\Vert_{L^2_\delta}\leq
        C(R)\Vert \eta\Vert_{L^2_{-\delta}}$.
\end{enumerate}
\end{Lemma}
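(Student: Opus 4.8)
\emph{Proof strategy.} The plan is to reduce both inequalities to an analysis of $\eta$ on the ends $\{\tau>R\}\subset C(p)$, using the eigensection expansion already exploited in the proof of Proposition~\ref{prop:boundary-map}. On such an end $\eta$ is harmonic, so by elliptic regularity and separation of variables $\eta=\sum_\lambda f_\lambda(\tau)\psi_\lambda(\theta)$, where $\psi_\lambda$ ranges over eigensections of $\nabla_{\partial_p}$ of eigenvalue $\sqrt{-1}\lambda$, and the weighted harmonicity equation forces $(e^{\tau\delta}f_\lambda')'=\lambda^2 e^{\tau\delta}f_\lambda$. Exactly as in that proof, the hypothesis $d_{A_0}\eta\in L^2_\delta$ kills the growing exponential in each mode, leaving, on $\{\tau>R\}\cap C(p)$,
\[
\eta \;=\; \underbrace{c_0^+(p)+c_0^-(p)\,e^{-\tau\delta}}_{\in\,\lfrak_p}\;+\;\sum_{\lambda\neq 0} c^-_\lambda(p)\,e^{-\gamma_\lambda\tau}\,\psi_\lambda(\theta),\qquad \gamma_\lambda=\tfrac12\!\left(\delta+\sqrt{4\lambda^2+\delta^2}\,\right),
\]
with $c_0^\pm(p)\in\lfrak_p=\ker\nabla_{\partial_p}$ (Lemma~\ref{lem:boundary-kernel}) and $\bbold(\eta)_p=c_0^+(p)=\lim_{\tau\to\infty}\eta(\tau,\theta)$. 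By Remark~\ref{rem:spectrum} every nonzero $\lambda$ satisfies $\gamma_\lambda>\delta$, while the surviving $\lambda=0$ term decays at the borderline rate $\delta$; in all cases $2\gamma_\lambda-\delta\ge\delta>0$. Writing $X_R$ for the compact complement of the ends $\{\tau>R\}$, the weights $e^{\pm\tau\delta}$ are mutually comparable on $X_R$ up to a constant depending only on $R$, so the contributions of $X_R$ to all the norms below control one another and are harmless.

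For (1) I would use that $\nabla_{\partial_p}$ is skew-adjoint, so the $\lfrak_p$-valued constant $c_0^+(p)+c_0^-(p)e^{-\tau\delta}$ is $L^2$-orthogonal on the circle to each $\psi_\lambda$, $\lambda\neq 0$; hence $\int_0^{2\pi}|\eta(\tau,\theta)|^2\,d\theta\ge 2\pi\,|c_0^+(p)+c_0^-(p)e^{-\tau\delta}|^2$. Multiplying by $e^{-\tau\delta}$, integrating over $\tau>R$, and summing over $p$ gives
\[
\|\eta\|_{L^2_{-\delta}}^2 \;\ge\; \sum_{p\in D} 2\pi\int_R^{\infty} e^{-\tau\delta}\,\big|c_0^+(p)+c_0^-(p)e^{-\tau\delta}\big|^2\,d\tau .
\]
For each $p$ the right-hand integral is, over an orthonormal basis of $\lfrak_p$, a direct sum of copies of the scalar quadratic form in $(c_0^+(p),c_0^-(p))$ with matrix $\tfrac1\delta\smtrx{e^{-R\delta} & \tfrac12 e^{-2R\delta}\\ \tfrac12 e^{-2R\delta} & \tfrac13 e^{-3R\delta}}$, which has positive trace and determinant $\tfrac{1}{12\delta^2}e^{-4R\delta}>0$, hence a smallest eigenvalue $c(R)>0$ depending only on $R$ and $\delta$. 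Therefore $\|\eta\|_{L^2_{-\delta}}^2\ge c(R)\sum_p|c_0^+(p)|^2=c(R)\,|\bbold(\eta)|^2$, which is (1).

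For (2) I would split $\eta-\phi_R\cdot\bbold(\eta)$ into its part on $X_{R+1}$ together with the transition annulus $\{R<\tau<R+1\}$, and its part on the tails $\{\tau>R+1\}$. On the former $\tau$ is bounded, so $\|\eta-\phi_R\bbold(\eta)\|_{L^2_\delta}\le C(R)\|\eta-\phi_R\bbold(\eta)\|_{L^2_{-\delta}}\le C(R)\big(\|\eta\|_{L^2_{-\delta}}+|\bbold(\eta)|\big)$, and (1) disposes of the last term. On each tail $\{\tau>R+1\}\cap C(p)$, where $\phi_R\equiv1$, one has $\eta-\bbold(\eta)=\sum_{\mu}c^-_\mu e^{-\gamma_\mu\tau}\psi_\mu$ over the decaying modes ($\gamma_\mu\ge\delta$), so by Parseval the $L^2_\delta$-mass there is $\sum_\mu\|\psi_\mu\|^2|c^-_\mu|^2\int_{R+1}^\infty e^{(\delta-2\gamma_\mu)\tau}\,d\tau$, which I would compare mode by mode with the $L^2_{-\delta}$-mass on the annulus, namely $\sum_\mu\|\psi_\mu\|^2|c^-_\mu|^2\int_R^{R+1}e^{-(\delta+2\gamma_\mu)\tau}\,d\tau$. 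The crux is that
\[
\frac{\int_{R+1}^{\infty} e^{(\delta-2\gamma_\mu)\tau}\,d\tau}{\int_R^{R+1} e^{-(\delta+2\gamma_\mu)\tau}\,d\tau}\;=\;\frac{2\gamma_\mu+\delta}{2\gamma_\mu-\delta}\cdot\frac{e^{2\delta R+\delta}\,e^{-2\gamma_\mu}}{1-e^{-(2\gamma_\mu+\delta)}}\;\le\;C(R)
\]
\emph{uniformly in the mode}, since $2\gamma_\mu-\delta\ge\delta$ bounds the first factor, $e^{-2\gamma_\mu}\le e^{-2\delta}$ the numerator of the second, and $2\gamma_\mu+\delta\ge3\delta$ the denominator. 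Summing the resulting bound over modes and over $p$, then bounding $\|\eta-\bbold(\eta)\|_{L^2_{-\delta}(R<\tau<R+1)}$ by $\|\eta\|_{L^2_{-\delta}}+C(R)|\bbold(\eta)|$ and invoking (1) once more, yields (2).

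The one place I expect genuine (if mild) work is exactly this uniformity-in-the-mode point: one must check that the per-mode comparison constants do not degenerate as $\gamma_\mu\to\infty$ — they do not, thanks to the borderline gap $2\gamma_\mu-\delta\ge\delta$ together with the extra factor $e^{-2\gamma_\mu}$ — and, correspondingly, that the $2\times2$ form in (1) stays uniformly positive for fixed $R$, which holds since its determinant is bounded below. Everything else is routine bookkeeping with the weighted norms and the eigensection expansion already established.
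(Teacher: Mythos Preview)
Your proposal is correct and follows precisely the route the paper indicates: the authors omit the proof entirely, remarking only that the result ``is straightforward from the eigensection expansion used above,'' and your argument is exactly that expansion carried out in detail. The mode-by-mode comparison and the positive-definiteness of the $2\times 2$ form are the right mechanisms, and your check that the per-mode ratio stays bounded as $\gamma_\mu\to\infty$ (via $2\gamma_\mu-\delta\ge\delta$ and the $e^{-2\gamma_\mu}$ factor) is the only nonobvious point, handled correctly.
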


The following is  also clear.
\begin{Lemma}
The norm
$$
\Vert
    \eta\Vert^2_{\Rcal_{\delta}}=\Vert d_{A_0}\eta\Vert^2_{L^2_{1,\delta}}+
 |\bbold(\eta)|^2
$$
    gives $\Rcal_{\delta}$ a Banach space structure for which the
    projections onto $L^2_{2,\delta}(\glfrak_E)$ and $\Hcal_{\delta}$ 
are continuous. Moreover, pointwise multiplication
    $\Rcal_{\delta}\times\Rcal_{\delta}\to\Rcal_{\delta}$ is
    well-defined and continuous, and $\bbold:\Rcal_{\delta}\to
\ker\nabla_{\partial}$ is a continuous linear map. 
\end{Lemma}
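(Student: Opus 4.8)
The plan is to reduce the statement to Proposition~\ref{prop:boundary-map}, an elliptic estimate on the factor $L^2_{2,\delta}(\glfrak_E)$, the finite dimensionality of $\Hcal_\delta$, and the weighted Sobolev multiplication properties. First I would verify that $\|\cdot\|_{\Rcal_\delta}$ is genuinely a norm: if it vanishes then $d_{A_0}\eta=0$, so $|\eta|$ is constant on the connected surface $X^\times$, while $\lim_{\tau\to+\infty}|\eta(\tau,\theta)|=|\bbold(\eta)|=0$ on each $C(p)$, forcing $\eta\equiv 0$. By Proposition~\ref{prop:boundary-map} there is a direct sum $\Rcal_\delta=L^2_{2,\delta}(\glfrak_E)\oplus\Hcal_\delta$ with $L^2_{2,\delta}(\glfrak_E)=\bbold^{-1}(0)$ and $\bbold\colon\Hcal_\delta\isorightarrow\ker\nabla_\partial$ an isomorphism. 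On $L^2_{2,\delta}(\glfrak_E)$ the Poincar\'e inequality~\eqref{eqn:poincare}, applied to $\eta$ and combined with the elliptic estimate for $d_{A_0}$ (valid because $\delta$ avoids the indicial roots by~\eqref{eqn:delta-assumption}), gives $\|\eta\|_{L^2_{2,\delta}}\lesssim\|d_{A_0}\eta\|_{L^2_{1,\delta}}$; since $\bbold$ vanishes there, $\|\cdot\|_{\Rcal_\delta}$ is equivalent to $\|\cdot\|_{L^2_{2,\delta}}$ on this factor, which is therefore complete. The factor $\Hcal_\delta$ is finite dimensional, hence complete in any norm, and on it $\|\cdot\|_{\Rcal_\delta}$ is equivalent to $\eta\mapsto|\bbold(\eta)|$. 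Since $|\bbold(\eta)|\le\|\eta\|_{\Rcal_\delta}$ by definition, the projection onto $\Hcal_\delta$, namely $\eta\mapsto(\bbold|_{\Hcal_\delta})^{-1}\bbold(\eta)$ (which equals the $\Hcal_\delta$-component because $\bbold$ kills $L^2_{2,\delta}(\glfrak_E)$), is bounded, and so is the complementary projection. A Cauchy sequence in $\Rcal_\delta$ then splits into Cauchy sequences in the two complete factors, so $\Rcal_\delta$ is complete. Continuity of $\bbold\colon\Rcal_\delta\to\ker\nabla_\partial$ is immediate from the norm, and linearity is clear from its construction as a limit in Proposition~\ref{prop:boundary-map}.

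\emph{Multiplication.} Given $\eta,\zeta\in\Rcal_\delta$, decompose $\eta=\xi+\eta_0$ and $\zeta=\xi'+\zeta_0$ with $\xi,\xi'\in L^2_{2,\delta}(\glfrak_E)$ and $\eta_0,\zeta_0\in\Hcal_\delta$. Every summand is bounded: $L^2_{2,\delta}\hookrightarrow C^0$ in real dimension two, and an element of $\Hcal_\delta$ differs from $\phi_R\cdot\bbold(\eta_0)$ by an $L^2_{2,\delta}$ term by the structure in the proof of Proposition~\ref{prop:boundary-map}; using the bounded projections and the equivalence of norms on $\Hcal_\delta$ one gets $\|\eta\|_{L^\infty}\lesssim\|\eta\|_{\Rcal_\delta}$. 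Since $L^2_2$ is a local algebra in dimension two, $\eta\zeta\in L^2_{2,loc.}(\glfrak_E)$. Expanding $d_{A_0}(\eta\zeta)=(d_{A_0}\eta)\zeta+\eta(d_{A_0}\zeta)$ and applying $\nabla_0$ once more, each resulting term is either a bounded function times an element of $L^2_{1,\delta}$, which lies in $L^2_\delta$, or a product $(d_{A_0}\eta)(d_{A_0}\zeta)$ of two elements of $L^2_{1,\delta}$; for the latter, redistributing the weight symmetrically and using $L^2_1\hookrightarrow L^4$ uniformly on the cylindrical ends yields $\|(d_{A_0}\eta)(d_{A_0}\zeta)\|_{L^2_\delta}\lesssim\|d_{A_0}\eta\|_{L^2_{1,\delta}}\|d_{A_0}\zeta\|_{L^2_{1,\delta}}$ --- here $\delta>0$ only helps --- which is a case of the weighted multiplication properties cited after~\eqref{eqn:connection-space} (cf.\ \cite[Lemma~7.2]{LockhartMcOwen:85}). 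Hence $d_{A_0}(\eta\zeta)\in L^2_{1,\delta}$ with norm $\lesssim\|\eta\|_{\Rcal_\delta}\|\zeta\|_{\Rcal_\delta}$. Finally $\bbold(\eta\zeta)=\bbold(\eta)\bbold(\zeta)$, the product of the limits on each $C(p)$, which lies in $\lfrak_p$ since $\lfrak_p$ --- the centralizer of $\hat\alpha(p)$, a direct sum of matrix algebras --- is closed under matrix multiplication; thus $|\bbold(\eta\zeta)|\le|\bbold(\eta)|\,|\bbold(\zeta)|$, and combining the two estimates gives $\|\eta\zeta\|_{\Rcal_\delta}\lesssim\|\eta\|_{\Rcal_\delta}\|\zeta\|_{\Rcal_\delta}$.

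\emph{Main obstacle.} The norm axioms, completeness, and continuity of the projections and of $\bbold$ are formal once Proposition~\ref{prop:boundary-map} is in hand; the only genuinely analytic point is the multiplication estimate, and within it the borderline product $L^2_{1,\delta}\cdot L^2_{1,\delta}$ in real dimension two. Handling this cleanly requires the uniform $L^4$-embedding on the cylindrical ends together with the decomposition of an element of $\Rcal_\delta$ into a bounded harmonic piece and an $L^2_{2,\delta}$ remainder, so that one never multiplies two genuinely unbounded factors.
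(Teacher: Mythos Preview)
The paper offers no proof of this lemma; it is introduced with ``The following is also clear,'' so there is nothing to compare against at the level of argument. Your proposal is a correct and complete unpacking of what the authors treat as evident: the norm axioms and completeness follow from Proposition~\ref{prop:boundary-map} together with the Poincar\'e inequality~\eqref{eqn:poincare} and the finite-dimensionality of $\Hcal_\delta\cong\bigoplus_{p\in D}\lfrak_p$, while the multiplication estimate is handled exactly as the paper does elsewhere (e.g.\ in the proof of Lemma~\ref{lem:smooth}) via the embedding $L^2_{1,\delta}\hookrightarrow L^4_\delta$ and the product map $L^4_\delta\times L^4_\delta\to L^2_\delta$. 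Your identification of the borderline product $L^2_{1,\delta}\cdot L^2_{1,\delta}$ as the only nontrivial point is accurate, and your treatment of it is the standard one used throughout the paper.
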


We are now in a position to define the gauge groups.
Let
$$
    \Gcal_{\delta}:=\left\{ \eta\in \Rcal_{\delta} \mid
\det\eta=1\right\}\quad ,\quad
    \Gcal_{\delta, \ast}:=\left\{ \eta\in \Gcal_{\delta} \mid
\bbold(\eta)=I\right\}
$$
Then $\Gcal_{\delta}$ and  $\Gcal_{\delta,\ast}$ are complex Banach Lie
groups with Lie algebras 
\begin{equation}\label{eq:Rdelta0}
\Lie \Gcal_{\delta}=\Rcal_\delta^0:=\{\eta\in  \Rcal_{\delta}\mid \tr \eta=0\}\quad ,\quad
    \Lie \Gcal_{\delta,\ast}=L^2_{2,\delta}(\slfrak_E)
 \end{equation}
It is clear that with these definitions there is a smooth action of
$\Gcal_{\delta}$ on $\Acal_{\delta}$ defined by pullback:
$\dbar_{g(A)}:=g\circ\dbar_A\circ g^{-1}$.
Note also that  $\Gcal_{\delta,\ast}\subset \Gcal_{\delta}$ is a normal
subgroup,
and the center $Z\subset\SL(2,\CBbb)$ embeds into $\Gcal_\delta$. 
We denote the 
quotient
\begin{equation} \label{eqn:quotient-group-center}
    \overline\Lbold:=\Gcal_{\delta}/Z\times \Gcal_{\delta,\ast}
\end{equation}

Notice that the defining conditions for $\Rcal_\delta$ are closed with
respect to hermitian conjugation.
For future reference, we  therefore define the closed Banach subgroups of
$\Gcal_{\delta}$ (resp.\ $\Gcal_{\delta,\ast}$):
$$
\Kcal_\delta=\{ g\in \Gcal_\delta \mid gg^\ast=I\} \quad ,\quad
\Kcal_{\delta,\ast}=\{ g\in \Gcal_{\delta,\ast} \mid gg^\ast=I\}
$$
These have Lie algebras
$$
\Lie \Kcal_{\delta}=\Ucal_{\delta}:=\{\eta\in  \Rcal^0_{\delta}\mid
\eta=-\eta^\ast\}\quad ,\quad
    \Lie \Kcal_{\delta,\ast}=L^2_{2,\delta}(\sufrak_E)
    $$

We end this section by discussing holomorphic structures. 
In the following, $K_X$ (resp.\ $\overline K_X$) denotes the complex line 
bundle of $(1,0)$-forms (resp.\ $(0,1)$-forms) on
$X$. The conformal metric on $X^\times$ gives a hermitian structure on
$K_X$, $\overline K_X$, on $X^\times$. 
The  Levi-Civita connection
and $A_0$ give a connection, still denoted $A_0$, on tensors on
associated bundles to $E$ tensored by $K_X$ or $\overline K_X$. With this
understood, we have

\begin{Lemma} \label{lem:smooth}
    Given $A\in \Acal_{\delta}$, there is $g\in
\Gcal_{\delta,\ast}$ such that $g(A)$ is smooth, and $\dbar_{g(A)}=\dbar_{A_0}$
on each $C(p)$. 
\end{Lemma}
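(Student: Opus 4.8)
Since $A$ is unitary it is the Chern connection of the holomorphic structure $\dbar_A$ for the fixed metric $h_0$, and likewise $g(A)$ is the Chern connection of $\dbar_{g(A)}=g\circ\dbar_A\circ g^{-1}$; as the Chern connection depends algebraically and to first order on the Dolbeault operator, $g(A)$ is smooth if and only if $\dbar_{g(A)}$ is. So it suffices to produce $g\in\Gcal_{\delta,\ast}$ with $\dbar_{g(A)}$ smooth and equal to $\dbar_{A_0}$ on every $C(p)$. In the holomorphic frame $s_i=|z|^{\alpha_k(p)}e_i$ the model operator on $\Delta^\times(p)$ is the plain Dolbeault operator $\dbar$, and writing $\dbar_A=\dbar+a$ with $a\in L^2_{1,\delta}(\slfrak_E\otimes\overline K_X)$ the requirement $g\dbar_A g^{-1}=\dbar$ becomes the \emph{linear} equation $\dbar g=g\,a$, to be solved with $g-I\in L^2_{2,\delta}$ near each puncture.

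\textbf{Standardization on the cylindrical ends.} Fix $p$. On a half-cylinder $\{\tau\geq R\}\subset C(p)$, the operator $\dbar\colon L^2_{2,\delta}\to L^2_{1,\delta}$ has a bounded right inverse $Q$: this is the Lockhart--McOwen theory of cylindrical ends, where the indicial roots of $\dbar$ are exactly the eigenvalues $\sqrt{-1}\lambda^m_{ij}$ of $\nabla_{\partial_p}$ from Remark \ref{rem:spectrum}, and the standing assumption \eqref{eqn:delta-assumption} places $\delta$ strictly between $0$ and the first nonzero one, so $\dbar$ is surjectively Fredholm in these weighted spaces (cf.\ \cite{LockhartMcOwen:85,Biquard:91}). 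Since $a\in L^2_{1,\delta}(X^\times)$, its norm on $\{\tau\geq R\}$ tends to $0$ as $R\to\infty$; using the multiplication $L^2_{2,\delta}\cdot L^2_{1,\delta}\hookrightarrow L^2_{1,\delta}$ on the two-dimensional cylinder together with this smallness, the map $\eta\mapsto Q(a)+Q(\eta\,a)$ contracts a small ball of $L^2_{2,\delta}(\{\tau\geq R\})$, and its fixed point gives $g=I+\eta$ with $\dbar g=g\,a$ there. Because $\tr a=0$, the function $\det g$ is $\dbar$-holomorphic, nonvanishing, and tends to $1$ at $\tau\to+\infty$, so the substitution $g\mapsto(\det g)^{-1/n}g$ (which preserves $\dbar g=g\,a$) puts $g$ in $\SL(n,\CBbb)$ with $g-I\in L^2_{2,\delta}$, hence $\bbold(g)=I$. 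Interpolating through $\SL(n,\CBbb)$-valued maps on $\{R\leq\tau\leq R+1\}$ and extending by $I$ over the rest of $X^\times$ yields $g_1\in\Gcal_{\delta,\ast}$ for which $\dbar_{A_1}:=\dbar_{g_1(A)}$ equals $\dbar_{A_0}$ on $\{\tau\geq R+1\}$ in every $C(p)$.

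\textbf{Interior smoothing and assembly.} Now $\dbar_{A_1}-\dbar_{A_0}$ is an $L^2_1$ form supported in the fixed compact subsurface-with-boundary $Y:=X\setminus\bigcup_p\{\tau>R+1\}$. By the standard interior fact (cf.\ \cite[Sec.\ 3]{DaskalWentworth:97})---choose local $\dbar_{A_1}$-holomorphic frames by solving a linear $\dbar$-equation in coordinate disks, patch with a partition of unity, and bootstrap---one obtains a complex gauge transformation $g_2$ equal to the identity near $\partial Y$ and outside $Y$, hence in $\Gcal_{\delta,\ast}$, with $\dbar_{g_2(A_1)}$ smooth; taking the frames near each boundary circle $\{\tau=R+1\}$ to be the inward extension of the model frame $s_i$ propagates the normalization $\dbar=\dbar_{A_0}$ over the collars $\{0<\tau\leq R+1\}$, so that $\dbar_{g_2(A_1)}=\dbar_{A_0}$ on all of each $C(p)$. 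Then $g:=g_2g_1\in\Gcal_{\delta,\ast}$ does the job.

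\textbf{Main obstacle.} The delicate point is the weighted analysis in the second step: producing the bounded right inverse $Q$ of $\dbar$ in the precise weighted Sobolev spaces, and keeping track of $\det g$ and the boundary value $\bbold(g)$ so that $g_1$ genuinely lands in $\Gcal_{\delta,\ast}$---everything hinges on the spectral gap \eqref{eqn:delta-assumption} separating $\delta$ from the indicial roots of $\nabla_{\partial_p}$. The interior smoothing is routine, but it must be executed compatibly with the end normalization, which is why the holomorphic frames on the collars are chosen to extend $s_i=|z|^{\alpha_k(p)}e_i$.
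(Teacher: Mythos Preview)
Your approach is correct and takes a somewhat different route from the paper's. The paper reduces everything to the global Fredholm property of $\dbar_A:L^2_{2,\delta}(\slfrak_E)\to L^2_{1,\delta}(\slfrak_E\otimes\overline K_X)$: writing $\dbar_A=\dbar_{A_0}+(1-\phi_R)[\beta,\cdot]+\phi_R[\beta,\cdot]$, the middle term is a compact operator (its support lies in a fixed compact set, where $L^2_1\hookrightarrow L^4$ is compact) and the last term has small operator norm for $R$ large, so $\dbar_A$ is Fredholm of the same index as $\dbar_{A_0}$; the paper then defers the actual construction of $g$ to Biquard's thesis and Atiyah--Bott. You instead build $g$ directly: a contraction on each half-cylinder $\{\tau\geq R\}$ normalizes the ends, and classical interior smoothing finishes. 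Both routes rest on the same Lockhart--McOwen input; the paper's packaging has the advantage that the Fredholm statement is reused repeatedly later (e.g.\ in Propositions \ref{prop:index} and \ref{prop:nude-index}), while yours is more self-contained for this lemma alone.

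One step in your argument is looser than it appears. The ``inward extension of the model frame $s_i$'' over the collar $\{0<\tau\leq R+1\}$ amounts to analytic continuation of a $\dbar_{A_1}$-holomorphic frame from the smaller punctured disk $\{\tau>R+1\}$ outward to the larger one, and that is an ill-posed problem in general. The clean fix is this: since $\dbar_{A_1}=\dbar_{A_0}$ for $\tau\geq R+1$, the holomorphic bundle $(E|_{C(p)},\dbar_{A_1})$ extends across the puncture to the full disk $\Delta(p)$ (glue to the model extension of $\dbar_{A_0}$); that extension is trivial because $\Delta(p)$ is contractible, so a global holomorphic frame $\sigma$ exists on $\Delta(p)$, and normalizing so that $\sigma$ agrees with $s$ at the origin produces a gauge $g_p$ on all of $C(p)$ with $\dbar_{g_p(A_1)}=\dbar_{A_0}$. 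In the unitary frame one checks $g_p-I=O\bigl(|z|^{\,1-\max_{i,j}|\alpha_i-\alpha_j|}\bigr)$, which by \eqref{eqn:delta-assumption} lies in $L^2_{2,\delta}$; after your determinant correction this gives $g_p\in\Gcal_{\delta,\ast}$, and the argument then concludes as you wrote.
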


\begin{proof}
    This is essentially \cite[Prop.\ II.7 and Lemme III.5]{BiquardThese},
    which itself is modeled on \cite[Lemma 14.8]{AtiyahBott:82}. 
The key point is to show that 
$$
\dbar_A: 
L^2_{2,\delta}(\slfrak_E)\lra L^2_{1,\delta}(\slfrak_E\otimes \overline K_X)
$$
is Fredholm.
    By \cite[Thm.\ 1.1]{LockhartMcOwen:85}, the corresponding operator
    (which is translation invariant on each $C(p)$)
$$
\dbar_{A_0}: 
L^2_{2,\delta}(\slfrak_E)\lra L^2_{1,\delta}(\slfrak_E\otimes \overline K_X)
$$
is Fredholm. 
There is a constant $\varepsilon>0$ so that $\dbar_{A_0}+B$
is Fredholm (of the same index), for any bounded map $B$ with $\Vert B\Vert <\varepsilon$ (\cite[Thm.\ 2.9]{Schechter:67}). 
Write $\beta=\dbar_A-\dbar_{A_0}\in L^2_{1,\delta}(\slfrak_E\otimes\overline K_X)$.
    Then we have the continuous inclusion $L^2_{1,\delta}\hookrightarrow L^4_{\delta}$ and multiplication maps $L^4_\delta\times L^4_\delta\to L^2_\delta$. 
    Recall the cut-off function $\phi_R$. Since there is a constant $C$
    (independent of $\eta$) satisfying
    \begin{equation} \label{eqn:sup_eta} 
    \sup |\eta|\leq C\Vert \eta\Vert_{L^2_{2,\delta}} \ ,
    \end{equation} 
    we can choose $R$ sufficiently large (depending upon $\beta$) so that
    the map 
$$
L^2_{2,\delta}(\slfrak_E)\lra L^2_{1,\delta}(\slfrak_E\otimes\overline K_X)
: \eta\mapsto \phi_R[\beta,\eta]
$$
     is bounded of 
    norm less than $\varepsilon$. Now the map 
$\eta\mapsto (1-\phi_R)[\beta,\eta]$
    defines a compact operator between the same spaces. This follows
 from the compact inclusion $L^2_1\hookrightarrow L^4$ on the support of
    $1-\phi_R$. 
 Since adding a compact operator to a Fredholm operator is still Fredholm with the same index
 (\cite[Thm.\ 2.10]{Schechter:67}), the result follows by writing
 $\dbar_A=\dbar_{A_0}+(1-\phi_R)\cdot\beta+\phi_R\cdot\beta$.
\end{proof}

\subsubsection{Higgs fields}
We begin with the key definition.
\begin{Definition}[{\sc admissible Higgs fields}]
The set of admissible  Higgs fields is defined to be
$$
\Dscr_\delta(\slfrak_E\otimes K_X)=\left\{ \Phi\in
L^2_{-\delta}(\slfrak_E\otimes K_X) 
\mid \dbar_{A_0}\Phi\in L^2_{\delta}(\slfrak_E\otimes K_X\otimes \overline K_X)
 \right\}
$$
\end{Definition}
Since $\dbar_{A_0}$ is a  closed operator, 
the  domain 
$\Dscr_\delta\subset L^2_{-\delta}$ becomes a Banach space with respect to
the graph norm, which we denote by 
\begin{equation} \label{eqn:graph_norm}
    \Vert\Phi\Vert^2_{\Dscr_\delta}:=
    \Vert\Phi\Vert^2_{L^2_{-\delta}}+\Vert\dbar_{A_0}\Phi\Vert^2_{L^2_{\delta}}
    \ .
    \end{equation}
From the inclusion $L^2_\delta\hookrightarrow L^2_{-\delta}$, there is a continuous embedding 
\begin{equation}\label{eqn:stupid_embedding}
L^2_{1,\delta}(\slfrak_E\otimes K_X)\hookrightarrow
    \Dscr_\delta(\slfrak_E\otimes K_X)\ . 
\end{equation}

We will need the following useful decomposition.
\begin{Proposition}[{\sc Decomposition of Higgs fields}]
\label{prop:higgs-decomposition}
    Given $\Phi\in \Dscr_{\delta}(\slfrak_E\otimes K_X)$, there is an expression $\Phi=\Phi_0+\Phi_1$,
    where $\Phi_1\in L^2_{1,\delta}$, and $\Phi_0$ is continuous and
    bounded.  There a well-defined limit
    $$
    \lim_{\tau\to+\infty}\Phi_0=\ell\in \bigoplus_{p\in D}\lfrak_p
    $$
    where by the limit  we mean with respect to the  trivialization $dz/z$
    of $K_X$ and the fixed unitary frame $\{e_i\}$ on $C(p)$.
    Moreover, there is a constant $C$, depending on $A_0$ and $\delta$ but
    independent of $\Phi$, such that
    \begin{align}
        |\ell|\leq \sup|\Phi_0|&\leq
    C\Vert\Phi\Vert_{\Dscr_{\delta}}
        \ ; \label{eqn:phi0_graph} \\
        \Vert\Phi_1\Vert_{L^2_{1,\delta}}&\leq 
        C\Vert\Phi\Vert_{\Dscr_{\delta}} \ . \label{eqn:phi1_graph}
    \end{align} 
   In particular,   this defines a continuous map
$$
\rbold :
\Dscr_{\delta}(\slfrak_E\otimes K_X)\lra  \bigoplus_{p\in D}\lfrak_p
: \Phi\mapsto\rbold(\Phi):= \ell
$$
   with $\ker \rbold = L^2_{1,\delta}(\slfrak_E\otimes K_X)$.
\end{Proposition}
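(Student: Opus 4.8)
The plan is to follow the pattern of Proposition~\ref{prop:boundary-map}: since a Higgs field is smooth on the compact piece and everything is controlled there by interior estimates, the statement is really a local question on each cylindrical end $C(p)$, where I will decompose $\Phi$ according to the spectrum of the boundary operator. On $C(p)$, using the flat trivialization $\tfrac{dz}{z}$ of $K_X$ and the unitary frame $\{e_i\}$, write $\Phi=\varphi\,\tfrac{dz}{z}$ with $\varphi$ a $\slfrak_n$-valued function; because $F_{A_0}=0$ there, $\dbar_{A_0}$ is identified with $\tfrac12(\partial_\tau-B)$, where $B:=-i\nabla_{\partial_p}$ is self-adjoint on $L^2(S^1,\glfrak_n)$ with $\ker B=\lfrak_p$ (Lemma~\ref{lem:boundary-kernel}) and every nonzero eigenvalue of absolute value $\ge\lambda(p)>\delta$ (Remark~\ref{rem:spectrum} and \eqref{eqn:delta-assumption}). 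The condition $\Phi\in\Dscr_\delta$ becomes $\varphi\in L^2_{-\delta}$, $g:=\tfrac12(\partial_\tau-B)\varphi\in L^2_{\delta}$. Let $P_0,P_+,P_-$ be the spectral projections of $B$ onto the zero, positive and negative parts; they commute with $\partial_\tau$, so each $\varphi^\bullet=P_\bullet\varphi$ solves $\tfrac12(\partial_\tau-B)\varphi^\bullet=g^\bullet:=P_\bullet g\in L^2_\delta$. I will also use that interior elliptic estimates for $\dbar_{A_0}$ on $\{\tfrac12\le\tau\le\tfrac52\}$ (where the weight is bounded above and below) together with the Sobolev trace theorem give $\Vert\varphi|_{\tau=1}\Vert_{L^2(S^1)}\le C\Vert\Phi\Vert_{\Dscr_\delta}$, hence the same bound for each $\varphi^\bullet(1)$.

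Next I treat the three pieces on $\{\tau\ge 1\}$. For the kernel part $\partial_\tau\varphi^0=2g^0$; since $\delta>0$, Cauchy--Schwarz bounds $\int_1^\infty|g^0|$, so $\varphi^0(\tau)\to\ell_p:=\varphi^0(1)+2\int_1^\infty g^0\in\lfrak_p$, with $\sup_\tau|\varphi^0|$ and $|\ell_p|$ bounded by $C\Vert\Phi\Vert_{\Dscr_\delta}$. Moreover a standard integration by parts against $e^{\tau\delta}$ (the boundary term at $\infty$ vanishes because $e^{\tau\delta}|\varphi^0-\ell_p|^2\le\tfrac1\delta\int_\tau^\infty e^{s\delta}|g^0|^2\to 0$) followed by absorption shows $\Vert\varphi^0-\ell_p\Vert_{L^2_\delta(\{\tau\ge1\})}\le\tfrac4\delta\Vert g^0\Vert_{L^2_\delta}$; this will be needed for the kernel statement. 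For the positive part, differentiating $|\varphi^+|^2$ gives $\tfrac{d}{d\tau}|\varphi^+|\ge\lambda(p)|\varphi^+|-2|g^+|$, and since $\varphi^+\in L^2_{-\delta}$ forces $\liminf_{\tau\to\infty}e^{-\tau\delta/2}|\varphi^+(\tau)|=0$, integrating $e^{-\lambda(p)\tau}|\varphi^+|$ kills the exponentially growing solution and yields $|\varphi^+(\tau)|\le 2e^{\lambda(p)\tau}\int_\tau^\infty e^{-\lambda(p)s}|g^+(s)|\,ds$. The negative part is the same argument run forward from $\tau=1$, giving $|\varphi^-(\tau)|\le e^{-\lambda(p)(\tau-1)}|\varphi^-(1)|+2\int_1^\tau e^{-\lambda(p)(\tau-s)}|g^-(s)|\,ds$. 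In both cases, because $\lambda(p)>\delta$, multiplying by $e^{\tau\delta/2}$ and applying Young's convolution inequality, together with the trace bound, gives $\Vert\varphi^\pm\Vert_{L^2_\delta(\{\tau\ge1\})}\le C\Vert\Phi\Vert_{\Dscr_\delta}$.

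To assemble, fix the cutoff $\phi_1$ from \eqref{eqn:phiR}, set $\Phi_0:=\sum_{p\in D}\phi_1(\tau)\,\varphi^0\,\tfrac{dz}{z}$ (supported on the cylinders) and $\Phi_1:=\Phi-\Phi_0$. Then $\Phi_0$ is continuous and bounded with $\lim_{\tau\to\infty}\Phi_0=\ell:=(\ell_p)_{p\in D}$ and $\sup|\Phi_0|\le C\Vert\Phi\Vert_{\Dscr_\delta}$, which is \eqref{eqn:phi0_graph}. On $\{\tau\le 2\}\cup X_0$ the weight is comparable to $1$, so $\Phi_1$ is $L^2_1$ there by interior regularity; on $\{\tau\ge 2\}$ one has $\Phi_1=(\varphi^++\varphi^-)\tfrac{dz}{z}\in L^2_\delta$ by the previous step, while $\dbar_{A_0}\Phi_1$ differs from $\dbar_{A_0}\Phi\in L^2_\delta$ by $g^0$ in the trivialization, hence lies in $L^2_\delta$. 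Then the weighted version of elliptic regularity for $\dbar_{A_0}$ — the interior estimate is uniform in $\tau$ by translation invariance on the cylinder, exactly as in Lemma~\ref{lem:smooth} — upgrades this to $\Phi_1\in L^2_{1,\delta}$ with $\Vert\Phi_1\Vert_{L^2_{1,\delta}}\le C\Vert\Phi\Vert_{\Dscr_\delta}$, giving \eqref{eqn:phi1_graph}. The limit $\ell$ is independent of the decomposition: two choices of $\Phi_0$ differ by an element of $L^2_{1,\delta}\subset L^2_\delta$, which is a difference of continuous functions admitting limits at infinity, so that difference of limits is $0$. Hence $\rbold(\Phi):=\ell$ is well defined, and it is linear and bounded by \eqref{eqn:phi0_graph}, so continuous. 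Finally $L^2_{1,\delta}\subset\ker\rbold$ (take $\Phi_0=0$); conversely if $\rbold(\Phi)=0$ then every $\ell_p=0$, and the integration-by-parts estimate of the second paragraph gives $\varphi^0\in L^2_{1,\delta}$ on each cylinder, whence $\Phi=\Phi_0+\Phi_1\in L^2_{1,\delta}$. Thus $\ker\rbold=L^2_{1,\delta}$.

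The step I expect to be the main obstacle is the analysis of the positive spectral component $\varphi^+$: the spectrum of $B$ is unbounded above, so one cannot solve the evolution equation by a bounded semigroup, and the exponentially growing homogeneous solution must be excluded using only the weak decay in $\varphi\in L^2_{-\delta}$. Working with the scalar quantity $|\varphi^+(\tau)|$ rather than mode by mode is what sidesteps the unboundedness, but it is precisely here that the normalization $\delta<\lambda(p)$ from \eqref{eqn:delta-assumption} is indispensable — both to make the $\liminf$ vanish and to make the Young estimate converge. Everything else is a routine weighted-Sobolev computation closely patterned on Proposition~\ref{prop:boundary-map}.
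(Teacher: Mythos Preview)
Your argument is correct and follows a genuinely different route from the paper's. The paper applies the Hodge decomposition to the $(1,1)$-form $\dbar_{A_0}\Phi\in L^2_\delta$, writing $\dbar_{A_0}\Phi=\Omega+\dbar_{A_0}\psi$ with $\Omega$ harmonic for the weighted adjoint and $\psi\in L^2_{1,\delta}$, and then takes $\Phi_1:=\psi$, $\Phi_0:=\Phi-\psi$; thus $\Phi_0$ is harmonic for the weighted Laplacian $\dbar_{A_0}^{\ast_\delta}\dbar_{A_0}$, and both the existence of the limit $\ell$ and the sup bound on $\Phi_0$ are extracted from the eigenfunction expansion of Proposition~\ref{prop:boundary-map} together with elliptic estimates for harmonic sections (including Lemma~\ref{lem:delta-minus-delta}). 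You instead decompose spectrally on each cylinder from the start, taking $\Phi_0$ to be the cut-off zero mode $\varphi^0$ and pushing $\varphi^\pm$ into $\Phi_1$. This makes the sup bound on $\Phi_0$ immediate (it is constant in $\theta$ and its $\tau$-derivative is integrable), at the price of having to show $\varphi^\pm\in L^2_\delta$ by the Duhamel/differential-inequality argument you identify as the crux; the paper's trade-off is exactly the reverse, getting $\Phi_1\in L^2_{1,\delta}$ for free from the Green's operator but working harder for the sup bound. Both proofs ultimately hinge on the same spectral gap $\delta<\lambda(p)$ from \eqref{eqn:delta-assumption}. One cosmetic point: with the conventions in \S\ref{sec:gauge-groups} one finds $\dbar_{A_0}=\tfrac12(\partial_\tau+B)d\bar w$ rather than $\tfrac12(\partial_\tau-B)$, which only swaps the roles of $\varphi^+$ and $\varphi^-$ in your analysis.
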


\begin{Remark} We emphasize that while the decomposition $\Phi=\Phi_0+\Phi_1$
in Proposition \ref{prop:higgs-decomposition} is not unique, the limit $\ell$
is.
\end{Remark}

\begin{proof}[Proof of Proposition \ref{prop:higgs-decomposition}]
    By the definition of $\Dscr_\delta$ we
    have $\dbar_{A_0}\Phi\in L^2_{\delta}$. We may therefore use the
    Hodge decomposition as in the previous section to write:
    $$
    \dbar_{A_0}\Phi=\Omega+\dbar_{A_0}\psi
    $$
    with $\Omega$ harmonic and $\psi\in L^2_{1,\delta}$.
    Then 
    $$
    \dbar_{A_0}^\ast(e^{\tau\delta}\dbar_{A_0}(\Phi-\psi))=0
    $$
    Trivializing by $dz/z$
    and using 
   the same analysis as in the previous section, we see that
    $\Phi-\psi$ has a well defined limit $\ell$ as $\tau\to +\infty$, where $\ell$ lies in the Levi factors.  
    Moreover, $\ell=0$ implies $\Phi\in L^2_{1,\delta}$. 
    The first assertion of the Proposition  then follows by setting
    $\Phi_0=\Phi-\psi$ and $\Phi_1=\psi$. 
   By Lemma \ref{lem:delta-minus-delta}, we have
    \begin{equation} \label{eqn:l-estimate}
        |\ell|\leq C \Vert \Phi_0\Vert_{L^2_{-\delta}}
    \end{equation}
   Next, since $\Omega$ is the harmonic projection, 
   \begin{equation} \label{eqn:harmonic-projection}
       \Vert \Omega\Vert_{L^2_\delta}\leq \Vert
       \dbar_{A_0}\Phi\Vert_{L^2_\delta}
   \end{equation}
Consider the Laplacian
$$
\dbar_{A_0} \dbar_{A_0}^{\ast_\delta}
    : L^2_{2,\delta}(\slfrak_E\otimes K_X\otimes\overline K_X)\lra  L^2_{\delta}(\slfrak_E\otimes K_X\otimes\overline K_X)
    \ ,
$$
    where 
    $\dbar_{A_0}^{\ast_\delta}:= e^{-\tau\delta}\dbar_{A_0}^\ast e^{\tau\delta}$.
    By 
    \cite[Thm.\ 1.1]{LockhartMcOwen:85}, this is  a Fredholm operator, and in
    fact it has index $0$.  Let $G^{(1)}_\delta$ denote its Green's operator.
    Then by definition, $\Phi_1=\dbar_{A_0}^{\ast_\delta}
    G^{(1)}_\delta(\dbar_{A_0}\Phi)$.  It follows from the elliptic estimate that
   \begin{equation} \label{eqn:phi1}
       \Vert \Phi_1\Vert_{L^2_{1,\delta}}\leq C\Vert
       \dbar_{A_0}\Phi\Vert_{L^2_\delta}
   \end{equation}
    which gives \eqref{eqn:phi1_graph}. 
   This  also implies trivially that
   \begin{equation} \label{eqn:phi0}
       \Vert \Phi_0\Vert_{L^2_{-\delta}} \leq 
       \Vert \Phi\Vert_{L^2_{-\delta}}+\Vert \Phi_1\Vert_{L^2_{-\delta}}
       \leq 
       \Vert \Phi\Vert_{L^2_{-\delta}}+C\Vert \dbar_{A_0}\Phi\Vert_{L^2_\delta}    
       \leq C\Vert \Phi\Vert_{\Dscr_{\delta}} \ ,
   \end{equation}
    and the estimate on $\ell$ in \eqref{eqn:phi0_graph} follows from this and \eqref{eqn:l-estimate}.

    To prove the estimate on $\Phi_0$ in \eqref{eqn:phi0_graph}, let
    $\widetilde\ell=\phi_R\cdot\ell$ be the smooth extension
    of the constant sections $\ell_p\otimes (dz/z)$  on each $C(p)$, obtained
    by multiplying by a cut-off function as above. 
    Set $\widetilde\Phi_0=\Phi_0-\widetilde\ell$. 
By \eqref{eqn:l-estimate} and \eqref{eqn:phi0}, it suffices to prove a
bound on $\widetilde\Phi_0$. 
    We first claim that 
    \begin{equation} \label{eqn:tilde_estimate} 
        \Vert\widetilde\Phi_0\Vert_{L^2_{1,\delta}}\leq C \Vert \Phi\Vert_{\Dscr_{\delta}}
    \end{equation} 
       for a constant $C$ which depends on the choice of cut-off function
       but is otherwise independent of $\Phi$.
    Indeed, since  $\widetilde\Phi_0$
    vanishes as $\tau\to \infty$, we can integrate by parts to obtain
    $$
        \Vert\partial_{A_0}\widetilde\Phi_0\Vert^2_{L^2_\delta}=\Vert\dbar_{A_0}\widetilde\Phi_0\Vert^2_{L^2_\delta}
        +\langle\widetilde\Phi_0, i\Lambda F_{A_0}\widetilde\Phi_0\rangle_{L^2_\delta} 
        +\langle\widetilde\Phi_0,
        i\delta\Lambda(d\tau\wedge d_{A_0}\widetilde\Phi_0)\rangle_{L^2_\delta} 
    $$
   It follows that 
    \begin{equation} \label{eqn:l21}
        \Vert\nabla_0 \widetilde\Phi_0\Vert^2_{L^2_\delta}\leq
        C(\Vert\dbar_{A_0}\widetilde \Phi_0\Vert^2_{L^2_\delta}+
        \Vert\widetilde\Phi_0\Vert^2_{L^2_\delta})
        \leq C( \Vert\Omega\Vert^2_{L^2_\delta}+
        \Vert\Phi_0\Vert^2_{L^2_{-\delta}})
        \leq C\Vert\Phi\Vert^2_{\Dscr_\delta}
    \end{equation} 
    where we have used \eqref{eqn:harmonic-projection}, \eqref{eqn:phi0}, and 
Lemma \ref{lem:delta-minus-delta} (2). 
The claim \eqref{eqn:tilde_estimate} now follows
from \eqref{eqn:l21} and \eqref{eqn:poincare}. 
Next, a bound on $\sup_{X_0}|\widetilde\Phi_0|$  follows from elliptic
regularity and the fact that
$\dbar_{A_0}^{\ast_\delta}\dbar_{A_0}\Phi_0=0$.
Finally, on each $C(p)$,  
$\dbar_{A_0}^{\ast_\delta}\dbar_{A_0}\widetilde\Phi_0=0$,
and so we have
$$
\Vert \dbar^\ast_{A_0}\dbar_{A_0}\widetilde\Phi_0\Vert_{L^2_\delta}\leq
C\Vert \widetilde\Phi_0\Vert_{L^2_{1,\delta}}
$$
Since the operator $\dbar^\ast_{A_0}\dbar_{A_0}$ is translation invariant
on $C(p)$,
the usual a priori estimate (cf.\ \cite[eq.\ (2.4)]{LockhartMcOwen:85})
along with \eqref{eqn:tilde_estimate} imply an estimate
$$
\Vert\widetilde\Phi_0\Vert_{L^2_{2,\delta}}\leq C 
\Vert\Phi\Vert^2_{\Dscr_\delta}\ ,
$$
and the $L^\infty$ bound follows as in \eqref{eqn:sup_eta}.  This completes the
proof of the existence of a decomposition.

If we have two such expressions:
$$
\Phi=\Phi_0+\Phi_1=\Phi_0'+\Phi_1'
$$
then $\Phi_0-\Phi_0'$ is bounded, continuous, and in $L^2_{\delta}$, and
therefore
$$
\lim_{\tau\to +\infty}
\Phi_0-\Phi_0' =0\ .
$$
Hence, the limit $\ell$ is independent of the decomposition.
\end{proof}

\begin{Remark} \label{rem:general-A}
    Notice that for any $A\in \Acal_{\delta}$, $\Phi\in \Dscr_{\delta}$, 
we have
$\dbar_{A}\Phi\in L^2_{\delta}$.
Indeed, $\dbar_A-\dbar_{A_0}=\beta\in L^2_{1,\delta}$,
and with respect to the decomposition in  Proposition \ref{prop:higgs-decomposition}, 
$$
[\beta,\Phi]=[\beta,\Phi_0]+[\beta,\Phi_1]
$$
Since $\Phi_0$ is bounded, the first term on the right hand side above is in $L^2_\delta$. Since
$\Phi_1\in L^2_{1,\delta}$,  the second term is in $L^2_\delta$ via the
inclusion $L^2_{1,\delta}\hookrightarrow L^4_\delta$. 
    Hence, $\dbar_A\Phi\in L^2_{\delta}\iff\dbar_{A_0}\Phi\in
    L^2_{\delta}$.

\end{Remark}

\begin{Definition}[{\sc Higgs pairs}] \label{def:higgs-bundle}
    A parabolic Higgs pair is a couple
    $(A,\Phi)\in \Acal_{\delta}\times \Dscr_{\delta}(\slfrak_E\otimes K_X)$ satisfying: $\dbar_A\Phi=0$.
We say that $(A,\Phi)$ is strongly parabolic if $\Phi\in L^2_{1,\delta}$.
    We let 
    $$\Bcal^{par}_{\delta}\subset \Acal_{\delta}\times
    \Dscr_{\delta}(\slfrak_E\otimes K_X)\quad ,\quad
    \Bcal^{spar}_{\delta}\subset \Acal_{\delta}\times
    L^2_{1,\delta}(\slfrak_E\otimes K_X)$$
    denote the spaces of parabolic and strongly
    parabolic Higgs pairs, respectively. 
\end{Definition}

The following key example gives the link Higgs pairs and parabolic Higgs
bundles.
\begin{Example} \label{ex:phi} 
    Let $(A,\Phi)\in \Bcal^{par}_{\delta}$, and 
 assume that 
$A=A_0$ on $C(p)$, $p\in D$. 
    In  the local holomorphic frame $\{s_i\}$ from
   \S\ref{sec:gauge-groups}, write 
    $$\Phi s_i=\sum_{j=1}^n\Phi_{ij} s_j =\sum_{j=1}^nP_{ij}(z) s_j\otimes \frac{dz}{z}$$
     Then the  $P_{ij}(z)$ are
    holomorphic functions on $\Delta^\times(p)$. In terms of the unitary frame,
$$
\Phi e_i= \sum_{j=1}^n\widehat \Phi_{ij}e_j=\sum_{j=1}^n
\Phi_{ij}|z|^{\alpha_j(p)-\alpha_i(p)}e_j
    =\sum_{j=1}^nP_{ij}(z)|z|^{\alpha_j(p)-\alpha_i(p)}e_j\otimes \frac{dz}{z}
$$  
Now $\widehat \Phi_{ij}\in L^2_{-\delta}$. Recalling that
$e^{-\tau\delta}=|z|^{\delta}$, and that $dz/z$ has norm $1$, we have 
$$
    \int_{\Delta^\times (p)}\frac{|dz|^2}{|z|^2}\, |P_{ij}(z)|^2
|z|^{2(\alpha_j(p)-\alpha_i(p))+\delta}<+\infty 
$$
By the condition \eqref{eqn:delta-assumption},
    the above bound means that the $P_{ij}(z)$ are regular on $\Delta(p)$.
    Moreover,
     $P_{ij}(0)= 0$ unless $2(\alpha_j(p)-\alpha_i(p))+\delta>0$, which
     occurs only if $\alpha_j(p)\geq \alpha_i(p)$, again by
     \eqref{eqn:delta-assumption}.
In other words, $\Phi$ can have only simple poles, and $\res_p\Phi$ is
    upper triangular with respect to the canonical frame. 
\end{Example}

\begin{Proposition} \label{prop:strong} 
Assume full flags. Then the following  maps are continuous: 
    \begin{align}
        \Dscr_\delta(\slfrak_E\otimes K_X)
        \times\Dscr_\delta(\slfrak_E\otimes K_X)
        &\lra L^2_{\delta}(\slfrak_E\otimes K_X\otimes \overline K_X)
        : (\varphi,\psi)\mapsto [\varphi^\ast,\psi]
        \label{eqn:basic_continuity}
        \\
        L^2_{2,\delta}(\slfrak_E)
        \times\Dscr_\delta(\slfrak_E\otimes K_X)
        &\lra L^2_{1,\delta}(\glfrak_E\otimes K_X)
        : (\eta,\varphi)\mapsto \eta\varphi
        \label{eqn:eta_continuity} \\
        L^2_{2,\delta}(\slfrak_E)
        \times\Dscr_\delta(\slfrak_E\otimes K_X)
        &\lra L^2_{\delta}(\slfrak_E\otimes K_X\otimes \overline K_X)
        : (\eta,\varphi)\mapsto [\varphi, [\varphi^\ast,\eta]]
        \label{eqn:phi_continuity}
    \end{align}
\end{Proposition}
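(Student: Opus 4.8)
The plan is to reduce all three continuity statements to the decomposition of Proposition~\ref{prop:higgs-decomposition}, together with three routine tools: the weighted Sobolev multiplication $L^4_\delta\times L^4_\delta\to L^2_\delta$ (Cauchy--Schwarz for the measure $e^{\tau\delta}\,d\mu$, since the weight enters the norm with power one), the embeddings $L^2_{1,\delta}\hookrightarrow L^4_\delta$ and $L^2_{2,\delta}\hookrightarrow L^\infty$ (the latter being \eqref{eqn:sup_eta}), and the sup bound $|\rbold(\varphi)|\le\sup|\varphi_0|\le C\Vert\varphi\Vert_{\Dscr_\delta}$ of \eqref{eqn:phi0_graph}; the hermitian adjoint and the identification $K_X\cong\overline K_X$ are bounded with bounded inverse on $X^\times$ (the conformal metric gives $dz/z$ unit norm on the cylinders), so they play no role in the weight estimates. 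Given $\varphi\in\Dscr_\delta(\slfrak_E\otimes K_X)$ I would always write $\varphi=\varphi_0+\varphi_1$ as in Proposition~\ref{prop:higgs-decomposition}, with $\varphi_1\in L^2_{1,\delta}$ controlled by \eqref{eqn:phi1_graph} and $\varphi_0$ bounded with limit $\ell^{\varphi}:=\rbold(\varphi)\in\bigoplus_{p\in D}\lfrak_p$, and then refine to $\varphi_0=\widetilde\varphi_0+\widetilde\ell^{\varphi}$, where $\widetilde\ell^{\varphi}=\phi_R\,\ell^{\varphi}$ is the cutoff extension from \eqref{eqn:phiR} and $\widetilde\varphi_0\in L^2_{2,\delta}$ with $\Vert\widetilde\varphi_0\Vert_{L^2_{2,\delta}}\le C\Vert\varphi\Vert_{\Dscr_\delta}$, exactly as in \eqref{eqn:tilde_estimate} and its $L^2_{2,\delta}$ upgrade in the proof of Proposition~\ref{prop:higgs-decomposition}. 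I record that $\nabla_0\widetilde\ell^{\varphi}$ is supported in the compact annulus $\{R<\tau<R+1\}$ and bounded by $\Vert\varphi\Vert_{\Dscr_\delta}$: indeed $\phi_R$ is real, and $\ell^{\varphi}\otimes(dz/z)$ is $\nabla_0$-parallel on each $C(p)$ because the constant element of $\lfrak_p$ commutes with $\widehat\alpha(p)$ and $d\tau,d\theta$ are parallel for the cylindrical metric.

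\textbf{Map \eqref{eqn:basic_continuity}.} I would expand $[\varphi^{\ast},\psi]$ via $\varphi=\varphi_0+\varphi_1$, $\psi=\psi_0+\psi_1$. Each of the three terms carrying a factor $\varphi_1$ or $\psi_1$ pairs that factor --- which lies in $L^2_{1,\delta}\subset L^4_\delta\cap L^2_\delta$ --- with a bounded factor ($\varphi_0,\psi_0$) or with another $L^4_\delta$ factor, hence lands in $L^2_\delta$ with norm $\le C\Vert\varphi\Vert_{\Dscr_\delta}\Vert\psi\Vert_{\Dscr_\delta}$ by \eqref{eqn:phi0_graph}, \eqref{eqn:phi1_graph}. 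The only delicate term is $[\varphi_0^{\ast},\psi_0]$, whose factors a priori lie only in $L^\infty\cap L^2_{-\delta}$ and need not decay at the punctures. Here I would decompose once more,
\[
[\varphi_0^{\ast},\psi_0]=[\widetilde\varphi_0^{\,\ast},\widetilde\psi_0]+[\widetilde\varphi_0^{\,\ast},\widetilde\ell^{\psi}]+[(\widetilde\ell^{\varphi})^{\ast},\widetilde\psi_0]+[(\widetilde\ell^{\varphi})^{\ast},\widetilde\ell^{\psi}].
\]
The first three terms carry a factor $\widetilde\varphi_0$ or $\widetilde\psi_0$ in $L^2_{1,\delta}$ and are estimated as before; the last term is $[(\widetilde\ell^{\varphi})^{\ast},\widetilde\ell^{\psi}]=\phi_R^{\,2}\,[(\ell^{\varphi})^{\ast},\ell^{\psi}]$, which \emph{vanishes identically}. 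This is the one place where the full flags hypothesis is essential: in that case $\lfrak_p$ is the diagonal Cartan subalgebra, which is abelian and invariant under hermitian adjoint, so $[(\ell^{\varphi})^{\ast},\ell^{\psi}]=0$. Summing, $[\varphi^{\ast},\psi]\in L^2_\delta$ with norm $\le C\Vert\varphi\Vert_{\Dscr_\delta}\Vert\psi\Vert_{\Dscr_\delta}$; bilinearity then yields continuity.

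\textbf{Maps \eqref{eqn:eta_continuity} and \eqref{eqn:phi_continuity}.} These should be easier, since each is linear in $\eta\in L^2_{2,\delta}$, which is bounded and carries the weight $\delta$, with $\nabla_0\eta\in L^2_{1,\delta}\hookrightarrow L^4_\delta$. For \eqref{eqn:eta_continuity}, using the inclusion \eqref{eqn:stupid_embedding} it suffices to bound $\eta\varphi$ and $\nabla_0(\eta\varphi)=(\nabla_0\eta)\varphi+\eta\nabla_0\varphi$ in $L^2_\delta$: in the first summand pair $\nabla_0\eta\in L^4_\delta$ with the bounded $\varphi_0$ and with $\varphi_1\in L^4_\delta$; in the second use $\nabla_0\varphi_1\in L^2_\delta$ with the bounded $\eta$, and for the $\varphi_0$ part write $\nabla_0\varphi_0=\nabla_0\widetilde\varphi_0+\nabla_0\widetilde\ell^{\varphi}$ with $\nabla_0\widetilde\varphi_0\in L^2_{1,\delta}\subset L^2_\delta$ and $\nabla_0\widetilde\ell^{\varphi}$ compactly supported and bounded, paired with $\eta\in L^2_\delta$. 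For \eqref{eqn:phi_continuity}, expand $[\varphi,[\varphi^{\ast},\eta]]$ via $\varphi=\varphi_0+\varphi_1$; every monomial contains $\eta$ (bounded, carrying the weight), and besides that either only bounded factors $\varphi_0$, or exactly one factor $\varphi_1\in L^2_\delta$ with bounded companions, or the cubic term $[\varphi_1,[\varphi_1^{\ast},\eta]]$ in which $\varphi_1\in L^4_\delta$ and $[\varphi_1^{\ast},\eta]\in L^4_\delta$ (because $\eta$ is bounded), so that $L^4_\delta\times L^4_\delta\to L^2_\delta$ applies. Each contribution is $\le C\Vert\varphi\Vert_{\Dscr_\delta}^{\,2}\Vert\eta\Vert_{L^2_{2,\delta}}$ (resp.\ $\le C\Vert\varphi\Vert_{\Dscr_\delta}\Vert\eta\Vert_{L^2_{2,\delta}}$ for \eqref{eqn:eta_continuity}), giving continuity.

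\textbf{Main obstacle.} The only genuinely nontrivial point is the term $[\varphi_0^{\ast},\psi_0]$ in \eqref{eqn:basic_continuity}: its two factors are merely bounded and do not individually decay on the cylinders, so their product is not a priori in $L^2_\delta$, and the estimate is saved only by the vanishing of the leading Levi-commutator $[(\ell^{\varphi})^{\ast},\ell^{\psi}]$ --- which is precisely where full flags is used. Everything else is bookkeeping built on Proposition~\ref{prop:higgs-decomposition} and the standard weighted multiplication and embedding theorems. The one auxiliary computation worth flagging is the $\nabla_0$-parallelism of $\ell^{\varphi}\otimes(dz/z)$ on the cylinders, needed so that splitting off the non-decaying part $\widetilde\ell^{\varphi}$ leaves a remainder $\widetilde\varphi_0$ with compactly supported extra derivative; this uses the explicit form $d_{A_0}=d+\sqrt{-1}\widehat\alpha(p)\,d\theta$ and flatness of the cylindrical metric.
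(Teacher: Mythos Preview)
Your proof is correct and follows essentially the same approach as the paper's: decompose via Proposition~\ref{prop:higgs-decomposition}, handle the cross terms by weighted Sobolev multiplication and boundedness, and for the critical term $[\varphi_0^\ast,\psi_0]$ in \eqref{eqn:basic_continuity} use the further splitting $\varphi_0=\widetilde\varphi_0+\widetilde\ell^{\varphi}$ together with the abelianness of $\lfrak_p$ under full flags to kill the only non-decaying piece. Two minor remarks: (i) the paper phrases the vanishing of $[(\widetilde\ell^{\varphi})^\ast,\widetilde\ell^{\psi}]$ as ``compactly supported'' rather than identically zero, but your stronger observation is correct and the conclusion is the same; (ii) for \eqref{eqn:eta_continuity} the paper bounds $\dbar_{A_0}(\eta\varphi)$ rather than the full $\nabla_0(\eta\varphi)$, relying implicitly on the integration-by-parts argument of \eqref{eqn:l21}, whereas your direct control of $\nabla_0(\eta\varphi)$ via the $\nabla_0$-parallelism of $\ell^{\varphi}\otimes(dz/z)$ on the cylinders is a clean alternative. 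Your invocation of \eqref{eqn:stupid_embedding} in that step is unnecessary (you are proving membership in $L^2_{1,\delta}$, not $\Dscr_\delta$), but this does not affect the argument.
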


\begin{proof}
We shall prove that under the assumptions there is a constant $C$,
    independent of $\eta,\varphi,\psi$, such that
    \begin{align}
        \Vert [\varphi^\ast,\psi]\Vert_{L^2_\delta}&\leq C
        \Vert\varphi\Vert_{\Dscr_\delta}
        \Vert\psi\Vert_{\Dscr_\delta} \ ; \label{eqn:bracket_estimate} \\
        \Vert\eta \varphi\Vert_{L^2_{1,\delta}}&\leq C
        \Vert \eta\Vert_{L^2_{2,\delta}}
        \Vert\varphi\Vert_{\Dscr_\delta}
         \ . \label{eqn:eta_estimate} \\ 
        \Vert [\varphi, [\varphi^\ast,\eta]]  \Vert_{L^2_{\delta}}&\leq C
        \Vert \eta\Vert_{L^2_{2,\delta}}
        \Vert\varphi\Vert_{\Dscr_\delta}^2
         \ . \label{eqn:product_estimate} 
    \end{align}
    For \eqref{eqn:bracket_estimate}, 
     use the decomposition from Proposition \ref{prop:higgs-decomposition} to
     write: $\varphi=\varphi_0+\varphi_1$, $\psi=\psi_0+\psi_1$.
     Consider
     the terms on the right hand side in the 
     expansion
     $$
    [\varphi^\ast, \psi]-[\varphi_0^\ast, \psi_0]=
    [\varphi^\ast_0,\psi_1]+[\varphi_1^\ast,\psi_0]+[\varphi_1^\ast,\psi_1]
    \ .
     $$
Because $\varphi_0$ and $\psi_0$ are bounded, 
     and $\varphi_1$ and $\psi_1$ are in $L^2_{1,\delta}$, 
     the first two terms 
     are clearly in $L^2_\delta$, 
     and the last term is in 
$L^2_\delta$
      because of
     the embedding $L^2_{1,\delta}\hookrightarrow L^4_{\delta}$.
     The  term $[\varphi_0^\ast, \psi_0]$ is not, in general, in $L^2_\delta$.
     Let $m_0$ (resp.\ $\ell_0$) denote the limits of $\varphi_0$ (resp.\
     $\psi_0$) from Proposition \ref{prop:higgs-decomposition}, and 
    let $\widetilde m_0:=\phi_1\cdot m_0$ (resp.\
    $\widetilde\ell:=\phi_1\cdot \ell_0$) be smooth extensions 
     to $X^\times$ (see \eqref{eqn:phiR}). Then  in particular, $\widetilde
m_0$, $\widetilde\ell$
     and their derivatives have pointwise norms bounded by the norms of $m_0$ and $\ell_0$. 
     We further expand:
    \begin{equation} \label{eqn:expansion} 
    [\varphi_0^\ast,\psi_0]=[\varphi_0^\ast-\widetilde m_0^\ast,
    \psi_0-\widetilde\ell_0]+[\varphi_0^\ast-\widetilde
    m_0^\ast,\widetilde\ell_0]+[\widetilde m_0^\ast,\psi_0-\widetilde\ell_0]
    +[\widetilde m_0^\ast, \widetilde\ell_0]
    \end{equation} 
      Since we assume full flags, $\lfrak_p$ is abelian for every
     $p\in D$, 
     so  the last term is compactly supported. Moreover, by
     our choice of extension $\widetilde\ell_0$
     and the second part of Proposition \ref{prop:higgs-decomposition}, it is bounded by
     the graph norm of $\psi$. 
    The other terms on the right hand side of \eqref{eqn:expansion} are
    bounded by the same graph norm by using Lemma
    \ref{lem:delta-minus-delta}: the first term by \eqref{eqn:phi1}
    and the multiplication theorem; and the second  and third terms by our
    choice of extensions and \eqref{eqn:phi0}.
    This proves the estimate \eqref{eqn:bracket_estimate}.

    For \eqref{eqn:eta_estimate}, again using the embedding
$L^2_{1,\delta}\hookrightarrow L^4_\delta$, along with  
Proposition \ref{prop:higgs-decomposition}, we have
$$
\Vert \eta\varphi\Vert_{L^2_\delta}\leq \Vert
\eta\Vert_{L^2_\delta}\sup|\varphi_0|+
C\Vert \eta\Vert_{L^2_{1,\delta}}
\Vert \varphi_1\Vert_{L^2_{1,\delta}}
\leq C\Vert \eta\Vert_{L^2_{2,\delta}}
        \Vert\varphi\Vert_{\Dscr_\delta}
        \ .
$$
    On the
    other hand, 
   \begin{align*} 
       \Vert \dbar_{A_0}(\eta\varphi)\Vert_{L^2_\delta}&\leq
\Vert \dbar_{A_0}(\eta)\varphi_0\Vert_{L^2_\delta}
+ \Vert \dbar_{A_0}(\eta)\varphi_1\Vert_{L^2_\delta}
+ \Vert \eta\dbar_{A_0}\varphi\Vert_{L^2_\delta} \\
       &\leq 
       \Vert\eta\Vert_{L^2_{1,\delta}}\sup|\varphi_0|
       +C\Vert\eta\Vert_{L^2_{2,\delta}}\Vert\varphi_1\Vert_{L^2_{1,\delta}}
       +\sup|\eta|\Vert\varphi\Vert_{\Dscr_\delta} \\
       &\leq C\Vert\eta\Vert_{L^2_{2,\delta}}
        \Vert\varphi\Vert_{\Dscr_\delta}
   \end{align*} 
The argument for \eqref{eqn:product_estimate} is similar.
This completes the proof of the Proposition.
\end{proof}

\begin{Corollary} \label{cor:gauge_group_action}
Defining
$$
 \Gcal_{\delta}\times   \Acal_{\delta}\times\Dscr_{\delta} \lra \Acal_{\delta}\times\Dscr_{\delta}
    : (g,(A,\Phi))\mapsto (g(A), g\Phi g^{-1})
$$
    gives a smooth (left) action of the gauge group $\Gcal_{\delta}$ on
    $\Acal_{\delta}\times\Dscr_{\delta}$ which preserves the space
    $\Bcal^{par}_{\delta}$. 
\end{Corollary}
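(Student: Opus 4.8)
The plan is to check three things in turn: that $g\Phi g^{-1}$ is again an admissible Higgs field, i.e.\ lies in $\Dscr_\delta(\slfrak_E\otimes K_X)$; that the assignment defines a left action and is smooth as a map of Banach manifolds; and that it carries $\Bcal^{par}_\delta$ into itself. Since the action $A\mapsto g(A)$ on $\Acal_\delta$ alone was already shown in \S\ref{sec:gauge-groups} to be well defined and smooth, only the Higgs-field factor and the compatibility of the two factors require attention.

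For the first point I would argue via the decompositions already in hand. Write $g=\tilde g_\infty+g'$ and $g^{-1}=\tilde g_\infty'+g''$, where $\tilde g_\infty=\phi_R\cdot\bbold(g)$ and $\tilde g_\infty'=\phi_R\cdot\bbold(g^{-1})$ are the smooth cut-off extensions of the boundary values and $g',g''\in L^2_{2,\delta}(\glfrak_E)$; recall from Proposition \ref{prop:boundary-map} and Lemma \ref{lem:boundary-kernel} that on each $C(p)$ the limits $\bbold(g),\bbold(g^{-1})$ are constant sections lying in the centralizer $\lfrak_p$ of $\hat\alpha(p)$. Write $\Phi=\Phi_0+\Phi_1$ as in Proposition \ref{prop:higgs-decomposition}, with $\Phi_1\in L^2_{1,\delta}$ and $\rbold(\Phi)=\lim_{\tau\to\infty}\Phi_0\in\bigoplus_p\lfrak_p$. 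Expanding $g\Phi g^{-1}$ produces the single ``constant'' term $\tilde g_\infty\Phi_0\tilde g_\infty'$, whose limit at each puncture is $\Ad_{\bbold(g)}\rbold(\Phi)$ --- again in $\lfrak_p$, since $\lfrak_p$ is a subalgebra of $\glfrak_n$ preserved by conjugation by anything commuting with $\hat\alpha(p)$ --- together with cross terms each containing at least one factor from $\{g',g'',\Phi_1\}$. The constant term is bounded, hence in $L^2_{-\delta}$, and its $\dbar_{A_0}$ is compactly supported, because $d_{A_0}$ annihilates constant sections valued in $\lfrak_p$ (Lemma \ref{lem:boundary-kernel}), so only the derivative of the cut-off survives. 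Each cross term, and its $\dbar_{A_0}$, lies in $L^2_\delta$ by the weighted Sobolev multiplication theorems together with estimates analogous to (indeed simpler than, since no hermitian adjoints occur) those in the proof of Proposition \ref{prop:strong}. Hence $g\Phi g^{-1}\in\Dscr_\delta$, with $\rbold(g\Phi g^{-1})=\Ad_{\bbold(g)}\rbold(\Phi)$.

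The action axioms are then immediate: $(gh)\Phi(gh)^{-1}=g(h\Phi h^{-1})g^{-1}$ and $I\cdot\Phi\cdot I=\Phi$, the connection factor having already been treated. Smoothness follows by writing the Higgs-field factor as the composition of the inversion $g\mapsto g^{-1}$, which is smooth on the Banach Lie group $\Gcal_\delta$, with the multiplication $(g,\Phi,k)\mapsto g\Phi k$; the latter is continuous trilinear from $\Rcal_\delta\times\Dscr_\delta\times\Rcal_\delta$ to $\Dscr_\delta$, which is exactly the quantitative content of the estimates used in the first step (one uses $\sup|\eta|\le C\Vert\eta\Vert_{L^2_{2,\delta}}$ from \eqref{eqn:sup_eta} on the $L^2_{2,\delta}$-pieces and Lemma \ref{lem:delta-minus-delta} on the harmonic pieces of $\Rcal_\delta$). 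Finally, $\Bcal^{par}_\delta$ is preserved because the defining relation $\dbar_{g(A)}=g\circ\dbar_A\circ g^{-1}$ on sections of $E$ yields, on endomorphism-valued forms, $\dbar_{g(A)}(g\Phi g^{-1})=g(\dbar_A\Phi)g^{-1}$; since $g(A)\in\Acal_\delta$ and $g\Phi g^{-1}\in\Dscr_\delta$, Remark \ref{rem:general-A} guarantees both sides lie in $L^2_\delta$, so $\dbar_A\Phi=0$ forces $\dbar_{g(A)}(g\Phi g^{-1})=0$.

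I expect the first step to be the real obstacle: because $\Phi$ is only required to lie in $L^2_{-\delta}$ and gauge transformations need not decay, one cannot simply quote a Banach-algebra bound and must instead control the behaviour at the punctures through the Levi decomposition. The structural point making this work is that $\lfrak_p$ is the centralizer of $\hat\alpha(p)$: this is what forces the limiting constants of $g$, $\Phi$, and $g^{-1}$ to multiply back into $\lfrak_p$ and their covariant derivatives to vanish at infinity, so that the only genuinely noncompact piece of $g\Phi g^{-1}$ is precisely the one $\Dscr_\delta$ was designed to accommodate.
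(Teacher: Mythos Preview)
Your argument is along the lines the paper intends --- the paper offers no separate proof, presenting the corollary as an immediate consequence of Proposition \ref{prop:strong} together with the structure of $\Rcal_\delta$ from Proposition \ref{prop:boundary-map}. Your decomposition of $g$, $g^{-1}$, and $\Phi$ into boundary and decaying parts is precisely the machinery those propositions provide, and the smoothness and the invariance of $\Bcal^{par}_\delta$ are handled correctly.

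There is one inaccuracy. You assert that $\dbar_{A_0}(\tilde g_\infty\Phi_0\tilde g_\infty')$ is compactly supported because $d_{A_0}$ annihilates constant $\lfrak_p$-valued sections. That reasoning applies to $\tilde g_\infty$ and $\tilde g_\infty'$, but the $\Phi_0$ produced by Proposition \ref{prop:higgs-decomposition} is \emph{not} a cut-off constant: it is only continuous and bounded with a limit, and $\dbar_{A_0}\Phi_0=\dbar_{A_0}\Phi-\dbar_{A_0}\Phi_1$ is only guaranteed to lie in $L^2_\delta$. The Leibniz term $\tilde g_\infty(\dbar_{A_0}\Phi_0)\tilde g_\infty'$ is therefore (bounded)$\cdot L^2_\delta\cdot$(bounded), hence in $L^2_\delta$ --- which is exactly what you need --- but it is not compactly supported. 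Alternatively, you may take $\Phi_0=\phi_R\cdot\rbold(\Phi)$ from the outset: the estimate \eqref{eqn:tilde_estimate} in the proof of Proposition \ref{prop:higgs-decomposition} shows $\Phi-\phi_R\cdot\rbold(\Phi)\in L^2_{1,\delta}$, and with that choice your compact-support argument goes through verbatim.
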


By Lemma \ref{lem:smooth}, after a gauge transformation in
$\Gcal_{\delta,\ast}$,
we may always
assume a Higgs pair $(A,\Phi)$ is smooth and of the form $(A_0,\Phi)$ on each
$C(p)$.
Therefore, Example \ref{ex:phi} shows  there is a well defined map:
\begin{equation} \label{eqn:residue}
    \mathrm{Res}: \Bcal^{par}_{\delta}\lra \bigoplus_{p\in D} \lfrak_p
\end{equation}
taking $\Phi$ to the collection $\{P_{ij}(0)\}_{\alpha_i(p)=\alpha_j(p)}$.
Note that  this does not depend on the representative in the
$\Gcal_{\delta,\ast}$-orbit of $(A,\Phi)$.
It is also clear that
$\Bcal^{spar}_{\delta}=\mathrm{Res}^{-1}(0)$.

Let us also note the following. Recall the definition of $\rbold$ from
Proposition \ref{prop:higgs-decomposition}.

\begin{Proposition}
For $(A,\Phi)\in \Bcal^{par}_{\delta}$,
$\mathrm{Res}(A,\Phi)=\rbold(\Phi)$.
\end{Proposition}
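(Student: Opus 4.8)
The strategy is to exploit the $\Gcal_{\delta,\ast}$-invariance of both $\mathrm{Res}$ and $\rbold$ in order to reduce to the normal form of Example \ref{ex:phi}, and then to read off $\rbold(\Phi)$ from an explicit decomposition $\Phi=\Phi_0+\Phi_1$ as in Proposition \ref{prop:higgs-decomposition}. For the invariance of $\rbold$: given $g\in\Gcal_{\delta,\ast}$, from $\bbold(g-I)=\bbold(g)-\bbold(I)=0$ and Proposition \ref{prop:boundary-map} we get $u:=g-I\in L^2_{2,\delta}(\glfrak_E)$, and likewise $v:=g^{-1}-I\in L^2_{2,\delta}(\glfrak_E)$ since $\Gcal_{\delta,\ast}$ is a group. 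Then $g\Phi g^{-1}-\Phi=g[\Phi,v]=[\Phi,v]+u[\Phi,v]$, and invoking the continuity of multiplication of $L^2_{2,\delta}$-sections against $\Dscr_\delta$-sections into $L^2_{1,\delta}$ (this is \eqref{eqn:eta_continuity} of Proposition \ref{prop:strong}, together with the analogous right-multiplication map and the weighted multiplication theorem) shows $g\Phi g^{-1}-\Phi\in L^2_{1,\delta}(\glfrak_E\otimes K_X)$; it is traceless, being the difference of two traceless sections, hence lies in $L^2_{1,\delta}(\slfrak_E\otimes K_X)=\ker\rbold$. Thus $\rbold(g\Phi g^{-1})=\rbold(\Phi)$, while $\mathrm{Res}$ is $\Gcal_{\delta,\ast}$-invariant by construction.

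By Lemma \ref{lem:smooth} we may therefore assume $(A,\Phi)=(A_0,\Phi)$ on each $C(p)$, and use the notation of Example \ref{ex:phi}: there $\Phi e_i=\sum_j P_{ij}(z)|z|^{\alpha_j(p)-\alpha_i(p)}\,e_j\otimes(dz/z)$ with each $P_{ij}$ holomorphic on $\Delta(p)$, and by definition $\mathrm{Res}(A,\Phi)=\bigoplus_{p\in D}\ell_p$, where $\ell_p\in\lfrak_p$ is the endomorphism whose matrix in the frame $\{e_i\}$ has entries $(\ell_p)_{ij}=P_{ij}(0)$ when $\alpha_i(p)=\alpha_j(p)$ and $0$ otherwise. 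Note $\tr\ell_p=\sum_i P_{ii}(0)=0$ since $\tr\Phi=0$, so $\ell_p$ is traceless; and since $\ell_p$ commutes with $\widehat\alpha(p)$ it has the same constant matrix in the frames $\{s_i\}$ and $\{e_i\}$, so $\ell_p\otimes(dz/z)$ is $\dbar_{A_0}$-holomorphic on $\Delta^\times(p)$. Put $\widetilde\ell:=\sum_{p\in D}\phi_R\cdot(\ell_p\otimes(dz/z))$, extended by $0$ over $X_0$; this is a smooth bounded section of $\slfrak_E\otimes K_X$ whose limit along each $C(p)$, in the sense of Proposition \ref{prop:higgs-decomposition}, is $\ell_p$.

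The crux is then the claim that $\Phi-\widetilde\ell\in L^2_{1,\delta}(\slfrak_E\otimes K_X)$: granting this, $\Phi=\widetilde\ell+(\Phi-\widetilde\ell)$ is a valid decomposition $\Phi=\Phi_0+\Phi_1$ as in Proposition \ref{prop:higgs-decomposition}, so by the asserted independence of the limit on the decomposition one gets $\rbold(\Phi)=\lim_{\tau\to+\infty}\widetilde\ell=\bigoplus_{p}\ell_p=\mathrm{Res}(A,\Phi)$, as desired. Over a compact subset of $X^\times$ the membership is clear since $\Phi$ is smooth there, so only the behaviour near the punctures matters: on $\{\tau>R+1\}\subset C(p)$ the $(i,j)$-coefficient of $\Phi-\widetilde\ell$ is $P_{ij}(z)|z|^{\alpha_j(p)-\alpha_i(p)}-(\ell_p)_{ij}$, which (together with its $\nabla_0$-derivative) is $O(|z|)$ when $\alpha_i(p)=\alpha_j(p)$ --- and hence in $L^2_{1,\delta}$ because $\delta<2$; is $O(|z|^{\alpha_j(p)-\alpha_i(p)})$ when $\alpha_i(p)<\alpha_j(p)$ --- hence in $L^2_{1,\delta}$ because $\delta<\lambda(p)\le\alpha_j(p)-\alpha_i(p)$; and, using $P_{ij}(0)=0$ from Example \ref{ex:phi}, is $O(|z|^{1-|\alpha_i(p)-\alpha_j(p)|})$ when $\alpha_i(p)>\alpha_j(p)$ --- hence in $L^2_{1,\delta}$ because $\delta<\lambda(p)\le 1-|\alpha_i(p)-\alpha_j(p)|$; the transition region $\{R<\tau<R+1\}$ contributes only a smooth piece on a set with bounded weight. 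I expect the genuinely delicate step to be the $\Gcal_{\delta,\ast}$-invariance of $\rbold$, since that is where the weighted multiplication estimates of Proposition \ref{prop:strong} are really used; the remaining estimates near the punctures are a direct rerun of Example \ref{ex:phi}.
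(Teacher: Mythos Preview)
Your proof is correct and follows the same strategy as the paper: establish $\Gcal_{\delta,\ast}$-invariance of $\rbold$ (which the paper states without detail and you justify via \eqref{eqn:eta_continuity}), reduce to $A=A_0$ on the cylinders, and read off the limit from an explicit decomposition. The only cosmetic difference is your choice of $\Phi_0=\widetilde\ell$ (the pure Levi piece), whereas the paper takes $\Phi_0$ to be the full constant-term matrix $\{P_{ij}(0)|z|^{\alpha_j-\alpha_i}\}$; both have the same limit $\ell_p$, and the resulting $L^2_{1,\delta}$ verification is the same computation either way.
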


\begin{proof}
 First, note that $\rbold(g\Phi g^{-1})=\rbold(\Phi)$ for $g\in
\Gcal_{\delta,\ast}$.  Now if $A=A_0$ on each $C(p)$, then we may take
$$
\Phi_0=\sum_{j=1}^n P_{ij}(0)|z|^{\alpha_j(p)-\alpha_i(p)}
e_j\otimes\frac{dz}{z}
$$
near each $p\in D$. Then $\Phi_0$ has the same limit as the one defined in
\ref{ex:phi}, and by using the expression for $\Phi$ in  the example one checks
that $\Phi-\Phi_0\in L^2_{1,\delta}$. Conversely, since both $\rbold$ and
$\res_{\lfrak}$ are gauge invariant, we may always reduce to this case.
\end{proof}

\begin{Remark} \label{rem:tensor}
    If $A_0^{(1)}$ and $A_0^{(2)}$ are model connections on bundles
    $E^{(1)}$ and $E^{(2)}$ as in \S\ref{sec:gauge-groups}, then
    clearly $E^{(1)}\oplus E^{(2)}$ inherits a model connection. In
    particular, the results of this section apply, \emph{mutatis mutandi}, to Higgs fields
    $\Phi\in L^2_{1,\delta}(\Hom(E^{(1)}, E^{(2)})\otimes K_X)$. 
\end{Remark}

\subsubsection{Relation with parabolic Higgs bundles and nonabelian Hodge}
We now make the connection between  Higgs pairs and Higgs bundles.
Recall the configuration spaces  $\Bcal_{\delta}^{par}$ and
$\Bcal_{\delta}^{spar}$ defined in  
Definition \ref{def:higgs-bundle}. 

\begin{Proposition}[{\sc Higgs pairs and Higgs bundles}]
\label{prop:analytic-higgs}
    Associated to each $\Gcal_{\delta,\ast}$ orbit in
    $\Bcal_{\delta}^{par}$ (resp.\ 
    $\Bcal_{\delta}^{spar}$) there is a unique isomorphism class of  framed
    parabolic (resp.\ strongly parabolic) Higgs bundles. Moreover, the
    residue maps defined in eqs.\ \eqref{eq big Res on moduli} and
    \eqref{eqn:residue} agree.
\end{Proposition}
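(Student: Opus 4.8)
The plan is to work puncture by puncture, reducing everything to the local models of \S\ref{sec:gauge-groups} and the weighted elliptic theory established there. Starting from $(A,\Phi)\in\Bcal_{\delta}^{par}$, the first step is to normalize: by Lemma \ref{lem:smooth} some gauge transformation in $\Gcal_{\delta,\ast}$ makes $A$ smooth with $\dbar_A=\dbar_{A_0}$ on every cylinder $C(p)$, so we may assume $A=A_0$ near $D$. Then $\dbar_A$ is a holomorphic structure on $E|_{X^\times}$, and on $\Delta^\times(p)$ the sections $s_i=|z|^{\alpha_k(p)}e_i$ (with $\alpha_k(p)$ the weight of the block containing $i$) form a $\dbar_{A_0}$-holomorphic frame. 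Using the gap condition \eqref{eqn:delta-assumption}, a $\dbar_{A_0}$-holomorphic section of $L^2_{-\delta}$-growth on $\Delta^\times(p)$ has no pole, so the $\Ocal_X$-span of the $s_i$ glues to a locally free extension $\Ecal$ of $(E|_{X^\times},\dbar_A)$ over $X$; I would emphasize that this extension is intrinsic --- it is the sheaf of $\dbar_A$-holomorphic sections lying in $L^2_{-\delta}$ near $D$ --- and hence independent of the normalization. Setting $\Ecal_{p,j}=\operatorname{span}\{\, s_i(p)\mid \alpha_k(p)\ge\alpha_j(p)\,\}$ with weight $\alpha_j(p)$ gives a weighted flag in $\Ecal_p$, i.e.\ a parabolic structure $\Ecal(\alpha)$, the $\SL(n,\CBbb)$-normalization $\det\Ecal(\alpha)\cong\Ocal(0)$ being automatic from $\tr=0$ and $\|\alpha(p)\|\in\NBbb$; the fixed unitary frames $\{e_i(p)\}$ serve as the framing.

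Next I would put in the Higgs field. Since $\dbar_A\Phi=0$, $\Phi$ is holomorphic on $X^\times$, and the computation of Example \ref{ex:phi} applies: in the frame $\{s_i\}$ one has $\Phi=\sum_{ij}P_{ij}(z)\, s_j\otimes(dz/z)$ with $P_{ij}$ holomorphic on $\Delta(p)$ and $P_{ij}(0)=0$ unless $\alpha_j(p)\ge\alpha_i(p)$. Hence $\Phi$ extends to a section of $\End(\Ecal)\otimes K_X(D)$, it is traceless, and $\res_p\Phi=(P_{ij}(0))$ is upper triangular for the canonical frame, so it preserves $\Ecal_{p,\bullet}$; this makes $(\Ecal(\alpha),\Phi)$ a framed parabolic Higgs bundle. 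If in addition $(A,\Phi)\in\Bcal^{spar}_{\delta}$, then $\Phi\in L^2_{1,\delta}$, i.e.\ $\rbold(\Phi)=0$, which forces $P_{ij}(0)=0$ also when $\alpha_i(p)=\alpha_j(p)$; then $\res_p\Phi$ maps $\Ecal_{p,j}$ into $\Ecal_{p,j+1}$, so the pair is strongly parabolic. For the agreement of residue maps I would combine the identity $\mathrm{Res}(A,\Phi)=\rbold(\Phi)$ proved just above with the description of $\rbold(\Phi)$ from Proposition \ref{prop:higgs-decomposition} and Example \ref{ex:phi}: that limit is exactly the block-diagonal (Levi) part of $\res_p\Phi$, which is by definition the value on $(\Ecal(\alpha),\Phi)$ of the moduli-space residue map \eqref{eq big Res on moduli}; in the strongly parabolic case both sides vanish, matching $\mathrm{Res}^{-1}(0)=\Bcal^{spar}_{\delta}$.

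Finally I must check that a different representative of the $\Gcal_{\delta,\ast}$-orbit gives an isomorphic framed object. Given $g\in\Gcal_{\delta,\ast}$ and normalizing both $(A,\Phi)$ and $g\cdot(A,\Phi)$ as above, the residual gauge transformation --- still denoted $g$ --- is $\dbar_{A_0}$-holomorphic on each $C(p)$; writing it in the frame $\{s_i\}$ and using $\bbold(g)=I$ together with the fact (from the proof of Proposition \ref{prop:boundary-map}) that elements of $L^2_{2,\delta}$ are continuous up to $D$ with limit $0$, one shows that $g$ extends holomorphically across $p$ with $g(p)$ in the unipotent radical $\sU_p$, hence flag-preserving and trivial on the associated graded. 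It then follows that $g$ is a holomorphic isomorphism $\Ecal(\alpha)\to\Ecal'(\alpha)$ intertwining $\Phi$ and $\Phi'$, i.e.\ an isomorphism of framed parabolic (resp.\ strongly parabolic) Higgs bundles. The main obstacle is precisely this last step --- controlling the regularity of $g$ and its limit at $D$ from the mere membership $g-I\in L^2_{2,\delta}$ --- which is where the gap condition \eqref{eqn:delta-assumption} (controlling the singularities of $g$ in the holomorphic frame and pinning down the vanishing entries of $g(p)$) and the Fredholm package of \S\ref{sec:gauge-groups} do the real work; the rest is bookkeeping with the local model and the growth conditions, identical in the two cases.
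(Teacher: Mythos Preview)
Your argument is correct and follows essentially the same route as the paper's proof: normalize via Lemma~\ref{lem:smooth}, extend $\Ecal$ and its parabolic structure using the holomorphic frame $\{s_i\}$, extend $\Phi$ via Example~\ref{ex:phi}, and then check that a based gauge transformation yields a framed isomorphism. You supply more detail than the paper at several points---notably the explicit verification that $g(p)\in\sU_p$ (hence trivial on the associated graded) and the strongly parabolic case---but the logical skeleton is identical; note only that the identity $\mathrm{Res}(A,\Phi)=\rbold(\Phi)$ is stated in the paper just \emph{after} this proposition rather than before, though your direct appeal to Example~\ref{ex:phi} makes the residue agreement self-contained anyway.
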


\begin{proof}
    By Lemma \ref{lem:smooth}, a $\Gcal_{\delta,\ast}$-orbit in
    $\Bcal_{\delta}^{par}$ 
    contains a  pair $(A,\Phi)$, where $\dbar_A$ is a smooth  $\dbar$-operator $\dbar_A$ that is equal to
    $\dbar_{A_0}$ on each $C(p)$. This defines a holomorphic bundle
    $\Ecal^0$ on $X^\times$ with a preferred holomorphic frame $\{s_i\}$ on
    each $C(p)$. Gluing to the trivial bundle on $\Delta(p)$ uniquely
    determines an
    extension of $\Ecal^0$ to a holomorphic bundle $\Ecal$ on $X$.  
    For each $0\leq \alpha<1$, define $\Ecal_\alpha$ to be the sheaf of
    germs generated by $\{s_i\}$ for $\alpha\leq \alpha_i(p)$. This defines
    a parabolic structure on $\Ecal$ with jumps $\alpha(p)$ at $p$. 
    According to the discussion in Example \ref{ex:phi}, 
     $\Phi\in H^0(\End(\Ecal(\alpha))\otimes
    K(D))$, with given residues. Hence, $(\Ecal,\Phi)$ is a parabolic
    Higgs bundle in the sense of Definition \ref{def:parabolic-higgs}. Given two such pairs
    $(A_1,\Phi_1)$  and $(A_2,\Phi_2)$  
    in the same $\Gcal_{\delta,\ast}$ orbit, then the element $g$ such that
$g(A_1,\Phi_1)=(A_2,\Phi_2)$ 
    extends as a holomorphic isomorphism of parabolic Higgs bundles
    $(\Ecal_1(\alpha),\Phi_1)\simeq (\Ecal_2(\alpha),\Phi_2)$. 
\end{proof}

In view of Proposition \ref{prop:analytic-higgs},
we have the following.
\begin{Definition}
    We call a  Higgs pair  $(A,\Phi)\in\Bcal_{\delta}^{par}$ stable (resp.\
    semistable)
    according to the stability (resp.\ semistability) of any of  the isomorphic  parabolic Higgs
bundles in its $\Gcal_{\delta,\ast}$-orbit.
Let $\Bcal_{\delta}^{par,s}$ (resp.\ $\Bcal_{\delta}^{spar,s}$) denote the
open subset of stable parabolic (resp.\ stable strongly parabolic) Higgs
bundles. 
\end{Definition}

At this point, we have made no assumption on the weights $\alpha$, other
than the full flags assumption for the general parabolic case. Consider
now the parabolic  bundle $\Ecal(\alpha)$  determined by extending
$\dbar_{A_0}$ to a bundle $\Ecal$ with parabolic structure,  as in
Proposition \ref{prop:analytic-higgs} (the Higgs field is not relevant here).  We now suppose 
the weights $\alpha$ and the background
connection $A_0$ have been chosen so that $\Ecal(\alpha)$ has parabolic
degree zero, and that  in fact $\Lambda^n(\Ecal(\alpha))$ is
holomorphically trivial.  
Hence,  in particular, $\Vert\alpha(p)\Vert\in \NBbb$ for all $p\in D$.

Let $\onebold_{D,\Vert\alpha\Vert}$ denote a section 
 of 
$$ \Lcal_{D,\Vert\alpha\Vert}:=
\bigotimes_{p\in D}
\Ocal(\Vert\alpha(p)\Vert p)$$
defined by the divisor. This gives a trivialization, unique up to a nonzero
multiplicative constant, of $\Lcal_{D,\Vert\alpha\Vert}$ on $X^\times $.  
Use this to define a singular hermitian metric on $\Lcal_{D,\Vert\alpha\Vert}$
by declaring $\Vert\onebold_{D,\Vert\alpha\Vert}\Vert=1$.
Then the product of this with  the background metric on $E\to X^\times$
combine to give a smooth metric on $\Lambda^n\Ecal(\alpha)$.
Since we assume this is trivial, there is a  modification of the product metric,  unique up to a nonzero
multiplicative constant,  which makes $\Lambda^n\Ecal(\alpha)$ flat. 
We can then choose a trivialization of $\Lambda^n\Ecal(\alpha)$ by taking a
global nonzero flat section.

Since all the bundles in 
$\Bcal_{\delta}^{par}$ (resp.\ 
    $\Bcal_{\delta}^{spar}$) 
    induce the same structure on $\Lambda^n\Ecal(\alpha)$, the
    trivialization may be fixed once and for all. We record this discussion in the
    following.

\begin{Proposition} \label{prop:analytic-sl-higgs}
Assume the weights and  background
connection $A_0$ have been chosen so that 
 $\Lambda^n(\Ecal(\alpha))$ is trivial. 
 Then 
    associated to each $\Gcal_{\delta,\ast}$ orbit in
    $\Bcal_{\delta}^{par}$ (resp.\ 
    $\Bcal_{\delta}^{spar}$) there is a unique isomorphism class of  framed
    parabolic (resp.\ strongly parabolic) $\SL(n,\CBbb)$-Higgs bundles. 
\end{Proposition}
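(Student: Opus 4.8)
The plan is to deduce this from Proposition~\ref{prop:analytic-higgs} together with the determinant bookkeeping carried out in the paragraphs immediately preceding the statement. Given a $\Gcal_{\delta,\ast}$-orbit in $\Bcal_{\delta}^{par}$ (resp.\ $\Bcal_{\delta}^{spar}$), first use Lemma~\ref{lem:smooth} to choose a representative $(A,\Phi)$ with $\dbar_A$ smooth and equal to $\dbar_{A_0}$ on each $C(p)$, and then apply Proposition~\ref{prop:analytic-higgs} to obtain a framed parabolic (resp.\ strongly parabolic) Higgs bundle $(\Ecal(\alpha),\Phi)$ whose residue agrees with $\mathrm{Res}(A,\Phi)$ from \eqref{eqn:residue}. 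What remains is to verify the two additional conditions of Definition~\ref{def:parabolic-higgs} in the $\SL(n,\CBbb)$-setting --- namely $\Tr\Phi=0$ together with flag preservation by $\res_p\Phi$, and the existence of a trivialization $\det(\Ecal(\alpha))\cong\Ocal(0)$ --- and then to check that the trivialization can be chosen compatibly over the whole configuration space.

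The conditions on $\Phi$ are immediate. Since $\Acal_\delta$ and $\Dscr_\delta(\slfrak_E\otimes K_X)$ are modeled on $\slfrak_E$-valued tensors, $\Tr\Phi=0$ identically. And Example~\ref{ex:phi} shows that, in the canonical holomorphic frame $\{s_i\}$ on each $C(p)$, the matrix of $\res_p\Phi$ is upper triangular, hence preserves the flag $\Ecal_{p,1}\supset\cdots\supset\Ecal_{p,n_p}\supset\{0\}$ (and in the strongly parabolic case sends $\Ecal_{p,j}$ into $\Ecal_{p,j+1}$, since then the relevant coefficients $P_{ij}(0)$ vanish). For the determinant: the parabolic structure induced on $\Lambda^n\Ecal$ is a single weight $\Vert\alpha(p)\Vert$ at each $p$, and the standing hypothesis that $\Lambda^n(\Ecal(\alpha))$ is holomorphically trivial forces $\Vert\alpha(p)\Vert\in\NBbb$ for all $p$ (as noted before the statement), whence $\beta(p)=\Vert\alpha(p)\Vert-\lfloor\Vert\alpha(p)\Vert\rfloor=0$ in \eqref{eq:tensorofpar} and $\det(\Ecal(\alpha))\cong\Ocal(0)$. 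Concretely, the section $\onebold_{D,\Vert\alpha\Vert}$ of $\Lcal_{D,\Vert\alpha\Vert}$ together with the background metric on $E$ produces a smooth metric on $\Lambda^n\Ecal(\alpha)$, which by triviality can be modified --- uniquely up to a positive scalar --- to a flat one, and a global nonvanishing flat section then furnishes the desired trivialization, unique up to $\CBbb^\ast$.

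The only point needing comment is uniformity of this choice across the orbit space and its compatibility with the $\Gcal_{\delta,\ast}$-action. Here one uses that every $A\in\Acal_\delta$ differs from $A_0$ by a trace-free perturbation, so the induced $\dbar$-operator on $\Lambda^n E$, and hence the holomorphic line bundle $\Lambda^n\Ecal^0$ on $X^\times$, its extension across $D$, and the resulting flat line bundle $\Lambda^n\Ecal(\alpha)$, are literally the same for every $(A,\Phi)\in\Bcal_\delta^{par}$; thus a single flat trivializing section may be fixed once and for all. Finally, if $(A_1,\Phi_1)$ and $(A_2,\Phi_2)$ lie in the same $\Gcal_{\delta,\ast}$-orbit, the holomorphic isomorphism $g$ of framed parabolic Higgs bundles from Proposition~\ref{prop:analytic-higgs} satisfies $\det g=1$ by definition of $\Gcal_\delta$, hence acts as the identity on $\Lambda^n\Ecal$ and preserves the chosen trivialization, so it is an isomorphism of framed $\SL(n,\CBbb)$-parabolic Higgs bundles. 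This gives the asserted bijection. There is no substantial obstacle: everything rests on Proposition~\ref{prop:analytic-higgs} and on the elementary fact that a topologically and holomorphically trivial line bundle has a one-dimensional space of flat sections, so the only mild care required is in matching the abstract determinant \eqref{eq:tensorofpar} with the extension of $\Lambda^n\Ecal^0$.
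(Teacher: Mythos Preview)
Your proposal is correct and follows essentially the same approach as the paper. In fact, the paper does not give a separate proof of this proposition at all: it is stated as ``recording'' the discussion in the preceding paragraphs, and your argument is a faithful (and somewhat more explicit) reconstruction of that discussion---reducing to Proposition~\ref{prop:analytic-higgs}, observing that the $\slfrak_E$-valued perturbations leave $\Lambda^n\Ecal$ unchanged so a single flat trivialization works for all of $\Bcal_\delta^{par}$, and noting that $\det g=1$ makes the isomorphisms compatible with this trivialization.
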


Finally, we end this subsection by  quoting  the important result on the existence of
Hermitian-Einstein metrics on parabolic Higgs pairs, proved by Simpson,
Biquard, and Mochizuki. 
For $A\in \Acal_{\delta}$, write $A=A_0+ a$, with $a\in
L^2_{1,\delta}(\sufrak_E\otimes T^\ast X^\times)$. Let
$$F_A=F_{A_0}+d_{A_0}a+a\wedge a$$ denote the curvature. Then since
$F_{A_0}$ vanishes in a neighborhood of $D$, it follows that $F_A\in
L^2_\delta$. 
We denote by $$F_A^\perp := F_A-\frac{1}{n}\tr(F_A)\cdot \Ibold$$
 the traceless part of $F_A$.
Given $\Phi\in \Dscr_\delta(\slfrak_E\otimes K_X)$,  and assuming full flags,  we also have 
from \eqref{eqn:bracket_estimate} that $[\Phi,\Phi^\ast]\in L^2_\delta$.
The \emph{Hitchin equations} for a parabolic Higgs pair $(A,\Phi)\in \Bcal_{\delta}^{par}$
are:
\begin{equation} \label{eqn:hitchin}
    F_A^\perp + [\Phi,\Phi^\ast]=0\ .
\end{equation}

We have the following parabolic version of the Hitchin-Simpson theorem.
\begin{Theorem}[{\sc Nonabelian Hodge Theorem}] \label{thm:hk}
    Let $(A,\Phi)\in \Bcal_{\delta}^{par,s}$. Then there is $g\in
    \Gcal_{\delta}$ such that the pair $g(A,\Phi)$ satisfies
    \eqref{eqn:hitchin}. Moreover, $g$ is unique up to the action of
    $\Kcal_{\delta}$.
\end{Theorem}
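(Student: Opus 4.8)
The statement is the existence-and-uniqueness of a Hermitian-Einstein metric for a stable parabolic Higgs pair, phrased in the gauge-theoretic language set up in this section. This is precisely the content of Simpson's noncompact curve theorem (Theorem \ref{thm hitchin eq}), but here it must be reconciled with the weighted Sobolev framework: one needs to know that the solution metric from Simpson's theorem is compatible with the background connection $A_0$ and the weight data $\alpha$ in the sense that the difference of Chern connections lies in $L^2_{1,\delta}$, and that the complexified gauge element realizing it is in $\Gcal_\delta$. So my approach would be: (1) invoke Proposition \ref{prop:analytic-sl-higgs} to pass from $(A,\Phi)\in\Bcal^{par,s}_\delta$ to an honest stable $\SL(n,\C)$-parabolic Higgs bundle $(\Ecal(\alpha),\Phi)$ of parabolic degree zero; (2) apply Simpson's existence theorem (Theorem \ref{thm hitchin eq}, together with Remark \ref{Remark local form of solution} which, under full flags or strong parabolicity, guarantees the solution metric has the diagonal local model \eqref{eq local model no logs} with no log terms) to obtain a compatible acceptable metric $h$ solving $F_A+[\Phi,\Phi^{*_h}]=0$ on $X^\times$; (3) translate ``$h$ is compatible and acceptable with local model \eqref{eq local model no logs}'' into the statement that $h=g^*h_0$ for a complex gauge transformation $g$, and then verify $g\in\Gcal_\delta$ and that the new connection lies in $\Acal_\delta$; (4) deduce uniqueness up to $\Kcal_\delta$ from the uniqueness part of Simpson's theorem together with the fact that two metrics in the same $L^2_{1,\delta}$-class differ by a $\Kcal_\delta$-gauge transformation.

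\textbf{Carrying out the steps.} For step (3), the key is a decay estimate: the singular behavior of the harmonic metric $h$ at each puncture is, by Simpson's asymptotic analysis and the no-log-term hypothesis, modeled exactly on $\diag(|z|^{2\alpha_1},\dots,|z|^{2\alpha_{n_p}})$ with the subleading terms decaying like a positive power of $|z|$; comparing with the background metric $h_0$ built from $A_0$ (which has precisely this diagonal model), one sees that $g=h_0^{-1/2}h\,h_0^{-1/2}$ (in an adapted frame) satisfies $g-I = O(|z|^{\epsilon})$ together with one-derivative control, which places $\dbar_{A_0}g\in L^2_{1,\delta}$ for $\delta$ as in \eqref{eqn:delta-assumption} and hence $g\in\Gcal_\delta$ with $\bbold(g)$ a positive element of $\bigoplus_p\lfrak_p$; correspondingly the transformed Chern connection differs from $A_0$ by an element of $L^2_{1,\delta}(\sufrak_E\otimes T^*X^\times)$, so $g(A)\in\Acal_\delta$, and $[\Phi,\Phi^{*}]\in L^2_\delta$ by Proposition \ref{prop:strong}. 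This is the step that genuinely uses the full-flags / strongly-parabolic restriction, via Remark \ref{Remark local form of solution}.

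\textbf{Uniqueness.} For the uniqueness clause, suppose $g_1(A,\Phi)$ and $g_2(A,\Phi)$ both solve \eqref{eqn:hitchin}. Setting $g=g_2 g_1^{-1}$, one has two Hermitian-Einstein metrics for the same parabolic Higgs bundle that are mutually bounded with $L^2_{1,\delta}$-comparable connections; Simpson's uniqueness theorem for the harmonic metric on a stable parabolic Higgs bundle (the metric is unique up to a positive scalar, which here is pinned down to $1$ by the $\SL(n,\C)$/parabolic-degree-zero normalization) forces $h_1$ and $h_2$ to agree, i.e. $g g^* = I$, so $g\in\Kcal_\delta$. Alternatively, and more in the gauge-theoretic spirit, one can run the standard Donaldson-type argument: the function $\sigma = \tr(g)+\tr(g^{-1})$ for $g=h_1^{-1}h_2$ is subharmonic off $D$ by the Weitzenb\"ock/Bochner identity for the difference of the two Hitchin equations, it is bounded because the metrics are mutually bounded, and the weighted-Sobolev decay of the two connections shows $\sigma-2n$ decays at the punctures, so the maximum principle on $X^\times$ forces $\sigma\equiv 2n$ and hence $g=I$. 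Either route closes the argument.

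\textbf{Main obstacle.} The real work is step (3): extracting from Simpson's (and Biquard's, Mochizuki's) asymptotic description of the harmonic metric the precise $L^2_{1,\delta}$-membership of the gauge transformation, i.e. matching the analytic regularity of the solution with the weighted Sobolev spaces fixed by \eqref{eqn:delta-assumption}. One must be careful that $\delta$ is small enough (below $\lambda(p)$) that the decay rate of the subleading terms in $h/h_0$ actually lands in $L^2_{1,\delta}$, and that no logarithmic factors appear — which is exactly where Assumption A is indispensable and why the general parabolic case is excluded. Everything else is bookkeeping: the passage to Higgs bundles (Proposition \ref{prop:analytic-sl-higgs}), the continuity estimates (Proposition \ref{prop:strong}), and the standard uniqueness argument are all in place.
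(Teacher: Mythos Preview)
Your outline is correct in spirit, but the paper takes a shorter and rather different route. Instead of invoking Simpson's theorem (Theorem \ref{thm hitchin eq}) in the ``acceptable metric'' language and then laboring to show that the resulting gauge transformation lands in $\Gcal_\delta$ via asymptotic analysis of the harmonic metric, the paper cites Biquard's version of the theorem \cite[Thm.\ 8.1]{Biquard:97}, which is \emph{already} formulated in weighted Sobolev spaces. The only remaining point is a bookkeeping step: Biquard produces the solution after acting by a gauge transformation in $\widehat L^p_{2,\widetilde\delta}$ for some $p>2$ and $\widetilde\delta>\delta$ still satisfying \eqref{eqn:delta-assumption}, and the paper observes that the embedding $L^p_{2,\widetilde\delta}\subset L^2_{2,\delta}$ (for $p$ close to $2$) together with the identification $\Rcal_\delta\simeq\widehat L^2_{2,\delta}$ from Proposition \ref{prop:boundary-map} places this gauge transformation in $\Gcal_\delta$. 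Uniqueness is declared ``standard''.

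What each approach buys: your route is conceptually transparent and makes explicit where Assumption A enters (no log terms, hence polynomial decay of $h/h_0$), but your step (3)---extracting the precise $L^2_{1,\delta}$ membership from Simpson's asymptotics---is exactly the hard analytic content of Biquard's paper, so you would effectively be redoing or re-citing that work anyway. The paper's route is terse but efficient: it outsources the entire analytic core to Biquard and reduces the proof to a Sobolev embedding. Your uniqueness argument via the subharmonicity of $\tr(g)+\tr(g^{-1})$ is the standard one the paper alludes to without spelling out.
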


\begin{proof}[Sketch of proof]
    In the context of weighted Sobolev spaces, this version of the theorem
    is due to Biquard \cite[Thm.\ 8.1]{Biquard:97}. 
Let us note the following. 
    First,  by Proposition  \ref{prop:boundary-map} we have that 
    $\Rcal_{\delta}\simeq \widehat L^2_{2,\delta}$, where the latter
    space
    is defined in \cite[p.\ 53]{Biquard:97}. Choose any $\widetilde
    \delta>\delta$  still satisfying the assumption \eqref{eqn:delta-assumption}. On
    \cite[p.\ 77]{Biquard:97} it is shown that there is a solution to
    \eqref{eqn:hitchin} after acting by a gauge transformation in $\widehat
    L^p_{2,\widetilde\delta}$ for any $p>2$. Now by the embedding 
    $ L^p_{2,\widetilde\delta}\subset L^2_{2,\delta}$ for $p$ sufficiently
    close to $2$
    (see \cite[Lemme 4.5]{Biquard:97}) we conclude that there is in fact a
    solution in the orbit of $\Gcal_{\delta}$.
    The uniqueness statement is standard.
\end{proof}

As in the case of gauge theory on closed manifolds,
we may alternatively regard \eqref{eqn:hitchin} as an
equation for a metric. Recall that $h_0$ denotes the hermitian metric on $E\to
X^\times$. Then for each $g\in \Gcal_{\delta}$, we define a new metric
$g(h_0)$ by the rule: $\langle u,v\rangle_{g(h_0)}:=\langle gu,
gv\rangle_{h_0}$.  
We define the space of admissible metrics:
\begin{equation} \label{eqn:metrics}
    \Mcal_\delta :=\left\{ g(h_0) \mid g\in \Gcal_{\delta} \right\}
    \simeq \Gcal_\delta/\Kcal_\delta
\end{equation}
Then we have the following reformulation of Theorem \ref{thm:hk}.

\begin{Theorem}[{\sc Harmonic metric}] \label{thm:hk-metric}
    Let $(A,\Phi)\in \Bcal_{\delta}^{par,s}$
    and $(\Ecal(\alpha),\Phi)$ a stable parabolic Higgs bundle  associated  to
    $(A,\Phi)$ from Proposition \ref{prop:analytic-higgs}.  
    Then there exists a unique metric $h\in \Mcal_\delta$
    satisfying
$$
    F_{(\Ecal,h)}^\perp + [\Phi,\Phi^{\ast_h}]=0\ ,
    $$
    where $(\Ecal,h)$ denotes the Chern connection for $\Ecal$ with the
    metric $h$. 
\end{Theorem}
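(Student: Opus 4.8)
The plan is to deduce this as a reformulation of Theorem~\ref{thm:hk} through the standard dictionary that trades a variable hermitian metric (with the holomorphic structure held fixed) for a variable holomorphic structure moved by the complex gauge group (with the metric held fixed). Fix $(A,\Phi)\in\Bcal_\delta^{par,s}$ and let $(\Ecal(\alpha),\Phi)$ be the associated parabolic Higgs bundle from Proposition~\ref{prop:analytic-higgs}; for $g\in\Gcal_\delta$ set $h=g(h_0)$. The key step, which I would record as a short lemma proved by the routine computation, is the identity: the Chern connection $(\Ecal,h)$ equals $g^{-1}\circ d_{g(A)}\circ g$, where $d_{g(A)}$ is the $h_0$-Chern connection of $g\dbar_A g^{-1}=\dbar_{g(A)}$, and $\Phi^{\ast_h}=g^{-1}(g\Phi g^{-1})^{\ast_{h_0}}g$. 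Since curvature and the traceless projection are preserved by conjugation, this yields
\[
F_{(\Ecal,h)}^\perp+[\Phi,\Phi^{\ast_h}]
= g^{-1}\Bigl(F_{g(A)}^\perp+[\,g\Phi g^{-1},\,(g\Phi g^{-1})^{\ast_{h_0}}\,]\Bigr)g\ ,
\]
so the metric equation of the theorem for $h=g(h_0)$ is equivalent to the Hitchin equation \eqref{eqn:hitchin} for the Higgs pair $g(A,\Phi)$.

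With this in hand, existence is immediate: by Theorem~\ref{thm:hk} there is $g\in\Gcal_\delta$ with $g(A,\Phi)$ satisfying \eqref{eqn:hitchin}, and then $h:=g(h_0)\in\Mcal_\delta$ solves $F_{(\Ecal,h)}^\perp+[\Phi,\Phi^{\ast_h}]=0$. I would pause to verify that all terms lie in the correct weighted spaces. The connection $(\Ecal,h)$ is $h$-unitary rather than $h_0$-unitary, so it need not belong to $\Acal_\delta$, but its conjugate $d_{g(A)}$ does, because $g(A)\in\Acal_\delta$; hence $F_{g(A)}^\perp\in L^2_\delta$ as in the discussion preceding \eqref{eqn:hitchin}. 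Since $g(A,\Phi)\in\Bcal_\delta^{par}$ by Corollary~\ref{cor:gauge_group_action}, the bracket $[\,g\Phi g^{-1},(g\Phi g^{-1})^{\ast_{h_0}}\,]$ lies in $L^2_\delta$ by \eqref{eqn:bracket_estimate} under the full flags hypothesis, and trivially so in the strongly parabolic case where $\Phi\in L^2_{1,\delta}$. As elements of $\Gcal_\delta\subset\Rcal_\delta$ and their inverses act by bounded multiplication, conjugation by $g$ keeps these in $L^2_\delta$, so $F_{(\Ecal,h)}^\perp$ and $[\Phi,\Phi^{\ast_h}]$ are in $L^2_\delta$ and the equation has content. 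Finally, because $\det g=1$, the metric $g(h_0)$ induces on $\Lambda^n\Ecal(\alpha)$ the same (flat) structure as $h_0$; thus $F_{(\Ecal,h)}=F_{(\Ecal,h)}^\perp$ and $h$ is compatible with the fixed $\SL(n,\CBbb)$-structure.

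For uniqueness, suppose $h_1=g_1(h_0)$ and $h_2=g_2(h_0)$ both solve the equation. By the identity above, $g_1(A,\Phi)$ and $g_2(A,\Phi)$ both satisfy \eqref{eqn:hitchin}, so by the uniqueness clause of Theorem~\ref{thm:hk} we have $g_2=kg_1$ for some $k\in\Kcal_\delta$; since $kk^\ast=I$ with respect to $h_0$, $\langle u,v\rangle_{h_2}=\langle kg_1u,kg_1v\rangle_{h_0}=\langle g_1u,g_1v\rangle_{h_0}=\langle u,v\rangle_{h_1}$, whence $h_1=h_2$. The only genuinely delicate points in this argument are the sign and convention bookkeeping in the metric--connection dictionary of the first paragraph and the interpretation of $F_{(\Ecal,h)}$ and $[\Phi,\Phi^{\ast_h}]$ for the metric $h$, which is singular near $D$; both are disposed of by conjugating back to $g(A)\in\Acal_\delta$ and invoking Corollary~\ref{cor:gauge_group_action} together with \eqref{eqn:bracket_estimate}, so no analysis beyond Theorem~\ref{thm:hk} itself is required.
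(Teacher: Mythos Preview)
Your proposal is correct and takes exactly the approach the paper intends: the paper simply presents Theorem~\ref{thm:hk-metric} as ``the following reformulation of Theorem~\ref{thm:hk}'' without writing out a proof, and you have supplied precisely the standard metric/gauge dictionary (together with the weighted-space bookkeeping via Corollary~\ref{cor:gauge_group_action} and \eqref{eqn:bracket_estimate}) that justifies this reformulation. The identification $\Mcal_\delta\simeq\Gcal_\delta/\Kcal_\delta$ in \eqref{eqn:metrics} is exactly what makes your uniqueness argument work.
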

The metric in the theorem above is called the \emph{harmonic metric} for $(\Ecal(\alpha),\Phi)$.

\subsection{Analytic Dolbeault moduli spaces}  \label{sec:dolbeault}
Fix $\alpha$, and suppose $\delta$ satisfies
    \eqref{eqn:delta-assumption}. 
Set
$$\Mbold^{par,s}_{\Dol}(\alpha,\delta)=\Gcal_{\delta}\backslash\Bcal_{\delta}^{par,s}\ ,\ \Mbold^{spar,s}_{\Dol}(\alpha,\delta)=
 \Gcal_{\delta}\backslash\Bcal_{\delta}^{spar,s}$$
The main goal of this section is to prove the following:

\begin{Theorem}[{\sc Moduli spaces of Higgs bundles}]
\label{thm:analytic-moduli}
    \begin{enumerate}
        \item
            If nonempty, 
            the moduli space $\Mbold^{spar,s}_{\Dol}(\alpha,\delta)$ is a smooth complex manifold of
            complex dimension 
            $$\dim\Mbold^{spar,s}_{\Dol}(\alpha,\delta)=(2g-2)\dim\sG+2\sum_{p\in
            D}\dim(\sG/\sP_p) \ .$$
\item 
    Assume full flags. If nonempty,  
            the moduli space $\Mbold^{par,s}_{\Dol}(\alpha,\delta)$ is a smooth complex manifold of
complex dimension 
            $$\dim\Mbold^{par,s}_{\Dol}(\alpha,\delta)=(2g-2+d)\dim\sG=
            \dim\Mbold^{spar,s}_{\Dol}(\alpha,\delta)+\sum_{p\in D}\dim(\sL_p)\ .$$
            \item    
   The assignment of a parabolic Higgs bundle to $(A,\Phi)$
(Proposition \ref{prop:analytic-higgs})    induces biholomorphisms
   $$ 
        \Mbold^{par,s}_{\Dol}(\alpha,\delta) \isorightarrow \Pcal_0^s(\alpha) \quad ,\quad
        \Mbold^{spar,s}_{\Dol}(\alpha,\delta) \isorightarrow
        \Scal\Pcal_0^s(\alpha) \ .
   $$ 
    \end{enumerate}
\end{Theorem}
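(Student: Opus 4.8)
The plan is to run the standard Kuranishi-slice construction of a gauge-theoretic moduli space in the weighted Sobolev framework of \S\ref{sec:gauge-groups}--\S\ref{sec:dolbeault}, and then compare the result with the algebraic moduli spaces via their coarse universal property; parts (1)--(2) are the local statement (smoothness and dimension) and part (3) is the global identification. Fix a stable Higgs pair $(A,\Phi)\in\Bcal_{\delta}^{par,s}$ (resp.\ $\Bcal_{\delta}^{spar,s}$) and form the deformation complex
$$C^\bullet(A,\Phi):\quad \Rcal_\delta^0 \xrightarrow{\,D_1\,} L^2_{1,\delta}(\slfrak_E\otimes \overline K_X)\oplus \mathcal D \xrightarrow{\,D_2\,} L^2_\delta(\slfrak_E\otimes K_X\otimes\overline K_X),$$
with $\mathcal D=\Dscr_\delta(\slfrak_E\otimes K_X)$ in the full-flags parabolic case and $\mathcal D=L^2_{1,\delta}(\slfrak_E\otimes K_X)$ in the strongly parabolic case, where $D_1\eta=(-\dbar_A\eta,[\Phi,\eta])$ is the infinitesimal $\Gcal_\delta$-action and $D_2(\beta,\varphi)=\dbar_A\varphi+[\beta,\Phi]$ is the linearization of $\dbar_A\Phi=0$. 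All products are well-defined and continuous by Propositions \ref{prop:higgs-decomposition} and \ref{prop:strong}, and $D_2D_1=0$ because $\dbar_A\Phi=0$. The rolled-up operator $D_1^{\ast_\delta}+D_2$ is Fredholm: on each cylinder $C(p)$ it is, after the reduction of Lemma \ref{lem:smooth}, a bounded perturbation of the translation-invariant operator attached to $(A_0,0)$, which is Fredholm by \cite[Thm.\ 1.1]{LockhartMcOwen:85} under the assumption \eqref{eqn:delta-assumption}, while the perturbation is compact on the compact core $X_0$.

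I would then show $H^0(C^\bullet)=H^2(C^\bullet)=0$. A class in $\ker D_1$ is, via Proposition \ref{prop:analytic-higgs}, a traceless parabolic endomorphism of the associated stable parabolic Higgs bundle $(\Ecal(\alpha),\Phi)$ commuting with $\Phi$, hence a scalar, hence $0$. For $H^2$, the weighted Serre duality pairing of $C^\bullet$ (weight $\delta$) against the same complex with weight $-\delta$ identifies $H^2(C^\bullet)$ with the dual of the space of $L^2_{-\delta}$ infinitesimal automorphisms; an elliptic regularity and growth argument as in Example \ref{ex:phi} shows such a solution is again a genuine parabolic endomorphism commuting with $\Phi$, so it vanishes by stability. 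With $H^2=0$ the Kuranishi obstruction map vanishes. Combining this with the Hitchin--Kobayashi correspondence (Theorems \ref{thm:hk} and \ref{thm:hk-metric}), which places each stable orbit in a single $\Kcal_\delta$-orbit of Hermitian--Einstein pairs and thereby reduces the action to the properly-acting unitary gauge group, one gets near $[(A,\Phi)]$ a complex-analytic slice modeled on $H^1(C^\bullet)$ on which $\Gcal_\delta/Z$ acts freely (by stability). Hence $\Mbold^{par,s}_{\Dol}(\alpha,\delta)$ (resp.\ $\Mbold^{spar,s}_{\Dol}(\alpha,\delta)$) is a smooth complex manifold of dimension $\dim_{\C}H^1(C^\bullet)$.

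Since $H^0=H^2=0$ we have $\dim_{\C}H^1(C^\bullet)=-\chi(C^\bullet)$, and I would compute this index by excision. The compact part contributes the usual Higgs-complex Euler characteristic, giving $-\chi=(2g-2)\dim\sG$ (Riemann--Roch for $\slfrak_E$ and $\slfrak_E\otimes K$, exactly as on a closed surface), and each end $C(p)$ contributes a boundary term governed by the spectrum of $\nabla_{\partial_p}$ from Remark \ref{rem:spectrum}; because $\delta$ lies below the least positive eigenvalue by \eqref{eqn:delta-assumption}, only the zero mode enters, and the end term is $2\dim(\sG/\sP_p)$ in the strongly parabolic case and $\dim\sL_p+2\dim(\sG/\sP_p)=\dim\sG$ in the full-flags case. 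Summing yields $(2g-2)\dim\sG+2\sum_{p\in D}\dim(\sG/\sP_p)$ and $(2g-2+d)\dim\sG$ respectively, and the displayed identity relating the two formulas is \eqref{eqn:dimensions}.

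For part (3): by Proposition \ref{prop:analytic-sl-higgs} each $\Gcal_\delta$-orbit in $\Bcal_\delta^{par,s}$ (resp.\ $\Bcal_\delta^{spar,s}$) determines an isomorphism class of stable $\SL(n,\C)$-parabolic (resp.\ strongly parabolic) Higgs bundle, hence a map to $\Pcal_0^s(\alpha)$ (resp.\ $\Scal\Pcal_0^s(\alpha)$). Injectivity: a holomorphic isomorphism of two such bundles is $\dbar_{A_0}$-holomorphic on each $C(p)$ and extends over $p$ (it matches the chosen extensions), so it is smooth on $X^\times$ with $d_{A_0}g\in L^2_{1,\delta}$, hence lies in $\Gcal_\delta$ --- this is the converse of the argument in the proof of Proposition \ref{prop:analytic-higgs}. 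Surjectivity: a smooth trivialization of a stable parabolic Higgs bundle on $X^\times$ together with a parabolic-adapted holomorphic frame near each $p$, normalized so that $\dbar_\Ecal=\dbar_{A_0}$ on each $C(p)$, exhibits it as a pair in $\Bcal_\delta^{par}$, the conditions $\dbar_A-\dbar_{A_0}\in L^2_{1,\delta}$ and $\Phi\in\Dscr_\delta$ following from \eqref{eqn:delta-assumption} and the estimate of Example \ref{ex:phi}, and it is stable by hypothesis. Finally, the Kuranishi slice carries a complex-analytic family of stable parabolic Higgs bundles, so the coarse moduli property of $\Pcal_0^s(\alpha)$ \cite{YokogawaParModuli} makes the map holomorphic; since $C^\bullet$ computes the same Zariski tangent space as the algebraic deformation theory of parabolic Higgs bundles, the map is a local biholomorphism, and being bijective it is a biholomorphism, and likewise in the strongly parabolic case. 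The delicate points --- which is where I expect the real work to lie --- are the end-term index computation, keeping straight the $\dim\sL_p$ versus $\dim\sU_p$ bookkeeping that separates the two cases, and the related $H^2$-vanishing, where one must identify $L^2_{-\delta}$ solutions with honest parabolic morphisms before invoking stability; the Fredholmness of $C^\bullet$ in the non-strongly-parabolic full-flags case is exactly where Proposition \ref{prop:strong} is indispensable.
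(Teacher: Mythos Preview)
Your overall strategy---Kuranishi slice built from a Fredholm deformation complex, vanishing of $H^0$ and $H^2$ via stability and Serre duality, index computation for the dimension, then comparison with the algebraic space---is the same as the paper's. There are, however, two organizational differences worth flagging. First, you take the full gauge algebra $\Rcal_\delta^0$ as the domain of $D_1$ and quotient by $\Gcal_\delta$ in one step, whereas the paper uses $L^2_{2,\delta}(\slfrak_E)$ and first constructs the \emph{framed} moduli space $\Mbold^{par,s}_{\Dol,\ast}=\Gcal_{\delta,\ast}\backslash\Bcal^{par,s}_\delta$ via Proposition~\ref{prop:kim}, then passes to $\Mbold^{par,s}_{\Dol}$ by the free proper action of the finite-dimensional residual group $\overline\Lbold$. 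Both routes give the same dimension (your domain is a finite-rank extension of theirs, so the index shifts by exactly $\sum_p\dim\sL_p$), but the paper's two-step construction makes the adjoint $d_1^{\ast_\delta}$ and the slice condition live in standard weighted Sobolev spaces, and it is the framed space that later carries the holomorphic symplectic form. Second, your appeal to the Hitchin--Kobayashi correspondence to ``reduce to the properly-acting unitary gauge group'' is unnecessary and a bit misleading: the paper does not use Theorems~\ref{thm:hk}--\ref{thm:hk-metric} in this proof at all, and instead establishes the local homeomorphism of the slice onto its image directly (Proposition~\ref{prop:kim} and Lemma~\ref{lem:mpim}). For part~(3), the paper goes beyond your bijectivity-plus-local-biholomorphism sketch: it builds an explicit universal family on $X\times\Scal^{par}_\delta(A,\Phi)$, argues that the framed analytic space represents the analytic moduli functor of framed parabolic Higgs bundles, and then invokes \cite{Miyajima:89} and the $\overline\Lbold$-equivariance to identify with the analytification of the algebraic moduli space of \cite{LogaresMartens}; your claim that ``$C^\bullet$ computes the same Zariski tangent space'' is exactly what this machinery makes precise.
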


We note that previous constructions exist in the strongly parabolic case
(e.g.\ \cite{Konno:93}) and for rank $2$ parabolic bundles \cite{Nakajima:96}.
Below we consider the general case of parabolic bundles under Assumption A of
the introduction.
We will actually first construct the \emph{framed} moduli spaces 

$$\Mbold^{par,s}_{\Dol,\ast}(\alpha,\delta)=\Gcal_{\delta,\ast}\backslash\Bcal_{
\ delta}^{par,s}\quad
 ,\quad
\Mbold^{spar,s}_{\Dol,\ast}(\alpha,\delta)=\Gcal_{\delta,\ast}\backslash\Bcal_{\
delta}^{spar,s}$$
as
finite dimensional complex manifolds. It can be shown (cf.\
\cite[Prop.\ 7.1.14]{DiffGeomCompVectBun}) that the  residual gauge
group $\overline\Lbold$  acts freely and  properly  on 
$\Mbold^{par,s}_{\Dol,\ast}(\alpha,\delta)$ and
$\Mbold^{spar,s}_{\Dol,\ast}(\alpha,\delta)$. 
The 
moduli spaces are then the quotients
 $$\Mbold^{par,s}_{\Dol}(\alpha,\delta)=\overline\Lbold\backslash\Mbold^{par,s}_{\Dol,\ast}(\alpha,\delta)\quad
 ,\quad
\Mbold^{spar,s}_{\Dol}(\alpha,\delta)=\overline\Lbold\backslash\Mbold^{spar,s}_{\Dol,\ast}(\alpha,\delta)$$
and inherit a complex manifold structure \cite{Cartan:54}.

\subsubsection{Index computations}
The natural operators  in the deformation complexes of Higgs bundles are
$D''=\dbar_A+\Phi$ and $D'=\partial_A+\Phi^\ast$. By the K\"ahler
identities, $D'$ is the formal $L^2$-adjoint of $D''$ (see \cite[\S 1]{Simpson:92}).
In the context of strongly parabolic Higgs bundles in the  weighted Sobolev
spaces we are using, it is more natural to consider
the $L^2_\delta$-adjoint $D'_\delta:=e^{-\tau\delta} D' e^{\tau\delta}$. 
The adjoint for parabolic Higgs bundles (not necessarily strongly parabolic)
would be more complicated. However, in light of Proposition \ref{prop:strong}, we
may instead use the same operators as in the strongly parabolic case.
The goal of this section is to prove  index formulas for the Dirac type
operators $D''+D'_\delta$. 

\begin{Proposition} \label{prop:index}
    Let $(A,\Phi)\in \Bcal_{\delta}^{spar}$ be a smooth parabolic Higgs bundle.

    \noindent
        (1)
    Let
    $$T^{spar}_{(A,\Phi)}: 
L^2_{1,\delta}(\slfrak_E\otimes\overline K_X)\oplus
L^2_{1,\delta}(\slfrak_E\otimes K_X)
\lra L^2_\delta(\slfrak_E)\oplus
L^2_{\delta}(\slfrak_E\otimes K_X\otimes \overline K_X) 
$$
be the  operator defined by 
    $$T^{spar}_{(A,\Phi)}(\beta,\varphi)
    =(e^{-\tau\delta}\dbar_A^\ast(e^{\tau\delta}\beta)
-i\Lambda[\Phi^\ast, \varphi] , \dbar_A\varphi+[\Phi,\beta])
$$
    Then $T^{spar}_{(A,\Phi)}$ is Fredholm of
index
$$
    \ind(T^{spar}_{(A,\Phi)})=(2g-2+d)\dim\sG
$$

\noindent
    (2)  Consider the same operator  as above, but with different
    domain:  
    $$T^{par}_{(A,\Phi)}: 
L^2_{1,\delta}(\slfrak_E\otimes\overline K_X)\oplus
\Dscr_\delta(\slfrak_E\otimes K_X)
\lra L^2_\delta(\slfrak_E)\oplus
L^2_{\delta}(\slfrak_E\otimes K_X\otimes \overline K_X) 
$$
    Then $T^{par}_{(A,\Phi)}$ is Fredholm of index
$$
    \ind(T^{par}_{(A,\Phi)})=(2g-2+d)\dim\sG+\sum_{p\in D} \dim \sL_p
$$
\end{Proposition}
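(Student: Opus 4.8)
The plan is to handle both operators by the standard method for elliptic operators on a surface with cylindrical ends: first establish Fredholmness via the Lockhart--McOwen theorem \cite{LockhartMcOwen:85}, using crucially that the weight $\delta$ chosen in \eqref{eqn:delta-assumption} avoids the indicial roots of the boundary operator listed in Remark \ref{rem:spectrum}; then compute the index as a deformation invariant, reducing to a model pair where the computation becomes Riemann--Roch on the compact surface $X$.

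\emph{Part (1).} For any two smooth pairs $(A,\Phi),(A',\Phi')\in\Bcal_\delta^{spar}$, the operators $T^{spar}_{(A,\Phi)}$ and $T^{spar}_{(A',\Phi')}$ share the same principal symbol and differ only by zeroth-order terms built from $a=A-A'\in L^2_{1,\delta}$ and from $\Phi,\Phi'\in L^2_{1,\delta}$. Arguing exactly as in the proof of Lemma \ref{lem:smooth} --- cut off by $\phi_R$ from \eqref{eqn:phiR}, use the compact embedding $L^2_1\hookrightarrow L^4$ on compact pieces of $X^\times$, and exploit the exponential decay supplied by $\delta>0$ --- these differences are compact operators between the relevant weighted Sobolev spaces. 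It therefore suffices to treat the model pair $(A_0,0)$, which is translation-invariant on each $C(p)$ and Fredholm by \cite[Thm.\ 1.1]{LockhartMcOwen:85}. At $(A_0,0)$ the operator decouples: $T^{spar}_{(A_0,0)}(\beta,\varphi)=(\dbar_{A_0}^{\ast_\delta}\beta,\,\dbar_{A_0}\varphi)$. By Example \ref{ex:phi} and \eqref{eqn:delta-assumption}, the weight window selects, as the relevant holomorphic extension over $X$, the sheaf $\mathcal F$ of traceless strongly parabolic endomorphisms of $\Ecal(\alpha)$ (residue mapping $\Ecal_{p,j}$ into $\Ecal_{p,j+1}$) in the $\beta$-slot, and $\mathcal F\otimes K(D)$ in the $\varphi$-slot; since $\dbar_{A_0}^{\ast_\delta}$ is the $L^2_\delta$-adjoint of $\dbar_{A_0}$, Lockhart--McOwen gives $\ind T^{spar}_{(A,\Phi)}=-\chi(X,\mathcal F)+\chi(X,\mathcal F\otimes K(D))$. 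By Riemann--Roch the sheaf $\mathcal F$ cancels, leaving $\rk(\mathcal F)\cdot\deg K(D)=(n^2-1)(2g-2+d)=(2g-2+d)\dim\sG$. (Equivalently: $T^{spar}_{(A,\Phi)}$ is the rolled-up deformation operator of the strongly parabolic Higgs bundle on the compact surface $X$, and its index is $\chi(\mathcal F\otimes K(D))-\chi(\mathcal F)$ for the same reason.)

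\emph{Part (2).} Here $(A,\Phi)$ is the same strongly parabolic pair, and $T^{par}_{(A,\Phi)}$ is literally $T^{spar}_{(A,\Phi)}$ with the second slot's domain enlarged from $L^2_{1,\delta}(\slfrak_E\otimes K_X)$ to $\Dscr_\delta(\slfrak_E\otimes K_X)$. By Proposition \ref{prop:higgs-decomposition} the map $\rbold$ has kernel exactly $L^2_{1,\delta}(\slfrak_E\otimes K_X)$, so the quotient $\Dscr_\delta(\slfrak_E\otimes K_X)/L^2_{1,\delta}(\slfrak_E\otimes K_X)$ is finite-dimensional of dimension $\sum_{p\in D}\dim\sL_p$, with a closed complement $V$ given by the span of the smooth model extensions $\phi_R\cdot\ell\otimes(dz/z)$. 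Thus $T^{par}_{(A,\Phi)}$ restricts to $T^{spar}_{(A,\Phi)}$ on a closed subspace of codimension $\dim V$, and the elementary fact that enlarging the domain of a Fredholm operator by a finite-dimensional subspace preserves Fredholmness and raises the index by the dimension of that subspace yields $\ind T^{par}_{(A,\Phi)}=\ind T^{spar}_{(A,\Phi)}+\sum_{p\in D}\dim\sL_p=(2g-2+d)\dim\sG+\sum_{p\in D}\dim\sL_p$.

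\emph{Main obstacle.} The analytic bookkeeping in Part (1) is the substantive step: one must check that the deformation from $(A,\Phi)$ to $(A_0,0)$ really proceeds through compact perturbations in the weighted $L^2$-setting, and that the window $0<\delta<\min_{p\in D}\lambda(p)$ is precisely what forces the Lockhart--McOwen index to equal the Euler characteristic of the \emph{strongly parabolic} extension $\mathcal F$ (this is where Example \ref{ex:phi} and Remark \ref{rem:spectrum} are used). Part (2), by contrast, is pure Fredholm linear algebra once Proposition \ref{prop:higgs-decomposition} is available.
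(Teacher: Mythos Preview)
Your proposal is correct and reaches the same conclusions, but takes a different route from the paper in both parts.

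For Part (1), the paper also reduces to $(A_0,0)$ via the same compact perturbation argument and decouples into $T_\beta=\dbar_{A_0}^{\ast_\delta}$ and $T_\varphi=\dbar_{A_0}$. Rather than identifying the indices with holomorphic Euler characteristics on the compactification, however, the paper applies the Atiyah--Patodi--Singer theorem directly: the $L^2_\delta$ operators are equivalent to the $L^2$-extended APS operators, and the $\eta$-invariant vanishes because the boundary spectrum is symmetric (Remark \ref{rem:spectrum}). This yields $\ind(T_\beta)$ and $\ind(T_\varphi)$ separately, each carrying a term $\pm\tfrac{1}{2}\sum_p\dim\sL_p$ which cancels upon summing. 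Your Riemann--Roch route is more holomorphic and arguably more transparent, but the sentence ``Lockhart--McOwen gives $\ind=-\chi(\mathcal F)+\chi(\mathcal F\otimes K(D))$'' oversells what that reference provides: Lockhart--McOwen gives Fredholmness and the index jump across critical weights, not the identification with a sheaf Euler characteristic. That identification (that $\ind(\dbar_{A_0}\bigr|_{L^2_{1,\delta}})=\chi(X,\mathcal S\ParEnd_0\otimes K(D))$, etc.) is a separate standard fact requiring either an APS argument or a direct kernel/cokernel comparison, and you should cite it as such (e.g.\ via Biquard).

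For Part (2), the paper argues by comparing the operator $\widehat T_\varphi$ on $\Dscr_\delta$ with the same operator $\widetilde T_\varphi$ between $L^2_{-\delta}$ spaces, showing their kernels and cokernels coincide, and then invoking the Lockhart--McOwen weight-jumping formula across $\delta=0$. Your finite-dimensional domain enlargement argument is both correct and cleaner: once Proposition \ref{prop:higgs-decomposition} exhibits $\Dscr_\delta/L^2_{1,\delta}\cong\bigoplus_{p}\lfrak_p$ with a closed complement, the index shift is elementary Fredholm linear algebra. This is a genuine simplification over the paper's treatment.
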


\begin{proof}
We first note that  the operators $T^{spar}_{(A,\Phi)}$ and
    $T^{spar}_{(A,\Phi)}$ are well-defined by Proposition
    \ref{prop:strong}.
    Also, as in the proof of Lemma \ref{lem:smooth}, 
    to prove Fredholmness and compute the index, we may drop the terms
    involving 
      $\Phi$ and $\Phi^\ast$
and  replace $A$ with $A_0$. 
So it suffices to consider the decoupled operators
\begin{align*}
    \dbar_{A_0} : L^2_{1,\delta}(\slfrak_E)&\lra L^2_{\delta}(\slfrak_E\overline K_X) : \eta\mapsto \dbar_{A_0}\eta\notag \\
    T_\beta: L^2_{1,\delta}(\slfrak_E\otimes\overline K_X)&\lra
L^2_\delta(\slfrak_E) :
 \beta
    \mapsto e^{-\tau\delta}\dbar_{A_0}^\ast(e^{\tau\delta}\beta)\notag\\
T_\varphi : 
    L^2_{1,\delta}(\slfrak_E\otimes K_X)&\lra 
L^2_{\delta}(\slfrak_E\otimes K_X\otimes \overline K_X) :
\varphi\mapsto \dbar_{A_0}\varphi \notag\\
\end{align*}
We have:
    \begin{equation}\label{eqn:T-index}
    \ind(T^{spar}_{(A,\Phi)})= \ind(T_\beta)+\ind(T_\varphi)
            \end{equation}
    The operators $\dbar_{A_0}$ and $T_\varphi$ are Fredholm (\cite[Thm.\ 1.1]{LockhartMcOwen:85}). 
The systems are equivalent to the $L^2$-extended operator of
    Atiyah-Patodi-Singer
    (see \cite[p.\ 56]{Taubes:93} and \cite[Prop.\
    3.7]{DaskalWentworth:97}).
    It then  follows from \cite[Thm.\ 3.10]{APS:75} that the indices are 
    \begin{align}  
        \ind(\dbar_{A_0})&= -(g-1+d/2)\dim\sG-\frac{1}{2}\sum_{p\in D} \dim
        \sL_p \notag \\
        \ind(T_\varphi)&= (g-1+d/2)\dim\sG-\frac{1}{2}\sum_{p\in D} \dim \sL_p   \label{eqn:index-phi}
\end{align}
        Here, we have used the fact from Remark \ref{rem:spectrum} that the boundary operator has a symmetric spectrum, and
    therefore the $\eta$-function vanishes identically (see \cite[eq.\
    (1.7)]{APS:75}).  The adjoint has index
\begin{equation} \label{eqn:index-beta}
    \ind(T_\beta)=-\ind(\dbar_{A_0})=
    (g-1+d/2)\dim\sG+\frac{1}{2}\sum_{p\in D} \dim \sL_p
\end{equation}
    The proof of part (1) then follows from 
    eqs.\ \eqref{eqn:T-index}, \eqref{eqn:index-phi}, and \eqref{eqn:index-beta}.
    For part (2), let $\widehat T_\varphi$ denote the same operator $T_\varphi$ but
    with domain  $\Dscr_\delta$.
The proof of part (2) then follows immediately from 
    the following claim:
    \medskip

\par\noindent{\bf Claim.} 
    $\ind(\widehat T_\varphi)=\ind(T_\varphi)+\sum_{p\in D} \dim \sL_p$.
    \medskip

\noindent The Claim follows from \cite[eq.\ (3.25)]{APS:75}.
    Alternatively, consider the same  operator with different domains and
    ranges:
    $$
\widetilde T_\varphi : 
    L^2_{1,-\delta}(\slfrak_E\otimes K_X)\lra 
L^2_{-\delta}(\slfrak_E\otimes K_X\otimes \overline K_X) :
    $$
Clearly, $\ker \widetilde T_\varphi=\ker \widehat T_\varphi$. 
For the cokernel, 
let $\beta\in L^2_{\delta}(\slfrak_E\otimes K_X\otimes \overline K_X)$
    satisfy $\dbar_{A_0}^\ast(e^{\tau\delta}\beta)=0$. Then there is a
    well-defined limit 
    $$
    \ell=\lim_{\tau\to\infty} e^{\tau\delta}\beta\in \bigoplus_{p\in
    D}\lfrak_p
    $$
    Now $\widetilde\ell:=\phi_R\cdot\ell\in \Dscr_{\delta}(\slfrak_E\otimes
    K_X)$, and 
    $
    \langle \dbar_{A_0}\ell, \beta\rangle_{L^2_\delta}=2\pi |\ell|^2
    $.
    Therefore,  for $\beta\in \coker \widehat T_\varphi$, $\ell=0$, and so
    $e^{\tau\delta}\beta\in L^2_\delta$.
    Let $\Omega=e^{2\tau\delta}\beta$. Then
    $\dbar_{A_0}^\ast(e^{-\tau\delta}\Omega)=0$, $\Omega\in L^2_{-\delta}$,
    and indeed $\Omega\in \coker \widetilde T_\varphi$. Conversely, given
    $\Omega\in \coker \widetilde T_\varphi$, we have 
    $$\lim_{\tau\to\infty} e^{-\tau\delta}\Omega=0$$
    and so 
    we may define $\beta=e^{-2\tau\delta}\Omega\in \coker\widehat
    T_\varphi$. Hence, 
    $\coker \widetilde T_\varphi\simeq \coker \widehat T_\varphi$, and 
    $\ind(\widetilde T_\varphi)= \ind(\widehat T_\varphi)$. Finally, by
    \cite[Thm.\ 1.2]{LockhartMcOwen:85},
    $$\ind(\widetilde T_\varphi)=\ind(T_\varphi)+\sum_{p\in D} \dim \sL_p$$
    This proves the Claim and completes the proof of the Proposition.
\end{proof}

We shall also need the following.
\begin{Proposition} \label{prop:nude-index}  Recall the
definition of $\mathcal{R}_\delta^0$ from \eqref{eq:Rdelta0}.
Then the operator
$$
    (D'')^\ast D'' : \Rcal^0_\delta\lra L^2_\delta(\slfrak_E)
$$
has index zero.
Moreover, $\ker((D'')^\ast D'')=\ker D''$.
\end{Proposition}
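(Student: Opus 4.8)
The plan is to treat $(D'')^\ast D''$ as the degree‑$0$ Laplacian of the (weighted) Dolbeault complex of the Higgs pair and exploit its formal self‑adjointness. First I would record the explicit shape
\[
(D'')^\ast D''\,\eta \;=\; \dbar_A^{\ast_\delta}\dbar_A\eta \;-\; i\Lambda\bigl[\Phi^\ast,[\Phi,\eta]\bigr]\ ,
\qquad \eta\in\mathcal R^0_\delta\, ,
\]
where $(D'')^\ast=D'_\delta$ is the $L^2_\delta$‑formal adjoint, and check that this indeed defines a bounded map $\mathcal R^0_\delta\to L^2_\delta(\slfrak_E)$. The $\dbar_A^{\ast_\delta}\dbar_A$ term lands in $L^2_\delta$ directly from $d_{A_0}\eta\in L^2_{1,\delta}$; for the Higgs term I would split $\eta=\phi_R\,\bbold(\eta)+\eta_1$ with $\eta_1\in L^2_{2,\delta}(\slfrak_E)$ (Proposition \ref{prop:boundary-map}), apply estimate \eqref{eqn:product_estimate} of Proposition \ref{prop:strong} to the $\eta_1$‑piece, and use that in the full‑flags case $\lfrak_p$ is abelian (so $[\,\rbold(\Phi),\bbold(\eta)\,]=0$ and $[\Phi,\phi_R\bbold(\eta)]$ decays), resp.\ that $\Phi\in L^2_{1,\delta}$ already decays in the strongly parabolic case.

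Next I would establish Fredholmness. Exactly as in the proofs of Lemma \ref{lem:smooth} and Proposition \ref{prop:index}, the operator is a compact‑plus‑small perturbation of its translation‑invariant model on the cylinders $C(p)$, obtained by replacing $A$ by $A_0$ and $\Phi$ by its limit $\ell=\rbold(\Phi)\in\bigoplus_p\lfrak_p$; the model is $\dbar_{A_0}^{\ast_\delta}\dbar_{A_0}-ic\,\Lambda\,\mathrm{ad}_{\ell^\ast}\mathrm{ad}_{\ell}$. Computing the indicial roots as in the proof of Proposition \ref{prop:boundary-map}, the term $\mathrm{ad}_{\ell^\ast}\mathrm{ad}_{\ell}$ contributes a nonnegative ``mass'' $|\ell_i-\ell_j|^2$ on the $(i,j)$‑block, vanishing precisely where the original root is already $0$ (in full flags this block is the abelian Levi, where the mass is identically zero); hence the nonzero indicial roots are $\pm\sqrt{(\lambda^m_{ij})^2+|\ell_i-\ell_j|^2}$, all of modulus $\ge\lambda(p)>\delta$, and the root $0$ keeps its multiplicity. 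Thus the key assumption \eqref{eqn:delta-assumption} remains valid for the model, and Lockhart--McOwen \cite[Thm.\ 1.1]{LockhartMcOwen:85} gives that $(D'')^\ast D''$ is Fredholm (with index independent of $(A,\Phi)$). Then I would prove $\ker\bigl((D'')^\ast D''\bigr)=\ker D''$: for $\eta$ in the kernel one has $D''\eta\in L^2_\delta$ by the first step, so since $\eta$ is bounded on each $C(p)$ while $D''\eta$ decays in $L^2_\delta$, the boundary integral in $\langle (D'')^\ast D''\eta,\eta\rangle_{L^2_\delta}=\|D''\eta\|^2_{L^2_\delta}+(\text{boundary term at }\tau=R)$ tends to $0$ along a sequence $R_j\to\infty$, forcing $D''\eta=0$; the reverse inclusion is trivial. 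For the index, formal self‑adjointness with respect to the $L^2_\delta$ pairing identifies the cokernel with the $L^2_\delta$‑kernel of the formal adjoint of $(D'')^\ast D''$, namely $(D'')^\ast D''$ itself: any $u\in L^2_\delta(\slfrak_E)$ annihilating the range satisfies $(D'')^\ast D''u=0$ distributionally, hence (elliptic regularity plus the cylinder analysis of the previous step, which shows the surviving decaying mode lies in $L^2_{2,\delta}$) $u\in\mathcal R^0_\delta$ and therefore $u\in\ker D''$. Consequently $\dim\ker=\dim\coker$ and the index is zero.

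The main obstacle is that the domain $\mathcal R^0_\delta$ properly contains $L^2_{2,\delta}(\slfrak_E)$ — it carries the finite‑dimensional space $\Hcal^0_\delta$ of harmonic gauge transformations with nonzero limits in $\bigoplus_p\lfrak_p$, which is not contained in $L^2_\delta$ — so neither the mapping property $\mathcal R^0_\delta\to L^2_\delta(\slfrak_E)$ nor the cokernel identification is automatic, and the formal‑adjoint bookkeeping must be carried out with this enlarged domain in mind. Making the first and third steps rigorous is where essentially all the work lies, and both rely on the abelianness of $\lfrak_p$ in the full‑flags case (resp.\ the decay of $\Phi$ in the strongly parabolic case) together with the continuity estimates of Proposition \ref{prop:strong}.
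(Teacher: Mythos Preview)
Your treatment of the kernel identity $\ker((D'')^\ast D'')=\ker D''$ is essentially the paper's: show $D''\eta\in L^2_\delta$ for $\eta\in\Rcal^0_\delta$ using Assumption~A, then integrate by parts against a cutoff to kill the boundary term. That part is fine. (One small point: in the paper $(D'')^\ast$ denotes the \emph{unweighted} formal adjoint $D'$, not $D'_\delta$; the two differ by a first-order term, so this does not affect the index, but the integration by parts in the kernel argument is carried out in $L^2$, not $L^2_\delta$.)

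The index-zero argument, however, has a genuine gap. Your self-adjointness step shows only one direction: a cokernel element $u\in L^2_\delta$ satisfies $(D'')^\ast D''u=0$, and by regularity plus decay lands in $L^2_{2,\delta}\subset\Rcal^0_\delta$, hence in $\ker D''$. This yields an injection $\coker\hookrightarrow\ker D''\bigr|_{L^2_{2,\delta}}\subset\ker D''\bigr|_{\Rcal^0_\delta}$, i.e.\ only $\mathrm{index}\geq 0$. The conclusion $\dim\ker=\dim\coker$ would require the reverse injection, i.e.\ that every $\eta\in\ker D''\bigr|_{\Rcal^0_\delta}$ lies in $L^2_\delta$. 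But such $\eta$ may have nonzero boundary value $\bbold(\eta)\in\bigoplus_p\lfrak_p$ and therefore \emph{not} lie in $L^2_\delta$; nothing in your argument rules this out (stability is not assumed in the statement). The very asymmetry between the enlarged domain $\Rcal^0_\delta$ and the target $L^2_\delta$ that you correctly flag at the end is what breaks the naive self-adjointness bookkeeping.

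The paper's route avoids this issue by a direct index computation. First, the Higgs contribution and the perturbation $A-A_0$ are stripped away entirely as compact-plus-small (as in the proof of Proposition~\ref{prop:index}), reducing to $T_{A_0}=\dbar_{A_0}^\ast\dbar_{A_0}$; there is no need to carry the Higgs mass into the indicial-root analysis. Then one computes the index of $T_{A_0}$ on the \emph{smaller} domain $L^2_{2,\delta}$ via the Lockhart--McOwen relative formula \cite[Lemma~7.3]{LockhartMcOwen:85}:
\[
\ind\bigl(\widetilde T_{A_0}:L^2_{2,\delta}\to L^2_\delta\bigr)
=\dim\ker_{L^2_{2,\delta}}T_{A_0}-\dim\ker_{L^2_{2,-\delta}}T_{A_0}
=-\dim\bigoplus_{p\in D}\lfrak_p\,,
\]
the last equality from the Fourier-mode expansion of Proposition~\ref{prop:boundary-map}. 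Since $\Rcal^0_\delta=L^2_{2,\delta}\oplus\Hcal^0_\delta$ with $\dim\Hcal^0_\delta=\dim\bigoplus_p\lfrak_p$, enlarging the domain by this finite-dimensional summand raises the index by exactly that amount, giving index zero. No comparison of kernel and cokernel is needed.
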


\begin{proof}
    As in the proof of Proposition \ref{prop:index}, we have
    $$
    \ind((D'')^\ast D'') = \ind(T_{A_0})
    $$
    where $T_{A_0}=\dbar_{A_0}^\ast\dbar_{A_0}$.
    Now consider the same operator with different domain:
    $$
    \widetilde T_{A_0} : L^2_{2,\delta}(\slfrak_E)\lra L^2_\delta(\slfrak_E)
    $$
    As in the proof of \cite[Lemma 7.3]{LockhartMcOwen:85},
    $$
    \ind(\widetilde T_{A_0})=\dim\ker_{L^2_{2,\delta}}(\dbar_{A_0}^\ast\dbar_{A_0})-
    \dim\ker_{L^2_{2,-\delta}}(\dbar_{A_0}^\ast\dbar_{A_0})
    $$
    Expanding in terms of Fourier modes as in the proof of Proposition
    \ref{prop:boundary-map}, we see that
    \begin{equation} \label{eqn:nude-index}
        \ind(\widetilde T_{A_0})=-\dim\bigoplus_{p\in
        D}\lfrak_p
    \end{equation}
Moreover, if $\eta\in \ker_{L^2_{2,-\delta}}(\dbar_{A_0}^\ast\dbar_{A_0})$,
    then $\eta\in \Rcal^0_\delta$,  and conversely by Proposition
    \ref{prop:boundary-map}, $\Rcal^0_\delta\subset L^2_{2,-\delta}$. The
    first statement then follows.

    For the second statement, suppose $\eta\in \Rcal^0_\delta$ satisfies
$(D'')^\ast D''\eta=0$. Then it suffices to show $D''\eta\in L^2_\delta$. For
if this is the case, then for the cut-off function $\phi_R$ as in \eqref{eqn:phiR},  and using the
fact that $\eta$ is bounded (cf.\ Proposition \ref{prop:boundary-map}),
we have
$$
\lim_{R\to +\infty}\left|\langle D''\eta, (d\phi_R)\eta\rangle_{L^2}\right|\leq
\lim_{R\to +\infty}\Vert D''\eta\Vert_{L^2_\delta}\Vert
(d\phi_R)\eta\Vert_{L^2_{-\delta}}=0
$$
It follows that
\begin{align*}
0&=\lim_{R\to +\infty} \langle(D'')^\ast D''\eta, (1-\phi_R)\eta\rangle_{L^2}\\
&=\lim_{R\to +\infty} \left\{\langle D''\eta, (1-\phi_R)D''\eta\rangle_{L^2}
-\langle D''\eta, (d\phi_R)\eta\rangle_{L^2}
\right\} \\
&=\Vert D''\eta\Vert^2_{L^2}
\end{align*}
and so $D''\eta=0$.

Now to show $D''\eta\in L^2_\delta$, first note that  by the definition of
$\Rcal_\delta$, $\dbar_{A_0}\eta\in L^2_{1,\delta}$. We have
$\dbar_A=\dbar_{A_0}+\beta$, with $\beta\in  L^2_{1,\delta}$, and so
$\dbar_A\eta\in L^2_{\delta}$ because $\eta$ is also bounded.
Write $\Phi=\Phi_0+\Phi_1$ according to Proposition
\ref{prop:higgs-decomposition}. Then since $b$ is bounded, $[\Phi_1,b]\in
L^2_\delta$. Because of Assumption A, as in the proof of
\eqref{eqn:bracket_estimate},   we also have
$[\Phi_0,b]\in L^2_\delta$.
Hence, $D''\eta=(\dbar_A+\Phi)\eta\in L^2_\delta$, the proof is complete.
\end{proof}

\subsubsection{The Hodge slice}
Let $(A,\Phi)$ be a \emph{smooth} parabolic Higgs pair. 
Consider the deformation complex
\begin{align} 
    \begin{split}\label{eqn:def-complex}
        \Ccal_\delta^{par}(A,\Phi) : & \\
        L^2_{2,\delta}(\slfrak_E)&\xrightarrow{\hspace{.1in}d_1\hspace{.05in}}
L^2_{1,\delta}(\slfrak_E\otimes\overline K_X )\oplus
\Dscr_\delta(\slfrak_E\otimes K_X)
\xrightarrow{\hspace{.1in}d_2\hspace{.05in}}
L^2_{\delta}(\slfrak_E\otimes K_X\otimes \overline K_X) 
    \end{split}
\end{align}
where 
\begin{align}
    \begin{split} \label{eqn:d1d2}
d_1(\eta)&= (\dbar_A\eta, [\Phi,\eta]) \\
d_2(\beta,\varphi) &= \dbar_A\varphi+[\Phi,\beta]
    \end{split}
\end{align}
\begin{Lemma}[{\sc Stable implies simple}] \label{lem:simple}
    Consider the extension of $d_1$ to the larger domain  $\Rcal^0_\delta$. 
    If $(A,\Phi)$ is stable then $\ker d_1=\{0\}$.
\end{Lemma}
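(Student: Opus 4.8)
The plan is to identify $\ker d_1$ (on the enlarged domain) with the space of endomorphisms of the associated parabolic Higgs bundle that commute with $\Phi$, and then invoke the fact that a stable object is simple. So suppose $\eta\in\Rcal^0_\delta$ satisfies $d_1\eta=0$, i.e.\ $\dbar_A\eta=0$ and $[\Phi,\eta]=0$; the goal is to show $\eta=0$. Since conjugation by $\Gcal_\delta$ preserves $\Rcal^0_\delta$ (by the multiplicativity of $\Rcal_\delta$) and intertwines $\ker d_1$ for $(A,\Phi)$ with $\ker d_1$ for any gauge-equivalent pair, Lemma~\ref{lem:smooth} allows me to assume from the start that $A=A_0$ on each cylinder $C(p)$. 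Elliptic regularity for $\dbar_A$ then shows that $\eta$ is smooth on $X^\times$ and, on each $\Delta^\times(p)$, holomorphic with respect to $\dbar_{A_0}$.

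The first substantive step is to extend $\eta$ across the punctures. Writing $\eta s_i=\sum_j Q_{ij}(z)\,s_j$ in the local holomorphic frame $\{s_i\}$ of \S\ref{sec:gauge-groups}, the $Q_{ij}$ are holomorphic on $\Delta^\times(p)$; since $\eta\in\Rcal_\delta$ it is bounded near $p$ with a well-defined limit $\bbold(\eta)\in\bigoplus_{p\in D}\lfrak_p$, and combining this with the standing assumption \eqref{eqn:delta-assumption} forces — by exactly the growth computation of Example~\ref{ex:phi}, applied to an endomorphism rather than a Higgs field (cf.\ Remark~\ref{rem:tensor}) — each $Q_{ij}$ to be regular at $p$, with $Q_{ij}(0)=0$ whenever $\alpha_i(p)>\alpha_j(p)$. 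Hence $\eta$ glues to a holomorphic section of $\End(\Ecal)$ on $X$ whose residue at each $p\in D$ preserves the parabolic flag in $\Ecal_p$; together with $[\Phi,\eta]=0$ this exhibits $\eta$ as an endomorphism of the parabolic Higgs bundle $(\Ecal(\alpha),\Phi)$, and $\eta\in\slfrak_E$ makes it traceless.

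Finally, stability of $(\Ecal(\alpha),\Phi)$ implies simplicity: the coefficients of the characteristic polynomial of $\eta$ are global holomorphic functions on $X$, hence constant, so the eigenvalues of $\eta$ are constant; the (saturated) generalized eigenbundles are $\Phi$-invariant parabolic subbundles, and comparing their parabolic slopes against their quotients via the stability inequality forces $\eta$ to have a single eigenvalue and then to equal that scalar. Thus $\eta=c\,\Id$ for some $c\in\C$, and $\tr\eta=0$ gives $c=0$, so $\eta=0$. The one genuinely delicate point is the extension step of the second paragraph: it is exactly where the constraint $0<\delta<\min_{p\in D}\lambda(p)$ is needed, and it rests entirely on the growth analysis already carried out in Example~\ref{ex:phi}.
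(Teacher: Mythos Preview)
Your proof is correct and follows essentially the same route as the paper: reduce to $A=A_0$ on the cylinders, use boundedness of $\eta\in\Rcal^0_\delta$ together with the local holomorphic frame to show the matrix entries $\eta_{ij}(z)$ extend holomorphically with $\eta_{ij}(0)=0$ when $\alpha_i(p)>\alpha_j(p)$, conclude that $\eta$ is a parabolic endomorphism commuting with $\Phi$, and invoke simplicity of stable objects. The paper simply asserts the last step (``stability implies simplicity''), whereas you spell out the standard eigenbundle argument; otherwise the arguments are the same.
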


\begin{proof}
    Let $\eta\in \ker d_1$. 
  Let  $(\Ecal,\Phi)$ be the parabolic bundle   corresponding to
    $(A,\Phi)$ as in Proposition \ref{prop:analytic-higgs}.  
    Without loss of generality, we may assume $A=A_0$ on each cylinder
    $C(p)$. We now proceed as in Example \ref{ex:phi}. Namely, in the local
    unitary (resp.\ holomorphic) frame $\{e_i\}$ (resp.\ $\{s_i\}$) we
    write  
    $$
    \eta e_i =\sum_{j=1}^n\widehat\eta_{ij}e_j =\sum_{j=1}^n \eta_{ij}(z)
    |z|^{\alpha_j(p)-\alpha_i(p)}e_j
    $$
    where the $\eta_{ij}(z)$ are holomorphic functions on
    $\Delta^\times(p)$.
    Since $\widehat \eta_{ij}$ is bounded (cf.\ Proposition
    \ref{prop:boundary-map}), $\eta_{ij}(z)$ is regular on
    $\Delta(p)$.
    Moreover,
     $\eta_{ij}(0)= 0$ if $\alpha_i(p)> \alpha_j(p)$.
Hence, $\eta$ defines a parabolic endomorphism. 
Since  stability implies simplicity, this forces $\eta\equiv 0$. 
\end{proof}

Define the cohomology groups of the complex:
$$
    H^i(\Ccal_\delta^{par}(A,\Phi)) =\begin{cases} \ker d_1 &i=0 \\
        \ker d_2/\imag d_1& i=1 \\
    \coker d_2 & i=2\end{cases}
$$
We have the formal $L^2$ adjoints:
\begin{align*}
d_1^\ast(\beta,\varphi)&= \dbar_A^\ast\beta- i\Lambda[\Phi^\ast,\varphi] \\
    d_2^\ast B &= ([\Phi^\ast,i\Lambda B], \dbar_A^\ast B)
\end{align*}
 For the weighted Sobolev spaces, the natural adjoints are
$d_1^{\ast_\delta}:= e^{-\tau\delta} d_1^\ast  e^{\tau\delta}$
and $d_2^{\ast_\delta}:= e^{-\tau\delta} d_2^\ast  e^{\tau\delta}$.
We have the following immediate but important consequence of Lemma
\ref{lem:simple}.
\begin{Proposition} \label{prop:smooth-B}
    The space $\Bcal_\delta^{par,s}$ (resp.\ $\Bcal_\delta^{spar,s}$)
    is a  smooth Banach submanifold of \break
    $
L^2_{1,\delta}(\slfrak_E\otimes\overline K_X )\oplus
\Dscr_\delta(\slfrak_E\otimes K_X)
    \ (\text{resp.}\
L^2_{1,\delta}(\slfrak_E\otimes\overline K_X )\oplus
    L^2_{1,\delta}(\slfrak_E\otimes K_X))
    $.
\end{Proposition}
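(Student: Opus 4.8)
The plan is to realize $\Bcal_\delta^{par,s}$ as the zero locus of a real-analytic map between Banach spaces and invoke the implicit function theorem. Writing $\dbar_A=\dbar_{A_0}+\beta$ with $\beta\in L^2_{1,\delta}(\slfrak_E\otimes\overline K_X)$, I would consider
\[
F(\beta,\Phi):=\dbar_{A_0}\Phi+[\beta,\Phi]\in L^2_\delta(\slfrak_E\otimes K_X\otimes\overline K_X),
\]
defined on $L^2_{1,\delta}(\slfrak_E\otimes\overline K_X)\oplus\Dscr_\delta(\slfrak_E\otimes K_X)$, so that $\Bcal_\delta^{par}=F^{-1}(0)$. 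First I would check that $F$ is well defined and bounded: the linear term $\Phi\mapsto\dbar_{A_0}\Phi$ is bounded $\Dscr_\delta\to L^2_\delta$ by the graph norm \eqref{eqn:graph_norm}, and the bilinear term obeys $\Vert[\beta,\Phi]\Vert_{L^2_\delta}\le C\Vert\beta\Vert_{L^2_{1,\delta}}\Vert\Phi\Vert_{\Dscr_\delta}$ after decomposing $\Phi=\Phi_0+\Phi_1$ as in Proposition \ref{prop:higgs-decomposition} and using $L^2_{1,\delta}\hookrightarrow L^4_\delta$, exactly as in Remark \ref{rem:general-A}. Being polynomial of degree $\le 2$, $F$ is real-analytic, and its differential at a Higgs pair $(A,\Phi)$ is $(\dot\beta,\dot\varphi)\mapsto\dbar_A\dot\varphi-[\Phi,\dot\beta]$, which differs from the operator $d_2$ of \eqref{eqn:d1d2} only by the linear automorphism $\dot\beta\mapsto-\dot\beta$ of the source; hence $\imag dF_{(A,\Phi)}=\imag d_2$ and $\ker dF_{(A,\Phi)}\cong\ker d_2$. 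So it remains to prove: \emph{for every stable Higgs pair $(A,\Phi)$, the map $d_2$ is surjective and $\ker d_2$ is a complemented subspace}. The strongly parabolic case is the same argument with $L^2_{1,\delta}(\slfrak_E\otimes K_X)$ in place of $\Dscr_\delta(\slfrak_E\otimes K_X)$ and $T^{spar}$ in place of $T^{par}$.

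For the complemented-kernel statement I would use the Fredholm package already in hand. The complex identity $d_2\circ d_1=0$ holds because $\dbar_A\Phi=0$, and the rolled-up operator $T^{par}_{(A,\Phi)}=d_1^{\ast_\delta}\oplus d_2$ is Fredholm by Proposition \ref{prop:index}. Standard elliptic Hodge theory for the complex $\Ccal_\delta^{par}(A,\Phi)$ then gives a topological splitting of the middle term as $\imag d_1\oplus\ker d_1^{\ast_\delta}$ with $\imag d_1$ closed, and inside $\ker d_1^{\ast_\delta}$ one has $\ker d_2\cap\ker d_1^{\ast_\delta}=\ker T^{par}_{(A,\Phi)}\cong H^1(\Ccal_\delta^{par}(A,\Phi))$, which is finite dimensional. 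Since stability makes $d_1$ injective (Lemma \ref{lem:simple}), this yields $\ker d_2=\imag d_1\oplus H^1(\Ccal_\delta^{par}(A,\Phi))$, a complemented subspace; as a by-product it identifies the tangent space to $\Bcal_\delta^{par,s}$ modulo the infinitesimal gauge action with $H^1(\Ccal_\delta^{par}(A,\Phi))$.

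The surjectivity of $d_2$ is equivalent to $\coker d_2=H^2(\Ccal_\delta^{par}(A,\Phi))=0$, and this is the step I expect to be the main obstacle. Since $\imag d_2$ is closed (Fredholmness of $T^{par}$), the cokernel is identified via the $L^2_\delta$-pairing with the set of $B\in L^2_\delta(\slfrak_E\otimes K_X\otimes\overline K_X)$ with $[\Phi^\ast,i\Lambda B]=0$ and $\langle B,\dbar_A\varphi\rangle_{L^2_\delta}=0$ for \emph{all} $\varphi$ in the \emph{enlarged} space $\Dscr_\delta(\slfrak_E\otimes K_X)$, not merely $L^2_{1,\delta}$. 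Testing against the bounded "constant-Levi" directions in $\Dscr_\delta$, as in the proof of the Claim in Proposition \ref{prop:index}, forces the limit of $e^{\tau\delta}B$ to vanish at each puncture and upgrades the decay of $B$ (effectively $e^{\tau\delta}B\in L^2_\delta$). With this improved decay, the two conditions say exactly that $\xi:=i\Lambda B\in\slfrak_E$ satisfies $\partial_A\xi=0$, $[\Phi,\xi]=0$, with growth at $D$ good enough that $\xi^\ast$ extends across $D$ as a parabolic holomorphic endomorphism of the parabolic Higgs bundle $(\Ecal(\alpha),\Phi)$ attached to $(A,\Phi)$ by Proposition \ref{prop:analytic-higgs}; i.e.\ $\xi^\ast\in H^0(\Ccal_\delta^{par}(A,\Phi))$, which vanishes by Lemma \ref{lem:simple}. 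Hence $B=0$, $d_2$ is onto, and $\Bcal_\delta^{par,s}$ is a smooth Banach submanifold; the strongly parabolic case follows verbatim with $T^{spar}$ and $L^2_{1,\delta}$.

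The genuinely delicate point, and where Assumption A and the design of $\Dscr_\delta$ enter, is precisely this identification $\coker d_2\cong\overline{H^0(\Ccal_\delta^{par}(A,\Phi))}$: away from $D$ it is just the usual self-duality of the $\SL(n,\C)$-Higgs deformation complex, but at the punctures one must show a cokernel element, a priori only in $L^2_\delta$, in fact lies in a $+\delta$-shifted weighted space so as to represent a bona fide parabolic endomorphism — and this matches, and relies on, the index bookkeeping in Proposition \ref{prop:index} comparing $\widehat T_\varphi$ (domain $\Dscr_\delta$) with $T_\varphi$ (domain $L^2_{1,\delta}$). I would organize the write-up so that this weighted duality is proved once and then quoted for both the parabolic and strongly parabolic statements.
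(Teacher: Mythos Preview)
Your approach is essentially the paper's: reduce to surjectivity of $d_2$ and kill the cokernel by Serre-type duality with $\ker d_1$, invoking Lemma~\ref{lem:simple}. The paper's write-up is shorter because it skips your decay-improvement step entirely: after gauging to $A=A_0$ on each $C(p)$, it observes directly that any $B\in(\imag d_2)^\perp$ satisfies $e^{\tau\delta}\,i\Lambda B^\ast\in\Rcal^0_\delta\cap\ker d_1$, and since Lemma~\ref{lem:simple} is stated on the \emph{enlarged} domain $\Rcal^0_\delta$, there is no need to first test against Levi directions to force the boundary value to vanish. Your route via the Claim in Proposition~\ref{prop:index} is correct but redundant here.

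One imprecision to fix: the section you call $\xi=i\Lambda B$ does not itself satisfy $\partial_A\xi=0$ and $[\Phi,\xi]=0$; the weighted adjoint condition gives those equations for $e^{\tau\delta}\,i\Lambda B$ (equivalently, $\eta:=e^{\tau\delta}\,i\Lambda B^\ast$ lies in $\ker d_1$). The missing $e^{\tau\delta}$ is exactly what makes $\eta$ bounded (hence in $\Rcal^0_\delta$) and eligible for the argument of Lemma~\ref{lem:simple}. Your discussion of the complemented kernel is a useful addition that the paper leaves implicit.
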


\begin{proof}
By the implicit function theorem, it suffices to prove that $d_2$ is
    surjective if $(A,\Phi)$ is stable. 
After a complex gauge transformation we may assume that $A=A_0$ on each
    $C_i$. Then as in the proof of Proposition \ref{prop:index}, we see that if $B\in
    (\imag d_2)^\perp$, then $e^{\tau\delta}B\in \Rcal^0_{\delta}$ and indeed,
    $e^{\tau\delta}i\Lambda B^\ast\in \ker d_1$. 
    Hence, $\coker d_2=\{0\}$ by Lemma \ref{lem:simple}.
\end{proof}

\begin{Definition} \label{eqn:harmonic-h1}
    Given $(A,\Phi)\in \Bcal_{\delta}^{par}$ smooth, let
    $$
    \Hbold^{par}_\delta(A,\Phi)=\{ (\beta,\varphi)\in 
L^2_{1,\delta}(\slfrak_E\otimes\overline K_X )\oplus
\Dscr_\delta(\slfrak_E\otimes K_X)
    \mid d_2(\beta,\varphi)=0\ ,\ d_1^{\ast_\delta}(\beta,\varphi)=0\}
    $$
\end{Definition}

\begin{Lemma} \label{lem:cohomology}
    Given $(A,\Phi)\in \Bcal_{\delta}^{par,s}$ smooth,
we have: 
$$
H^1(\Ccal_\delta^{par}(A,\Phi))  \simeq\Hbold^{par}_\delta(A,\Phi)
$$
\end{Lemma}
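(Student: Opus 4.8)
The plan is to establish the isomorphism $H^1(\Ccal_\delta^{par}(A,\Phi)) \simeq \Hbold^{par}_\delta(A,\Phi)$ by the standard Hodge-theoretic argument: construct a Green's operator for the deformation complex and use it to split $\ker d_2$ as $\imag d_1 \oplus \Hbold^{par}_\delta(A,\Phi)$. First I would observe that $\Hbold^{par}_\delta(A,\Phi)$ is by definition the intersection $\ker d_2 \cap \ker d_1^{\ast_\delta}$, so the content of the lemma is that every class in $\ker d_2$ has a unique representative killed by $d_1^{\ast_\delta}$, i.e. $\ker d_2 = \imag d_1 \oplus \Hbold^{par}_\delta(A,\Phi)$ as a topological direct sum, and that $\imag d_1 \cap \ker d_1^{\ast_\delta} = \{0\}$.

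The key steps, in order: (i) By Lemma \ref{lem:simple}, $\ker d_1 = \{0\}$ even on the enlarged domain $\Rcal^0_\delta$, so there are no obstructions coming from $H^0$; in particular $d_1$ is injective with closed range (the closed range needs the Poincar\'e-type inequality and the fact that $d_1^{\ast_\delta}d_1$ is, up to the lower-order $\Phi$-terms, the operator $(D'')^\ast D''$ of Proposition \ref{prop:nude-index}, which has index zero and trivial kernel). (ii) Since $\ker d_1 = \{0\}$ and $d_1^{\ast_\delta} d_1$ is invertible on its domain, $\imag d_1$ is closed and we get the orthogonal (with respect to the $L^2_\delta$ pairing) decomposition of the middle term of the complex. (iii) Restricting to $\ker d_2$: any $(\beta,\varphi) \in \ker d_2$ decomposes as $d_1\eta + \xi$ with $d_1^{\ast_\delta}\xi = 0$; since $d_2 d_1 = 0$ (this is the complex condition, which follows from $\dbar_A\Phi = 0$ and the Jacobi identity), $\xi \in \ker d_2$ as well, so $\xi \in \Hbold^{par}_\delta(A,\Phi)$. (iv) Uniqueness: if $d_1\eta \in \Hbold^{par}_\delta(A,\Phi)$ then $0 = \langle d_1^{\ast_\delta}d_1\eta, \eta\rangle_{L^2_\delta} = \Vert d_1\eta\Vert^2_{L^2_\delta}$, hence $d_1\eta = 0$, hence $\eta = 0$ by Lemma \ref{lem:simple}. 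This gives the isomorphism $H^1 = \ker d_2 / \imag d_1 \simeq \Hbold^{par}_\delta(A,\Phi)$.

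The main obstacle is the functional-analytic bookkeeping on the non-strongly-parabolic side, where the middle term involves the Banach space $\Dscr_\delta$ rather than $L^2_{1,\delta}$, so the pairing $\langle \cdot, \cdot\rangle_{L^2_\delta}$ and the adjoint $d_1^{\ast_\delta}$ need to be checked to make sense on these spaces and the relevant integrations by parts justified. This is exactly where Assumption A (full flags) enters, via Proposition \ref{prop:strong}: the estimates \eqref{eqn:bracket_estimate}, \eqref{eqn:eta_estimate}, \eqref{eqn:product_estimate} guarantee that the bracket terms $[\Phi^\ast,\varphi]$, $[\Phi,\beta]$, $[\Phi^\ast,\eta]$ all land in $L^2_\delta$, so that $d_1, d_2, d_1^{\ast_\delta}$ are bounded between the claimed spaces and the Green's-operator machinery applies. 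I would also need to invoke Proposition \ref{prop:higgs-decomposition} and Lemma \ref{lem:delta-minus-delta} to control the boundary contributions when integrating by parts against $\widehat T_\varphi$-type operators, and Remark \ref{rem:general-A} to replace $A$ by $A_0$ on the cylinders when verifying Fredholmness of the relevant Laplacian $d_1^{\ast_\delta}d_1 + d_2^\ast d_2$. Once boundedness and the index-zero/trivial-kernel facts are in hand, the elliptic package (closed range, Hodge decomposition) is routine and the isomorphism follows formally.
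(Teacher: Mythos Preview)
Your approach is correct and essentially the same as the paper's: both hinge on the invertibility of $d_1^{\ast_\delta}d_1:L^2_{2,\delta}(\slfrak_E)\to L^2_\delta(\slfrak_E)$ (trivial kernel from Lemma~\ref{lem:simple}, index zero from the Lockhart--McOwen theory as in Proposition~\ref{prop:index}) to produce, for any $(\beta,\varphi)\in\ker d_2$, an $\eta$ with $(\beta,\varphi)+d_1\eta\in\Hbold^{par}_\delta(A,\Phi)$, with Proposition~\ref{prop:strong} ensuring $d_1^{\ast_\delta}(\beta,\varphi)\in L^2_\delta$. The paper's proof is just a terse version of your steps (i)--(iii); your extra remarks about the full Hodge Laplacian $d_1^{\ast_\delta}d_1+d_2^\ast d_2$ and a Green's operator for the whole complex are not needed, since solving $d_1^{\ast_\delta}d_1\eta=-d_1^{\ast_\delta}(\beta,\varphi)$ directly already does the job.
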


\begin{proof}
    Let $(\beta,\varphi)\in \ker d_2$. 
    Because of Proposition \ref{prop:strong},  $d_1^\ast(\beta,\varphi)\in
L^2_\delta$.  By Lemma \ref{lem:simple}, 
    $$d_1^{\ast_\delta}d_1 : L^2_{2,\delta}(\slfrak_E)\lra L^2_{\delta}(\slfrak_E)
$$
is surjective, so we can find $\eta$ so that
    $d_1^{\ast_\delta}d_1\eta=- d_1^{\ast_\delta}(\beta,\varphi)$.
   Then 
    $
    (\beta,\varphi)+d_1\eta\in \Hbold^{par}_\delta(A,\Phi)
    $.
\end{proof}

By the same argument as in the proof of Proposition \ref{prop:smooth-B} 
(in this case $e^{\tau\delta}i\Lambda B^\ast\in L^2_{2,\delta}$), we have
\begin{Lemma}[{\sc Serre duality}] \label{lem:serre}
    For
    $(A,\Phi)\in \Bcal_{\delta}^{par}$,   we have:
    $$H^2(\Ccal_\delta^{par}(A,\Phi))\simeq
    H^0(\Ccal_\delta^{par}(A,\Phi))^\ast$$
\end{Lemma}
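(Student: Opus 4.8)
The plan is to carry out the standard Hodge-theoretic proof of Serre duality for an elliptic complex, in the weighted Sobolev setting, following closely the pattern of the proof of Proposition~\ref{prop:smooth-B}. Since the cohomology of $\Ccal_\delta^{par}(A,\Phi)$ is invariant under the $\Gcal_\delta$-action, Lemma~\ref{lem:smooth} lets me assume $(A,\Phi)$ is smooth with $A=A_0$ on each cylinder $C(p)$. By Proposition~\ref{prop:index} the operator $T^{par}_{(A,\Phi)}$ is Fredholm, so in particular $d_2$ has closed range and $H^2(\Ccal_\delta^{par}(A,\Phi))=\coker d_2$ is finite dimensional; moreover, using the $L^2_\delta$ pairing on the target, $\coker d_2$ is identified with the set of $B\in L^2_\delta(\slfrak_E\otimes K_X\otimes \overline K_X)$ annihilating $\imag d_2$, i.e.\ with the weak kernel of $d_2^{\ast_\delta}=e^{-\tau\delta}d_2^\ast e^{\tau\delta}$, where $d_2^\ast B=([\Phi^\ast,i\Lambda B],\dbar_A^\ast B)$.

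The first substantive step is to upgrade the regularity of such a $B$. Put $\Omega:=e^{\tau\delta}B\in L^2_{-\delta}$; then $d_2^\ast\Omega=0$ weakly, and since this operator is translation invariant on each $C(p)$, the eigensection analysis already used in Propositions~\ref{prop:boundary-map} and~\ref{prop:index} shows $\Omega$ is smooth with a well-defined limit $\ell\in\bigoplus_{p\in D}\lfrak_p$ along the cylinders. Testing the vanishing of $B$ against $d_2(0,\widetilde\ell)=\dbar_A\widetilde\ell$, for $\widetilde\ell=\phi_R\cdot\ell\in\Dscr_\delta(\slfrak_E\otimes K_X)$ the cut-off extension of this limit --- exactly the computation in the Claim inside the proof of Proposition~\ref{prop:index}(2), which evaluates $\langle \dbar_A\widetilde\ell,B\rangle_{L^2_\delta}$ to a positive multiple of $|\ell|^2$ --- forces $\ell=0$. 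Hence $\Omega\in L^2_\delta$, and the interior plus cylindrical elliptic estimates improve this to $\Omega\in L^2_{2,\delta}$; equivalently $e^{\tau\delta}i\Lambda B^\ast\in L^2_{2,\delta}(\slfrak_E)$.

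Now I would write down the duality map and check it is a bijection. Set $\eta:=(i\Lambda\Omega)^\ast\in L^2_{2,\delta}(\slfrak_E)$. Applying the K\"ahler identity $\dbar_A^\ast=-i[\Lambda,\partial_A]$ to the $(1,1)$-form $\Omega$, the two components of $d_2^\ast\Omega=0$ read $\partial_A(i\Lambda\Omega)=0$ and $[\Phi^\ast,i\Lambda\Omega]=0$; taking hermitian adjoints these become $\dbar_A\eta=0$ and $[\Phi,\eta]=0$, i.e.\ $\eta\in\ker d_1=H^0(\Ccal_\delta^{par}(A,\Phi))$ --- here the $L^2_{2,\delta}$ regularity of $\eta$ is essential, since that is the actual domain of $d_1$ in the complex and, without a stability hypothesis, $\ker d_1$ could be strictly larger on the bigger space $\Rcal^0_\delta$. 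Conversely, given $\eta\in\ker d_1$, the $(1,1)$-form $\Omega:=-i\,\eta^\ast\,d\mu$ satisfies $d_2^\ast\Omega=0$ and $B:=e^{-\tau\delta}\Omega$ obeys $\Vert B\Vert_{L^2_\delta}=\Vert\Omega\Vert_{L^2_{-\delta}}\leq C\Vert\eta\Vert_{L^2_{2,\delta}}$, so $B$ lies in the weak kernel of $d_2^{\ast_\delta}$. These two assignments are mutually inverse and conjugate linear, hence define a conjugate-linear isomorphism $\coker d_2\isorightarrow\ker d_1$; composing with the conjugate-linear Riesz isomorphism $\ker d_1\cong(\ker d_1)^\ast$ attached to any hermitian inner product on the finite dimensional space $H^0$ produces the desired $\CBbb$-linear isomorphism $H^2(\Ccal_\delta^{par}(A,\Phi))\cong H^0(\Ccal_\delta^{par}(A,\Phi))^\ast$.

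The step I expect to be the main obstacle is the regularity argument in the second paragraph: because the domain of $d_2$ contains the strictly larger space $\Dscr_\delta$ and not merely $L^2_{1,\delta}$, the cokernel a priori lives only in $L^2_\delta$, and one must genuinely exploit the extra test directions carrying a nonzero Levi limit to conclude that the associated $\eta$ lands in $L^2_{2,\delta}(\slfrak_E)$, hence in $H^0$. Everything downstream of that is formal Hodge theory.
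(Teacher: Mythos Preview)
Your proposal is correct and follows exactly the route the paper indicates: the paper's proof is the single sentence ``By the same argument as in the proof of Proposition~\ref{prop:smooth-B} (in this case $e^{\tau\delta}i\Lambda B^\ast\in L^2_{2,\delta}$),'' and your write-up is precisely a detailed unpacking of that sentence --- the parenthetical remark being exactly the regularity step you identify as the main obstacle, where testing against the extra $\Dscr_\delta$ directions kills the Levi limit $\ell$ and forces $\eta$ into $L^2_{2,\delta}$ rather than merely $\Rcal^0_\delta$.
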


By Lemma \ref{lem:serre} and Proposition \ref{prop:index}, it follows that
\begin{equation} \label{eqn:dim-par-analytic}
\dim \Hbold^{par}_\delta(A,\Phi)=(2g-2+d)\dim\sG+\sum_{p\in D} \dim \sL_p
    \end{equation}
for all $(A,\Phi)\in \Bcal_{\delta}^{par,s}$.

Let us also recall the strongly parabolic deformation complex. 
Let $(A,\Phi)\in \Bcal^{par}_{\delta}$ be smooth and define:

\begin{align}\begin{split} \label{eqn:def-complex-strong}
    \Ccal_\delta^{spar}(A,\Phi) : &\\
    L^2_{2,\delta}(\slfrak_E)&\xrightarrow{\hspace{.1in}d_1\hspace{.05in}}
L^2_{1,\delta}(\slfrak_E\otimes\overline K_X )\oplus
L^2_{1,\delta}(\slfrak_E\otimes K_X)
\xrightarrow{\hspace{.1in}d_2\hspace{.05in}}
L^2_{\delta}(\slfrak_E\otimes K_X\otimes \overline K_X) 
\end{split}
\end{align}
Note that by \eqref{eqn:eta_continuity} the complex is well-defined, i.e.\  we
\emph{do not} need to assume that $(A,\Phi)$ is strongly
parabolic. 
Define the harmonics $$\Hbold^{spar}_{\delta}(A,\Phi)\simeq
H^1(\Ccal_\delta^{spar}(A,\Phi)):=\ker d_2/\imag d_1$$ in the same way
as \eqref{eqn:harmonic-h1}. Then as in the proof of  
Proposition  \ref{prop:smooth-B}, for stable Higgs bundles we have vanishing of $H^0$ and $H^2$ of the
complex \eqref{eqn:def-complex-strong}, and so 
\begin{align} 
    \begin{split}\label{eqn:dim-spar}
    \dim \Hbold^{spar}_{\delta}(A,\Phi)&=(2g-2)\dim\sG+2\sum_{p\in D}
        \dim(\sG/\sP_p)
+\sum_{p\in D}\dim \sL_p\\
    &=\dim \Hbold^{par}_{\delta}(A,\Phi)- \sum_{p\in D}\dim \sL_p
    \end{split}
\end{align}
for all $(A,\Phi)\in \Bcal_{\delta}^{par,s}$ (cf.\
\eqref{eqn:dim-par-analytic}).

We have the following useful consequence.
From Proposition \ref{prop:higgs-decomposition}, there is a well-defined
residue map
\begin{equation} \label{eqn:harmonic-residue}
    \Hbold_\delta^{par}(A,\Phi)\lra \bigoplus_{p\in D}\lfrak_p :
[(\beta,\varphi)]\mapsto \lim_{\tau\to +\infty} \varphi_0
\end{equation}
The kernel is exactly $\Hbold_\delta^{spar}(A,\Phi)$.
The dimension count above \eqref{eqn:dim-spar} then implies the next
result.

\begin{Corollary} \label{cor:residue}
    The map \eqref{eqn:harmonic-residue}
is surjective.
\end{Corollary}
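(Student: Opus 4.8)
The plan is to observe that the residue map in \eqref{eqn:harmonic-residue} is a $\CBbb$-linear map between finite dimensional vector spaces, so that surjectivity follows from a dimension count once the kernel has been identified. First I would record that the map is well defined: for $(\beta,\varphi)$ representing a class in $H^1(\Ccal_\delta^{par}(A,\Phi))$, the quantity $\lim_{\tau\to+\infty}\varphi_0=\rbold(\varphi)$ is independent of the choice of decomposition $\varphi=\varphi_0+\varphi_1$ by Proposition \ref{prop:higgs-decomposition}, and it is unchanged under adding a coboundary $d_1\eta$ with $\eta\in L^2_{2,\delta}(\slfrak_E)$, since then $[\Phi,\eta]\in L^2_{1,\delta}$ by \eqref{eqn:eta_continuity} and hence $\rbold([\Phi,\eta])=0$.

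Next I would pin down the kernel. If $\rbold(\varphi)=0$ then $\varphi\in\ker\rbold=L^2_{1,\delta}(\slfrak_E\otimes K_X)$ by Proposition \ref{prop:higgs-decomposition}, so $(\beta,\varphi)$ is a cocycle for the strongly parabolic complex $\Ccal_\delta^{spar}(A,\Phi)$ of \eqref{eqn:def-complex-strong}; conversely any strongly parabolic cocycle has $\varphi\in L^2_{1,\delta}$ and so is killed by the residue. The natural map $\Hbold_\delta^{spar}(A,\Phi)\to\Hbold_\delta^{par}(A,\Phi)$ induced by the inclusion of the middle terms $L^2_{1,\delta}\hookrightarrow\Dscr_\delta$ is injective, because stability forces $\ker d_1=\{0\}$ (Lemma \ref{lem:simple}), so a strongly parabolic cocycle that becomes a coboundary in $\Ccal_\delta^{par}$ is already a coboundary in $\Ccal_\delta^{spar}$. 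Thus the kernel of \eqref{eqn:harmonic-residue} is exactly $\Hbold_\delta^{spar}(A,\Phi)$, as asserted just before the statement.

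Finally, by rank--nullity the image has dimension
\[
\dim\Hbold_\delta^{par}(A,\Phi)-\dim\Hbold_\delta^{spar}(A,\Phi)=\sum_{p\in D}\dim\sL_p=\dim\bigoplus_{p\in D}\lfrak_p,
\]
using \eqref{eqn:dim-par-analytic} and \eqref{eqn:dim-spar}. Since the image is a subspace of $\bigoplus_{p\in D}\lfrak_p$ of the full dimension, it is all of $\bigoplus_{p\in D}\lfrak_p$, which is the claimed surjectivity. The only point requiring any care is the identification of the kernel; once the two deformation complexes are seen to be related only by the inclusion $L^2_{1,\delta}\hookrightarrow\Dscr_\delta$ in the middle term, the argument reduces entirely to the dimension formulas already established, so there is no substantive obstacle.
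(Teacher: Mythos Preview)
Your proof is correct and follows exactly the paper's approach: the paper states just before the corollary that the kernel of \eqref{eqn:harmonic-residue} is $\Hbold_\delta^{spar}(A,\Phi)$ and that ``the dimension count above \eqref{eqn:dim-spar} then implies the next result,'' which is precisely your rank--nullity argument using \eqref{eqn:dim-par-analytic} and \eqref{eqn:dim-spar}. Your added justification of well-definedness and of the kernel identification is sound (and slightly more detailed than the paper), though note that since $\Hbold_\delta^{spar}$ and $\Hbold_\delta^{par}$ are defined as harmonic subspaces satisfying the same conditions $d_2=0$, $d_1^{\ast_\delta}=0$, the inclusion $\Hbold_\delta^{spar}\hookrightarrow\Hbold_\delta^{par}$ is literally a subspace inclusion, so injectivity requires no argument.
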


The space $\Hbold_\delta^{par}(A,\Phi)$ (resp.\
$\Hbold_\delta^{spar}(A,\Phi)$)
is identified with the tangent space to $\Mbold^{par,s}_{\Dol,\ast}$
(resp.\ $\Mbold^{spar,s}_{\Dol,\ast}$) for $(A,\Phi)\in \Bcal_\delta^{par,s}$
(resp.\ $(A,\Phi)\in \Bcal_\delta^{spar,s}$) at the point $[(A,\Phi)]$.
We now proceed to define coordinate neighborhoods via the Kuranishi method. 
For
$(A,\Phi)\in \Bcal_{\delta}^{par,s}$,
following \cite[Def.\ 3.1]{CollierWentworth:19}, we define the \emph{Hodge
slice} by
\begin{align}\begin{split} \label{eqn:hodge-slice}
    \Scal^{par}_\delta(A,\Phi)&=\\
    \bigl\{
        (\beta,\varphi)&\in L^2_{1,\delta}(\slfrak_E\otimes\overline
        K_X)\oplus \Dscr_\delta \mid d_2(\beta,\varphi)+[\beta,\varphi]=0\
        ,\ d_1^{\ast_\delta}(\beta,\varphi)=0
        \bigr\}
\end{split}
\end{align}

Consider the operator 
$$d_2 d_2^{\ast_\delta}: L^2_{2,\delta}(\slfrak_E\otimes K_X\otimes\overline K_X)
\lra L^2_\delta(\slfrak_E\otimes K_X\otimes\overline K_X)
$$ 
Again using
\cite[Thm.\ 1.1]{LockhartMcOwen:85},
along with \eqref{eqn:phi_continuity} and the argument in the proof of Lemma
\ref{lem:smooth}, 
this is  a Fredholm operator of index $0$, and in
fact by Lemma \ref{lem:serre} it is invertible.  Let $G^{(2)}_\delta$ denote its Green's operator. 
We then define the \emph{Kuranishi map}
\begin{equation} \label{eqn:kuranishi}
    \kbold:\ker d_1^{\ast_\delta}\lra \ker d_1^{\ast_\delta} : (\beta,\varphi)\mapsto
    (\beta,\varphi)+d_2^{\ast_\delta} G^{(2)}_\delta([\beta,\varphi])
\end{equation}
An application of the inverse function theorem shows that $\kbold$ defines
a homeomorphism of neighborhoods $U,V$ of the origin from 
$$\kbold:
\Scal^{par}_\delta(A,\Phi)\supset U\isorightarrow
V\subset
\Hbold^{par}_{\delta}(A,\Phi)$$ 

Similarly,  if
$(A,\Phi)\in \Bcal_{\delta}^{spar,s}$ we define
\begin{align}\begin{split} \label{eqn:strong-hodge-slice}
    \Scal^{spar}_\delta(A,\Phi)=&\\
    \bigl\{
        (\beta,\varphi)\in L^2_{1,\delta}(\slfrak_E\otimes\overline
        K_X)&\oplus L^2_{1,\delta}(\slfrak_E\otimes
        K_X)\mid d_2(\beta,\varphi)+[\beta,\varphi]=0\
        ,\ d_1^{\ast_\delta}(\beta,\varphi)=0
        \bigr\}
\end{split}
\end{align}
In this case, the Kuranishi map $\kbold$ defines
a homeomorphism of neighborhoods of the origin from $\Scal^{spar}_\delta(A,\Phi)$ to
$\Hbold^{spar}_{\delta}(A,\Phi)$.

\subsubsection{Proof of Theorem \ref{thm:analytic-moduli}}
The existence of a complex manifold structure on the moduli space is now very
standard (cf.\  \cite{Kim:87} and \cite[Ch.\ VII]{DiffGeomCompVectBun}).

\begin{Proposition}[{\sc Local slice}] \label{prop:kim}
Let $(A,\Phi)\in \Bcal_{\delta}^{par,s}$ be a smooth, stable, parabolic
Higgs pair. 
Then the  map
$$
    p_{\H} : 
\Scal_{\delta}^{par}(A,\Phi)\lra \Mbold_{\Dol,\ast}^{par,s}(\alpha,\delta)
: (\beta,\varphi)\mapsto [(\dbar_{A}+\beta,\Phi+\varphi)]
$$
is a local homeomorphism from an open neighborhood of the origin in 
$\Scal_{\delta}^{par}(A,\Phi)$ to an open neighborhood of $[(A,\Phi)]$. This gives a local coordinate chart on
 $\Mbold_{\Dol,\ast}^{par,s}(\alpha,\delta)$.
The transition functions for the local coordinate charts constructed in
    this way  are holomorphic.
\end{Proposition}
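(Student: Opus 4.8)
The plan is to run the standard Kuranishi slice construction in the weighted Sobolev setting, using the analytic facts already assembled: continuity of the bracket and of the gauge action on $\Dscr_\delta$ (Proposition \ref{prop:strong}, Corollary \ref{cor:gauge_group_action}), injectivity of $d_1$ on $\Rcal^0_\delta$ (Lemma \ref{lem:simple}), invertibility of $d_1^{\ast_\delta}d_1$ (as in Lemma \ref{lem:cohomology}), and the Fredholm/Hodge package of \cite{LockhartMcOwen:85}. First I would promote $p_{\H}$ to a local homeomorphism. Consider the smooth map
$$\Psi\colon \Gcal_{\delta,\ast}\times \Scal_{\delta}^{par}(A,\Phi)\lra \Bcal_{\delta}^{par,s}\ ,\qquad (g,(\beta,\varphi))\longmapsto g\cdot(\dbar_A+\beta,\Phi+\varphi)\ ,$$
which is well defined: a slice element satisfies $d_2(\beta,\varphi)+[\beta,\varphi]=0$, i.e.\ $(\dbar_A+\beta,\Phi+\varphi)$ is a parabolic Higgs pair, and it is stable for $(\beta,\varphi)$ small since stability is an open condition. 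The differential of $\Psi$ at $(\id,0)$ is $(\eta,(\dot\beta,\dot\varphi))\mapsto d_1\eta+(\dot\beta,\dot\varphi)$, where the tangent space $T_0\Scal_{\delta}^{par}(A,\Phi)$ is identified via the Kuranishi homeomorphism $\kbold$ with $\Hbold_{\delta}^{par}(A,\Phi)=\ker d_2\cap\ker d_1^{\ast_\delta}$. Since $d_1$ is injective with closed range (invertibility of $d_1^{\ast_\delta}d_1$), one has the Hodge splitting $\ker d_2=\imag d_1\oplus\Hbold_{\delta}^{par}(A,\Phi)$, so $d\Psi_{(\id,0)}$ is a Banach-space isomorphism onto $T_{(A,\Phi)}\Bcal_{\delta}^{par,s}=\ker d_2$ (Proposition \ref{prop:smooth-B}). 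The inverse function theorem then makes $\Psi$ a homeomorphism of a neighborhood of $(\id,0)$ onto a neighborhood of $(A,\Phi)$; in particular $\Psi$ is open, so $p_{\H}$ carries a neighborhood of the origin onto an open neighborhood of $[(A,\Phi)]$.

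For local injectivity of $p_{\H}$ I would use the standard properness of the slice: applying the inverse function theorem to $(g,c)\mapsto d_1^{\ast_\delta}\big(g\cdot c-(A,\Phi)\big)$ near $(\id,(A,\Phi))$, one gets that the $\Gcal_{\delta,\ast}$-orbit of every configuration close to $(A,\Phi)$ meets a small piece of $\Scal_{\delta}^{par}(A,\Phi)$ in a unique point close to the origin; since stability forces $(A,\Phi)$ to be simple (Lemma \ref{lem:simple}) its $\Gcal_{\delta,\ast}$-stabilizer is trivial, and a short contradiction argument shows that a gauge transformation taking one small slice point to another must itself be close to $\id$, hence equal to $\id$ by the uniqueness just obtained. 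Together with the previous step, $p_{\H}$ is a homeomorphism from a neighborhood of $0\in\Scal_{\delta}^{par}(A,\Phi)$ onto an open neighborhood of $[(A,\Phi)]$, and composing with $\kbold^{-1}$ gives a chart modeled on the finite-dimensional complex vector space $\Hbold_{\delta}^{par}(A,\Phi)$ (dimension as in \eqref{eqn:dim-par-analytic}).

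The step I expect to be the main obstacle is showing the transition functions are holomorphic, the delicate point being that the slice condition $d_1^{\ast_\delta}=0$ is conjugate-linear. The resolution, following \cite{Kim:87} and \cite[Ch.\ VII]{DiffGeomCompVectBun}, has two ingredients. First, $\Bcal_{\delta}^{par,s}$ is a complex Banach submanifold of the complex affine space $\Acal_\delta\times\Dscr_\delta$ — the equation $d_2(\beta,\varphi)+[\beta,\varphi]=0$ is holomorphic — and $\Gcal_{\delta,\ast}$ is a complex Banach Lie group (\S\ref{sec:gauge-groups}) acting holomorphically, since $g\mapsto g^{-1}$ and the pullback formula are holomorphic. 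Second, although $d_1^{\ast_\delta}$ is conjugate-linear, the associated gauge-fixing projection $P:=d_1\,(d_1^{\ast_\delta}d_1)^{-1}d_1^{\ast_\delta}$ onto $\imag d_1$ is \emph{complex}-linear, because a conjugate-linear operator enters an even number of times; equivalently $\ker d_1^{\ast_\delta}$, and hence $\Hbold_{\delta}^{par}(A,\Phi)$, is a complex subspace. Thus the slice is locally $\Bcal_{\delta}^{par,s}\cap\big((A,\Phi)+\ker P\big)$, a transverse intersection of complex submanifolds, carrying a complex structure for which $\kbold$ is biholomorphic. With this, the transition between two charts sends $h_1\in\Hbold_{\delta}^{par}(A_1,\Phi_1)$ to the unique $h_2$ with $\kbold_2^{-1}(h_2)$ on the $\Gcal_{\delta,\ast}$-orbit of $\kbold_1^{-1}(h_1)$; the connecting $g=g(h_1)$ near $\id$ is produced by the holomorphic implicit function theorem applied to the holomorphic equation $P_{(A_2,\Phi_2)}\big(g\cdot\kbold_1^{-1}(h_1)-(A_2,\Phi_2)\big)=0$, whose linearization in $g$ at the base point is the isomorphism $d_1^{\ast_\delta}d_1$. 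Hence $g(h_1)$, and therefore $h_2$, depends holomorphically on $h_1$. The remaining verifications — holomorphy of the gauge action and complex-linearity of the Green's operator $G^{(2)}_\delta$ in the weighted spaces — are routine given Proposition \ref{prop:strong} and \cite{LockhartMcOwen:85}.
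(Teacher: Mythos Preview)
Your strategy is the same as the paper's (the Kuranishi--Kim slice argument), and the openness and holomorphicity steps are handled correctly. There is, however, one genuine gap in your injectivity step, and one minor confusion in the holomorphicity discussion.

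\medskip
\noindent\textbf{The gap: properness.} After your inverse function theorem argument, you know that for $g$ in a small neighborhood $V$ of $\id$ and slice elements close to $0$, the representation $g\cdot(\beta,\varphi)$ is unique. What you still need is that \emph{any} $g\in\Gcal_{\delta,\ast}$ carrying one small slice element to another must lie in $V$. Your ``short contradiction argument'' does not supply this: in the weighted Sobolev setting there is no free compactness of gauge orbits, so one cannot simply pass to a convergent subsequence of gauge transformations. The paper isolates exactly this issue and resolves it by a dedicated Poincar\'e-type inequality (Lemma~\ref{lem:mpim}): for $\eta\in L^2_{2,\delta}(\slfrak_E)$ one has $\Vert\eta\Vert_{L^2_{1,\delta}}\leq C\Vert d_1\eta\Vert_{\Dscr_\delta}$, proved by contradiction using the cylindrical structure and \eqref{eqn:poincare}. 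With this in hand, the paper writes $g=1+\eta$, computes $d_1\eta$ explicitly in terms of $(\beta,\varphi)$ and $(\widetilde\beta,\widetilde\varphi)$, applies the multiplication estimates of Proposition~\ref{prop:strong}, and combines Lemma~\ref{lem:mpim} with the elliptic estimate to obtain
\[
\Vert\eta\Vert_{L^2_{2,\delta}}\leq \frac{C'(\Vert(\beta,\varphi)\Vert_{\Dscr_\delta}+\Vert(\widetilde\beta,\widetilde\varphi)\Vert_{\Dscr_\delta})}{1-CC'(\Vert(\beta,\varphi)\Vert_{\Dscr_\delta}+\Vert(\widetilde\beta,\widetilde\varphi)\Vert_{\Dscr_\delta})}\ ,
\]
forcing $g\in V$ for small slice elements. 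This estimate is the analytic heart of the properness, and you should not expect it to drop out of general Banach-manifold reasoning; it uses the specific $L^2_\delta$ structure on the ends.

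\medskip
\noindent\textbf{Minor point.} The operator $d_1^{\ast_\delta}(\beta,\varphi)=e^{-\tau\delta}\dbar_A^\ast(e^{\tau\delta}\beta)-i\Lambda[\Phi^\ast,\varphi]$ is $\CBbb$-linear in $(\beta,\varphi)$ (both $\dbar_A^\ast$ and $[\Phi^\ast,\cdot\,]$ are), so the slice condition already cuts out a complex subspace and your ``even number of conjugate-linear factors'' detour is unnecessary. Your conclusion that $\Hbold^{par}_\delta(A,\Phi)$ is a complex subspace, and hence that the Kim-style implicit-function argument for the transition maps is holomorphic, is correct; this is exactly what the paper invokes when it cites \cite[Lemma~2.3]{Kim:87}.
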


We first need a generalization of the Poincar\'e inequality.

\begin{Lemma} \label{lem:mpim}
    Fix $(A,\Phi)$ be as in Proposition \ref{prop:kim}, and let $d_1$ denote the operator defined
    in \eqref{eqn:d1d2}. Then there is a constant $C>1$, depending on $(A,\Phi)$,
    such that for any $\eta\in L^2_{2,\delta}(\slfrak_E)$,
    $$
    \Vert \eta\Vert_{L^2_{1,\delta}}\leq C\Vert
    d_1\eta\Vert_{\Dscr_\delta}\ . 
    $$
\end{Lemma}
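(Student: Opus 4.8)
The plan is to deduce the desired inequality from the injectivity of $d_1$ on $\Rcal^0_\delta$ (Lemma \ref{lem:simple}) together with a Fredholm/elliptic estimate, much as the basic Poincar\'e inequality \eqref{eqn:poincare} was proven. First I would observe that $d_1^{\ast_\delta}d_1 : L^2_{2,\delta}(\slfrak_E)\to L^2_\delta(\slfrak_E)$ is the Laplacian of the deformation complex; by the index computation (Propositions \ref{prop:index}, \ref{prop:nude-index}) and the argument in the proof of Lemma \ref{lem:smooth} it is Fredholm, and by Lemma \ref{lem:simple} it has trivial kernel, hence it is injective with closed range. Combined with the elliptic estimate $\Vert\eta\Vert_{L^2_{2,\delta}}\leq C(\Vert d_1^{\ast_\delta}d_1\eta\Vert_{L^2_\delta}+\Vert\eta\Vert_{L^2_\delta})$ and a standard Rellich-type argument to absorb the lower-order term (using that injectivity rules out the usual contradiction sequence), one gets $\Vert\eta\Vert_{L^2_{2,\delta}}\leq C\Vert d_1^{\ast_\delta}d_1\eta\Vert_{L^2_\delta}$. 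But $\Vert d_1^{\ast_\delta}d_1\eta\Vert_{L^2_\delta}\leq C\Vert d_1\eta\Vert_{\Dscr_\delta}$ since $d_1^{\ast_\delta}$ is built from $\dbar_A^\ast$ and the bracket with $\Phi^\ast$, and by Proposition \ref{prop:strong} (the estimate \eqref{eqn:phi_continuity}, or rather the mapping property of $i\Lambda[\Phi^\ast,\cdot\,]$ out of $\Dscr_\delta$) this last operator is continuous into $L^2_\delta$; combining gives the claimed bound after restricting to a neighborhood of the origin is unnecessary, as the inequality is linear.

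Alternatively, and perhaps more cleanly, I would argue directly by contradiction in the spirit of the proof of \eqref{eqn:poincare}. Suppose no such $C$ exists; then there is a sequence $\eta_j\in L^2_{2,\delta}(\slfrak_E)$ with $\Vert\eta_j\Vert_{L^2_{1,\delta}}=1$ and $\Vert d_1\eta_j\Vert_{\Dscr_\delta}\to 0$. Using the Poincar\'e inequality \eqref{eqn:poincare} applied to $\eta_j$ (valid since $\dbar_A\eta_j$ is the $\overline K_X$-component of $d_1\eta_j$, up to the bounded zeroth-order term $\beta$, and $\Vert\dbar_{A}\eta_j\Vert_{L^2_\delta}\leq \Vert d_1\eta_j\Vert_{\Dscr_\delta}\to 0$), one first controls $\Vert\eta_j\Vert_{L^2_\delta}$ away from the cylinders and then, exactly as in the cut-off argument for \eqref{eqn:poincare}, on the cylindrical ends, to conclude $\Vert\eta_j\Vert_{L^2_\delta}$ is bounded below. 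Passing to a weak limit $\eta_j\rightharpoonup\eta$ in $L^2_{1,\delta}$, and using the compact embedding $L^2_1\hookrightarrow L^4$ off the cylinders together with the cut-off estimate on the ends (as in the proof of Proposition \ref{prop:boundary-map}), one gets strong convergence in $L^2_\delta$, so $\Vert\eta\Vert_{L^2_\delta}>0$; moreover $d_1\eta=0$ in the sense of distributions, so $\eta\in\ker d_1$ on the extended domain $\Rcal^0_\delta$ — here one needs that weak $L^2_{1,\delta}$-convergence plus boundedness gives $\eta\in\Rcal^0_\delta$, which follows from the characterization of $\Rcal_\delta$ and the fact that $\Rcal^0_\delta\subset L^2_{2,-\delta}$ (Proposition \ref{prop:nude-index}). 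By Lemma \ref{lem:simple} this forces $\eta\equiv 0$, a contradiction.

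The main obstacle I anticipate is making precise the step where one passes from the sequence $\eta_j\in L^2_{2,\delta}$ to a kernel element on the \emph{larger} domain $\Rcal^0_\delta$: the weak limit $\eta$ is obtained only in $L^2_{1,\delta}$, and one must argue that the vanishing of $d_1\eta$ upgrades $\eta$ to lie in $\Rcal^0_\delta$ (equivalently, that $\dbar_{A_0}\eta\in L^2_{1,\delta}$) so that Lemma \ref{lem:simple}, which is stated for the $\Rcal^0_\delta$-extension of $d_1$, actually applies. This is essentially an elliptic-bootstrapping point on the cylindrical ends, using that on each $C(p)$ the operator decouples to the boundary operator $\nabla_{\partial_p}$ with the spectral gap \eqref{eqn:delta-assumption}; concretely, $\dbar_A\eta=-[\Phi,\beta]$-type terms vanish, $\dbar_{A_0}\eta=-\beta\eta + (\text{l.o.t.})$ with $\beta\in L^2_{1,\delta}$ and $\eta$ bounded, so $\dbar_{A_0}\eta\in L^2_\delta$, and then local elliptic regularity for $\dbar_{A_0}$ plus the gap condition promote this to $L^2_{1,\delta}$, so $\eta\in\Rcal_\delta$. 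Once that is in hand, the remaining pieces — the two Poincar\'e-type estimates on the ends and the compactness argument — are routine adaptations of the techniques already developed in the proof of Proposition \ref{prop:boundary-map}.
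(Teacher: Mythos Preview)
Your second (contradiction) approach is essentially the paper's, but you make your own life harder at one point and thereby manufacture the ``main obstacle.'' The paper does not take the weak limit in $L^2_{1,\delta}$. Instead, as the very first step after setting up the contradiction sequence, it observes that the elliptic estimate for $\dbar_{A_0}$ (after the usual perturbation from $A_0$ to $A$) together with $\Vert\eta_j\Vert_{L^2_{1,\delta}}=1$ and $\Vert\dbar_A\eta_j\Vert_{L^2_{1,\delta}}\leq\Vert d_1\eta_j\Vert_{\Dscr_\delta}\to 0$ already gives a uniform bound on $\Vert\eta_j\Vert_{L^2_{2,\delta}}$. The weak limit is then taken in $L^2_{2,\delta}$, so $\eta\in L^2_{2,\delta}\subset\Rcal^0_\delta$ for free, and Lemma \ref{lem:simple} applies directly with no bootstrapping. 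Your proposed workaround for the obstacle (showing $\dbar_{A_0}\eta\in L^2_\delta$ and then ``promoting'' to $L^2_{1,\delta}$) is at best delicate, since $\Rcal_\delta$ requires $d_{A_0}\eta\in L^2_{1,\delta}$, not merely $L^2_\delta$; it is simply unnecessary once you take the weak limit at the right regularity.

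One further detail you gloss over: to conclude $[\Phi,\eta]=0$, the paper uses that the $\Dscr_\delta$-norm on the second component of $d_1\eta_j$ controls $\Vert[\Phi,\eta_j]\Vert_{L^2_{-\delta}}$, and then strong convergence of $\eta_j$ on compact subsets (from the compact embedding $L^2_2\hookrightarrow L^4$ there) gives $[\Phi,\eta]=0$ pointwise. Regarding your first approach via the Laplacian $d_1^{\ast_\delta}d_1$: this can be made to work, but the phrase ``standard Rellich-type argument'' is misleading, since the inclusion $L^2_{2,\delta}\hookrightarrow L^2_\delta$ (same weight) is \emph{not} compact on a manifold with cylindrical ends. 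The honest argument is that Fredholmness plus injectivity gives closed range, hence the a priori estimate by the open mapping theorem; no compactness is needed.
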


\begin{proof} To the contrary, suppose the existence of a sequence $\eta_j$ with
    $\Vert\eta_j\Vert_{L^2_{1,\delta}}=1$ and
    $\Vert d_1\eta_j\Vert_{\Dscr_\delta}\to 0$.
    Notice that from the elliptic estimate this implies that $\Vert\eta_j\Vert_{L^2_{2,\delta}}$ is uniformly bounded. 
    We may therefore assume $\eta_j\to \eta$
    weakly in  $L^2_{2,\delta}$. 
    Choose a cut-off function $\psi$ vanishing
    on $X_0$ and identically $=1$ for $\tau$ large.  
    Since $A_0$ is flat on the cylinders $C(p)$, \eqref{eqn:poincare}
    implies
    \begin{equation} \label{eqn:preliminary}
    \Vert \psi\eta_j\Vert_{L^2_{1,\delta}}\leq
    C\Vert d_{A_0}(\psi\eta_j)\Vert_{L^2_\delta}
    \leq
    2C\Vert\dbar_{A_0}(\psi\eta_j)\Vert_{L^2_\delta}
    \end{equation}
    Now $A=A_0+\beta$ with $\beta \in L^2_{1,\delta}$. By the embedding
    $L^2_{1,\delta}\hookrightarrow L^4_{\delta}$, we have
    $$
    \Vert [\beta, \psi\eta_j]\Vert_{L^2_{\delta}}\leq C \Vert
    \beta\Vert_{L^2_{1,\delta}(\supp(\psi))}\Vert  \psi\eta_j
    \Vert_{L^2_{1,\delta}}
    $$
    Hence, we may choose $\psi$ so that the inequality
    \eqref{eqn:preliminary} holds with $A_0$ replaced by $A$ (and a
    different constant). By readjusting $\psi$, we conclude as 
 in  the proof of \eqref{eqn:poincare}, that $\Vert\eta\Vert_{L^2_{1,\delta}}\geq 1/2$ and
    $\dbar_{A}\eta=0$.
Since $\Vert
    [\Phi,\eta_j]\Vert_{L^2_{-\delta}}\to 0$
    and $\eta_j\to \eta$ strongly on compact sets, we have
     $[\Phi,\eta]=0$.  This  then contradicts the
    assumption that $\ker d_1=\{0\}$. 
\end{proof}

\begin{proof}[Proof of Proposition \ref{prop:kim}]
    The first assertion now follows exactly as in  \cite[Lemma 1.7]{Kim:87}.
    For the sake of completeness, we repeat the argument in our case.
    Fix a smooth $(A,\Phi)$. If
    $(\beta,\varphi)\in L^2_{1,\delta}(\slfrak_E\otimes \overline
    K_X)\times\Dscr_\delta$, and $g\in \Gcal_{\delta,\ast}$,
    denote by  $g(\beta,\varphi):=(\widetilde\beta,\widetilde\varphi)$,
    where
    $$
    g(\dbar_A+\beta,\Phi+\varphi)=(\dbar_A+\widetilde\beta,\Phi+\widetilde\varphi)\
    .
    $$
With this understood,
   define the map 
        $$
        \Psi : L^2_{2,\delta}(\slfrak_E)\times L^2_{1,\delta}(\slfrak_E\otimes \overline
    K_X)\times
        \Dscr_\delta\lra L^2_{\delta}(\slfrak_E) \ :\ 
        (\eta,\beta,\varphi)\mapsto\
        d_1^{\ast_\delta}[e^\eta(\beta,\varphi)]
    $$
    Then the derivative  $D_1\Psi(0,0,0)$ with respect to  $\eta$,
    evaluated at the  origin
    $(\eta,\beta,\varphi)=(0,0,0)$, is $d_1^{\ast_\delta}d_1$. 
    Since $(A,\Phi)$ is stable, by Lemma \ref{lem:simple},  $\ker d_1=\{0\}$, and $D_1\Psi(0,0,0)$ is
    therefore an isomorphism. By the
    implicit function theorem, for $(\beta,\varphi)$ in a neighborhood
    $U$ of the origin there is a unique $g\in
    \Gcal_{\delta,\ast}$ in a neighborhood $V$ of the identity  so
    that $g(\beta,\varphi)\in \ker d_1^{\ast_\delta}$.

    Suppose now that 
    $ g(\beta,\varphi)=(\widetilde\beta,\widetilde\varphi) $
    for some $g\in \Gcal_{\delta,\ast}$, where $(\beta,\varphi),
    (\widetilde\beta,\widetilde\varphi)\in \ker d_1^{\ast_\delta}$. 
    If we define $\eta$ by $g=1+\eta$, then by Proposition
    \ref{prop:boundary-map}, $\eta\in
    L^2_{2,\delta}(\glfrak_E)$. We compute: 
    $$
    d_1\eta=(\beta-\widetilde\beta+\eta\beta-\widetilde\beta\eta,
    \varphi-\widetilde\varphi+\eta\varphi-\widetilde\varphi\eta)\ .
    $$
    Using Proposition \ref{prop:strong} (which holds equally well for
    $\glfrak_E$-valued sections),  we obtain the estimate 
    \begin{equation} \label{eqn:d1_estimate} 
    \Vert d_1\eta\Vert_{\Dscr_\delta}\leq C'
    (1+\Vert\eta\Vert_{L^2_{2,\delta}})(\Vert(\beta,\varphi)\Vert_{\Dscr_\delta}+\Vert(\widetilde\beta,\widetilde\varphi)\Vert_{\Dscr_\delta})
    \end{equation} 
    for a constant $C'$. 
The usual elliptic estimate gives
$$
    \Vert\eta\Vert_{L^2_{2,\delta}}\leq C(
    \Vert\eta\Vert_{L^2_{1,\delta}}+\Vert\dbar_{A_0}\eta\Vert_{L^2_{1,\delta}})\ .
$$
By patching the elliptic estimate on a compact set to the argument using
    multiplication properties as above, we can replace $A_0$ with $A$ in
    the above equation to obtain 
$$
    \Vert\eta\Vert_{L^2_{2,\delta}}\leq C(
    \Vert\eta\Vert_{L^2_{1,\delta}}+\Vert d_1\eta\Vert_{\Dscr_\delta})\ .
$$
    Finally, using this, along with \eqref{eqn:d1_estimate} and  Lemma
    \ref{lem:mpim}, we have
    $$
    \Vert\eta\Vert_{L^2_{2,\delta}}\leq 
\frac{C'(\Vert(\beta,\varphi)\Vert_{\Dscr_\delta}+\Vert(\widetilde\beta,\widetilde\varphi)\Vert_{\Dscr_\delta})}
{1-CC' (\Vert(\beta,\varphi)\Vert_{\Dscr_\delta}+\Vert(\widetilde\beta,\widetilde\varphi)\Vert_{\Dscr_\delta})}
    $$
    If $(\beta,\varphi)$ and $(\widetilde\beta,\widetilde\varphi)$ are
    sufficiently small, $g\in V$ and is therefore the identity. This proves
    that the slice is homeomorphic onto its image. 
    The second assertion on the holomorphicity follows similarly, as
in \cite[Lemma 2.3]{Kim:87}. 
\end{proof}

\begin{proof}[Completion of the Proof of Theorem \ref{thm:analytic-moduli}]
By the discussion at the end of the previous section, 
$\Scal_{\delta}^{par}(A_0,\Phi_0)$
is locally homeomorphic to a domain in complex euclidean space of the
correct dimension. This gives a local coordinate chart on 
$\Mbold_{\Dol,\ast}^{par,s}(\alpha,\delta)$.  As 
we also have that the transition functions are holomorphic, this therefore gives
$\Mbold_{\Dol,\ast}^{par,s}(\alpha,\delta)$ the structure of a complex manifold.   
The same arguments apply to the strongly parabolic case.
This proves 
statements (1) and (2) of the theorem.

For (3), we claim that the based moduli spaces 
$\Mbold_{\Dol,\ast}^{par,s}(\alpha,\delta)$ and
$\Mbold_{\Dol,\ast}^{spar,s}(\alpha,\delta)$ represent the moduli
functors for families of \emph{framed} parabolic and strongly parabolic 
Higgs bundles introduced in \cite{LogaresMartens}.  
Below we shall only give a few details in the parabolic case, for example. 

An important first step is the existence
of a universal family on $X\times \Scal_{\delta}^{par,s}(A_0,\Phi_0)$. 
Let $\widehat E\to X^\times\times \Scal_{\delta}^{par,s}(A_0,\Phi_0)$
be the pullback of the bundle $E\to X^\times$ above. 
Consider the $\dbar$-operator given  at the point $(\beta,\varphi)$ 
$$
\dbar_{\widehat E}=\dbar_{\Scal}+\dbar_{A_0}+\beta
$$
Here, $\dbar_{\Scal}$ is short-hand for the Cauchy-Riemann operator on
$\Scal_{\delta}^{par,s}(A_0,\Phi_0)$, which acts on sections of $\widehat E$
because it is a pullback from $X^\times$. 
Notice that $\beta$, regarded as a form on
$\Scal_{\delta}^{par,s}(A_0,\Phi_0)$, varies holomorphically. Hence,
$\dbar_{\widehat E}^2=0$, and this therefore gives a holomorphic structure
on $\widehat \Ecal$ on  $\widehat E$. As in 
Proposition \ref{prop:analytic-higgs},
there is a natural extension, also denoted $\widehat \Ecal$,  of
$\widehat E$ as a holomorphic bundle on  $ X\times
\Scal_{\delta}^{par,s}(A_0,\Phi_0)$. 
This extension depends on a choice of trivial holomorphic bundle on $ D\times
\Scal_{\delta}^{par,s}(A_0,\Phi_0)$. 
The parabolic structure is given by the growth of holomorphic sections with
respect to the background metric on $\widehat E$.  The associated graded
bundle on $D\times \Scal_\delta^{par,s}$ is naturally trivialized by
the germs of the $s_i$.
Finally, the relative logarithmic  Higgs field 
is defined by  $\widehat\Phi_0+\varphi$, where $\widehat\Phi_0$ is the
pullback of $\Phi_0$ and $\varphi$ is the tautological form defined as with
$\beta$. 

Using the existence of the universal family one can show as in
\cite[Sec.\ 6]{Norton:79},  or the detailed exposition in 
\cite[Sec.\ 4.2]{FriedmanMorgan:94},
that 
$\Mbold_{\Dol,\ast}^{par,s}(\alpha,\delta)$ locally represents the analytic functor of framed
parabolic bundles. The construction of an algebraic space, $\Pcal_0(\alpha)$, 
representing the algebraic functor is
outlined in \cite{LogaresMartens}. The isomorphism of
$\Mbold_{\Dol,\ast}^{par,s}(\alpha,\delta)$
with the analytification of $\Pcal_0(\alpha)$ then follows as in
\cite{Miyajima:89}. Finally, it is clear that the identification is
equivariant with respect to the  action of $\overline\Lbold$, and this in turn
proves assertion (3) of Theorem \ref{thm:analytic-moduli}.
\end{proof}

\subsubsection{Variation of the harmonic metric} \label{sec:harmonic-metric}
Having defined the manifold structure on the framed moduli space 
$\Mbold^{par,s}_{\Dol,\ast}(\alpha,\delta)$
and its quotient $\Mbold^{par,s}_{\Dol}(\alpha,\delta)$, we can now prove a statement
about the variation of the harmonic metric.
Notice that the space \eqref{eqn:metrics} of admissible metrics has the
structure of a Banach manifold with tangent space
$$T_{h_0}\Mcal_\delta\simeq \sqrt{-1}\,\Ucal_\delta\ .$$

\begin{Theorem}[{\sc Variation of harmonic metric}] \label{thm:first-variation}
Let $(A,\Phi)\in \Bcal^{par,s}_\delta$ be a solution to \eqref{eqn:hitchin} for
the metric $h_0$,
and let $\Hscr:\Scal^{par}_\delta(A,\Phi)\to \Mcal_\delta$ denote the
family of
harmonic metrics from Theorem \ref{thm:hk-metric}.
Then $\Hscr$ is a $C^\infty$ map. Moreover, the derivative $\Hscr$ at the origin
vanishes.
\end{Theorem}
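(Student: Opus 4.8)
The plan is to set up the implicit‐function‐theorem characterization of the harmonic metric along the Hodge slice and differentiate. Fix the solution $(A,\Phi)$ of \eqref{eqn:hitchin} for $h_0$, and for $(\beta,\varphi)\in\Scal^{par}_\delta(A,\Phi)$ write the deformed pair $(A_{\beta},\Phi_{\varphi}):=(\dbar_A+\beta,\Phi+\varphi)$. By Theorem \ref{thm:hk-metric}, there is a unique $h\in\Mcal_\delta$ solving the Hitchin equation for this pair; writing $h=e^{s}(h_0)$ with $s\in\sqrt{-1}\,\Ucal_\delta$ (which by Proposition \ref{prop:boundary-map} decomposes as $\sqrt{-1}\,(L^2_{2,\delta}(\sufrak_E)\oplus(\text{constants in }\lfrak_p))$, but since the $\SL$-structure on the determinant is already flat the constant part is killed and in fact $s\in\sqrt{-1}\,L^2_{2,\delta}(\sufrak_E)$), the equation becomes
$$
    \Fscr(\beta,\varphi,s):=\Lambda F_{e^{s}(A_\beta)}^\perp+\Lambda[e^{s}\Phi_\varphi e^{-s},(e^{s}\Phi_\varphi e^{-s})^{\ast_{h_0}}]=0.
$$
The first step is to verify that $\Fscr$ is a smooth map between the relevant Banach spaces — using Proposition \ref{prop:strong} (the multiplication estimates \eqref{eqn:bracket_estimate}–\eqref{eqn:product_estimate}) for the bracket term and the elliptic nature of the first term — and that the partial derivative $D_s\Fscr(0,0,0)$ is the linearized Hitchin operator at the solution, which on $\sqrt{-1}\,L^2_{2,\delta}(\sufrak_E)$ is (up to a factor) $\dbar_A^\ast\dbar_A+[\Phi,[\Phi^\ast,\cdot]]$, essentially $(D'')^\ast D''$ restricted to the skew‐hermitian part. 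By Proposition \ref{prop:nude-index} this operator has index zero and kernel equal to $\ker D''\cap\sqrt{-1}\,L^2_{2,\delta}(\sufrak_E)$; since $(A,\Phi)$ is stable, $\ker d_1=\{0\}$ (Lemma \ref{lem:simple}), so the only obstruction is the covariant constants, which again vanish on $L^2_{2,\delta}$. Hence $D_s\Fscr(0,0,0)$ is an isomorphism, the implicit function theorem applies, and $\Hscr(\beta,\varphi)=e^{s(\beta,\varphi)}(h_0)$ with $s$ a $C^\infty$ function of $(\beta,\varphi)$; this gives the smoothness assertion.

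For the vanishing of the derivative at the origin, differentiate $\Fscr(\beta,\varphi,s(\beta,\varphi))=0$ in the direction of a tangent vector $(\dot\beta,\dot\varphi)\in\Hbold^{par}_\delta(A,\Phi)$ to the slice at $0$. Writing $\dot s:=Ds(0,0)(\dot\beta,\dot\varphi)$, the chain rule gives
$$
    D_s\Fscr(0,0,0)\dot s = -\,D_{(\beta,\varphi)}\Fscr(0,0,0)(\dot\beta,\dot\varphi).
$$
The key computation is that the right-hand side is orthogonal (in $L^2_\delta$) to the cokernel of $D_s\Fscr(0,0,0)$ trivially since that operator is surjective, so $\dot s$ is determined; and then one shows $\dot s=0$. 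The cleanest route, following \cite[Lemma 3.6 / Prop. 3.8]{CollierWentworth:19}, is to observe that the linearization of the Hitchin map in the direction of a \emph{harmonic} representative $(\dot\beta,\dot\varphi)$ — i.e.\ one in $\ker d_1^{\ast_\delta}\cap\ker d_2$ — produces a right‐hand side that is $L^2_\delta$-orthogonal to the image of $d_1$ restricted to $\sqrt{-1}\,\Ucal_\delta$; pairing the displayed equation with $\dot s$ and integrating by parts, using the Kähler identities \eqref{eqn:kahler} and the fact that $(\dot\beta,\dot\varphi)$ satisfies both the Coulomb gauge condition $d_1^{\ast_\delta}(\dot\beta,\dot\varphi)=0$ and $d_2(\dot\beta,\dot\varphi)=0$, forces $\Vert\dbar_A\dot s\Vert^2_{L^2_\delta}+\Vert[\Phi,\dot s]\Vert^2_{L^2_\delta}=0$, hence $\dot s\in\ker d_1$, hence $\dot s=0$ by Lemma \ref{lem:simple}. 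Therefore $D\Hscr(0,0)=0$.

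\textbf{Main obstacle.} I expect the bookkeeping in the second paragraph to be the delicate part: one must check that the $\dbar_A$-Coulomb gauge slice condition $d_1^{\ast_\delta}=0$ is exactly the infinitesimal version of the metric normalization $s\perp\ker d_1$, so that the harmonic representative of a slice tangent vector does not acquire a spurious metric variation. Concretely, the subtlety is that the slice $\Scal^{par}_\delta$ is cut out using the \emph{fixed} operators $d_1,d_2$ at $(A,\Phi)$ (the $h_0$-adjoints), whereas the harmonic metric condition involves the varying $\Phi^{\ast_h}$; one needs the first variation of $\Phi^{\ast_h}$ to be controlled by $\dot s$ and to produce, after the integration by parts, precisely the nonnegative quantity $\Vert[\Phi,\dot s]\Vert^2$ with no indefinite cross terms. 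Ensuring the cross terms vanish is exactly where the harmonicity (both Coulomb gauge and $d_2$-closed) of $(\dot\beta,\dot\varphi)$ gets used, and where Proposition \ref{prop:strong} is needed to guarantee every term lies in $L^2_\delta$ so the integrations by parts are legitimate on the noncompact surface $X^\times$ with the weight $e^{\tau\delta}$. Once these integrability and orthogonality points are nailed down, the vanishing is immediate.
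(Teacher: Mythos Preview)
Your overall strategy matches the paper's: apply the implicit function theorem to the Hitchin map $\Nscr(\beta,\varphi;h)$, showing that its partial derivative in the metric direction at $(0,0;h_0)$ is the operator $(D'')^\ast D''$, which is an isomorphism by Proposition~\ref{prop:nude-index} and Lemma~\ref{lem:simple}; then deduce smoothness of $\Hscr$, and for the vanishing of $D\Hscr(0)$ argue that a harmonic slice tangent vector already solves the linearized Hitchin equation (the paper simply cites \cite[Prop.~3.12]{CollierWentworth:19} for this step).

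There is, however, a genuine gap in your setup. You claim that because ``the $\SL$-structure on the determinant is already flat'' the constant part of $\sqrt{-1}\,\Ucal_\delta$ is killed and $s\in\sqrt{-1}\,L^2_{2,\delta}(\sufrak_E)$. This is wrong: the constants in $\bigoplus_{p\in D}\lfrak_p$ are already traceless (by the definition of $\lfrak_p$ in Remark~\ref{rem:invariant-notation}), so the $\SL$ condition does not remove them. More importantly, restricting to $L^2_{2,\delta}$ breaks your argument: by \eqref{eqn:nude-index} in the proof of Proposition~\ref{prop:nude-index}, the operator $(D'')^\ast D''$ on $L^2_{2,\delta}\to L^2_\delta$ has index $-\dim\bigoplus_{p\in D}\lfrak_p<0$, so it cannot be surjective and the implicit function theorem fails. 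The paper works on the full space $\sqrt{-1}\,\Ucal_\delta\simeq T_{h_0}\Mcal_\delta$, where Proposition~\ref{prop:nude-index} gives index zero on $\Rcal^0_\delta$, and then stability (Lemma~\ref{lem:simple}) kills the kernel. You must keep the boundary constants in the domain.

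For the vanishing step, your sketch (pair with $\dot s$, integrate by parts, use harmonicity of $(\dot\beta,\dot\varphi)$) is in the right spirit, and you correctly flag that the weighted adjoint $d_1^{\ast_\delta}$ in the slice definition must be reconciled with the unweighted linearized Hitchin operator. The cleaner route---and what \cite{CollierWentworth:19} does in the closed case---is to observe directly that the slice conditions $d_2(\dot\beta,\dot\varphi)=0$ and $d_1^{\ast_\delta}(\dot\beta,\dot\varphi)=0$, together with their hermitian conjugates, say exactly that the linearized Hitchin equation holds with $\dot s=0$; hence the right-hand side of your displayed equation vanishes, and injectivity of $(D'')^\ast D''$ forces $\dot s=0$. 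This avoids the pairing computation altogether.
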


\begin{proof}
Define the map
\begin{align*}
\Nscr&: \Scal^{par}_\delta(A,\Phi)\times \Mcal_\delta \lra
L^2_{\delta}(i\sufrak_E) \\
&(\beta,\varphi; h)\mapsto i\Lambda\left(F_{(\dbar_E+\beta,h)}
+[\Phi+\varphi, (\Phi+\varphi)^{\ast_h}] \right)
\end{align*}
Then the derivative of $\Ncal$ with respect to $\Mcal_\delta$ at the origin is
$$
d_2\Ncal_{(0,0;h_0)}: \sqrt{-1}\, \Ucal_\delta\lra
 L^2_{\delta}(i\sufrak_E)
 :
 \dt h\mapsto i\Lambda\left(\dbar_E\partial_E^{h_0}(\dt
h)+[\Phi,[\Phi^\ast_{h_0},\dt h]]\right)
$$
A calculation shows that
$d_2\Ncal_{(0,0;h_0)}(\dt h)=(D'')^\ast D''(\dt h)$.
By  Proposition \ref{prop:nude-index}
(note that $(D'')^\ast D''$ preserves the
hermitian subspace)  and Lemma \ref{lem:simple}, we conclude
that $d_2\Ncal_{(0,0;h_0)}$ is an isomorphism. The smoothness of $\Hscr$ then
follows from the implicit function theorem.
The second statement follows exactly as in \cite[Prop.\
3.12]{CollierWentworth:19}.
\end{proof}

\subsubsection{The hyperk\"ahler metric} \label{sec:hyperkahler}
Recall the residue map  (see \eqref{eqn:residue}
$$\mathrm{Res}: \Mbold^{par,s}_{\Dol}(\alpha,\delta)\lra \bigoplus_{p\in D}\lfrak_p$$
(as well as Assumption A).
The following is an immediate consequence of Corollary \ref{cor:residue}.
\begin{Proposition} \label{prop:res-submersion}
The map $\res_{\lfrak}$ is a holomorphic submersion.
\end{Proposition}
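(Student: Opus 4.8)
The plan is to deduce this from Corollary \ref{cor:residue}, which asserts that the linear residue map $\Hbold_\delta^{par}(A,\Phi)\to \bigoplus_{p\in D}\lfrak_p$ of \eqref{eqn:harmonic-residue} is surjective for every stable $(A,\Phi)$. First I would recall that, by the Kuranishi construction in Proposition \ref{prop:kim}, a neighborhood of $[(A,\Phi)]$ in $\Mbold^{par,s}_{\Dol}(\alpha,\delta)$ is modeled on the Hodge slice $\Scal^{par}_\delta(A,\Phi)$ (after further quotienting by $\overline\Lbold$, which only affects the framing and not the residue, since $\mathrm{Res}$ is $\Gcal_{\delta,\ast}$-invariant by the Proposition preceding Remark \ref{rem:tensor}). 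In these coordinates a tangent vector at the origin is an element $(\beta,\varphi)\in \Hbold_\delta^{par}(A,\Phi)$, and the differential of $\mathrm{Res}\circ p_\H$ at the origin sends $(\beta,\varphi)$ to $\lim_{\tau\to+\infty}\varphi_0$, i.e.\ to $\rbold(\varphi)$, which is precisely the map \eqref{eqn:harmonic-residue}. Hence the differential of $\res_\lfrak$ at $[(A,\Phi)]$ is surjective, which is the definition of a submersion.

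The one point requiring a small argument is \emph{holomorphicity}, i.e.\ that $\res_\lfrak$ is holomorphic as a map of complex manifolds, not merely smooth. This follows because the residue map $\mathrm{Res}:\Bcal^{par}_\delta\to\bigoplus_{p\in D}\lfrak_p$ is the restriction of the continuous $\CBbb$-linear map $\rbold$ of Proposition \ref{prop:higgs-decomposition} to the Higgs field component, and the Kuranishi chart $\kbold^{-1}\circ p_\H^{-1}$ intertwines the complex structures holomorphically (Proposition \ref{prop:kim}, last sentence). Concretely, in the slice the Higgs field is $\Phi+\varphi$ and $\mathrm{Res}(\dbar_A+\beta,\Phi+\varphi)=\rbold(\Phi)+\rbold(\varphi)$ depends $\CBbb$-linearly — hence holomorphically — on $(\beta,\varphi)$, and the passage from slice coordinates to the complex-analytic coordinates on $\Hbold_\delta^{par}(A,\Phi)$ via the Kuranishi map is biholomorphic. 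Thus $\res_\lfrak$ is a holomorphic map whose differential is everywhere surjective.

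The main (minor) obstacle is bookkeeping: one must check that the linearization of the nonlinear residue-in-the-slice really is the linear map \eqref{eqn:harmonic-residue}, which amounts to observing that the decomposition $\Phi=\Phi_0+\Phi_1$ of Proposition \ref{prop:higgs-decomposition} is compatible with adding a small $\varphi\in L^2_{1,\delta}$ (since such $\varphi$ lies in $\ker\rbold$, it changes only the $\Phi_1$-part up to a smooth error, so $\rbold(\Phi+\varphi)=\rbold(\Phi)+\rbold(\varphi)$), together with the fact that the Kuranishi map $\kbold$ is the identity to first order at the origin. Given all of this, the statement is immediate from Corollary \ref{cor:residue}.

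\begin{proof}
By Proposition \ref{prop:kim}, for a smooth stable representative $(A,\Phi)\in\Bcal^{par,s}_\delta$, the Hodge slice $\Scal^{par}_\delta(A,\Phi)$, composed with the Kuranishi map $\kbold$ of \eqref{eqn:kuranishi}, provides holomorphic coordinates on a neighborhood of $[(A,\Phi)]$ in $\Mbold^{par,s}_{\Dol,\ast}(\alpha,\delta)$, and these descend to holomorphic coordinates on $\Mbold^{par,s}_{\Dol}(\alpha,\delta)$ after the free, proper, holomorphic action of $\overline\Lbold$. Since $\mathrm{Res}$ is invariant under $\Gcal_{\delta,\ast}$ and the $\overline\Lbold$-action only alters the framing, $\res_\lfrak$ is well-defined on $\Mbold^{par,s}_{\Dol}(\alpha,\delta)$ and it suffices to check the claim on $\Mbold^{par,s}_{\Dol,\ast}(\alpha,\delta)$ in the slice.

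In the slice, a point near $[(A,\Phi)]$ is represented by $(\dbar_A+\beta,\Phi+\varphi)$ with $(\beta,\varphi)$ small. By the Proposition preceding Remark \ref{rem:tensor}, $\mathrm{Res}(\dbar_A+\beta,\Phi+\varphi)=\rbold(\Phi+\varphi)$, and since $\rbold$ is continuous and $\CBbb$-linear (Proposition \ref{prop:higgs-decomposition}), this is a holomorphic function of $(\beta,\varphi)$; composing with the biholomorphism $\kbold$ and the chart map shows $\res_\lfrak$ is holomorphic. Its differential at $[(A,\Phi)]$ is, under the identification of the tangent space with $\Hbold^{par}_\delta(A,\Phi)$, the linear map $[(\beta,\varphi)]\mapsto \rbold(\varphi)=\lim_{\tau\to+\infty}\varphi_0$ of \eqref{eqn:harmonic-residue}; here we use that $\kbold$ is the identity to first order at the origin and that the first-order variation of $\rbold(\Phi+\varphi)$ in $\varphi$ is $\rbold(\varphi)$. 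By Corollary \ref{cor:residue} this linear map is surjective. As $[(A,\Phi)]$ was an arbitrary stable point, $\res_\lfrak$ is a holomorphic submersion.
\end{proof}
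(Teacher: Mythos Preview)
Your proof is correct and follows the same approach as the paper: the paper states the proposition as ``an immediate consequence of Corollary \ref{cor:residue}'' without further elaboration, and your argument simply unpacks this by identifying the differential of $\res_\lfrak$ in slice coordinates with the linear map \eqref{eqn:harmonic-residue}. The additional details you supply about holomorphicity and the Kuranishi chart are accurate but go beyond what the paper spells out.
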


The goal of this section is to prove the following.
\begin{Theorem}[{\sc Hyperk\"ahler structure}]  \label{thm:hyperkahler}
For any $\ell\in \bigoplus_{p\in D}\lfrak_p$, the fiber
    $\res^{-1}_{\lfrak}(\ell)$ is a hyperk\"ahler manifold.
\end{Theorem}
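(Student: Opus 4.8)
The plan is to realise $\res^{-1}_{\lfrak}(\ell)$ as an infinite-dimensional hyperk\"ahler quotient, following Hitchin's original construction and its parabolic refinements by Konno and Nakajima; the one genuinely new ingredient is the non-zero boundary value $\ell$, whose effect is mild under Assumption A because there each boundary subalgebra $\lfrak_p$ is abelian (in the strongly parabolic case one simply has $\ell=0$). First I would fix a smooth Higgs field $\Phi_\ell$, supported near $D$, which on each cylinder $C(p)$ is the constant section $\ell_p\otimes(dz/z)$ in the unitary frame $\{e_i\}$, so that $\rbold(\Phi_\ell)=\ell$ in the notation of Proposition \ref{prop:higgs-decomposition}. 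Because $\lfrak_p$ is abelian (or $\ell=0$), the section $[\Phi_\ell,\Phi_\ell^\ast]$ is compactly supported and conjugation by the Levi subgroup $\sL_p\subset\sG$ fixes $\ell_p$; consequently $\Gcal_\delta$ (and in particular $\Kcal_\delta$) preserves the affine Banach space
\[
\Ascr_\ell:=\Acal_\delta\times\bigl(\Phi_\ell+L^2_{1,\delta}(\slfrak_E\otimes K_X)\bigr),
\]
modelled on $L^2_{1,\delta}(\sufrak_E\otimes T^\ast X^\times)\oplus L^2_{1,\delta}(\slfrak_E\otimes K_X)$. Equipped with the standard quaternionic triple $(I,J,K)$ of Hitchin's construction and the weighted inner product $\int_{X^\times}e^{\tau\delta}\langle\cdot,\cdot\rangle\,d\mu$, the space $\Ascr_\ell$ is a flat hyperk\"ahler Banach manifold: $I,J,K$ act fibrewise and linearly, hence are unaffected by the conformal weight $e^{\tau\delta}$.

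Next I would analyse the action of $\Kcal_\delta$ on $\Ascr_\ell$ by $g\cdot(A,\Phi)=(g(A),g\Phi g^{-1})$; by Proposition \ref{prop:strong} and the fact that $\lim_{\tau\to+\infty}g$ lies in $\prod_p\sL_p$ and fixes each $\ell_p$, this is a well-defined, isometric, tri-holomorphic action. The key point is to show it is Hamiltonian with hyperk\"ahler moment map whose components are, up to the weight $e^{\tau\delta}$,
\[
\mu_{I}(A,\Phi)=\Lambda\bigl(F_A^\perp+[\Phi,\Phi^{\ast}]\bigr)\ ,\qquad
\mu_{J}(A,\Phi)+\sqrt{-1}\,\mu_{K}(A,\Phi)=\dbar_A\Phi\ ,
\]
valued in the appropriate topological dual of $\Lie\Kcal_\delta$. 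That these formulas define $C^\infty$ maps of the weighted Sobolev Banach spaces is exactly the content of Propositions \ref{prop:higgs-decomposition} and \ref{prop:strong} together with Remark \ref{rem:general-A}, and this is where Assumption A is essential, since it guarantees $[\Phi,\Phi^\ast]\in L^2_\delta$. The moment-map identity itself is a weighted integration by parts, the contributions from the cylindrical ends being controlled by the choice \eqref{eqn:delta-assumption} of $\delta$ relative to the boundary spectrum (Lemma \ref{lem:boundary-kernel} and Remark \ref{rem:spectrum}), as in the analogous computations of Konno and Biquard. The constraint $\res_{\lfrak}=\ell$ has been absorbed into the choice of model $\Ascr_\ell$.

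With the moment map in hand, $\mu_{J}^{-1}(0)\cap\mu_{K}^{-1}(0)=\{\dbar_A\Phi=0\}$ is the locus of parabolic Higgs pairs in $\Ascr_\ell$, which by Example \ref{ex:phi} all have $\mathrm{Res}=\ell$, while $\mu_I^{-1}(0)$ is the Hitchin equation \eqref{eqn:hitchin}. By the Nonabelian Hodge Theorem \ref{thm:hk}, every stable $\Gcal_\delta$-orbit with residue $\ell$ meets $\mu_I^{-1}(0)\cap\Ascr_\ell$ in a single $\Kcal_\delta$-orbit, so the natural map
\[
\bigl(\mu_I^{-1}(0)\cap\mu_{J}^{-1}(0)\cap\mu_{K}^{-1}(0)\bigr)^{s}\big/\Kcal_\delta\ \longrightarrow\ \res^{-1}_{\lfrak}(\ell)\subset\Mbold^{par,s}_{\Dol}(\alpha,\delta)
\]
is a bijection; that it is a diffeomorphism, holomorphic for each of $I,J,K$, follows by matching the Kuranishi description of the hyperk\"ahler quotient with the Hodge slice of \S\ref{sec:dolbeault}, using that $\res^{-1}_{\lfrak}(\ell)$ is a transversally cut out submanifold (Proposition \ref{prop:res-submersion} and Corollary \ref{cor:residue}) whose tangent space at $[(A,\Phi)]$ is modelled on the harmonic space $\Hbold^{spar}_\delta(A,\Phi)=\ker d\,\res_\lfrak$. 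The general principle that the hyperk\"ahler quotient of a flat hyperk\"ahler Banach manifold by an isometric tri-holomorphic Hamiltonian action is hyperk\"ahler --- applied with the Fredholm package of \S\ref{sec:dolbeault} in place of finite-dimensionality --- then endows $\res^{-1}_{\lfrak}(\ell)$ with a hyperk\"ahler structure; restricted to genuine solutions of \eqref{eqn:hitchin}, which decay at a definite rate, the induced metric is the natural $L^2$-metric (note $L^2_\delta\hookrightarrow L^2$ for $\delta>0$).

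\textbf{Main obstacle.} The crux is the second step: verifying that the moment-map formulas define smooth maps of the weighted Sobolev Banach spaces, and that the moment-map identity holds without spurious boundary contributions at the cylindrical ends, where $\Lie\Kcal_\delta$ contains non-decaying (harmonic) directions. This rests squarely on Assumption A --- so that $[\Phi,\Phi^\ast]\in L^2_\delta$ via Proposition \ref{prop:strong} --- and on the precise placement of $\delta$ relative to the boundary spectrum, and it is the point at which the argument genuinely departs from the compact case.
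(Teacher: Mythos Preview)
Your hyperk\"ahler-quotient plan follows Konno and Nakajima, and the paper's argument is morally the same but packaged more directly --- however, your choice of the weighted metric creates a genuine gap. Having already built $\res^{-1}_\lfrak(\ell)$ as a complex manifold with holomorphic symplectic form $\Omega_\ell$ (Proposition \ref{prop:poisson}), the paper simply chooses at each point tangent representatives $(\beta,\varphi)\in\ker d_2\cap\ker d_1^\ast$ via Lemma \ref{lem:nude-gauge} --- note the \emph{unweighted} adjoint $d_1^\ast$ and the full gauge algebra $\Rcal^0_\delta$, rather than $d_1^{\ast_\delta}$ and $L^2_{2,\delta}$ --- defines $I,J,K$ pointwise on these, and uses the unweighted $L^2$-metric. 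Closedness of the three K\"ahler forms and integrability of $J,K$ then follow exactly as in Hitchin's argument for closed surfaces. No moment-map pairing against the non-decaying harmonic directions of $\Ucal_\delta$ is ever written down; Lemma \ref{lem:nude-gauge} is the device that replaces it.

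The problem with your proposal is the weighted inner product $\int e^{\tau\delta}\langle\cdot,\cdot\rangle$. The Hitchin equation \eqref{eqn:hitchin} is the real moment map for the \emph{unweighted} $L^2$ symplectic form; for the weighted form the zero locus of $\mu_I$ would be a different equation, so your identification of $\mu_I^{-1}(0)$ with solutions of \eqref{eqn:hitchin} breaks. Moreover the quotient metric you obtain is the $L^2_\delta$-metric, not the $L^2$-metric; the embedding $L^2_\delta\hookrightarrow L^2$ does not make the two norms agree, so your closing remark does not recover the natural $L^2$ hyperk\"ahler metric the paper constructs. Your approach can be repaired by switching to the unweighted $L^2$ form throughout: the pairing $\int\langle\mu_I,\xi\rangle$ with bounded $\xi\in\Ucal_\delta$ then converges because $L^2_\delta\subset L^1$ (Cauchy--Schwarz against $e^{-\tau\delta/2}$), and the horizontal space for the $\Kcal_\delta$-action becomes $\ker d_1^\ast$ --- at which point you have essentially rederived the paper's argument.
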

This generalizes the results of Konno \cite{Konno:93} and Nakajima
\cite{Nakajima:96}.
The first step is the definition of a holomorphic 2-form.
On $\Bcal^{par,s}_\delta$, we set:
\begin{equation} \label{eqn:symplectic-form}
    \Omega((\beta_1,\varphi_1), (\beta_2,\varphi_2)):=
    i\int_{X^\times}\tr(\varphi_2\wedge\beta_1-\varphi_1\wedge\beta_2)
\end{equation}
This is well-defined, since we have a natural duality
$
L^2_{-\delta}\times L^2_\delta\hookrightarrow L^1
$.
Clearly, $\Omega$ is holomorphic and closed. We have the following:

\begin{Proposition} \label{prop:based-symplectic}
The form $\Omega$ descends to a holomorphic symplectic form
    $\Omega_\ast$  on
    $\Mbold^{par,s}_{\Dol, \ast}$.
\end{Proposition}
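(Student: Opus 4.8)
The plan is to show that $\Omega$ from \eqref{eqn:symplectic-form} is $\Gcal_{\delta,\ast}$-basic and nondegenerate on the Hodge slice, and then invoke the Kuranishi chart description of $\Mbold^{par,s}_{\Dol,\ast}$ from Proposition \ref{prop:kim}. First I would check $\Gcal_{\delta,\ast}$-invariance: if $g\in \Gcal_{\delta,\ast}$ acts on a pair $(A,\Phi)$, the tangent vectors transform by conjugation, $\beta\mapsto g\beta g^{-1}$, $\varphi\mapsto g\varphi g^{-1}$, and $\tr(\varphi_2\wedge\beta_1-\varphi_1\wedge\beta_2)$ is manifestly conjugation-invariant as a $2$-form-valued trace; the integrand is in $L^1$ by the duality $L^2_{-\delta}\times L^2_\delta\hookrightarrow L^1$ (note $\varphi_i\in\Dscr_\delta\subset L^2_{-\delta}$ while $\beta_i\in L^2_{1,\delta}\subset L^2_\delta$), and this integrability is stable under the conjugation since $g,g^{-1}$ are bounded in $L^\infty$ by Proposition \ref{prop:boundary-map}. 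So $\Omega$ is $\Gcal_{\delta,\ast}$-invariant.

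Next I would verify that $\Omega$ is horizontal, i.e.\ that $\iota_{X_\eta}\Omega=0$ for $X_\eta=d_1\eta=(\dbar_A\eta,[\Phi,\eta])$ the infinitesimal action of $\eta\in \Lie\Gcal_{\delta,\ast}=L^2_{2,\delta}(\slfrak_E)$, when paired against a vector $(\beta,\varphi)$ in $\ker d_2$ (the relevant tangent directions). This is the usual moment-map-type integration by parts:
$$
\Omega(d_1\eta,(\beta,\varphi))=i\int_{X^\times}\tr\bigl(\varphi\wedge\dbar_A\eta-[\Phi,\eta]\wedge\beta\bigr)
= i\int_{X^\times}\tr\bigl(\eta\,(\dbar_A\varphi+[\Phi,\beta])\bigr)=i\int_{X^\times}\tr(\eta\, d_2(\beta,\varphi))=0,
$$
using Stokes' theorem and $\tr[\Phi,\eta]\wedge\beta=\tr\eta[\Phi,\beta]$ (up to sign). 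The boundary term vanishes because $\eta\in L^2_{2,\delta}$ decays (with weight $e^{\tau\delta}$ there is genuine decay by the remark after Proposition \ref{prop:boundary-map}) and $\varphi\in L^2_{-\delta}$, so the product of the boundary values decays; more carefully, approximate by the cut-offs $\phi_R$ as in \eqref{eqn:phiR} and let $R\to\infty$, the correction term $\int\tr(\varphi\wedge d\phi_R\,\eta)\to 0$ since $\varphi\in L^2_{-\delta}$, $\eta\in L^2_\delta$ after weighting. Combined with invariance, $\Omega$ therefore descends to a closed holomorphic $2$-form $\Omega_\ast$ on $\Mbold^{par,s}_{\Dol,\ast}$, whose value at $[(A,\Phi)]$ is computed by restricting \eqref{eqn:symplectic-form} to $\Hbold^{par}_\delta(A,\Phi)\cong H^1(\Ccal^{par}_\delta(A,\Phi))$ via the Kuranishi identification.

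Finally I would prove nondegeneracy on $\Hbold^{par}_\delta(A,\Phi)$. Suppose $(\beta_1,\varphi_1)\in\Hbold^{par}_\delta$ pairs to zero with all of $\Hbold^{par}_\delta$; since $\Omega$ is already horizontal and invariant, this means $(\beta_1,\varphi_1)$ pairs to zero with all of $\ker d_2$. The map $(\beta,\varphi)\mapsto(\varphi,-\beta)$ (thought of as the complex-linear extension of the Hodge star / Serre-duality pairing) sends $\ker d_2$ into the space dual to $\coker d_1^\ast$, and the pairing $\Omega$ is, up to the sign convention, exactly the Serre-duality pairing $H^1(\Ccal^{par}_\delta)\times H^1(\Ccal^{par}_\delta)\to \CBbb$ that is perfect by Lemma \ref{lem:serre} together with the index computation \eqref{eqn:dim-par-analytic} (the complex is self-dual: $H^0\cong (H^2)^\ast$, $H^2\cong(H^0)^\ast$, and $H^1$ is self-dual). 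Hence $\Omega_\ast$ is a perfect pairing on each $H^1$, so it is nondegenerate, and $\Omega_\ast$ is a holomorphic symplectic form. The main obstacle I anticipate is making the boundary-term vanishing in the horizontality computation fully rigorous in the weighted Sobolev setting — one must be careful that the non-strongly-parabolic component $\varphi_0$ of $\varphi$ (which is only bounded, not decaying) does not produce a nonzero boundary contribution when paired with $\dbar_A\eta$; this is handled by observing $\dbar_A\eta\in L^2_\delta$ genuinely decays and $\varphi_0$ is bounded, so $\tr(\varphi_0\wedge\dbar_A\eta)\in L^1$ with the integral of the truncation error going to zero, and the constant-limit part $\rbold(\varphi)\in\bigoplus_p\lfrak_p$ contributes nothing since $\dbar_A\eta$ integrates to zero against constants on each cylinder after the $R\to\infty$ limit.
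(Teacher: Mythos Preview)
Your descent argument (invariance plus horizontality) is correct and matches the paper's \eqref{eqn:omega-integration}; the boundary term vanishes because $\eta\varphi\in L^2_{1,\delta}$ by \eqref{eqn:eta_continuity}. The gap is in nondegeneracy. Your map $(\beta,\varphi)\mapsto(\varphi,-\beta)$ is not even type-correct; presumably you mean $(\beta,\varphi)\mapsto(-\varphi^\ast,\beta^\ast)$, pairing with which would extract the $L^2$ norm. But two things fail in the weighted setting: (i) if $\rbold(\varphi)\neq 0$ then the bounded part $\varphi_0$ does not decay, so $\varphi^\ast\notin L^2_{1,\delta}(\slfrak_E\otimes\overline K_X)$ and the conjugate pair is not in the deformation space at all; and (ii) the implication ``$(\beta,\varphi)\in\ker d_1^\ast\Rightarrow(-\varphi^\ast,\beta^\ast)\in\ker d_2$'' uses the \emph{unweighted} $L^2$ adjoint, whereas $\Hbold^{par}_\delta$ is cut out by the weighted $d_1^{\ast_\delta}=e^{-\tau\delta}d_1^\ast e^{\tau\delta}$, and the weight factor spoils the symmetry. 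Your appeal to Lemma \ref{lem:serre} only gives $H^2\simeq(H^0)^\ast$; self-duality of $H^1$ under $\Omega$ is exactly the statement to be proved.

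The paper handles both points. It first pairs $(\beta,\varphi)$ against $d_1\eta$ for $\eta$ in the \emph{larger} algebra $\Rcal^0_\delta$: these $d_1\eta$ lie in $\ker d_2$ and represent nontrivial classes, and by \eqref{eqn:omega-integration} the pairing reduces to the boundary term $\tr(\varphi_\infty\,\bbold(\eta))$ with $\bbold(\eta)\in\bigoplus_p\lfrak_p$ arbitrary, forcing $\varphi_\infty=0$ and hence $\varphi\in L^2_{1,\delta}$, which disposes of (i). Then Lemma \ref{lem:nude-gauge} moves a representative into the \emph{unweighted} slice $\ker d_1^\ast$, where the conjugate trick is valid and yields $(\widetilde\beta,\widetilde\varphi)=0$, so $(\beta,\varphi)=-d_1\eta$ for some $\eta\in\Rcal^0_\delta$; this disposes of (ii). Finally, pairing $-d_1\eta$ against classes with arbitrary residue (supplied by Corollary \ref{cor:residue}) forces $\bbold(\eta)=0$, i.e.\ $\eta\in L^2_{2,\delta}$, so the class is trivial. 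The obstacle you flagged (the horizontality boundary term) is the easy part; the substantive work is this weighted/unweighted mismatch in the nondegeneracy step.
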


We first prove the following.
\begin{Lemma} \label{lem:nude-gauge}
    Let $(A,\Phi)\in \Bcal_\delta^{par,s}$ be a smooth Higgs pair, 
    and let $(\beta,\varphi)\in L^2_{1,\delta}$ be in the kernel of $d_2$.  
Then there is a unique $\eta\in \Rcal^0_\delta$ such that
    $(\beta,\varphi)+d_1\eta\in \ker d_1^\ast$. 
\end{Lemma}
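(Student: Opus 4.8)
The plan is to solve the elliptic problem for $\eta$ directly. Given $(\beta,\varphi)\in L^2_{1,\delta}$ with $d_2(\beta,\varphi)=0$, I want $\eta\in\Rcal^0_\delta$ with $d_1^\ast\bigl((\beta,\varphi)+d_1\eta\bigr)=0$, i.e.\ I must solve
\[
d_1^\ast d_1\eta = -d_1^\ast(\beta,\varphi)
\]
for $\eta\in\Rcal^0_\delta$. Note that since $(\beta,\varphi)\in L^2_{1,\delta}$, one has $d_1^\ast(\beta,\varphi)\in L^2_\delta(\slfrak_E)$ (here the $\varphi$-component of $d_1^\ast$ involves $i\Lambda[\Phi^\ast,\varphi]$, which lies in $L^2_\delta$ by Proposition \ref{prop:strong}, and the $\beta$-component $\dbar_A^\ast\beta$ is in $L^2_\delta$ since $\beta\in L^2_{1,\delta}$). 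So the right-hand side is in the expected target space $L^2_\delta(\slfrak_E)$.

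The operator $d_1^\ast d_1$ here is exactly $(D'')^\ast D''$ acting on $\Rcal^0_\delta$ (where $D''=\dbar_A+\Phi$, $D'_\delta$ its $L^2_\delta$-adjoint), and Proposition \ref{prop:nude-index} tells us precisely that
\[
(D'')^\ast D'' : \Rcal^0_\delta\lra L^2_\delta(\slfrak_E)
\]
has index zero, with $\ker((D'')^\ast D'')=\ker D''$. By Lemma \ref{lem:simple}, stability of $(A,\Phi)$ forces $\ker D''=\ker d_1=\{0\}$ (on the enlarged domain $\Rcal^0_\delta$). Index zero together with trivial kernel gives trivial cokernel, so $(D'')^\ast D''$ is an isomorphism $\Rcal^0_\delta\xrightarrow{\sim} L^2_\delta(\slfrak_E)$. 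Hence there is a unique $\eta\in\Rcal^0_\delta$ with $(D'')^\ast D''\eta=-d_1^\ast(\beta,\varphi)$, and by construction $d_1^\ast\bigl((\beta,\varphi)+d_1\eta\bigr)=0$. Uniqueness of $\eta$ is immediate from injectivity of $(D'')^\ast D''$.

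There is one small wrinkle to address: I should check that $(\beta,\varphi)+d_1\eta$ actually lies in the space where $d_1^\ast$ was originally taken (so that the statement ``$\in\ker d_1^\ast$'' is literally meaningful, not just ``$d_1^{\ast_\delta}=0$''). Since $\eta\in\Rcal^0_\delta$, Proposition \ref{prop:higgs-decomposition} and the multiplication/boundedness properties give $d_1\eta=(\dbar_A\eta,[\Phi,\eta])$ with both components controlled — the $\dbar_A\eta$ term is in $L^2_\delta$ because $\dbar_{A_0}\eta\in L^2_{1,\delta}$ by definition of $\Rcal_\delta$ and $\beta$-correction is lower order, and $[\Phi,\eta]\in L^2_\delta$ by the argument used in the proof of Proposition \ref{prop:nude-index} (Assumption A, as in the estimate \eqref{eqn:bracket_estimate}). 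Thus $(\beta,\varphi)+d_1\eta$ and $d_1^\ast$ of it are in the relevant $L^2_\delta$ spaces, and the equation $d_1^\ast=0$ is the genuine (unweighted-adjoint) condition. The main obstacle is really just bookkeeping the function spaces: making sure the passage between $d_1^\ast$ and $d_1^{\ast_\delta}$ and the domain $\Rcal^0_\delta$ versus $L^2_{2,\delta}$ is handled consistently, but all the analytic input is already in Propositions \ref{prop:nude-index}, \ref{prop:higgs-decomposition}, \ref{prop:strong} and Lemma \ref{lem:simple}, so no new estimates are needed.
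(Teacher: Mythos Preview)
Your proof is correct and follows essentially the same approach as the paper: reduce to solving $d_1^\ast d_1\eta=-d_1^\ast(\beta,\varphi)$, then invoke Proposition \ref{prop:nude-index} (index zero and $\ker((D'')^\ast D'')=\ker D''$) together with Lemma \ref{lem:simple} to conclude that $(D'')^\ast D''$ is an isomorphism $\Rcal^0_\delta\to L^2_\delta(\slfrak_E)$. The paper's version is more terse and re-derives the kernel equality by pointing to the argument in Proposition \ref{prop:boundary-map}, whereas you simply quote the second assertion of Proposition \ref{prop:nude-index} directly; your additional function-space bookkeeping is harmless extra care.
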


\begin{proof}
It suffices to solve
$
    d_1^\ast d_1\eta=-d_1^\ast(\beta,\varphi)
$.
    By Proposition \ref{prop:nude-index}, this is possible if
    $\ker(d_1^\ast d_1)=\{0\}$ on $\Rcal^0_\delta$. As in the proof of  Proposition
    \ref{prop:boundary-map}, it follows easily that $\ker(d_1^\ast
    d_1)=\ker d_1$, and hence the desired result is a consequence of Lemma
    \ref{lem:simple}.
\end{proof}

\begin{proof}[Proof of Proposition \ref{prop:based-symplectic}]
We have
    \begin{equation} \label{eqn:omega-integration}
    \Omega((\beta,\varphi), d_1\eta):=
    -i\int_{X^\times}\tr(d_2(\beta,\varphi)\eta)
    +i\int_{X^\times}d\tr(\varphi\eta)
    \end{equation} 
    The second term on the right hand side vanishes for $\eta\in
    L^2_{2,\delta}$ (cf.\ \eqref{eqn:eta_continuity}). Hence,
    $\Omega$ descends, and we denote by $\Omega_\ast$ the resulting
    holomorphic form on 
    $\Mbold^{par,s}_{\Dol, \ast}$.
    Now suppose $(\beta,\varphi)\in \ker d_2$ is such that 
    $
    \Omega_\ast((\beta,\varphi), \cdot)\equiv 0
    $. Recall from Proposition \ref{prop:higgs-decomposition}  
    that there is a decomposition $\varphi=\varphi_0+\varphi_1$, where
    $\varphi_1\in L^2_{1,\delta}$, and 
    a well-defined limit $\varphi_\infty:=\lim_{\tau\to +\infty}\varphi_0$. 
    If $\ell\in \bigoplus_{p\in D} \lfrak_p$ is arbitrary, and $\eta\in
    \Rcal^0_\delta$ such that $\lim_{\tau\to +\infty}\eta=\ell$, then the
    calculation in \eqref{eqn:omega-integration}  shows that  
    $$
    0= \Omega((\beta,\varphi), d_1\eta):= \tr(\varphi_\infty\ell)
    $$
    Since $\ell$ is arbitrary, we see that $\varphi_\infty=0$, and hence $\varphi\in L^2_{1,\delta}$. 
    By Lemma \ref{lem:nude-gauge}, we can then find a different $\eta\in
    \Rcal^0_\delta$ such that
    $$
    (\widetilde\beta,\widetilde\varphi)=(\beta,\varphi)+d_1\eta\in \ker
    d_1^\ast
    $$
    Notice that $(-\widetilde\varphi^\ast, \widetilde\beta^\ast)\in \ker
    d_2$. The condition  
    $$\Omega((\widetilde\beta,\widetilde \varphi),(\widetilde\varphi^\ast, -\widetilde\beta^\ast
    ))=0
    $$
    implies that $(\widetilde \beta,\widetilde\varphi)=0$, or that
    $(\beta,\varphi)=-d_1\eta$. Recall that $\eta$ has a well-defined limit
$\eta_\infty:=\lim_{\tau\to +\infty}\eta
    $.
    By Corollary \ref{cor:residue}, for arbitrary $\ell\in
    \bigoplus_{p\in D}\lfrak_p$ we can find $(\beta_1,\varphi_1)\in \ker
    d_2$ such that $(\varphi_1)_\infty=\ell$. 
    Again using \eqref{eqn:omega-integration}, we have
    $$
    0=\Omega_\ast((\beta,\varphi),(\beta_1,\varphi_1))=\tr(\eta_\infty\ell)
    $$
Since $\ell$ is arbitrary, we conclude that $\eta_\infty=0$, and so by
    Proposition \ref{prop:boundary-map}, $\eta\in L^2_{2,\delta}$. But then
    the class of $(\beta,\eta)$ is zero. Hence, $\Omega_\ast$ is
    nondegenerate.
\end{proof}

\begin{Proposition} \label{prop:poisson}
The symplectic form $\Omega$ induces a Poisson structure on
    $\Mbold^{par,s}_{\Dol}$ via the fibration 
    $$p_\ast: \Mbold^{par,s}_{\Dol,\ast}\lra\Mbold^{par,s}_{\Dol}$$
    The symplectic leaves of $\Mbold^{par,s}_{\Dol}$ are the fibers
    $\res^{-1}_\lfrak(\ell)$, for $\ell\in \bigoplus_{p\in D}\lfrak_p$. 
\end{Proposition}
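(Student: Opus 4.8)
The plan is to descend the holomorphic symplectic form $\Omega_\ast$ of Proposition \ref{prop:based-symplectic} along the fibration $p_\ast$, using that this fibration is a principal $\overline\Lbold$-bundle and that $\Omega_\ast$ is $\overline\Lbold$-invariant. First I would record the two structural inputs: (i) $p_\ast\colon\Mbold^{par,s}_{\Dol,\ast}\to\Mbold^{par,s}_{\Dol}$ is a principal bundle for the residual gauge group $\overline\Lbold$, which acts freely and properly by \cite[Prop.\ 7.1.14]{DiffGeomCompVectBun}; and (ii) $\Omega_\ast$ is $\overline\Lbold$-invariant, since the pairing \eqref{eqn:symplectic-form} on $\Bcal^{par,s}_\delta$ is built from the conjugation-invariant trace form and is therefore invariant under the full gauge group $\Gcal_\delta$, hence under $\overline\Lbold$ after passing to the quotient. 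Granting these, the general principle that a symplectic form invariant under a free proper group action descends to a Poisson structure on the quotient --- whose Poisson algebra is $C^\infty(\Mbold^{par,s}_{\Dol,\ast})^{\overline\Lbold}$ with the restricted bracket, identified with $C^\infty(\Mbold^{par,s}_{\Dol})$ via $p_\ast^\ast$ --- produces the Poisson structure. This is exactly the Poisson structure of \cite{LogaresMartens}, and I would cite it for the precise statement; in the finite-dimensional setting of Theorem \ref{thm:analytic-moduli} nothing exotic is required.

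For the symplectic leaves I would identify the residue map $\mathrm{Res}$ of \eqref{eqn:residue} as a moment map for the $\overline\Lbold$-action on the framed moduli space, using the invariant metric on $\bigoplus_{p\in D}\lfrak_p$. The relevant computation has essentially been carried out inside the proof of Proposition \ref{prop:based-symplectic}: an infinitesimal generator of the $\overline\Lbold$-action at $[(A,\Phi)]$ is represented by $d_1\eta$ for $\eta\in\Rcal^0_\delta$ with $\bbold(\eta)=\ell$ (these are the residual directions, as opposed to the directions $\eta\in L^2_{2,\delta}$ which act trivially in cohomology), and for $(\beta,\varphi)\in\ker d_2$ the identity \eqref{eqn:omega-integration} collapses $\Omega\big(d_1\eta,(\beta,\varphi)\big)$ to the boundary term, which by Stokes equals $\sum_{p\in D}\tr\big(\ell_p\cdot\lim_{\tau\to+\infty}(\varphi_0)_p\big)$ up to a universal constant. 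Since $\lim_{\tau\to+\infty}\varphi_0$ is the derivative of $\mathrm{Res}$ along $(\beta,\varphi)$ by Proposition \ref{prop:higgs-decomposition} and \eqref{eqn:harmonic-residue}, this is precisely $d\langle\mathrm{Res},\ell\rangle$; equivariance of $\mathrm{Res}$ for the coadjoint action is clear from its description via conjugation of residues by the framing, and in the full flags case $\mathrm{Res}$ is in fact $\overline\Lbold$-invariant (Remark \ref{rem:Resfullflag}).

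With $\mathrm{Res}$ established as an equivariant moment map, Marsden--Weinstein reduction gives that the reduced spaces $\mathrm{Res}^{-1}(\mathcal{O})/\overline\Lbold$, for $\mathcal{O}$ a coadjoint orbit in $\bigoplus_p\lfrak_p$, are symplectic and are the symplectic leaves of the Poisson quotient $\Mbold^{par,s}_{\Dol}$. Under Assumption A these are exactly the fibers $\res^{-1}_\lfrak(\ell)$: in the full flags case $\overline\Lbold$ is abelian, so the orbits are points and $\mathrm{Res}^{-1}(\ell)/\overline\Lbold=\res^{-1}_\lfrak(\ell)$; in the strongly parabolic case $\mathrm{Res}\equiv 0$, so the only orbit is $\{0\}$ and $\res^{-1}_\lfrak(0)$ is all of $\Mbold^{spar,s}_{\Dol}$, which is then a single symplectic leaf. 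As a consistency check I would note that a dimension count from \eqref{eqn:dim-spar}, Proposition \ref{prop:res-submersion}, and $\dim\overline\Lbold=\sum_p\dim\sL_p$ gives $\dim\res^{-1}_\lfrak(\ell)=\dim\Mbold^{spar,s}_{\Dol}$, as it must for a leaf. The remaining point needing attention is connectedness of the fibers, so that each fiber is a single leaf and not a union of leaves; I would deduce this from connectedness of $\Mbold^{par,s}_{\Dol}$ together with Proposition \ref{prop:res-submersion}, or from irreducibility of the moduli space of parabolic Higgs bundles with fixed complex masses. The main obstacle is not any individual step but the moment-map bookkeeping --- tracking which infinitesimal gauge transformations in $\Rcal^0_\delta$ generate genuine leaf directions, normalizing the boundary integral, and checking equivariance in the non-abelian strongly parabolic case --- together with the connectedness of the fibers; everything else is standard symplectic reduction applied to the finite-dimensional manifolds of Theorem \ref{thm:analytic-moduli}.
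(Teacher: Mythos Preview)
Your approach is correct and closely parallels the paper's, but packages the second half differently. Both arguments begin identically: the $\overline\Lbold$-invariance of $\Omega_\ast$ gives a Poisson structure on the quotient, and the key computation is the boundary identity \eqref{eqn:omega-integration}, which you read as the moment map condition $\iota_{X_\ell}\Omega_\ast = d\langle\mathrm{Res},\ell\rangle$ and the paper reads as saying the fibers of $p_\ast$ are isotropic. From there you invoke Marsden--Weinstein reduction to identify the leaves as $\mathrm{Res}^{-1}(\ell)/\overline\Lbold$, whereas the paper argues directly: isotropy of the $p_\ast$-fibers pins down the rank of the Poisson bivector as $\dim\res^{-1}_\lfrak(\ell)$, and then nondegeneracy of the descended form $\Omega_\ell$ on each fiber is checked by hand using Lemma \ref{lem:nude-gauge} to choose representatives $(\beta,\varphi)\in\ker d_1^\ast$ (the \emph{unweighted} adjoint) and pairing against $(-\varphi^\ast,\beta^\ast)$. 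Your route is more conceptual and avoids this explicit nondegeneracy check; the paper's is more self-contained and does not rely on the general reduction machinery as a black box. Both are silent on connectedness of the fibers --- your deduction from ``connectedness of $\Mbold^{par,s}_{\Dol}$ together with Proposition \ref{prop:res-submersion}'' does not work as stated (submersions from connected spaces need not have connected fibers), so if you want that, you should appeal instead to irreducibility of the fixed-residue moduli space.
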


\begin{proof}
Since the residual gauge group $\overline\Lbold$ preserves the symplectic form
    $\Omega_\ast$, it is standard that the Poisson structure on
    $\Mbold^{par,s}_{\Dol,\ast}$ 
    induces one on the quotient $\Mbold^{par,s}_{\Dol}$. 
    It is easy to see using Proposition \ref{prop:higgs-decomposition}
    and \eqref{eqn:omega-integration} that the fibers of $p_\ast$ are
    isotropic. It follows that the rank of the Poisson structure on $\Mbold^{par,s}_{\Dol}$
    coincides with the dimension of the fibers $\res^{-1}_\lfrak(\ell)$. Hence, to
    show these are the symplectic leaves it suffices to show that
    $\Omega_\ast$ descends to a symplectic form $\Omega_\ell$ on each
    $\res^{-1}_\lfrak(\ell)$. The
    tangent space to a fiber of
    $$
    \Mbold^{par,s}_{\Dol,\ast}\lra  \bigoplus_{p\in D}\lfrak_p
    $$
    consists of $(\beta,\varphi)\in \ker d_2$ with $\varphi\in
    L^2_{2,\delta}$. The 
    fact that the restriction of $\Omega_\ast$ to  this fiber descends
    follows again from \eqref{eqn:omega-integration}. 
    To prove nondegeneracy, note that by  Lemma \ref{lem:nude-gauge} we may
    assume $(\beta,\varphi)\in \ker d_1^\ast$. Then $(-\varphi^\ast,
    \beta^\ast)\in \ker d_2$, and 
    $$
    \Omega_\ast((\beta,\varphi),(-\varphi^\ast,\beta^\ast))=\Vert
    \beta\Vert^2_{L^2}+\Vert\varphi\Vert^2_{L^2}
    $$
    Nondegeneracy of $\Omega_\ell$ follows, and this completes the proof.
\end{proof}

\begin{proof}[Proof of Theorem \ref{thm:hyperkahler}]
This now follows as in the case of Higgs bundles on closed Riemann
    surfaces. Namely, on $\res^{-1}(\ell)$ we choose representatives for
    tangent vectors  $(\beta,\varphi)\in \ker d_2\cap \ker d_1^\ast$.  That we may do this
    follows from Lemma \ref{lem:nude-gauge}.  
    We then define almost complex structures
    \begin{align*}
        I(\beta,\varphi)&=(i\beta,i\varphi)\\
        J(\beta,\varphi)&=(i\varphi^\ast,-i\beta)\\
        K(\beta,\varphi)&=(-\varphi^\ast,\beta^\ast)
    \end{align*}
    Then with respect the $L^2$ metric the associated fundamental forms
    $\Omega_I=\Omega_\ell$, $\omega_J$, and $\omega_K$ are closed. The integrabilty of
    $J$ and $K$ then follows from \cite[Lemma 6.8]{selfduality}. The complex
    structure we have defined on $\res^{-1}_\lfrak(x)$ coincides with $I$. 
\end{proof}

\subsection{deRham moduli spaces}

\subsubsection{Construction of moduli of flat bundles} \label{sec:flat}
The method used in \S\ref{sec:dolbeault}
to construct the Dolbeault moduli spaces of parabolic Higgs bundles may be
used to form the deRham moduli spaces. As in the case of Higgs bundles,  we only describe the set-up for
the strongly parabolic case, or the case of full flags.
Since the details should be clear from the previous discussion, we omit proofs. 

In parallel with Definition \ref{def:higgs-bundle}, consider the following
\begin{Definition}  \label{def:analytic-flat-connection}
  Let
    \begin{align*} 
        \Fcal_{\delta}^{par} &=\left\{(A,\Phi)\in \Acal_{\delta}\times
    \Dscr_\delta(\slfrak_E\otimes K_X) \mid
    \nabla=d_A+\Phi\text{ is projectively flat}\right\} \\
    \Fcal_{\delta}^{spar}&=\left\{(A,\Phi)\in \Acal_{\delta}\times
        L^2_{1,\delta}(\slfrak_E\otimes K_X) \mid
    \nabla=d_A+\Phi\text{ is projectively flat}\right\}
    \end{align*} 
\end{Definition}
The flatness condition is 
\begin{equation} \label{eqn:phi-flat}
    F^\perp_\nabla=
F^\perp_A+\dbar_A\Phi=0
\end{equation}
which makes sense as an equation in $L^2_\delta$ because of the defining property of
$\Dscr_\delta$ (see Remark \ref{rem:general-A}). 
The group $\Gcal_{\delta}$ acts on $\Fcal_{\delta}^{par}$ by
pulling back the complex connection  $\nabla$  and rewriting in terms of
a unitary and $(1,0)$ part. 
Explicitly,  for $g\in \Gcal_{\delta}$,
$$
g(\nabla):=g\circ\nabla\circ g^{-1}=d_{g(A)} + g^{-1}\Phi g -
\partial_A(g)g^{-1}-(g^\ast)^{-1}\partial_A(g^\ast)\ .
$$

By Lemma \ref{lem:smooth}, we can find a
gauge transformation such that $\dbar_\nabla=\dbar_A$ is a
smooth $\dbar$ operator. Hence, $F_A$ is smooth, and 
it then 
follows from elliptic regularity and \eqref{eqn:phi-flat} that  $\Phi$ is smooth. 
So $\nabla$ defines a holomorphic connection on $X^\times$. 
Using Lemma \ref{lem:smooth} once again, we may assume $A=A_0$ on the
cylinders. Since $A_0$ is flat on the cylinders, \eqref{eqn:phi-flat}
implies $\dbar_{A_0}\Phi=0$ on each $C(p)$. From Example
\ref{ex:phi} one sees that $\nabla$ defines a logarithmic connection on
$X$  on a
holomorphic bundle with a parabolic structure, and
$\res_p(\nabla)=\alpha(p)+\res_p(\Phi)$ for each $p\in D$.

Set
$$
\Mbold^{spar,s}_{\DR}(\alpha,\delta):=\Gcal_{\delta}\backslash\Fcal_{\delta}^{
spar , s }  \
,\quad
\Mbold^{par,s}_{\DR}(\alpha,\delta):=\Gcal_{\delta}\backslash\Fcal_{\delta}^{par
, s }  \
.$$
Then we have the following:

\begin{Theorem}[{\sc Moduli spaces of flat connections}]
\label{thm:analytic-DRmoduli}
    Fix $\alpha$, and suppose $\delta$ satisfies
    \eqref{eqn:delta-assumption}. Then  if nonempty
            the moduli spaces $\Mbold^{spar,s}_{\DR}(\alpha,\delta)$
            and $\Mbold^{par,s}_{\DR}(\alpha,\delta)$ are  smooth complex
            manifolds of
            complex dimensions 
            \begin{align*}
                \dim\Mbold^{spar,s}_{\DR}(\alpha,\delta)&=(2g-2)\dim\sG+2\sum_{p\in
                D}\dim(\sG/\sP_p) \\
                \dim\Mbold^{par,s}_{\DR}(\alpha,\delta)&=(2g-2+d)\dim\sG \
                .
            \end{align*}
    Moreover, the assignment of a logarithmic connection to $(A,\Phi)$
    described above
    induces biholomorphism 
    $$
        \Mbold^{spar,s}_{\DR}(\alpha,\delta) \isorightarrow
        \Scal\Pcal_1(\alpha) \quad ,\quad
        \Mbold^{par,s}_{\DR}(\alpha,\delta) \isorightarrow \Pcal_1(\alpha)
        \ .
        $$
\end{Theorem}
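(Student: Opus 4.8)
The plan is to transcribe the Kuranishi slice construction of \S\ref{sec:dolbeault} essentially verbatim, with the Dolbeault pair (satisfying $\dbar_A\Phi=0$) replaced by the flat pair $(A,\Phi)\in\Fcal_\delta^{par}$ (satisfying the projective flatness $F^\perp_A+\dbar_A\Phi=0$) and with $\Bcal_\delta^{par}$ replaced throughout by $\Fcal_\delta^{par}$. First I would introduce the de Rham deformation complex at a smooth flat pair $(A,\Phi)$: it has the same underlying weighted Sobolev spaces as the complex \eqref{eqn:def-complex}, with $D_2$ now the linearization of $F^\perp_A+\dbar_A\Phi$ and $D_1$ the linearized $\Gcal_\delta$-action, the only change from \eqref{eqn:d1d2} being that the full connection $\nabla=d_A+\Phi$ enters in place of $\dbar_A+\Phi$. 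As in Proposition~\ref{prop:index}, Fredholmness and the index are computed after dropping the $\Phi$ and $\Phi^\ast$ terms and replacing $A$ by $A_0$; since the relevant boundary operator and its (vanishing) $\eta$-invariant are exactly those of Remark~\ref{rem:spectrum}, the de Rham indices agree with the Dolbeault ones — namely $(2g-2+d)\dim\sG$ in the full-flags case and $(2g-2)\dim\sG+2\sum_{p\in D}\dim(\sG/\sP_p)$ in the strongly parabolic case — and Serre duality is as in Lemma~\ref{lem:serre}. For the vanishing $H^0=H^2=0$ at a stable point I would prove the de Rham analog of Lemma~\ref{lem:simple}: after gauging so that $A=A_0$ on each $C(p)$ one has $\res_p(\nabla)=\alpha(p)+\res_p(\Phi)$, so the growth analysis of Example~\ref{ex:phi} shows any infinitesimal flat endomorphism in $\Rcal^0_\delta$ defines a parabolic endomorphism of the logarithmic connection, hence is scalar by stability; alternatively one transports simplicity from the Higgs side via Theorem~\ref{thm:hk}. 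With the continuity statements of Proposition~\ref{prop:strong} supplying all the bilinear terms in $D_1^{\ast_\delta}$ and $D_2^{\ast_\delta}$, the argument of Proposition~\ref{prop:smooth-B} then shows $\Fcal_\delta^{par,s}$ and $\Fcal_\delta^{spar,s}$ are smooth Banach submanifolds.

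Next I would define the de Rham Hodge slice $\Scal_\delta^{par}(A,\Phi)$ as in \eqref{eqn:hodge-slice}, imposing the full flatness equation together with the Coulomb condition $D_1^{\ast_\delta}(\beta,\varphi)=0$, and the Kuranishi map via the Green's operator of $D_2D_2^{\ast_\delta}$, which is invertible since $H^2=0$; the inverse function theorem then identifies a neighborhood of the origin in the slice with a neighborhood of the origin in the framed de Rham harmonic space of the dimension listed above. The local slice theorem — that the slice maps homeomorphically onto an open subset of $\Gcal_{\delta,\ast}\backslash\Fcal_\delta^{par,s}$, with holomorphic transition functions — follows as in Proposition~\ref{prop:kim}; the one ingredient to re-derive is the Poincar\'e-type inequality of Lemma~\ref{lem:mpim} for the de Rham operator $D_1$, which goes through verbatim because $A_0$ is flat on the cylinders and $\ker D_1=\{0\}$. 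Here the slice carries the complex structure for which the tautological logarithmic connection varies holomorphically, and one checks that the charts are holomorphic for it exactly as in \cite[Lemma 2.3]{Kim:87}. Since $\overline\Lbold$ acts freely and properly (as in \cite[Prop.~7.1.14]{DiffGeomCompVectBun}), the quotients $\Mbold^{par,s}_{\DR}(\alpha,\delta)$ and $\Mbold^{spar,s}_{\DR}(\alpha,\delta)$ acquire the asserted complex-manifold structure.

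For the biholomorphisms I would build a universal family on $X\times\Scal_\delta^{par,s}$ in parallel with the proof of Theorem~\ref{thm:analytic-moduli}(3): extend the pulled-back smooth bundle across $D$ using the preferred holomorphic frames $\{s_i\}$, equip it with the parabolic structure of weights $\alpha$ coming from growth with respect to the background metric, and with the relative logarithmic connection built from $d_{A_0}+\Phi_0$ and the tautological deformation forms; its residue at $p$ is $\alpha(p)+\res_p(\Phi)$, so the family realizes parabolic logarithmic connections with fixed weights $\alpha$ and complex masses unconstrained apart from the flag-preserving condition. This gives local representability of the analytic moduli functor of framed (strongly) parabolic logarithmic connections (as in \cite{Norton:79,FriedmanMorgan:94}), and comparison with the analytification of the algebraic moduli space $\Pcal_1(\alpha)$ (resp.\ $\Scal\Pcal_1(\alpha)$) of \cite{AlfayalambdaModuli} via \cite{Miyajima:89} yields the isomorphism of the framed spaces; $\overline\Lbold$-equivariance descends it to $\Mbold^{par,s}_{\DR}(\alpha,\delta)\isorightarrow\Pcal_1(\alpha)$ and $\Mbold^{spar,s}_{\DR}(\alpha,\delta)\isorightarrow\Scal\Pcal_1(\alpha)$. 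I expect the main obstacle to be precisely this comparison, together with pinning down the complex structure: one must verify that the analytic notion of (semi)stability of a flat pair coincides with the stability used in \cite{AlfayalambdaModuli}, that the $\SL(n,\CBbb)$-normalization (trivialized determinant, induced operator $\partial$) is respected, and that the complex structure on the slice matches the algebraic structure on $\Pcal_1(\alpha)$. Everything else is the word-for-word de Rham transcription of \S\ref{sec:dolbeault}, which is why the details are omitted.
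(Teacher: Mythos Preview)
Your proposal is correct and is precisely the approach the paper intends: the paper explicitly states that ``the method used in \S\ref{sec:dolbeault} to construct the Dolbeault moduli spaces \ldots\ may be used to form the deRham moduli spaces'' and omits the proof entirely, so your detailed transcription of the Kuranishi slice argument (deformation complex, index computation via Proposition~\ref{prop:index}, vanishing of $H^0$ and $H^2$ from stability, slice theorem, universal family, and comparison with the algebraic moduli) is exactly what is being left to the reader.
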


\subsubsection{Joint parametrization by the Hodge slice}
Let $(A,\Phi)\in \Bcal^{par,s}_{\delta}$ satisfy the Hitchin equations, so
that $D=D''+D'$, where $D''=\dbar_A+\Phi$, $D'=\partial_A+\Phi^\ast$ is a flat
connection: $D\in \Fcal^{par,s}_{\delta}$. Set:
$$
D'_\delta:= e^{-\tau\delta}D'e^{\tau\delta}\quad ,\quad
D_\delta=D''+D'_\delta
$$
Then: $F_{D_\delta}=\delta\dbar\partial\tau\cdot id$. Fix a $\U(1)$
connection $\nabla_\delta^0$  on the trivial line bundle $\underline\CBbb$ with curvature
$-\delta\dbar\partial\tau$, i.e.\ the Chern connection for the metric
$e^{-\tau\delta}$. Then clearly, under the identification
$\underline\CBbb\otimes E\simeq E$ given by the trivialization of
$\underline \CBbb$, $D=\nabla_\delta^0\otimes D_\delta$. 
We now define the map:
\begin{equation} \label{eqn:p-dR}
    p_{\DR} : \Scal^{par}_\delta(A,\Phi)\to \Mbold^{par,s}_{\DR,\ast}(\alpha,\delta)
    : (\beta,\varphi)\mapsto [\nabla^0_\delta\otimes
    (D_\delta+\beta+\varphi)]
\end{equation}
Then $p_{\DR}$ is well-defined on a neighborhood of the origin in the
slice. We have the following
\begin{Proposition} 
    The map $p_{\DR}$ gives a biholomorphism of a neighborhood of the
    origin in $\Scal^{par}_\delta(A,\Phi)$ to a neighborhood of $[D]\in
    \Mbold^{par,s}_{\DR,\ast}(\alpha,\delta)$.
\end{Proposition}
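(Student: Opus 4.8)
The plan is to mirror the argument used for the Dolbeault slice (Proposition~\ref{prop:kim}) and adapt it to the de~Rham setting, since the difference between $D_\delta+\beta+\varphi$ and a generic nearby flat connection is governed by the same deformation complex $\Ccal_\delta^{par}(A,\Phi)$ after reinterpreting the $(1,0)$ and $(0,1)$ parts. First I would observe that, because $(A,\Phi)$ solves the Hitchin equations, the flat connection $D=D''+D'$ has the property that the linearization of the flatness condition \eqref{eqn:phi-flat} at $D_\delta$ splits, via the K\"ahler identities, into exactly the operators $d_1,d_2$ of \eqref{eqn:d1d2} together with their $\delta$-weighted adjoints; this is the standard fact that the harmonic metric identifies the Higgs and flat deformation theories. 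Thus the Zariski tangent space to $\Fcal_\delta^{par,s}$ at $D$, modulo the image of the gauge action of $\Lie\Gcal_{\delta,\ast}$, is $H^1(\Ccal_\delta^{par}(A,\Phi))\simeq \Hbold_\delta^{par}(A,\Phi)$, which has the correct dimension $(2g-2+d)\dim\sG+\sum_{p\in D}\dim\sL_p$ by \eqref{eqn:dim-par-analytic}.

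Next I would show $p_{\DR}$ is well-defined: for $(\beta,\varphi)\in\Scal^{par}_\delta(A,\Phi)$ sufficiently small, the connection $\nabla^0_\delta\otimes(D_\delta+\beta+\varphi)$ is again projectively flat. Writing out $F^\perp$ of this connection, the quadratic term is $d_2(\beta,\varphi)+[\beta,\varphi]$ (after absorbing the hermitian-conjugate cross terms using the structure of $D_\delta$), which vanishes precisely by the defining equation of the Hodge slice \eqref{eqn:hodge-slice}; one must check that all terms lie in $L^2_\delta$, which follows from Proposition~\ref{prop:strong} and Remark~\ref{rem:general-A} exactly as in the Dolbeault case. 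Then injectivity of $p_{\DR}$ modulo $\Gcal_{\delta,\ast}$ is proved by the same implicit function theorem argument as in the proof of Proposition~\ref{prop:kim}: one uses the slice condition $d_1^{\ast_\delta}(\beta,\varphi)=0$, the simplicity estimate of Lemma~\ref{lem:mpim} (which holds verbatim here, since it only uses $\ker d_1=\{0\}$ from Lemma~\ref{lem:simple}), the multiplication estimates of Proposition~\ref{prop:strong}, and elliptic regularity patched across a compact set to conclude that a gauge transformation taking one slice representative to another and fixing the framing must be the identity when both representatives are small. Holomorphicity of $p_{\DR}$ and of the transition maps follows as in \cite[Lemma~2.3]{Kim:87}, since $\beta$ and $\varphi$ enter holomorphically.

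Finally, surjectivity onto a neighborhood of $[D]$ follows from the inverse function theorem once one knows the derivative of $p_{\DR}$ at the origin is an isomorphism onto $H^1(\Ccal_\delta^{par}(A,\Phi))$, which it is by the identification of deformation theories in the first step and the dimension count; alternatively, since both $\Scal^{par}_\delta(A,\Phi)$ and $\Mbold^{par,s}_{\DR,\ast}(\alpha,\delta)$ are smooth of the same (finite) dimension by Theorem~\ref{thm:analytic-DRmoduli}, an injective holomorphic map between them that is a local homeomorphism onto its image is automatically biholomorphic onto a neighborhood. The main obstacle I anticipate is the bookkeeping in the flatness computation: unlike the Dolbeault case, where $\dbar_A\Phi=0$ is the only constraint, here one must carefully track how the $\delta$-twisting $D'_\delta=e^{-\tau\delta}D'e^{\tau\delta}$ interacts with the cross terms $[\beta,\varphi^\ast]$ and $[\varphi,\beta^\ast]$ appearing in $F^\perp$ of the perturbed connection, and verify that the quadratic part really does reduce to $d_2(\beta,\varphi)+[\beta,\varphi]$ modulo terms that are controlled in $L^2_\delta$ by Proposition~\ref{prop:strong}; getting the weighted-Sobolev regularity of every term right, especially near the punctures where $\Phi$ is only in $\Dscr_\delta$ and not in $L^2_{1,\delta}$, is where the real work lies.
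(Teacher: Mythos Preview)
Your overall strategy---mirroring the Dolbeault slice argument of Proposition~\ref{prop:kim}---is exactly what the paper intends; the paper states this Proposition without proof, relying on the section title ``Joint parametrization by the Hodge slice'' to signal that the same slice $\Scal^{par}_\delta(A,\Phi)$ works for both moduli spaces. So the approach is correct.

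However, your well-definedness step has a gap and a misplaced worry. The traceless curvature of $D_\delta+\beta+\varphi$ is not just $d_2(\beta,\varphi)+[\beta,\varphi]$; a direct computation gives
\[
F^\perp_{D_\delta+\beta+\varphi}=\bigl(d_2(\beta,\varphi)+[\beta,\varphi]\bigr)+\bigl(\partial_A^\delta\beta+[\Phi^\ast,\varphi]\bigr),
\]
where $\partial_A^\delta=\partial_A+\delta\,\partial\tau\wedge$. The first bracket vanishes by the first slice equation; the second vanishes because, via the K\"ahler identities \eqref{eqn:kahler}, the condition $d_1^{\ast_\delta}(\beta,\varphi)=0$ is precisely $-i\Lambda\bigl(\partial_A^\delta\beta+[\Phi^\ast,\varphi]\bigr)=0$, and $\Lambda$ is an isomorphism on $(1,1)$-forms. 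So \emph{both} slice conditions are needed for flatness, and you should invoke $d_1^{\ast_\delta}=0$ here rather than reserving it for injectivity. This is the real content of ``joint parametrization'': the two Hodge slice equations simultaneously encode the Higgs holomorphicity and the de~Rham flatness.

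Conversely, the cross terms $[\beta,\varphi^\ast]$ and $[\varphi,\beta^\ast]$ you worry about do not appear at all: the perturbed connection is $D_\delta+\beta+\varphi$, with no hermitian conjugates of $\beta$ or $\varphi$ entering, so the bookkeeping near the punctures reduces to exactly the estimates already established in Proposition~\ref{prop:strong}. Once flatness is set up correctly, your injectivity and surjectivity arguments go through as written.
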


By the discussion in \S \ref{sec:flat}, there is a well-defined
residue map
$$
\mathrm{Res} : \Mbold^{par,s}_{\DR}(\alpha,\delta)\lra \bigoplus_{p\in D}\lfrak_p : \nabla\mapsto \bigoplus_{p\in D}\res_p(\nabla)
$$

We state the following without proof (cf.\ proof of Theorem
\ref{thm:hyperkahler}).

\begin{Theorem}
    For any $\ell\in \bigoplus_{p\in D}\lfrak_p$,
    the spaces $\mathrm{Res}^{-1}(\ell)\subset
    \Mbold^{par,s}_{\DR}(\alpha,\delta)$ are holomorphic symplectic
    manifolds. 
\end{Theorem}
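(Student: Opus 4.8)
The plan is to mirror the proof of Theorem \ref{thm:hyperkahler} in the de Rham setting, reading off the flat-connection analogue of each piece of the Dolbeault argument. First I would transport the relevant infrastructure across the biholomorphism $p_{\DR}$ of \eqref{eqn:p-dR}: the Hodge slice $\Scal^{par}_\delta(A,\Phi)$ parametrizes a neighbourhood of $[D]$ in $\Mbold^{par,s}_{\DR,\ast}(\alpha,\delta)$, and a tangent vector is represented by a pair $(\beta,\varphi)\in \ker d_2 \subset L^2_{1,\delta}(\slfrak_E\otimes\overline K_X)\oplus \Dscr_\delta(\slfrak_E\otimes K_X)$, exactly as in the Dolbeault case (the flatness deformation equation linearizes to the same $d_2$ after absorbing the weight twist into $D'_\delta$). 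The key structural point is that fixing the residue $\mathrm{Res}$ cuts out, at the level of the slice, the subspace of $(\beta,\varphi)$ with $\varphi\in L^2_{2,\delta}$, just as in the proof of Proposition \ref{prop:poisson}; this is again a consequence of Proposition \ref{prop:higgs-decomposition} (the limit $\varphi_\infty=\rbold(\varphi)$ records the variation of the residue).

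Next I would define the holomorphic $2$-form on $\Fcal^{par,s}_\delta$ by the same formula \eqref{eqn:symplectic-form},
$$
\Omega\bigl((\beta_1,\varphi_1),(\beta_2,\varphi_2)\bigr)=i\int_{X^\times}\tr(\varphi_2\wedge\beta_1-\varphi_1\wedge\beta_2),
$$
which is finite by the duality $L^2_{-\delta}\times L^2_\delta\hookrightarrow L^1$ and is manifestly closed and holomorphic. The integration-by-parts identity \eqref{eqn:omega-integration} goes through verbatim with $d_2$ now the de Rham deformation operator, and it shows two things at once: $\Omega$ descends to the framed moduli space (the boundary term $i\int d\,\tr(\varphi\eta)$ kills pairings with $d_1\eta$ for $\eta\in L^2_{2,\delta}$), and its restriction to the fibre $\mathrm{Res}^{-1}(\ell)$ descends further, since there $\varphi\in L^2_{2,\delta}$ forces $\varphi_\infty=0$ and the residual boundary term $\tr(\varphi_\infty\ell)$ vanishes. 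For nondegeneracy on $\mathrm{Res}^{-1}(\ell)$ I would, as in the proof of Proposition \ref{prop:poisson}, use the de Rham analogue of Lemma \ref{lem:nude-gauge} to choose the representative in $\ker d_1^\ast$ as well, and then test against $(-\varphi^\ast,\beta^\ast)\in\ker d_2$, obtaining $\Omega\bigl((\beta,\varphi),(-\varphi^\ast,\beta^\ast)\bigr)=\Vert\beta\Vert^2_{L^2}+\Vert\varphi\Vert^2_{L^2}$, which is positive unless $(\beta,\varphi)=0$.

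The one point that needs a little care — and which I expect to be the main obstacle — is that in the de Rham picture the natural gauge-fixing adjoint is $D'_\delta$ rather than $\partial_A+\Phi^\ast$, so one must check that the Kodaira–Spencer map and the slice condition $d_1^{\ast_\delta}(\beta,\varphi)=0$ still produce a genuine harmonic representative for each de Rham cohomology class, and that the Hodge-theoretic decomposition (closed range of $d_2$, invertibility of $d_2 d_2^{\ast_\delta}$) survives the replacement. This is handled by the same Fredholm package used throughout \S\ref{sec:analytic}: Proposition \ref{prop:strong} gives the continuity of the bracket terms, Proposition \ref{prop:index} gives the index, and the argument of Proposition \ref{prop:nude-index} together with Lemma \ref{lem:simple} (which applies to the underlying parabolic bundle, hence is insensitive to whether one works with $\Phi$ or the flat connection) gives the triviality of $H^0$ and $H^2$. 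Once the gauge slice and the harmonic theory are in place, the verification that $\Omega$ restricts to a symplectic form on $\mathrm{Res}^{-1}(\ell)$ is identical word-for-word to the Dolbeault proof, and we conclude that each $\mathrm{Res}^{-1}(\ell)\subset \Mbold^{par,s}_{\DR}(\alpha,\delta)$ is a holomorphic symplectic manifold.
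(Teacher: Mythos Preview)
Your proposal is correct and is precisely what the paper intends: the statement is given without proof, with only the remark ``cf.\ proof of Theorem \ref{thm:hyperkahler}'', and your outline faithfully transports the Dolbeault arguments of Propositions \ref{prop:based-symplectic} and \ref{prop:poisson} to the de Rham side via the joint parametrization $p_{\DR}$ of \eqref{eqn:p-dR}. The only thing to keep straight is that the Hodge slice condition is the \emph{same} slice used for both moduli spaces (so you do not need to re-derive a separate de Rham deformation complex), and then the form \eqref{eqn:symplectic-form}, the integration-by-parts \eqref{eqn:omega-integration}, and the nondegeneracy test against $(-\varphi^\ast,\beta^\ast)$ go through verbatim.
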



\section{Stratifications and conformal limits} \label{sec:strat}

\subsection{Hodge pairs}
The goal of this section is to define a generalization of the
Bia{\l}ynicki-Birula decomposition of the Dolbeault moduli space in the
parabolic setting. This is based on Simpson's notion of a Hodge bundle
\cite{SimpsonThesis},
which in the moduli space is a fixed point for the $\CBbb^\ast$-action. In
our analytic setting, there is a subtlety regarding the relationship
between the Hodge and parabolic filtrations. We address this in \S\ref{sec:filtrations}.
In \S\ref{sec:BB} we define the Bia{\l}ynicki-Birula (or \BB) slices,
that were first introduced in \cite{CollierWentworth:19} in the nonparabolic
setting.

\subsubsection{The parabolic and Hodge filtrations} \label{sec:filtrations}

There is a natural action of  $\CBbb^\ast$ 
on $\Bcal^{par}_{\delta}$ given by $\lambda\cdot(A,\Phi)=(A,\lambda\Phi)$.
Since this commutes with the action of $\Gcal_{\delta}$, we have a
$\CBbb^\ast$-action on
$\Mbold_\Dol^{par,s}(\alpha,\delta)$. 
Suppose $[(A,\Phi)]\in \Mbold_\Dol^{par,s}(\alpha,\delta)$ is a fixed point of
$\CBbb^\ast$.  Then 
as in the closed surface case (cf.\ \cite[eq.\ (7.2)]{selfduality}), 
there is a representative $(A,\Phi)$ of $[(A,\Phi)]$ so that 
for every $\vartheta$ there is $g_\vartheta\in
\Kcal_{\delta}$, with the property  that $d_A=g_\vartheta(d_A)$ and
$e^{i\vartheta}\Phi=g_\vartheta\Phi g_\vartheta^{-1}$. 
Note that this implies that $g_\vartheta$ has constant eigenvalues. 
It follows that  $\Ecal=(E,\dbar_A)$  splits
holomorphically and isometrically  as 
\begin{align}
    \begin{split}\label{eqn:splitting}
        \Ecal&=\Ecal_1\oplus\cdots\oplus \Ecal_\ell \\
        \Phi : \Ecal_a &\to \Ecal_{a+1}\otimes K_X \ , \ a=1, \ldots, \ell-1
    \end{split}
\end{align}
and $\Phi$ annihilates $\Ecal_\ell$. 
The important point is that for generic $\vartheta$ the $E_i$ are
precisely the eigenbundles of $g_\vartheta$. 
Following Simpson, we call such a Higgs pair a \emph{Hodge pair}.

The following compares the quasi-parabolic structure with the
Hodge splitting.
\begin{Proposition} \label{prop:parabolic-hodge}
Let $(A,\Phi)$ be a Hodge pair. Then after possibly acting by
an element of $\Kcal_{\delta,\ast}$, the $C^\infty$ splitting
\eqref{eqn:splitting} is preserved by $d_{A_0}$ and is
    translation invariant  on each $C(p)$, $p\in D$. 
\end{Proposition}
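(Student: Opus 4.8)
The plan is to analyze the structure of a Hodge pair $(A,\Phi)$ near each puncture $p\in D$ using the $S^1$-equivariance, and show the Hodge splitting $\Ecal=\Ecal_1\oplus\cdots\oplus\Ecal_\ell$ can be made compatible with the model data $(A_0, \hat\alpha(p))$ after a framing gauge transformation. First I would recall that, since $[(A,\Phi)]$ is $\CBbb^\ast$-fixed, there is a representative with $g_\vartheta\in\Kcal_\delta$ satisfying $d_A = g_\vartheta(d_A)$ and $e^{i\vartheta}\Phi = g_\vartheta\Phi g_\vartheta^{-1}$, with $g_\vartheta$ of constant eigenvalues; the eigenbundles of $g_\vartheta$ (for generic $\vartheta$) give the $C^\infty$ splitting \eqref{eqn:splitting}. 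By Lemma \ref{lem:smooth} we may first act by an element of $\Gcal_{\delta,\ast}$ so that $\dbar_A = \dbar_{A_0}$ on each $C(p)$; the issue is that this gauge transformation need not respect the splitting, so I must argue the splitting can be simultaneously arranged to be the $A_0$-parallel one on the cylinders.

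The key step is the following local analysis on $C(p)$. After the reduction above, $\Ecal = (E,\dbar_{A_0})$ on $C(p)$, with holomorphic frame $\{s_i\}$ and the model metric. The one-parameter family $g_\vartheta$ restricted to $C(p)$ is a unitary automorphism commuting with $\dbar_{A_0}$ and conjugating $\Phi$ by $e^{i\vartheta}$; decomposing into boundary eigensections as in Lemma \ref{lem:boundary-kernel} and Remark \ref{rem:spectrum}, and using that $g_\vartheta$ is bounded with bounded derivative (it lies in $\Kcal_\delta$, hence $g_\vartheta - b(g_\vartheta)$ decays), I expect to conclude that the limit $b(g_\vartheta)\in\bigoplus_p \sL_p$ (a product of unitary groups) is itself a one-parameter subgroup whose eigenbundles are translation-invariant and $A_0$-parallel on $C(p)$. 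Since the actual splitting \eqref{eqn:splitting} is the eigenbundle decomposition of $g_\vartheta$ and $g_\vartheta$ converges to $b(g_\vartheta)$ with exponential rate, an application of the implicit function theorem / a small gauge transformation in $\Kcal_{\delta,\ast}$ (which by definition has trivial boundary value, hence does not disturb $\dbar_{A_0}$ on the cylinders) straightens the eigenbundles of $g_\vartheta$ to those of $b(g_\vartheta)$ on each $C(p)$. After this correction, the splitting is preserved by $d_{A_0}$ and translation invariant on each $C(p)$, as claimed.

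The main obstacle I anticipate is controlling the interaction between the two filtrations: the parabolic (quasi-parabolic) filtration is encoded by the ordering of the weights $\hat\alpha(p)$ inside the model frame, while the Hodge filtration comes from $g_\vartheta$, and a priori these need not be ``block-diagonal'' with respect to each other. The crucial point is that $g_\vartheta$ commutes with $\dbar_{A_0}$, hence (by the eigensection expansion, exactly as in the proof of Lemma \ref{lem:boundary-kernel}) its boundary value must lie in $\sL_p$, i.e.\ must preserve each weight-eigenspace of $\hat\alpha(p)$; this forces the limiting Hodge splitting to refine, and be refined by, the parabolic flag in a compatible way. Making this rigorous — in particular checking that the correcting gauge transformation genuinely lies in $\Kcal_{\delta,\ast}$ and that it can be chosen to straighten all the eigenbundles of $g_\vartheta$ simultaneously while keeping $\dbar_A = \dbar_{A_0}$ on the cylinders — is the technical heart of the argument, but it is a direct adaptation of the closed-surface case in \cite{selfduality} combined with the boundary analysis already developed in \S\ref{sec:gauge-groups}.
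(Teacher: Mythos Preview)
Your overall strategy---use the boundary value $\bbold(g_\vartheta)$ to identify the asymptotic splitting, then correct $g_\vartheta$ to a constant by a based unitary gauge transformation via an implicit function theorem argument---is the right one and matches the paper's. However, there is a genuine gap in your execution. You begin by invoking Lemma~\ref{lem:smooth} to reduce to $\dbar_A=\dbar_{A_0}$ on the cylinders, but that lemma uses a \emph{complex} gauge transformation in $\Gcal_{\delta,\ast}$, whereas the Proposition only permits an element of $\Kcal_{\delta,\ast}$. After conjugating by a non-unitary $g$, the element $g\,g_\vartheta\,g^{-1}$ is no longer unitary, so your subsequent assertion that ``$g_\vartheta$ restricted to $C(p)$ is a unitary automorphism commuting with $\dbar_{A_0}$'' fails, and with it the orthogonality of the Hodge splitting you rely on. (Your parenthetical that a $\Kcal_{\delta,\ast}$-transformation ``does not disturb $\dbar_{A_0}$ on the cylinders'' is also incorrect: any nontrivial gauge transformation alters $\dbar_A$.)

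The paper's proof avoids this detour entirely: it never reduces to $\dbar_{A_0}$ but works with the original $(A,\Phi)$ and $g_\vartheta\in\Kcal_\delta$ directly. Fixing a $g_0$ that is constant on each $C(p)$ with the same characteristic polynomial and boundary value $\bbold(g_\vartheta)$, one considers the conjugation map
\[
F:\Kcal_{\delta,\ast}\lra\bigl\{g\in\Kcal_\delta:\bbold(g)=\bbold(g_\vartheta),\ \charpoly(g)\text{ constant}\bigr\},\qquad h\mapsto hg_0h^{-1},
\]
whose derivative $H\mapsto[H,g_0]$ is surjective, so $F$ is locally open. A compactly supported $h_0$ first brings $g_\vartheta$ close to $g_0$ (handling the ``not a priori small'' issue you gloss over), and then openness of $F$ yields $h\in\Kcal_{\delta,\ast}$ with $h g_\vartheta h^{-1}=g_0$. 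Once $g_\vartheta$ is constant on each $C(p)$, write $d_A=d_{A_0}+b$ with $b\in L^2_{1,\delta}$; from $d_A(g_\vartheta)=0$ one gets $[\sqrt{-1}\hat\alpha\,d\theta+b,g_\vartheta]=0$, and letting $\tau\to\infty$ kills the $b$-term to give $[\hat\alpha(p),g_\vartheta]=0$. Note that it is the \emph{full} connection $d_A$, not $\dbar_{A_0}$, that $g_\vartheta$ commutes with---this is what delivers commutation with $\hat\alpha(p)$ and hence $A_0$-parallelism of the splitting.
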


\begin{proof}
Let
$$
    \Kcal^c_{\delta}=\left\{ g\in \Kcal_{\delta}\mid
    \bbold(g)=\bbold(g_\vartheta)\
,\ \charpoly(g)\text{ is constant }\right\}
$$
    where $\charpoly$ denotes the (pointwise) characteristic polynomial.
    Then $\Kcal_{\delta}^c\subset\Kcal_{\delta}$ is a Banach submanifold
with tangent space
$$
    T_g\,\Kcal_{\delta}^c=\left\{ [g,H] \mid H\in
L^2_{2,\delta}(\sufrak_E) \right\}
$$
    Choose $g_0\in \Kcal_{\delta}^c$  to be an element that is constant on
    each $C(p)$ (with respect to the fixed unitary frame $\{e_i\}$). Define: 
    $$F:\Kcal_{\delta,\ast}\to \Kcal_{\delta}^c  : h\mapsto h g_0 h^{-1}$$
Then 
        $$DF(1): T_{id}\,\Kcal_{\delta,\ast}\simeq
    L^2_{2,\delta}(\sufrak_E)\lra T_{g_0}\Kcal_{\delta}^c : H\mapsto [H,g_0]$$
        is surjective, and so by the implicit
function theorem $F$ is locally an open mapping. It follows that if $g$ is
sufficiently close to $g_0$, then $g=hg_0 h^{-1}$ for some $h$. 

Now returning to the case at hand, we can find an $h_0$ which is
the identity outside a large compact set, such that $h_0g_\vartheta
h_0^{-1}$ is sufficiently close to $g_0$. By the previous
    paragraph, we can then find $h\in \Kcal_{\delta}$ so that $hg_\vartheta
h^{-1}= g_0$. Hence, after a gauge transformation, we may assume
    $g_\vartheta=\bbold(g_\vartheta)$ is constant on each $C(p)$. 
    Write $d_A=d_{A_0}+b$. Since $g_\vartheta$ is constant on each $C(p)$,  we have
    $$
    0=d_A(g_\vartheta)=d_{A_0}(g_\vartheta)+[b,g_\vartheta]=[\sqrt{-1}\hat\alpha\,
    d\theta+b,g_\vartheta]
    $$
    But $b\in L^2_{\delta}$, so taking limits we conclude that
    $[\hat\alpha,g_\vartheta]=0$. 
        Hence, $A_0$ preserves the splitting given by $g_\vartheta$, and since $g_\vartheta$ is constant, the splitting is translation 
        invariant.  The result follows.
\end{proof}

\begin{Remark}
\begin{enumerate}
\item We note that the only $\CBbb^\ast$ fixed points in the \emph{framed}
moduli
space $\Mbold_{\Dol,\ast}^{par,s}(\alpha,\delta)$ lie in the locus where
$\Phi\equiv 0$.
Indeed, by the discussion above, $g_\vartheta$ has constant eigenvalues, and so
if $g_\vartheta$ were in the based gauge group $\Gcal_{\delta,\ast}$ it would
necessarily be the identity.
\item
It also follows that in the case of full flags, a Hodge pair is strongly
    parabolic. Indeed, by Proposition \ref{prop:parabolic-hodge}
    and the form \eqref{eqn:splitting}, it follows from Example
    \ref{ex:phi} that $\res(\Phi)$ is strictly upper triangular. 
    \end{enumerate}
\end{Remark}

As a consequence of Proposition \ref{prop:parabolic-hodge},
we may find a connection $\widetilde A_0$ that
preserves the splitting and agrees with $A_0$ on each $C(p)$, $p\in D$.
From now on we just assume $\widetilde A_0=A_0$ everywhere. Then
$A_0$ induces connections $A_0^{(a)}$ on the bundles $E_a$ that
are translation invariant. We may therefore define the weighted
Sobolev space of connections $\Acal_\delta^{(a)}$ for each
bundle $E_a$. We then have the inclusion:
$$
\Acal_\delta^{(1)}\times\cdots\times \Acal_\delta^{(\ell)}\lra
\Acal_{\delta}
$$
The Hodge pair structure induces
connections $A^{(a)}$ which are not a priori translation
invariant, but up to gauge they 
can be written as $d_{A^{(a)}}= d_{A_0^{(a)}} +
b^{(a)}$, with $b^{(a)}\in
L^2_{1,\delta}(\ufrak_{E_a}\otimes T^\ast X^\times)$.  
That is, $A^{(a)}\in \Acal_\delta^{(a)}$. 
Applying Lemma \ref{lem:smooth} for each $a$, we can assume that the
connection on the Hodge bundle agrees with $A_0$ on each $C(p)$. 
We shall say that the Hodge pair is \emph{in good gauge}. 

Now suppose $(A,\Phi)$ is a Hodge pair in good gauge. Following 
\cite{CollierWentworth:19} we consider bundles
$$
\nfrak^+_E=\bigoplus_{b>a}\Hom(E_b,E_a)\ ,\ \hfrak_E=\bigoplus_{a=1}^\ell
\End E_a \cap \slfrak_E
$$
with the induced connections from the $A^{(a)}$. 
Because the Hodge splitting is translation invariant on each $C(p)$, it makes sense to consider the subbundles $\nfrak_E^+\cap
\lfrak_p$ and $\hfrak_E\cap \lfrak_p$. Note that: 
\begin{equation} \label{eqn:direct-sum}
        \dim \lfrak_p=2\rank(\nfrak_E^+\cap\lfrak_p)+\rank(\hfrak_E\cap \lfrak_p)\ . 
\end{equation}

We then define the subcomplex of
\eqref{eqn:def-complex}:

\begin{align} 
    \begin{split}\label{eqn:bb-complex}
        \Ccal^{par,+}_\delta(A,\Phi) &: \\ 
        L^2_\delta(\nfrak^+_E)&\xrightarrow{\hspace{.1in}d_1\hspace{.05in}}
        L^2_{1,\delta}(\nfrak_E^+\otimes\overline K_X)\oplus
    \Dscr_\delta((\hfrak_E\oplus \nfrak_E^+)\otimes K_X)
\xrightarrow{\hspace{.1in}d_2\hspace{.05in}}
L^2_{\delta}(\hfrak_E\oplus\nfrak_E^+) 
    \end{split}
\end{align}
Similarly, we have the subcomplex of
\eqref{eqn:def-complex-strong}:

\begin{align} 
    \begin{split}\label{eqn:bb-complex-strong}
        \Ccal^{spar,+}_\delta(A,\Phi) &: \\ 
        L^2_\delta(\nfrak^+_E)&\xrightarrow{\hspace{.1in}d_1\hspace{.05in}}
        L^2_{1,\delta}(\nfrak_E^+\otimes\overline K_X)\oplus
    L^2_{1,\delta}((\hfrak_E\oplus \nfrak_E^+)\otimes K_X)
\xrightarrow{\hspace{.1in}d_2\hspace{.05in}}
L^2_{\delta}(\hfrak_E\oplus\nfrak_E^+) 
    \end{split}
\end{align}

\begin{Lemma} \label{lem:half-dimensional}
    Let $(A,\Phi)\in \Bcal^{spar,s}_\delta$ be a Hodge pair in good gauge.
    Then 
    \begin{align}
        \dim H^1(\Ccal^{par,+}_\delta(A,\Phi)) &=
            (g-1)\dim\sG+\sum_{p\in D} \dim(\sG/\sP_p) + \sum_{p\in D} \dim \sL_p\label{eqn:BB-par} \\
        \dim H^1(\Ccal^{spar,+}_\delta(A,\Phi)) &=
            (g-1)\dim\sG+\sum_{p\in D} \dim(\sG/\sP_p) + \sum_{p\in D}
        \rank(\nfrak_E^+\cap \lfrak_p)\label{eqn:BB-spar} 
    \end{align}
    Under the assumption of full flags, 
    \begin{equation} \label{eqn:BB-spar-prime}
        \dim H^1(\Ccal^{spar,+}_\delta(A,\Phi)) =
            (g-1)\dim\sG+\sum_{p\in D} \dim(\sG/\sP_p) 
        \tag{\theequation'} 
    \end{equation}
\end{Lemma}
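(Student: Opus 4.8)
The plan is to compute the three cohomology dimensions by an index computation combined with vanishing of $H^0$ and $H^2$ of the subcomplexes, and then to deduce the full-flags case from the general strongly parabolic formula by identifying the correction term. First I would establish the analogue of Proposition \ref{prop:index} for the subcomplexes \eqref{eqn:bb-complex} and \eqref{eqn:bb-complex-strong}: replacing $(A,\Phi)$ by $(A_0,0)$ and dropping the zeroth-order terms involving $\Phi$ (legitimate since the Higgs field of a Hodge pair is off-diagonal with respect to the $\nfrak_E^+$/$\hfrak_E$ decomposition and contributes only compact perturbations, exactly as in the proof of Proposition \ref{prop:index}), the relevant Dirac operator decouples into $\dbar_{A_0}$ acting on sections valued in $\nfrak_E^+$ and in $\hfrak_E\oplus\nfrak_E^+$, tensored with $\overline K_X$ or $K_X$. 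By the Atiyah--Patodi--Singer index theorem on a manifold with cylindrical ends (as invoked for \eqref{eqn:index-phi} and \eqref{eqn:index-beta}, using that the boundary operator $\nabla_{\partial_p}$ has symmetric spectrum so $\eta$-invariants vanish), each $\dbar_{A_0}$ contributes $\pm\bigl[(g-1)\rank + \tfrac12\sum_{p}\dim(\text{corresponding graded piece of }\lfrak_p)\bigr]$, and the extended (Dolbeault) domain $\Dscr_\delta$ adds $\sum_p$ of the relevant piece of $\dim\lfrak_p$, as in the Claim proved inside Proposition \ref{prop:index} via \cite[Thm.\ 1.2]{LockhartMcOwen:85}.

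Next I would argue that $H^0$ and $H^2$ of each subcomplex vanish for stable Hodge pairs. For $H^0$: an element of $\ker d_1$ inside $L^2_\delta(\nfrak_E^+)$ defines a parabolic endomorphism that shifts the Hodge grading strictly upward and commutes with $\Phi$; by stability (simplicity) it must be nilpotent, but the weight considerations of Example \ref{ex:phi} — exactly as in Lemma \ref{lem:simple} — force it to vanish. For $H^2$: a Serre-duality argument identical to Lemma \ref{lem:serre}, using that $\nfrak_E^+$ is the ``opposite'' of $\nfrak_E^-$ and $\hfrak_E$ is self-dual, pairs $H^2$ of the $+$ complex with $H^0$ of the $-$ complex, which vanishes by the same reasoning. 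Therefore $\dim H^1 = -\ind$ of the Dirac operator, and assembling the pieces using $\dim\sG = 2\dim(\sG/\sP_p) + \dim\sL_p$ (from \eqref{eqn:dimensions}) and \eqref{eqn:direct-sum} gives \eqref{eqn:BB-par} and \eqref{eqn:BB-spar}: the difference between the parabolic and strongly parabolic answers is $\sum_p\dim\sL_p$ versus $\sum_p \rank(\nfrak_E^+\cap\lfrak_p)$, which is precisely the Dolbeault-domain correction applied only to the $\hfrak_E\oplus\nfrak_E^+$ (versus just $\nfrak_E^+$) factor, as dictated by which summands sit in $K_X$-twisted spaces.

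For the primed formula \eqref{eqn:BB-spar-prime} under full flags, I would invoke the remark following Proposition \ref{prop:parabolic-hodge} that in the full flags case a Hodge pair is strongly parabolic with strictly upper triangular residue, so $\hfrak_E\cap\lfrak_p = 0$ (each graded piece $\cE_{p,j}/\cE_{p,j+1}$ is one-dimensional and the Hodge grading refines the parabolic flag), and moreover $\nfrak_E^+\cap\lfrak_p = \lfrak_p \cap (\text{strictly upper triangular}) $ has rank $0$ as well because $\lfrak_p$ is abelian (indeed trivial up to the Cartan). Hence $\rank(\nfrak_E^+\cap\lfrak_p) = 0$ and \eqref{eqn:BB-spar} collapses to \eqref{eqn:BB-spar-prime}. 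I expect the main obstacle to be the bookkeeping of which graded pieces of $\lfrak_p$ enter with the Dolbeault correction and which do not — that is, carefully matching the boundary-operator spectral data on the $\nfrak_E^+$, $\nfrak_E^-$, and $\hfrak_E$ summands so that the APS index contributions sum correctly; the functional-analytic inputs (Fredholmness, the compact-perturbation argument, Serre duality) are by now routine given Propositions \ref{prop:index}, \ref{prop:strong}, and Lemma \ref{lem:serre}.
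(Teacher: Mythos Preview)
Your overall strategy matches the paper's, but there is a genuine gap in the index computation. You assert that the $\eta$-invariants vanish because the boundary operator $\nabla_{\partial_p}$ has symmetric spectrum. That symmetry holds on the full bundle $\slfrak_E$ (Remark \ref{rem:spectrum}), but it is \emph{not} true when restricted to $\nfrak_E^+$: the eigenvalues $m+\alpha_i(p)-\alpha_j(p)$ with $i,j$ in different Hodge blocks are not symmetric about $0$ in general. The paper keeps the nonzero term $\eta^+(0)$ in the APS index for both $\dbar_{A_0}$ on $\nfrak_E^+$ (whose adjoint is $T_\beta^+$) and $T_\varphi^+$, and then observes that these two contributions \emph{cancel} in the sum $i(T_\beta^+)+i(T_\varphi^+)$. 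The same cancellation mechanism disposes of the $\deg\nfrak_E^+$ terms, which you have also silently dropped from your formula for the individual indices; only $\deg\hfrak_E=0$ can be argued directly (tracelessness of curvature). Without recognizing these cancellations, your intermediate formulas are wrong and the correct final answer is a coincidence rather than a proof.

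There is also a small error in your full-flags argument: you claim $\hfrak_E\cap\lfrak_p=0$, but in fact $\lfrak_p\subset\hfrak_E$ in that case (each one-dimensional parabolic graded piece lies in a single Hodge summand $E_a$, so diagonal endomorphisms preserve the Hodge splitting). What you actually need, and what the paper uses, is $\nfrak_E^+\cap\lfrak_p=\{0\}$, which holds because $\lfrak_p$ is diagonal while $\nfrak_E^+$ is strictly block-off-diagonal with respect to the Hodge grading. So your conclusion \eqref{eqn:BB-spar-prime} is correct, but the justification should be corrected.
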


\begin{proof}
    As in the proof of Proposition \ref{prop:index}, 
   elements of cohomology have representatives in  harmonic spaces
    $\Hbold^{par,+}_\delta(A,\Phi)$
    and $\Hbold^{spar,+}_\delta(A,\Phi)$.
    The dimension count then  reduces to
    computing the sum of the indices of the decoupled operators
\begin{align*}
    T_\beta^+&: L^2_{1,\delta}(\nfrak^+_E\otimes\overline K_X)\lra
    L^2_\delta(\nfrak_E) \\
    T_\varphi^+&: L^2_{1,\delta}((\hfrak_E\oplus \nfrak_E)\otimes K_X)\lra L^2_{\delta}(\hfrak_E\oplus \nfrak_E)
\end{align*}
 The operator $T_\beta^+$  is the adjoint of
$$
        \dbar_{A_0} : L^2_{1,\delta}(\nfrak^+_E)\lra
    L^2_\delta(\nfrak_E\otimes\overline K_X) 
$$
        As above, using \cite{APS:75} we have
        $$
        i(\dbar_{A_0})= \deg \nfrak_E^+ -
    \rank(\nfrak_E^+)(g-1+d/2)-\frac{1}{2}\sum_{p\in D}\rank(\nfrak_E^+\cap
    \lfrak_p)-\frac{\eta^+(0)}{2} 
        $$
        Here, $\eta^+(s)$ is the $\eta$-function for the boundary operator on $\nfrak_E^+$, 
        and by $\deg \nfrak_E^+$ we mean the integral of the $\alpha_0$ term in \cite[Thm.\ 3.10 (i)]{APS:75}.
    Similarly, 
    $$
    i(T^{+}_\varphi)=\deg \nfrak_E^++\deg\hfrak_E +(\rank
        \nfrak_E^++\rank\hfrak_E)(g-1+d/2)-\frac{1}{2}\sum_{p\in
    D}\rank((\nfrak_E^+\oplus\hfrak_E)\cap \lfrak_p)
    -\frac{\eta^+(0)}{2}
    $$
        Here, we have used the fact that the spectrum of the boundary operator on $\hfrak$ is symmetric about the origin (cf.\
        Remark  \ref{rem:spectrum}), so its
        contribution to the $\eta$-function vanishes.
    Now $\deg\hfrak_E=0$, since the curvature form is traceless. Noting that $ 2\rank
        \nfrak_E^++\rank\hfrak_E=\dim\sG=n^2-1$, and using \eqref{eqn:direct-sum}, we have
    \begin{align}
        \begin{split}\label{eqn:predimension}
            i(T_\beta^+)+i(T_\varphi^{+})
                &= (g-1+d/2)\dim\sG-\frac{1}{2}\sum_{p\in D}\rank(\hfrak_E\cap
            \lfrak_p) \\
                &= (g-1)\dim\sG+\frac{1}{2}\sum_{p\in D}(\dim \sG-\rank(\hfrak_E\cap
            \lfrak_p)) \\
                &=(g-1)\dim\sG+\frac{1}{2}\sum_{p\in D}(
            \dim(\sG/\sL_p)+2\rank(\nfrak_E^+\cap\lfrak_p)) \\
            &=
                (g-1)\dim\sG+\sum_{p\in D}(
            \dim(\sG/\sP_p)+\rank(\nfrak_E^+\cap\lfrak_p) )
        \end{split}
    \end{align} 
        This proves \eqref{eqn:BB-spar}. For the parabolic case, we again let $\widetilde T_\varphi^+$ be the
        operator $T_\varphi^+$, but with domain $\Dscr_\delta((\lfrak_E\oplus \nfrak_E)\otimes K_X)$. 
    As in the proof of Proposition \ref{prop:index}, 
        $$i(\widetilde T_\varphi^+)=i(T_{\varphi}^{+})+\sum_{p\in
    D}\rank((\nfrak_E^+\oplus\hfrak_E)\cap\lfrak_p)$$ 
        Now \eqref{eqn:BB-par} follows from \eqref{eqn:BB-spar} and \eqref{eqn:direct-sum}.
        However, note that since we assume full flags in the parabolic case, in
    fact $\nfrak^+_E\cap \lfrak_p=\{0\}$ and
        $\lfrak_p\subset\hfrak_E$, so the equality is automatic.
\end{proof}

\subsubsection{The Bia{\l}ynicki-Birula  slice} \label{sec:BB}
Following \cite[Def.\ 3.7]{CollierWentworth:19}, we define

\begin{Definition}[{\sc BB slice}] \label{def:bb-slice}
    Let $(A,\Phi)\in \Bcal^{spar,s}_\delta$ be a smooth Hodge pair in good
gauge.
    The $\BB$-slice at $(A,\Phi)$ in the strongly parabolic case is defined by
    \begin{align*}
    \Scal_\delta^{spar,+}(A,\Phi)=\bigl\{
        (\beta,\varphi)\in L^2_{1,\delta}(\nfrak_E^+\otimes\overline K_X)&\oplus
        L^2_{1,\delta}((\lfrak_E\oplus\nfrak_E^+)\otimes K_X) \mid \\
        d_2(\beta,\varphi)&+[\beta,\varphi]=0\ ,\ d_1^{\ast_\delta}(\beta,\varphi)=0
        \bigr\}
    \end{align*}
    In case the system of weights $\{\alpha(p)\}_{p\in D}$ corresponds to
    full flags, we define the $\BB$-slice in the parabolic case to be
    \begin{align*}
    \Scal_\delta^{par,+}(A,\Phi)=\bigl\{
        (\beta,\varphi)\in L^2_{1,\delta}(\nfrak_E^+\otimes\overline K_X)&\oplus
        \Dscr_\delta((\lfrak_E\oplus\nfrak_E^+)\otimes K_X) \mid \\
        d_2(\beta,\varphi)&+[\beta,\varphi]=0\ ,\ d_1^{\ast_\delta}(\beta,\varphi)=0
        \bigr\}
    \end{align*}
\end{Definition}

We have the analog of \cite[Prop.\ 4.2]{CollierWentworth:19}. 
\begin{Proposition} \label{prop:good-gauge}
    Let $(A,\Phi)\in \Bcal^{spar,s}_\delta$ be a smooth Hodge pair 
For each $$(\beta,\varphi)\in L^2_{1,\delta}(\nfrak_E^+\otimes\overline K_X)$$ satisfing
$d_2(\beta,\varphi)+[\beta,\varphi]=0$, there is a unique $f\in
    L^2_{2,\delta}(\nfrak_E^+)$ such that the complex gauge transformation
    $g=\Ibold+f\in \Gcal_{\delta,\ast}$
    takes $(\dbar_E+\beta,\Phi+\varphi)$ into the slice $\Scal^{par,+}_\delta(A,\Phi)$.
\end{Proposition}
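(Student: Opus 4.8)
The plan is to follow the pattern of \cite[Prop.\ 4.2]{CollierWentworth:19}, transplanted into the weighted Sobolev framework of \S\ref{sec:dolbeault}. First I would set up the relevant map. Given $(\beta,\varphi)\in L^2_{1,\delta}(\nfrak_E^+\otimes\overline K_X)\oplus L^2_{1,\delta}(\nfrak_E^+\otimes K_X)$ (the $\hfrak_E$-component of $\varphi$ is zero here since $\beta$ takes values only in $\nfrak_E^+$ and $d_2(\beta,\varphi)+[\beta,\varphi]=0$ forces the compatibility), define for $f\in L^2_{2,\delta}(\nfrak_E^+)$ the element $(\widetilde\beta,\widetilde\varphi)$ by $(\Ibold+f)(\dbar_E+\beta,\Phi+\varphi)=(\dbar_E+\widetilde\beta,\Phi+\widetilde\varphi)$, and consider
\[
\Psi : L^2_{2,\delta}(\nfrak_E^+)\times L^2_{1,\delta}(\nfrak_E^+\otimes\overline K_X)\oplus L^2_{1,\delta}(\nfrak_E^+\otimes K_X)\lra L^2_{\delta}(\nfrak_E^+)
\]
\[
 (f,\beta,\varphi)\mapsto d_1^{\ast_\delta}\bigl[(\Ibold+f)(\dbar_E+\beta,\Phi+\varphi)\bigr]\ ,
\]
where $d_1$, $d_1^{\ast_\delta}$ are the operators of the subcomplex \eqref{eqn:bb-complex} (note that because $\nfrak_E^+$ is a genuine subbundle, $\Ibold+f$ with $f\in L^2_{2,\delta}(\nfrak_E^+)$ is invertible in $\Gcal_{\delta,\ast}$, by Proposition \ref{prop:boundary-map}). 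The derivative of $\Psi$ in the $f$-variable at the origin is $d_1^{\ast_\delta}d_1$ acting on $L^2_{2,\delta}(\nfrak_E^+)$. The key step is to show this is an isomorphism. The cokernel vanishes because, as in the proof of Lemma \ref{lem:cohomology} and Proposition \ref{prop:smooth-B}, an element $B$ orthogonal to the image satisfies $e^{\tau\delta}B\in \Rcal^0_\delta$ with $e^{\tau\delta}i\Lambda B^\ast$ lying in $\ker d_1$ of the subcomplex; for the injectivity one argues exactly as in Lemma \ref{lem:simple} (stable implies simple) restricted to the nilpotent block — an element of $\ker d_1\cap L^2_{2,\delta}(\nfrak_E^+)$ is a parabolic endomorphism that is strictly block-lower/upper-triangular with respect to the Hodge splitting, hence nilpotent, hence destabilizing unless zero. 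Granting that $d_1^{\ast_\delta}d_1$ is invertible, the implicit function theorem produces, for $(\beta,\varphi)$ in a neighborhood of the origin, a unique small $f$ with $\Psi(f,\beta,\varphi)=0$, i.e.\ $g=\Ibold+f$ moves the pair into $\ker d_1^{\ast_\delta}$; since gauge transformations in $\Gcal_{\delta,\ast}$ preserve the equation $d_2(\beta,\varphi)+[\beta,\varphi]=0$ (this is just the flatness/integrability condition, preserved under conjugation), the image lies in $\Scal^{par,+}_\delta(A,\Phi)$.

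Uniqueness is the second half. Suppose $g_1=\Ibold+f_1$ and $g_2=\Ibold+f_2$ both carry $(\dbar_E+\beta,\Phi+\varphi)$ into the slice; writing $g=g_2g_1^{-1}=\Ibold+f$ with $f\in L^2_{2,\delta}(\nfrak_E^+)$, one gets two elements of $\ker d_1^{\ast_\delta}$ related by $g$, and repeating the estimate in the proof of Proposition \ref{prop:kim} — using Proposition \ref{prop:strong} for the multiplication bounds, the elliptic estimate to pass from $A_0$ to $A$, and the Poincaré-type inequality of Lemma \ref{lem:mpim} adapted to the $\nfrak_E^+$-valued complex (its proof goes through verbatim since $\ker d_1=\{0\}$ on that subcomplex by the argument above) — yields
\[
\Vert f\Vert_{L^2_{2,\delta}}\leq \frac{C'(\Vert(\beta,\varphi)\Vert_{\Dscr_\delta}+\Vert(\widetilde\beta,\widetilde\varphi)\Vert_{\Dscr_\delta})}{1-CC'(\Vert(\beta,\varphi)\Vert_{\Dscr_\delta}+\Vert(\widetilde\beta,\widetilde\varphi)\Vert_{\Dscr_\delta})}\ ,
\]
so for $(\beta,\varphi)$ small enough $f=0$ and $g_1=g_2$.

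I expect the main obstacle to be verifying that $d_1^{\ast_\delta}d_1$ restricted to the \emph{nilpotent} subcomplex \eqref{eqn:bb-complex} is invertible with uniform constants in the relevant weighted spaces: one must check that the Fredholm/index machinery of Proposition \ref{prop:index} and the ``stable implies simple'' mechanism of Lemma \ref{lem:simple} really do localize to the $\nfrak_E^+$-block (the subtlety being that $\Dscr_\delta$-type growth is only present in the $(1,0)$ slot of the full complex, whereas here $\beta$ sits in $L^2_{1,\delta}(\nfrak_E^+\otimes\overline K_X)$, so the adjoint analysis is the cleaner strongly-parabolic one — this is why the statement is phrased with $L^2_{1,\delta}$ in the $\overline K_X$-factor). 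Once the localized Poincaré inequality (analogue of Lemma \ref{lem:mpim}) and the surjectivity of $d_1^{\ast_\delta}d_1$ on $L^2_{2,\delta}(\nfrak_E^+)$ are in hand, the rest is a routine repetition of the arguments in Propositions \ref{prop:kim} and \ref{prop:smooth-B}.
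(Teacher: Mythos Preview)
Your argument has a genuine gap: the implicit function theorem, as you invoke it, only produces $f$ for $(\beta,\varphi)$ in a \emph{neighborhood of the origin}, and your uniqueness estimate likewise only works for small data. But the proposition is a \emph{global} statement --- it asserts existence and uniqueness of $f$ for \emph{every} $(\beta,\varphi)$ satisfying the integrability equation. This global version is essential for the next result (Proposition~\ref{prop:pH}), where the \BB-slice is shown to parametrize the \emph{entire} stable manifold $\Wcal_0(A,\Phi)$, not just a neighborhood of the fixed point.

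The argument in \cite[Prop.~4.2]{CollierWentworth:19} (and the closely related recursive scheme of \cite[Prop.~3.11]{CollierWentworth:19}) gets the global statement by exploiting the graded nilpotent structure of $\nfrak_E^+$. Write $\nfrak_E^+=\bigoplus_{k\geq 1}\nfrak_E^{+,k}$ with $\nfrak_E^{+,k}=\bigoplus_{b-a=k}\Hom(E_b,E_a)$, and decompose $f=\sum_k f_k$ accordingly. Since $\dbar_A$ preserves the grading, $\ad\Phi$ lowers degree by one, and $\ad\Phi^\ast$ raises it by one, the Laplacian $d_1^{\ast_\delta}d_1$ preserves each $L^2_{2,\delta}(\nfrak_E^{+,k})$. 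Moreover, the degree-$k$ component of the slice condition $d_1^{\ast_\delta}\bigl((\Ibold+f)\cdot(\beta,\varphi)\bigr)=0$ is \emph{affine-linear} in $f_k$, with linear part $d_1^{\ast_\delta}d_1$ and inhomogeneous term depending only on $(\beta,\varphi)$ and the already-determined $f_j$ for $j<k$: the nonlinear contributions from the gauge action and the brackets all land in strictly higher degree. Since $d_1^{\ast_\delta}d_1$ is invertible on each graded piece --- this is exactly the key step you correctly isolated, via Lemma~\ref{lem:simple} --- each $f_k$ is uniquely determined, with no smallness assumption anywhere. You have the right analytic ingredient; what is missing is this inductive structure on the Hodge grading that converts it into a global result.
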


The proof follows exactly as in the reference above. The key step is to use
the invertibility of $d_1^{\ast_\delta}d_1$ on $L^2_{2,\delta}$. This
follows from Lemma \ref{lem:simple}. We omit the details.  

\begin{Example} \label{ex:BB-residue}
    Consider a rank $2$ Hodge bundle $(A_0,\Phi_0)$ and an element
    $(\beta,\varphi)$ in the \BB-slice. 
    On $C(p)$ we write:

\begin{equation}
        \label{rank 2 slice}\dbar_A=
\left(\begin{matrix}
    \dbar_{L_1}& b\\ 0& \dbar_{L_2}
\end{matrix}\right)
\ ,\ \Phi=\left(\begin{matrix} \varphi_1&\varphi_2\\ \varphi_0&-\varphi_1
\end{matrix}\right)\otimes \frac{dz}{z}
\end{equation}

    Note that while $\varphi_0$ is holomorphic and possibly nonzero at
    $z=0$ (depending upon the relation between the Hodge splitting and the
    parabolic structure),
$\varphi_1$ is not holomorphic unless $b=0$ or $\varphi_0=0$.
Similarly, $\varphi_2$ is not holomorphic unless $b=0$ or $\varphi_1=0$.
Nevertheless,  we claim that
the limits $\displaystyle \lim_{z\to 0}\varphi_i$ exist for $i=1,2$.  Moreover,
identifying $\lfrak/W$ with $\CBbb/\pm1$: 
$$
\res_p([\dbar_A,\Phi])= \lim_{z\to 0}\varphi_1(z)
$$

To prove the claim,  we first bring $\dbar_A$ into the standard form
    $\dbar_{A_0}$ (modulo permuting the factors). See Example \ref{ex:phi}.
This is done by a based gauge transformation
$$
    g=\left(\begin{matrix} 1&-u\\ 0&1\end{matrix}\right)
$$
        where $g^{-1}\circ\dbar_A\circ g=\dbar_{A_0}$, and  $u\in
        L^2_{2,\delta}$ is such that $\dbar u=b$ on $C(p)$.
        Then
        $$
    g^{-1}\Phi g = \left(\begin{matrix} \varphi_1+u\varphi_0
        &\varphi_2-u(2\varphi_1+u\varphi_0)
\\
        \varphi_0&-(\varphi_1+u\varphi_0)
\end{matrix}\right)\otimes \frac{dz}{z}
        $$
        Notice that $\varphi_1+u\varphi_0$ is holomorphic. But we have a bound
        $|u(z)|\leq C|z|^\delta$, and $\varphi_0$ is holomorphic at $z=0$,
        hence
$$
    \text{res}_{\lfrak}([\dbar_A,\Phi])=\lim_{z\to
    0}(\varphi_1+u\varphi_0)(z)= \lim_{z\to 0}\varphi_1(z)
$$
Similarly, $\varphi_2-u(2\varphi_1+u\varphi_0)$ is holomorphic, and so
$$
    \lim_{z\to 0}\left(\varphi_2-u(2\varphi_1+u\varphi_0)\right)
    =\lim_{z\to 0} \varphi_2
$$
exists.
\end{Example}

Let $(A,\Phi)\in \Bcal^{spar,s}_\delta$ be a smooth Hodge pair  as above. 
We define the stable manifold of $(A,\Phi)$ for the action of $\CBbb^\ast$ on $\Mbold^{par,s}_{\Dol}$. 
\begin{equation} \label{eqn:W0}
    \Wcal_0(A,\Phi)=\left\{ [(\widetilde A,\widetilde\Phi)]\in
    \Mbold^{par,s}_{\Dol}\mid \lim_{t\to 0}[(\widetilde
    A,t\widetilde\Phi)]=[(A,\Phi)]\right\}
\end{equation}

Like Proposition \ref{prop:good-gauge} above, 
the proof of \cite[Cor.\ 4.3]{CollierWentworth:19}  can be adapted to give the
following result and proof of Theorem \ref{thm:coisotropic}.
\begin{Proposition}\label{prop:pH}
Let $(A,\Phi)\in \Bcal_{\delta}^{par,s}$ be a smooth, stable, 
Hodge pair. 
Then the  map
$$
    p_{\H} : 
\Scal_{\delta}^{par,+}(A,\Phi)\lra \Mbold_{\Dol}^{par,s}(\alpha,\delta)
: (\beta,\varphi)\mapsto [(\dbar_{A}+\beta,\Phi+\varphi)]
$$
is a biholomorphism onto $\Wcal_0(A,\Phi)$.  Moreover, $\Wcal_0(A,\Phi)$ is
coisotropic with respect to the Poisson structure on
$\Mbold_{\Dol}^{par,s}(\alpha,\delta)$, and
for any $\ell\in
    \bigoplus_{p\in D}\lfrak_p$, $\Wcal_0(A,\Phi)\cap \mathrm{Res}^{-1}(\ell)$
    is a holomorphic Lagrangian with respect to $\Omega_\ell$. 
\end{Proposition}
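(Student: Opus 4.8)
The plan is to follow the strategy of \cite[Cor.\ 4.3]{CollierWentworth:19}, adapting each step to the weighted Sobolev framework developed in \S\ref{sec:analytic}. First I would establish that $p_{\H}$ is a biholomorphism onto $\Wcal_0(A,\Phi)$. The key point is that the $\BB$-slice $\Scal^{par,+}_\delta(A,\Phi)$ provides a genuine local slice for the $\Gcal_{\delta,\ast}$-action near the Hodge pair $(A,\Phi)$, which follows from Proposition \ref{prop:good-gauge} together with the invertibility of $d_1^{\ast_\delta}d_1$ on $L^2_{2,\delta}$ guaranteed by Lemma \ref{lem:simple}; the proof is formally identical to that of Proposition \ref{prop:kim}, now carried out within the subcomplex \eqref{eqn:bb-complex}. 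One then checks, exactly as in the reference, that the image of $p_{\H}$ is precisely the set of points whose $\CBbb^\ast$-limit is $[(A,\Phi)]$: the $\CBbb^\ast$-action on $\Scal^{par,+}_\delta$ contracts to the origin as $t\to 0$ because the slice lives in the strictly-upper-triangular bundle $\nfrak_E^+$ (plus the $\lfrak_E$-part, on which the $\CBbb^\ast$-weight is nonnegative), so that the B\'ialynicki-Birula stratum $\Wcal_0(A,\Phi)$ from \eqref{eqn:W0} is identified with an open neighborhood of the origin in the slice; dimension count \eqref{eqn:BB-par} (or \eqref{eqn:BB-spar-prime} in the strongly parabolic case) confirms this has the expected dimension. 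Holomorphicity of $p_{\H}$ and of the transition maps follows as in Proposition \ref{prop:kim}.

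Next I would prove that $\Wcal_0(A,\Phi)$ is coisotropic. The natural way is to compute the restriction of the holomorphic Poisson bivector (equivalently, to compute $\Omega_\ast$ on the framed cover and descend) on tangent vectors to $\Wcal_0(A,\Phi)$, represented by harmonic elements $(\beta,\varphi)\in \Hbold^{par,+}_\delta(A,\Phi)$ with $\beta\in L^2_{1,\delta}(\nfrak_E^+\otimes \overline K_X)$ and $\varphi\in \Dscr_\delta((\hfrak_E\oplus\nfrak_E^+)\otimes K_X)$. Plugging such representatives into the formula \eqref{eqn:symplectic-form} for $\Omega$, the trace pairing $\tr(\varphi_2\wedge\beta_1-\varphi_1\wedge\beta_2)$ kills the pairing of $\nfrak_E^+$ against $\nfrak_E^+$ under $\wedge$ for weight reasons, so the symplectic complement of $T\Wcal_0(A,\Phi)$ inside the tangent space of $\Mbold^{par,s}_{\Dol}$ is contained in $T\Wcal_0(A,\Phi)$. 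Concretely, one shows the $\Omega_\ast$-orthogonal complement of the subspace cut out by the $\BB$-slice is spanned by the $\nfrak_E^+\otimes K_X$-components, which already lie in the slice; comparing with the dimension formulas \eqref{eqn:dim-par-analytic}, \eqref{eqn:BB-par} makes this precise. This is the standard mechanism by which a B\'ialynicki-Birula stratum of a holomorphic symplectic manifold with $\CBbb^\ast$-action is coisotropic, and in the Poisson setting of Proposition \ref{prop:poisson} one argues leaf by leaf.

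Finally, for the Lagrangian statement, I would intersect with a symplectic leaf $\mathrm{Res}^{-1}(\ell)$ and use Proposition \ref{prop:res-submersion}: since $\res_\lfrak$ is a holomorphic submersion and the $\CBbb^\ast$-action interacts with $\res_\lfrak$ through Table \eqref{eq CL table intro} (the complex masses transform linearly), $\Wcal_0(A,\Phi)\cap \mathrm{Res}^{-1}(\ell)$ has half the dimension of the leaf — this is a direct count using \eqref{eqn:BB-par} versus the leaf dimension $\dim\Hbold^{spar}_\delta = \dim\Hbold^{par}_\delta - \sum_p \dim\sL_p$ from \eqref{eqn:dim-spar}. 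A coisotropic submanifold of the right dimension inside a symplectic leaf is automatically Lagrangian for $\Omega_\ell$, provided it is isotropic, which follows from the restriction of the coisotropy computation above to the leaf. I expect the main obstacle to be the bookkeeping in the coisotropy step: one must carefully track which components of $(\beta,\varphi)$ in the decomposition $\nfrak_E^+\oplus\hfrak_E\oplus\nfrak_E^-$ pair nontrivially under \eqref{eqn:symplectic-form}, and in the parabolic (non-strongly-parabolic) case one must control the $\Dscr_\delta$-valued $\varphi_0$-component at the punctures using Proposition \ref{prop:higgs-decomposition} and the residue analysis of Example \ref{ex:BB-residue}, to ensure the boundary terms in \eqref{eqn:omega-integration} do not spoil the orthogonality relations.
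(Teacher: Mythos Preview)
Your proposal is correct and follows essentially the same route as the paper, which simply refers to \cite[Cor.\ 4.3]{CollierWentworth:19} and the machinery of \S\ref{sec:analytic}. A few points of streamlining are worth noting. First, your coisotropy argument is slightly muddled: you speak of the ``symplectic complement'' inside $\Mbold_{\Dol}^{par,s}$, but this space is only Poisson. The cleanest formulation---and this is exactly Remark \ref{rem:coisotropic}, immediately following the Proposition---is to observe that your trace computation (that $\tr(\varphi\wedge\beta)=0$ when $\beta\in\nfrak_E^+$, $\varphi\in\hfrak_E\oplus\nfrak_E^+$) shows the image of the \BB-slice in the \emph{framed} moduli space $\Mbold_{\Dol,\ast}^{par,s}$ is isotropic for $\Omega_\ast$; combined with the dimension count \eqref{eqn:BB-par} versus \eqref{eqn:dim-par-analytic} it is Lagrangian there, and the image of a Lagrangian under a Poisson quotient map is automatically coisotropic. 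Second, in your final step the clause ``provided it is isotropic'' is redundant: a coisotropic submanifold of a Poisson manifold intersects each symplectic leaf in a coisotropic submanifold of that leaf, and once you have verified (via your dimension count) that $\Wcal_0(A,\Phi)\cap\mathrm{Res}^{-1}(\ell)$ is half-dimensional, Lagrangian follows immediately. Finally, a minor notational slip: you write ``$\lfrak_E$'' where you mean ``$\hfrak_E$'' (the bundle of Hodge-block-diagonal endomorphisms), and the reference to Table \eqref{eq CL table intro} is misplaced---the relevant fact is simply that $\mathrm{Res}$ is $\CBbb^\ast$-equivariant for the scaling action on $\bigoplus_p\lfrak_p$.
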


\begin{Remark} \label{rem:coisotropic}
Note that the image of
$ \Scal_{\delta}^{par,+}(A,\Phi) $ in the framed moduli space
$\Mbold_{\Dol,\ast}^{par,s}(\alpha,\delta)$ is a holomorphic Lagrangian; hence
the quotient in $\Mbold_{\Dol}^{par,s}(\alpha,\delta)$
is coisotropic.
\end{Remark}

\subsubsection{Relation with Simpson's partial stratification}

We define $\Wcal_1(A,\Phi)$ to be the set of all $[D]\in
\Mbold_{\DR,\ast}^{par,s}(\alpha,\delta)$ satisfying the following
condition. Let $(\Ecal(\alpha),\nabla)$ be a  holomorphic
bundle with logarithmic connection $\nabla$ associated to $[D]$ under the
identification from Theorem \ref{thm:analytic-DRmoduli}. 
By Proposition \ref{prop:simpson-griffiths} we know there is 
an associated Hodge bundle $(Gr_\Acal(\Ecal)(\alpha),\Phi_\Acal)$. Then we say
$[D]\in \Wcal_1(A,\Phi)$ if the isomorphism class of Higgs pairs associated
to $(Gr_\Acal(\Ecal)(\alpha),\Phi_\Acal)$
via Theorem \ref{thm:analytic-moduli} coincides with $[(A,\Phi)]$.

Recall the map $p_{\DR}$ from \eqref{eqn:p-dR}. 

\begin{Proposition}\label{prop:dR}
    Let $(A,\Phi)\in \Bcal_{\delta}^{par,s}$ be a smooth, stable, 
Hodge pair. Then the map 
    $$
    p_{\DR} : \Scal_\delta^{par,+}(A,\Phi)\lra \Mbold_{\DR,\ast}^{par,s}(\alpha,\delta)  
    $$
    is a biholomorphism onto $\Wcal_1(A,\Phi)$.
\end{Proposition}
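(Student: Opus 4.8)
The strategy is to run the same argument that establishes Proposition \ref{prop:kim} (and its \BB-counterpart, Proposition \ref{prop:pH}), but now on the de Rham side, using the joint parametrization by the Hodge slice. First I would recall from the paragraph around \eqref{eqn:p-dR} that the flat connection $D$ attached to a harmonic Hodge pair $(A,\Phi)$ decomposes as $D=\nabla_\delta^0\otimes D_\delta$ with $D_\delta=D''+D'_\delta$, and that $p_{\DR}$ sends $(\beta,\varphi)\in\Scal_\delta^{par,+}(A,\Phi)$ to the gauge class of $\nabla_\delta^0\otimes(D_\delta+\beta+\varphi)$. The key local statement is that $p_{\DR}$ is a local biholomorphism near the origin; this is the de Rham analogue of Proposition \ref{prop:kim}, and its proof is identical in structure: one produces a local slice for the $\Gcal_{\delta,\ast}$-action on $\Fcal_\delta^{par,s}$ using the implicit function theorem together with the Poincaré-type inequality of Lemma \ref{lem:mpim} (applied to $d_1$ for the \BB-complex \eqref{eqn:bb-complex}) and simplicity from Lemma \ref{lem:simple}, exactly as in \cite[Lemmas 1.7, 2.3]{Kim:87}. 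Holomorphicity of the transition data follows as in the Dolbeault case, so I would simply cite Theorem \ref{thm:analytic-DRmoduli} and the corresponding Proposition for $p_{\DR}$ already stated in \S\ref{sec:flat}.

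Next I would verify that the image of $p_{\DR}$ is contained in $\Wcal_1(A,\Phi)$. Given $(\beta,\varphi)$ in the \BB-slice, the connection $\nabla=\nabla_\delta^0\otimes(D_\delta+\beta+\varphi)$ is a logarithmic $\lambda$-connection (with $\lambda=1$) on the bundle $\Ecal(\alpha)$ determined by $\dbar_{A}+\beta$ as in \S\ref{sec:flat}. The point is that the $\nfrak_E^+$-form $\beta$, being strictly block-upper-triangular with respect to the Hodge splitting \eqref{eqn:splitting}, is nilpotent along the filtration; scaling $\beta$ and $\varphi$ by $t$ and sending $t\to 0$ recovers the Hodge pair $(A,\Phi)$ — this is precisely the content of the Dolbeault statement $p_{\H}$ landing in $\Wcal_0(A,\Phi)$ in Proposition \ref{prop:pH}. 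Translating to the de Rham side via Simpson's construction (Proposition \ref{prop:simpson-griffiths} of the appendix): the filtration by $\bigoplus_{a\le k}E_a$ is Griffiths transverse for $\nabla$, and its associated graded Higgs pair is the $\C^*$-limit, which is $(A,\Phi)$. Hence the associated Hodge bundle of $(\Ecal(\alpha),\nabla)$ is $[(A,\Phi)]$, i.e. $p_{\DR}(\beta,\varphi)\in\Wcal_1(A,\Phi)$.

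Finally, surjectivity of $p_{\DR}$ onto $\Wcal_1(A,\Phi)$: given $[D]\in\Wcal_1(A,\Phi)$ with associated logarithmic connection $(\Ecal(\alpha),\nabla)$, its Simpson filtration $\Acal$ has $Gr_\Acal\cong(A,\Phi)$ up to isomorphism, and one uses the rescaling $\C^*$-action on the $\lambda$-connection together with the fact that the limit lands in the smooth locus to produce, by an inverse-function-theorem argument near the fixed point, a representative of $[D]$ of the form $\nabla_\delta^0\otimes(D_\delta+\beta+\varphi)$ with $(\beta,\varphi)$ small; after a further gauge transformation in $\Gcal_{\delta,\ast}$ using Proposition \ref{prop:good-gauge} (which holds verbatim here since it only uses invertibility of $d_1^{\ast_\delta}d_1$) one lands in $\Scal_\delta^{par,+}(A,\Phi)$. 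Injectivity then follows from the local slice property plus a connectedness/properness argument along the $\C^*$-orbit, exactly as in \cite[Cor.\ 4.3]{CollierWentworth:19}. I expect the main obstacle to be bookkeeping the extension data at the punctures: one must check that the parabolic extension of $\nabla_\delta^0\otimes(D_\delta+\beta+\varphi)$ across $D$ determined by the background metric $h_0$ at the Hodge bundle is exactly the one making the residue computation of Example \ref{ex:BB-residue} valid, so that the identification of $\Wcal_1$ is independent of choices — but this is the same subtlety already handled on the Dolbeault side, and the arguments transfer without essential change.
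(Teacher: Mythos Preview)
Your overall architecture --- local biholomorphism near the origin, image contained in $\Wcal_1(A,\Phi)$, then surjectivity and injectivity --- is sound, and the first two steps match the paper. The surjectivity step, however, has a genuine gap. You propose to use the $\C^*$-action on the $\lambda$-connection moduli space to bring a given $[D]\in\Wcal_1(A,\Phi)$ close to the fixed point and then extract, via the inverse function theorem, a representative of $[D]$ with \emph{small} $(\beta,\varphi)$. But the action $\xi\cdot[\lambda,\Ecal,\nabla]=[\xi\lambda,\Ecal,\xi\nabla]$ changes $\lambda$: if $[D]$ sits at $\lambda=1$, then $\xi\cdot[D]$ is a $\xi$-connection, not a representative of $[D]$ in $\Mbold_{\DR,\ast}^{par,s}(\alpha,\delta)$. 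There is no mechanism here for producing a small representative of $[D]$ itself, and undoing the scaling would blow the perturbation back up.

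The paper avoids this entirely by proving a \emph{global} de Rham gauge-fixing lemma, the analogue of Proposition~\ref{prop:good-gauge} for flat connections: for \emph{any} $(\beta,\varphi)\in L^2_{1,\delta}(\nfrak_E^+\otimes\overline K_X)\oplus\Dscr_\delta((\lfrak_E\oplus\nfrak_E^+)\otimes K_X)$ with $D+\beta+\varphi$ flat --- no smallness required --- there is a unique $f\in L^2_{2,\delta}(\nfrak_E^+)$ such that $g=1+f\in\Gcal_{\delta,\ast}$ gauges $D+\beta+\varphi$ into $\Scal_\delta^{par,+}(A,\Phi)$. The proof follows the recursive argument of \cite[Prop.\ 3.11]{CollierWentworth:19}, which exploits the nilpotency of $\nfrak_E^+$ to solve inductively along the Hodge grading; the analytic input is invertibility of the Laplacian $D'_\delta D''$ (not $d_1^{\ast_\delta}d_1$ as you write --- the de Rham gauge action involves the full flat connection --- though the vanishing of the kernel again reduces to Lemma~\ref{lem:simple}). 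Surjectivity is then immediate: membership in $\Wcal_1(A,\Phi)$ \emph{means} the Griffiths transverse filtration has associated graded $[(A,\Phi)]$, and any smooth splitting of that filtration already produces an upper-triangular representative $D+\beta+\varphi$, which the lemma places in the slice. Uniqueness in the lemma handles injectivity. The relevant citation is \cite[Cor.\ 4.9]{CollierWentworth:19}, not Cor.\ 4.3, which is the Dolbeault statement.
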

This is \cite[Cor.\ 4.9]{CollierWentworth:19}. The proof there relies on
the following:

\begin{Lemma}
    Let $(A,\Phi)$ be as above, and let $D$ be the associated flat
    connection.
    Suppose that 
    $$(\beta,\varphi)\in L^2_{1,\delta}(\nfrak_E^+\otimes \overline K)\oplus
    \Dscr_\delta((\lfrak_E\oplus \nfrak_E^+)\otimes K_X)
    $$
    is such that $D+\beta+\varphi$ is flat.
    Then there exists a
    unique  smooth section $f\in L^2_{2,\delta}(\nfrak_E^+)$ such that if 
     $g=1+f\in \Gcal_{\delta,\ast}$, and
    $$
    g(D+\beta+\varphi) = D+\widetilde\beta+\widetilde\varphi
    $$
    then $(\widetilde\beta,\widetilde\varphi)\in
    \Scal_\delta^{par,+}(A,\Phi)$. 
\end{Lemma}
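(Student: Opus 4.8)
The plan is to mimic the argument of \cite[Lemma underlying Cor.\ 4.9]{CollierWentworth:19}, exploiting the fact that $(A,\Phi)$ is a Hodge pair in good gauge so that the connection $D=D''+D'$ (equivalently $D_\delta$) respects the weight filtration $\nfrak_E^+\subset\lfrak_E\oplus\nfrak_E^+$ on each cylinder $C(p)$, and the $\CBbb^\ast$-action contracts the nilpotent part. First I would set up the map
\begin{equation*}
    \Psi : L^2_{2,\delta}(\nfrak_E^+)\times \bigl(L^2_{1,\delta}(\nfrak_E^+\otimes\overline K_X)\oplus\Dscr_\delta((\lfrak_E\oplus\nfrak_E^+)\otimes K_X)\bigr) \lra L^2_\delta(\nfrak_E^+)
\end{equation*}
by $\Psi(f,(\beta,\varphi)) = d_1^{\ast_\delta}\bigl[(1+f)(D+\beta+\varphi)-D\bigr]$, where the bracket denotes the $(0,1)$- and $(1,0)$-deformation of the connection obtained by the gauge transformation $g=1+f\in\Gcal_{\delta,\ast}$, decomposed into its $\nfrak_E^+$ and $\lfrak_E\oplus\nfrak_E^+$ parts. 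The crucial point is that the gauge group element is taken in the \emph{unipotent} part $L^2_{2,\delta}(\nfrak_E^+)$ (not all of $L^2_{2,\delta}(\slfrak_E)$), so the deformation stays in the subcomplex $\Ccal^{par,+}_\delta(A,\Phi)$ of \eqref{eqn:bb-complex}; one must check this closure property, which follows from the block structure of $D$ with respect to the Hodge splitting together with Proposition \ref{prop:strong} for $\glfrak_E$-valued sections.

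Next I would compute the linearization of $\Psi$ in the $f$-variable at the origin: $D_1\Psi(0,0) = d_1^{\ast_\delta}d_1$, now regarded as an operator $L^2_{2,\delta}(\nfrak_E^+)\to L^2_\delta(\nfrak_E^+)$ attached to the subcomplex. As in \cite{CollierWentworth:19}, the key is that this operator is an isomorphism. Surjectivity/invertibility reduces to showing $\ker d_1 = \{0\}$ on $\nfrak_E^+$-valued sections; this is exactly the nilpotent-block version of Lemma \ref{lem:simple} (stable implies simple), since an element of $\ker d_1$ valued in $\nfrak^+_E$ would be a parabolic endomorphism strictly lowering the Hodge degree, hence nilpotent, hence zero by stability/simplicity. (Alternatively, one invokes Proposition \ref{prop:nude-index} restricted to the relevant sub-bundle, as in the proof of Proposition \ref{prop:good-gauge}, and the index vanishing plus triviality of the kernel gives the isomorphism.) With this in hand, the implicit function theorem produces a unique $f=f(\beta,\varphi)\in L^2_{2,\delta}(\nfrak_E^+)$, depending smoothly (indeed holomorphically) on the data, so that $d_1^{\ast_\delta}$ annihilates the resulting deformation.

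It then remains to verify that this $g=1+f$ actually lands the flat connection $D+\beta+\varphi$ in the BB-slice $\Scal_\delta^{par,+}(A,\Phi)$, i.e.\ that the image, written as $D+\widetilde\beta+\widetilde\varphi$, satisfies both $d_1^{\ast_\delta}(\widetilde\beta,\widetilde\varphi)=0$ (which is by construction) and the defining curvature-type equation of the slice. Here one uses that $D+\beta+\varphi$ is flat, so $F^\perp$ vanishes, and the gauge transformation preserves flatness; unwinding the flatness condition in the $D_\delta$-trivialization and decomposing into $\hfrak_E$ and $\nfrak_E^+$ components recovers exactly the slice equation $d_2(\widetilde\beta,\widetilde\varphi)+[\widetilde\beta,\widetilde\varphi]=0$ projected appropriately — this is the same bookkeeping as in the Dolbeault case (cf.\ the passage around \eqref{eqn:hodge-slice} and \eqref{eqn:strong-hodge-slice}), now performed with the de Rham operator $D$ in place of $\dbar_A+\Phi$. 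Finally, uniqueness of $f$: if $g_1,g_2\in\Gcal_{\delta,\ast}$ of the form $1+f_i$ with $f_i\in L^2_{2,\delta}(\nfrak_E^+)$ both carry the connection into the slice, then $g_2g_1^{-1}$ fixes a point of $\Mbold^{par,s}_{\DR,\ast}$, and since the based gauge group acts freely on the stable framed moduli space (the argument of Proposition \ref{prop:kim}, using Lemma \ref{lem:mpim} together with \eqref{eqn:d1_estimate}), $g_1=g_2$ for $(\beta,\varphi)$ small.

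\textbf{Main obstacle.} The delicate point, as throughout \S\ref{sec:strat}, is the analytic behavior of the $\Dscr_\delta$-component $\varphi$ near the punctures: $\varphi$ need not be holomorphic (as Example \ref{ex:BB-residue} illustrates), so one must carefully track that the $f$ produced by the implicit function theorem lies in $L^2_{2,\delta}(\nfrak^+_E)$ — and in particular is continuous with the right decay — and that the resulting $\widetilde\varphi$ still lies in $\Dscr_\delta$ with the limit (the residue) transforming correctly under the unipotent gauge change. This requires the multiplication and decomposition estimates of Proposition \ref{prop:higgs-decomposition} and Proposition \ref{prop:strong} applied to $\nfrak_E^+$-valued objects, exactly as in the closure-of-subcomplex step; everything else is a faithful transcription of \cite{CollierWentworth:19}.
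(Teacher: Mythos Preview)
Your implicit function theorem approach yields only a local result: it produces a unique $f$ for $(\beta,\varphi)$ in a neighborhood of the origin, as you yourself concede in the uniqueness clause (``$g_1=g_2$ for $(\beta,\varphi)$ small''). But the lemma is a \emph{global} statement --- it must hold for \emph{every} flat deformation $D+\beta+\varphi$ with $(\beta,\varphi)$ in the indicated spaces --- since it is invoked to prove that $p_{\DR}$ surjects onto \emph{all} of $\Wcal_1(A,\Phi)$ in Proposition~\ref{prop:dR}, not merely onto a neighborhood of $[D]$. The slice argument of Proposition~\ref{prop:kim} that you appeal to for uniqueness is likewise explicitly local.

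The paper's proof follows instead the \emph{recursive} argument of \cite[Prop.\ 3.11]{CollierWentworth:19}, which exploits the nilpotency of $\nfrak_E^+$. Because $g=1+f$ with $f\in\nfrak_E^+$ is unipotent, the gauged deformation is \emph{polynomial} in $f$, and filtering by the weight grading on $\nfrak_E^+$ turns the slice equation into a finite sequence of \emph{linear} equations for the successive weight components of $f$, each with coefficient operator the Laplacian $D'_\delta D''$ (essentially your $d_1^{\ast_\delta}d_1$), invertible by stability via Lemma~\ref{lem:simple}. Solving weight by weight terminates after finitely many steps and produces a unique $f$ with no smallness hypothesis on $(\beta,\varphi)$. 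Your linearization captures the correct diagonal piece of this recursion, but the implicit function theorem does not see the triangular structure that makes the argument global.
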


\begin{proof}
    The proof follows the recursive argument in 
    \cite[Prop.\ 3.11]{CollierWentworth:19}. The key step is to 
    invert the Laplacian $D'_\delta D''$ (see \cite[eq.\ (3.9)]{CollierWentworth:19}).
 By the
    assumption of stability of the Hodge bundle, the kernel, and hence also
    cokernel, of this operator vanishes (see \ref{lem:simple}).  Hence,
    the same proof as in that reference applies here as well. 
\end{proof}

\subsection{Existence of conformal limits}

In this section we prove the main result on the existence of a conformal
limit.  Fix a smooth stable Hodge pair $(A,\Phi)\in
\Bcal^{par}_{\delta}$ in good gauge. 
We assume, without loss of generality, that the hermitian metric $h$ on $E$ satisfies the
Hitchin equations. 
Let $D=\nabla^0_\delta\otimes D_\delta$ denote the corresponding flat
connection. 
To keep with the notation of the references, for $\ubold=(\beta,\varphi)\in
\Scal^{par,+}_\delta(A,\Phi)$, let  $\dbar_{\ubold}=\dbar_A+\beta$,
$\Phi_\ubold=\Phi+\varphi$.
Fix $R>0$. 
Let $h(\ubold,R)$ denote the harmonic metric
for the Higgs pair $(\dbar_\ubold,R\Phi_\ubold)$.
Consider the flat connection 
\begin{equation} \label{eqn:Gaiotto-connection}
    D_{(\ubold,R)}=\Phi_\ubold+\dbar_\ubold +\partial_\ubold^{h(\ubold,R)}+
    R^2 \Phi_\ubold^{\ast_{h(\ubold,R)}}\ .
\end{equation}

Notice that if $\ubold\in \Scal^{spar,+}_\delta(A,\Phi)$, then
$D_{(\ubold,R)}$ is a family in $\Fcal^{spar}_{\delta}$. 
However, if $\ubold\in \Scal^{par,+}_\delta(A,\Phi)$, it is not always true
that $D_{(\ubold,R)}$ lies in $\Fcal^{par}_{\delta}$. 
Nevertheless, we prove the following.

\begin{Proposition}[{\sc Conformal limit}]\label{prop:conformallimit}
    $\displaystyle \lim_{R\to 0} D_{(\ubold,R)} = \nabla_\delta^0\otimes
    (D_\delta+\beta+\varphi)$.
    Here, the convergence is $C^\infty$ on compact subsets of $X^\times$. 
\end{Proposition}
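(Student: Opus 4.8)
The goal is to show that for fixed $\ubold=(\beta,\varphi)$ in the BB-slice, the family of flat connections $D_{(\ubold,R)}$ in \eqref{eqn:Gaiotto-connection} converges as $R\to 0$ (uniformly on compact subsets of $X^\times$) to $\nabla^0_\delta\otimes(D_\delta+\beta+\varphi)$. The single point at which $R$ enters nontrivially is the harmonic metric $h(\ubold,R)$ for the rescaled Higgs pair $(\dbar_\ubold, R\Phi_\ubold)$; everything else is polynomial in $R$. So the strategy is to track $h(\ubold,R)$ as $R\to 0$ and show it converges to the harmonic metric $h_0:=h(\ubold,0)$ of the limiting Hodge pair $(A,\Phi)$ (equivalently, the metric $h$ we fixed to solve the Hitchin equations), with the convergence being strong enough to pass to the limit in the connection coefficients.

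\textbf{Step 1: Reduce to convergence of the metric.} Write $h(\ubold,R)=h_0\cdot e^{s(\ubold,R)}$ for a hermitian endomorphism-valued section $s(\ubold,R)$, with $s(\ubold,0)=0$. Then $\partial^{h(\ubold,R)}_\ubold = \partial^{h_0}_\ubold + (\text{terms linear in } \dbar_\ubold s \text{ and } s)$ and $\Phi_\ubold^{\ast_{h(\ubold,R)}} = \Phi_\ubold^{\ast_{h_0}} + O(s)$, all estimates in the relevant weighted Sobolev norms via the multiplication lemmas of \S\ref{sec:gauge-groups} and Proposition \ref{prop:strong}. Hence $D_{(\ubold,R)} = \Phi_\ubold + \dbar_\ubold + \partial^{h_0}_\ubold + R^2\Phi_\ubold^{\ast_{h_0}} + E(\ubold,R)$, where the error $E(\ubold,R)$ is controlled by $\|s(\ubold,R)\|$ in a suitable norm plus the explicit $R^2$ term. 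Since $\partial^{h_0}_\ubold$ and the fixed $D_\delta$ differ by the gauge conjugation built into the definition $D_\delta=D''+D'_\delta$ and the twist $\nabla^0_\delta$, the constant ($R$-independent) part is exactly $\nabla^0_\delta\otimes(D_\delta+\beta+\varphi)$. So it remains to show $E(\ubold,R)\to 0$ on compacta, which reduces to $s(\ubold,R)\to 0$.

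\textbf{Step 2: Continuity of the harmonic metric in $R$.} This is where the analytic work of \S\ref{sec:analytic} is used. The pair $(\dbar_\ubold,R\Phi_\ubold)$ is stable for all $R>0$ (scaling the Higgs field preserves stability), and its associated Hodge bundle is the stable Hodge pair $(A,\Phi)$ independently of $R$ and $\ubold$ — this is precisely the content of the BB-slice construction (Proposition \ref{prop:pH}) and of the $\CBbb^\ast$-limit. By Theorem \ref{thm:hk} / Theorem \ref{thm:hk-metric}, $h(\ubold,R)\in\Mcal_\delta$ exists and is unique. The map $(\ubold,R)\mapsto h(\ubold,R)$ is $C^\infty$ by the same implicit-function-theorem argument as in Theorem \ref{thm:first-variation}: the linearization of the Hitchin operator $\Nscr$ in the metric direction is $(D'')^\ast D''$, which is an isomorphism on the hermitian subspace of $\Rcal^0_\delta$ by Proposition \ref{prop:nude-index} and Lemma \ref{lem:simple} (stability of the Hodge bundle). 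Applying this at $(\ubold,0)$, where the Higgs field is $0$ and $h(\ubold,0)=h_0$ solves $F^\perp_{\dbar_\ubold}=0$, gives smooth dependence in a neighborhood of $R=0$, hence $s(\ubold,R)\to 0$ in $L^2_{2,\delta}$ as $R\to 0$, and therefore $C^\infty_{loc}$ on $X^\times$ by elliptic regularity. Feeding this into Step 1 yields $E(\ubold,R)\to 0$ on compact subsets, which is the claim. (One must check $h_0$ is genuinely the $R\to 0$ limit and not merely \emph{a} fixed point: this follows because the limiting equation $F^\perp_{\dbar_\ubold}=0$ has the unique solution $h_0$ in $\Mcal_\delta$ by stability of $(A,\Phi)$, and uniqueness plus the implicit function theorem pin down the branch.)

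\textbf{Main obstacle.} The delicate point is the uniform control of $h(\ubold,R)$ as $R\to 0^+$: a priori the harmonic metric could degenerate near the punctures, since the parabolic/Hodge filtrations interact subtly (Remark after Proposition \ref{prop:parabolic-hodge}, and Remark \ref{Remark local form of solution} on the local form of the solution). The resolution is that we work in the BB-slice, where the background $A_0$ already respects both structures and the Higgs field has upper-triangular residue of the fixed type; the weighted-Sobolev setup is tailored (via the choice \eqref{eqn:delta-assumption} of $\delta$) so that the linearized operator $(D'')^\ast D''$ remains Fredholm of index zero and invertible \emph{uniformly} down to $R=0$. So the potential degeneration is absent by construction, and the implicit function theorem applies cleanly at the endpoint $R=0$. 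The remaining work is bookkeeping: verifying that each coefficient of $D_{(\ubold,R)}$ is jointly continuous in $(h,R)$ in the norms at hand, which is routine given Proposition \ref{prop:strong} and the multiplication properties of weighted Sobolev spaces.
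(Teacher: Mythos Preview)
There is a genuine gap. Your entire argument rests on the claim that the harmonic metric $h(\ubold,R)$ for $(\dbar_\ubold, R\Phi_\ubold)$ converges, as $R\to 0$, to a \emph{fixed} metric $h_0$ on $E$. This is false. Already at $\ubold=0$ one sees the problem: the harmonic metric for the scaled Hodge pair $(A,R\Phi)$ is obtained from the harmonic metric $h$ of $(A,\Phi)$ by the block-diagonal gauge transformation $g=\diag(R^{m_1/2},\ldots,R^{m_\ell/2})$ (with $m_j-m_{j+1}=2$), because $g(R\Phi)g^{-1}=\Phi$ while $g$ commutes with $\dbar_A$. This rescaling degenerates as $R\to 0$. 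More generally, the parabolic bundle $(\dbar_\ubold,0)$ is typically \emph{not} polystable (for $\ubold$ in the \BB-slice the underlying bundle is an iterated extension whose graded pieces have unequal parabolic degrees), so there is no harmonic metric at $R=0$ at all, and your implicit-function-theorem step has no endpoint to anchor to. Your identification of the linearization with $(D'')^\ast D''$ for the Hodge pair is likewise incorrect: linearizing the Hitchin map for $(\dbar_\ubold, R\Phi_\ubold)$ in the metric at $R=0$ yields $\dbar_\ubold^{\ast}\dbar_\ubold$, which has kernel whenever the bundle has nontrivial endomorphisms. Finally, even if your $h_0$ existed, the formal limit you write down in Step~1 contains $-\beta^{\ast_{h_0}}$ (from the Chern connection of $\dbar_\ubold$) rather than $\Phi^{\ast_h}$, so it would not match $\nabla^0_\delta\otimes(D_\delta+\beta+\varphi)$.

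The paper's proof (following \cite{GaiottoLimitsOPERS,CollierWentworth:19}) confronts this degeneration head-on. One takes the $R$-dependent ``approximate'' metric $h'_R:=g(h)$ and checks directly that the connection built from $h'_R$ has the correct limit: $\Phi_\ubold+\dbar_\ubold+e^{-\tau\delta}\partial_\ubold^{h'_R}e^{\tau\delta}+R^2\Phi_\ubold^{\ast_{h'_R}}\to D_\delta+\beta+\varphi$ as $R\to 0$, the point being that the blow-up in $h'_R$ cancels against the $R^2$ prefactor and the block structure of $\beta,\varphi\in\nfrak_E^+\oplus\hfrak_E$. One then writes the true harmonic metric as $h(\ubold,R)=\exp(f_R)(h'_R)$ with $f_R$ valued in $\hfrak_E\oplus\nfrak_E^+$, and applies the implicit function theorem to $N_{(\ubold,R)}(f_R)=0$. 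The linearization at $(R,f_R)=(0,0)$ is $\dbar_\ubold\partial_E\dot f+[\Phi_\ubold,[\Phi^\ast,\dot f]]$; its injectivity is proved by exploiting the triangular decomposition $\dot f=\dot f_{\hfrak}+\dot f_+$ and reducing inductively to $(D'')^\ast D''$ on each graded piece, where Lemma~\ref{lem:simple} and Proposition~\ref{prop:nude-index} apply. The missing ingredient in your argument is precisely this $R$-dependent rescaling by $g$; without it no convergence statement for the metric is available.
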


\begin{proof}
    Given the set-up of the previous sections, the proof is formally the same that of
    \cite{GaiottoLimitsOPERS} (see especially,  \cite[pp.\ 1223-4]{CollierWentworth:19}).
    As in these references, the key is to find an expansion in $R$ for the
    metric $h(\ubold, R)$. We first modify $h$ to get a metric $h'_R$ by an
    action of 
    $$
    g=\left(\begin{matrix} R^{m_1/2} &
    &0\\&\ddots&\\0&&R^{m_\ell/2}\end{matrix}\right)
    $$
    where $m_j-m_{j+1}=2$, $j=1,\ldots, \ell-1$, and $\sum_{j=1}^\ell
    (\rank E_j)m_j=0$. Notice that since $d_{A_0}g$ is compactly supported,
    in particular we have $g\in \Rcal^0_{\delta}$, and hence $g$ 
    is an element of the gauge group $\Gcal_{\delta}$.  
    Then as in \cite[eq.\ (5.4)]{CollierWentworth:19},
    $$
    \lim_{R\to 0}\left\{ \Phi_\ubold+\dbar_\ubold +
    e^{-\tau\delta}\partial_{\ubold}^{h'_R}e^{\tau\delta}+R^2\Phi_\ubold^{\ast_{h_R'}}\right\}=D_\delta+\beta+\varphi\
    .
    $$
    The goal now is express $h(\ubold,R)=g_R(h_R')$, $g_R= \exp(f_R)\in
    \Gcal_\delta$, where $f_R$ is a
    traceless $h_R'$-hermitian endomorphism. 
    The Hitchin equations 
    $$N_{(\ubold,R)}(f_R):=i\Lambda(F_{(\dbar_u,h(\ubold,R)}+[\Phi_\ubold,
    \Phi_\ubold^{\ast_{h(\ubold,R)}}])=0$$
    are now a function of $f_R$. As in \cite[p.\ 1224]{CollierWentworth:19}, we may view $N_{(\ubold,R)}$ as a map
    $$
    N_{(\ubold,R)} :  \Rcal_{\delta}(\hfrak_E\oplus \nfrak^+_E)\lra
L^2_{\delta}(\hfrak_E\oplus \nfrak^+_E)
    $$
    The linearization at $R=0$, $f_R=0$,  may be computed:
    $$
\frac{1}{2}dN_{(\ubold,0)}(0)\dt{f}=\dbar_{\ubold}\partial_E\dt{f}
+ [\Phi_\ubold, [\Phi^\ast,\dt{f}]]\ .
    $$
    As an operator on the $\dt f$  variable, 
    it follows as in the proof of Lemma \ref{lem:smooth}
    and Proposition \ref{prop:nude-index} that $dN_{(\ubold,0)}(0)$
    is Fredholm of index $0$; hence, surjectivity follows from injectivity. 
    With respect to the splitting of the bundle,
    decompose $\dt{f}=\dt{f}_{\hfrak}+\dt{f}_+$. 
    Following the proof of \cite[Lemma 5.2]{CollierWentworth:19}, an element of
     of $\ker dN_{(\ubold,0)}(0)$ would satisfy
     $
    (D'')^\ast D''\dt{f}_{\hfrak}=0
     $.
     Since we assume the Hodge pair is stable, by Lemma \ref{lem:simple} and
Proposition \ref{prop:nude-index}, the operator $(D'')^\ast D''$
     has no kernel, and so $\dt{f}_{\hfrak}=0$. Repeating this argument for the
     upper trianguler components $\dt{f}_+$ then shows that $\ker dN_{(\ubold,0)}(0)=\{0\}$. 
     The implicit function theorem can then be applied to $N_{(\ubold,R)}$
     to find the solution $f_R$ for small $R$. The rest of the proof
     follows as in the references cited above.
\end{proof}

\subsection{Conformal limit in Rank 2}\label{sec conf rk 2}
Recall from Example \ref{C* fixed points in rank 2}
 that there are two types of fixed points in rank two. Namely, $\cE(\alpha)$ is a stable parabolic bundle with $\Phi=0$ or 
\begin{equation}
        \label{eq general rk 2 fixed}(\cE(\alpha),\Phi)\cong(\cL_1(\beta_1)\oplus\cL_2(\beta_2),\begin{pmatrix}
        0&0\\\phi_0&0
\end{pmatrix})~,
\end{equation}
where $\cL_i(\beta_i)$ are parabolic line bundles and $\phi_0:\cL_1\to\cL_2\otimes K(D)$ is not zero and $\res_p(\phi_0)=0$ whenever $\beta_1(p)>\beta_2(p)$. 

In the case when $\cE(\alpha)$ is stable and $\Phi=0$, the 
the \BB-slice consists of all parabolic Higgs bundles $(\cE(\alpha),\varphi)$ with underlying bundle $\cE(\alpha).$ The $\hbar$-conformal limit in this case is 
\begin{equation}
        \label{eq stable bundle cl}\CL_\hbar(\bar\partial_E,0+\varphi)=(\hbar, \bar\partial_E,\hbar\partial_h+\varphi),
\end{equation}
 where $\bar\partial_E+\partial_h$ is the flat unitary logarithmic connection associated to the stable bundle $\cE(\alpha).$

For the fixed points with $\Phi\neq0$, let $h$ be the harmonic metric and $D=\bar\partial_E+\Phi^{*_h} +\partial_E^h+\Phi$  be the associated parabolic logarithmic connection. The splitting $\cL_1\oplus\cL_2$ is orthogonal with respect to $h$, so $h=h_1\oplus h_2.$
As in Example \ref{ex:BB-residue}, points in the \BB-slice through $(\bar\partial_E, \Phi)$ have the form $(\bar\partial_E+\beta,\Phi+ \varphi)$, where
\begin{equation}
         \label{eq sliced elements rank 2}\beta = \begin{pmatrix} 0 & b \\ 0 & 0 \end{pmatrix}, \quad \varphi=\begin{pmatrix}  \varphi_1 & \varphi_2 \\ 0 & - \varphi_1 \end{pmatrix}.
 \end{equation} 
Being in slice means 
$\bar\partial_E(\varphi)+[\beta,\Phi]+[\beta,\varphi]=0$ and
$e^{-\tau\delta}\partial_E^h(e^{\tau\delta}\beta)+[\Phi^{*_h},\varphi]=0.$
Specifically,
\begin{equation}
\label{eq slice eq}\xymatrix{\bar\partial_E\varphi_1+b\wedge\phi_0=0,&
\bar\partial_E
\varphi_2-2b\wedge\varphi_1=0&\text{and}&e^{-\tau\delta}\partial_E^h(
e^{\tau \delta}b) - 2 \phi_0 ^ { * _h } \wedge \varphi_1=0}.
\end{equation}
For such Higgs bundles, the $\hbar$-conformal limit is $(\hbar, \bar\partial_E+\hbar\Phi^{*_h}+\beta,\hbar\partial^h_E+\Phi+\varphi)$. More explicitly,
\begin{equation}
        \label{eq confomral limit rk 2}\CL_\hbar(\bar\partial_E+\beta,\Phi+\varphi)=\left(\hbar,\begin{pmatrix}
        \bar\partial_{1}&\phi_0^{*_h}+b\\0&\bar\partial_2
\end{pmatrix}, \begin{pmatrix}
        \hbar\partial^h_1+\varphi_1&\varphi_2\\\phi_0&\hbar\partial^h_2-\varphi_1
\end{pmatrix}\right).
\end{equation}

\begin{Remark}\label{Remark comple mass implies dol deformation}
As noted in Example \ref{ex:BB-residue}, the residue of the associated Higgs field at $p\in D$ is given by $\lim_{z\to p}\varphi_1$. In particular, if $\mathrm{Res}(\Phi)\neq 0$, then $\varphi_1\neq 0$, and the slice equations \eqref{eq slice eq} imply $b\neq0$. Hence, the loci of the slice with $b=0$ is in the strongly parabolic moduli space. Note also that $b=0$ if and only if $\varphi_2$ is holomorphic.
\end{Remark}


\section{A detailed study of the four-punctured sphere} \label{sec:example}
\label{sec: 4 punctured sphere}
In this section we discuss our main results in the special case of rank $2$
on the four-punctured sphere where things are relatively explicit. Many
aspects of the Higgs bundle moduli space in this special case were studied
by Fredrickson--Mazzeo--Swoboda--Wei{\ss} in \cite{FMSWfourpunctured}. 
We focus on the fixed points which are analogous to the uniformizing points in the unpunctured case. In particular, the harmonic metric of these fixed points is related to a metric on $\mathbb{CP}^1$ with constant negative curvature and conical singularities. Furthermore, in these cases the intersection of the \BB-slice with the hyperkahler moduli spaces with fixed complex masses parameterize sections of the Hitchin map, and the conformal limit of such objects share many features with the set of opers. 

\subsection{4-punctured sphere moduli space}\label{sec: fixeddata} Consider the
Riemann surface $X=\C\P^1$ and fix an effective divisor  $D=p_1+ p_2+p_3+p_4$.
Without loss of generality we assume $p_i\neq \infty$ for all $i.$ Throughout
this section, all local computations are done in the affine chart
$\C\P^1\setminus\{\infty\}$.

Fix a rank $2$ vector bundle $E \rightarrow \CP^1$ with degree $-4$. 
For each $p_i\in D$, fix a weighted filtration
\begin{align*}
 E_{p_i} &\supset \; \; \;F_{p_i}\; \; \;\supset 0\\ 
 0 < \alpha(p_i) &< 1-\alpha(p_i)<1,
\end{align*}
where $\alpha(p_i)\in(0,\frac{1}{2}).$ For notational convenience, we write $\alpha(p_i)=\alpha_i$ and $F_{p_i}=F_i$. The parabolic weights are determined by the vector $\alpha=(\alpha_1, \alpha_2, \alpha_3, \alpha_4) \in (0, \frac{1}{2})^{4}$.
This puts us in the case of full flags and trivial determinant since, 
given a parabolic bundle $\cE(\alpha)$ with this data, equation \eqref{eq:tensorofpar} gives an isomorphism $\det(\cE(\alpha)) \cong \cO(-4)(D) \cong \cO(0)$. 
We assume that the parabolic weights are generic, so that semistability implies stability.
With this fixed data, we define the moduli space $\cP_0(\alpha)$. It is a six dimensional complex manifold with Hitchin map $-\det:\cP_0(\alpha)\to \cB=H^0(K^2(2D))$. 

The residue map $\mathrm{Res}: \cP_0(\alpha) \to \oplus_{p \in D} \mathfrak{l}_p$ (see Remark \ref{rem:Resfullflag}) is determined by the vector $\mu=(\mu_1, \mu_2, \mu_3, \mu_4)$, where 
$\mu_i \in \C$ is the complex mass at $p_i \in D$.
Here, the subspace $F_{i}$ is the eigenspace of the residue $\mathrm{res}_{p_i}\Phi$ with eigenvalue $\mu_i$. 
Denote the fiber by
\[\cP_0(\alpha,\mu)=\mathrm{Res}^{-1}(\mu)\subset\cP_0(\alpha).\] For each $\mu$, $\cP_0(\alpha,\mu)$ is a smooth hyperk\"ahler manifold of complex dimension two. Let $\cB(\mu)$ denote the image of $\cP_0(\alpha,\mu)$ under the Hitchin map
\[-\det:\xymatrix@R=0em{\cP_0(\alpha,\mu)\ar[r]& \cB(\mu)\subset H^0(K^2(2D))\\[\cE(\alpha),\Phi]\ar@{|->}[r]&-\det(\Phi)}.\] 
The space $\cB(\mu)$ consists of all
elements of $H^0(K^2(2D))$ whose
residue\footnote{in the sense of \cite[eq.\ 1.2.19]{Tjurin:78}} at $p_i\in D$
is $\mu_i^2$. In particular, $\cB(\mu)$ is affine over $H^0(K^2(D))\cong \C$.

Since we are in the case of the full flags,  the strongly parabolic moduli space $\cS\cP_0(\alpha)$ corresponds to setting all complex masses $\mu_i=0$. Moreover, all $\C^*$-fixed points in $\cP_0(\alpha)$ lie in $\cS\cP_0(\alpha)$. In this case, the base $\cB(\alpha,0)$ is given by 
\[\cB(\alpha,0)\cong H^0(K^2(D))\cong H^0(\cO).\]
The nilpotent cone, i.e., the zero fiber of $\cS\cP_0(\alpha)\to \cB(\alpha,0)$, is the unique singular fiber. 

For generic parabolic weights
$\alpha$, the nilpotent cone consists of five spheres in an affine $D_4$
arrangement. There are five connected components of $\C^*$-fixed points, these are shown in red in Figure \ref{fig: nilpotentcone}. In particular, there are four isolated fixed points which determine the `exterior spheres' and one component of fixed points isomorphic to $\CP^1$ which is the `interior sphere.' 
\begin{figure}[h]
          \begin{tikzpicture}
  \draw[thick, red] (0,0) circle (1cm);
  \draw[red] plot[domain=pi:2*pi] ({cos(\x r)},{.2*sin(\x r)});
\draw[dashed,red] plot[domain=0:pi] ({cos(\x r)},{.2*sin(\x r)});
  \draw[thick,black] (1.06,1.06) circle (.5cm);
  \fill[fill=red] (1.42,1.42) circle (2pt);
  \draw[black] plot[domain=pi:2*pi] ({1.07+.3536*cos(\x r)+.0707*sin(\x r)},{1.07+.0707*sin(\x r)-.3536*cos(\x r)});
  \draw[black,dashed] plot[domain=0:pi] ({1.07+.3536*cos(\x r)+.0707*sin(\x r)},{1.07+.0707*sin(\x r)-.3536*cos(\x r)});
  \draw[thick,black] (-1.06,1.06) circle (.5cm);
  \fill[fill=red] (-1.42,1.42) circle (2pt);
  \draw[black] plot[domain=pi:2*pi] ({-1.07+.3536*cos(\x r)-.0707*sin(\x r)},{1.07+.0707*sin(\x r)+.3536*cos(\x r)});
  \draw[black,dashed] plot[domain=0:pi] ({-1.07+.3536*cos(\x r)-.0707*sin(\x r)},{1.07+.0707*sin(\x r)+.3536*cos(\x r)});
  \draw[thick,black] (-1.06,-1.06) circle (.5cm);
  \fill[fill=red] (-1.42,-1.42) circle (2pt);
  \draw[black] plot[domain=pi:2*pi] ({-1.07+.3536*cos(\x r)+.0707*sin(\x r)},{-1.07+.0707*sin(\x r)-.3536*cos(\x r)});
  \draw[black,dashed] plot[domain=0:pi] ({-1.07+.3536*cos(\x r)+.0707*sin(\x r)},{-1.07+.0707*sin(\x r)-.3536*cos(\x r)});
  \draw[thick,black] (1.06,-1.06) circle (.5cm);
  \fill[fill=red] (1.42,-1.42) circle (2pt);
  \draw[black] plot[domain=pi:2*pi] ({+1.07+.3536*cos(\x r)-.0707*sin(\x r)},{-1.07+.0707*sin(\x r)+.3536*cos(\x r)});
  \draw[black,dashed] plot[domain=0:pi] ({+1.07+.3536*cos(\x r)-.0707*sin(\x r)},{-1.07+.0707*sin(\x r)+.3536*cos(\x r)});
\end{tikzpicture}
\caption{\label{fig: nilpotentcone} The nilpotent cone of the strongly parabolic Hitchin moduli space on $(\CP^1, D)$. The $\C^*$-fixed points are shown in red.}
\end{figure}
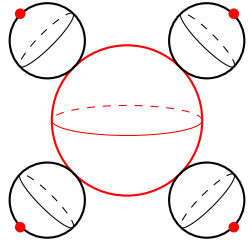

Consequently, the \BB-stratification of the moduli space $\cP(\alpha)$ from \eqref{eq bb strat} is 
 \begin{equation}
         \label{eq strata 4 punct}\cP(\alpha)=\coprod_{a\in
\pi_0(\cP_0(\alpha)^{\C^*})} \cW^a=\cW^{cent}\cup
\cW^{ext},
 \end{equation}
 where $\cW^{cent}$ is the stratum labeled by the central sphere in
Figure \ref{fig: nilpotentcone}, and $\cW^{ext}$ consists of the four
strata labeled by the exterior fixed points.
The volumes of the spheres with respect to the hyperk\"ahler metric on $\cS\cP_0(\alpha)$ vary with the parabolic weights $\alpha\in(0, \frac{1}{2})^4$, and degenerate to zero on certain 3-dimensional `walls' of $(0, \frac{1}{2})^4$ \cite{FMSWfourpunctured}. These walls divide the hypercube $(0, \frac{1}{2})^4$ into twenty-four chambers. As shown in \cite{Meneses,FMSWfourpunctured} and discussed below, the Higgs bundles corresponding to the $\C^*$-fixed points is chamber-dependent.

\subsection{$\C^*$-fixed points}\label{sec: fixedpoints4p} 

Consider a stable $\C^*$-fixed point of the form \eqref{eq general rk 2 fixed}.
Recall, for each $p\in D$, the subspace $F_p$ is either $\cL_1|_p$ or $\cL_2|_p$. Let $D_I$ and $D_{I^c}$ be the effective subdivisors of $D$ for which the subspace $F_p$ is $\cL_2|_p$ and $\cL_1|_p$, respectively. Thus, 
\[ D=D_I+D_{I_c}.\] 
Here $I$ denotes the subset of $\{1,2,3,4\}$ determined by the support of $D_I,$ and $I^c$ is its complement.   
The parabolic weights $\beta_1(p_i)$ and $\beta_2(p_i)$ are given by 
\begin{equation}
        \label{eq fixedpoint weights}\beta_1(p_i)=\begin{dcases}
        \alpha_i & i \in I \\ 1-\alpha_i & i \in I^c
\end{dcases}\ \ \ \ \ \ \ \text{and}\ \ \ \ \ \ \ \beta_2(p_i)=\begin{dcases}
       1-\alpha_i & i \in I\\
       \alpha_i&i\in I^c
\end{dcases}.
\end{equation}
The map $\phi_0$ in the Higgs field is a parabolic map $\cL_1(\beta_1)\to
\cL_2(\beta_2)\otimes K_X(D).$ By Definition \ref{def parabolic morph}, this
means that $\phi$ is a meromorphic section of $\cL_1^*\otimes\cL_2\otimes K_X$
with a worst simple poles at $p\in D$ and whose residue is zero whenever
$\beta_1(p_i)> \beta_2(p_i).$ Since $\beta_1(p_i)> \beta_2(p_i)$ implies $p_i\in
D_{I^c}$, we have
\[\phi_0\in H^0(\cL_1^* \cL_2 K(D_I)).\]

The $\C^*$-fixed point under consideration is stable whenever $\phi_0\in H^0(\cL_1^*\cL_2K(D_I))$ is nonzero and $\deg(\cL_2(\beta_2))<0$. Equivalently, we have 
\begin{equation}
        \label{eq stability of hodge}\vcenter{\xymatrix@=0em{\deg(\cL_2(\beta_2))=\deg(\cL_2)+\deg(D_I)-\sum_{i\in I}\alpha_i+\sum_{j\in I^c}\alpha_j<0\\ \deg(\cL_1^*\cL_2K(D_I))=2+2\deg(\cL_2)+\deg(D_I)\geq 0~.}}
\end{equation}
A straight forward computation shows that stability forces the degrees of $\cL_1$ and $\cL_2$ to be $-3,-2,-1$. Hence $\cL_2$ is isomorphic to $\cO(-1),\cO(-2)$ or $\cO(-3)$ and $\cL_1\cong\cO(-4)\cL_2^*.$ There are five cases determined by the degree of $D_I.$ The following table gives the conditions which are direct consequences of \eqref{eq stability of hodge}.

\begin{equation}
\label{eq table 4 puct}
        \begin{tabular}{|c|c|c|c|}\hline
                $\deg(D_I)$&$\cL_2$&condition on $(\alpha_1,\alpha_2,\alpha_3,\alpha_4)$& $\cL_1^* \cL_2 K(D_I)$\\\hline
                 $0$& $\cO(-1)$& $\sum_{i=1}^4\alpha_i<1$&$\cO$ \\\hline 
                 $1$ &$\cO(-1)$& $-\alpha_i+\sum_{j\in I^c}\alpha_j<0$, where $D_I=p_i$& \cO(1)\\\hline
                 $2$ &$\cO(-2)$&$-\sum_{i\in I}\alpha_i+\sum_{j\in I^c}\alpha_j<0$&$\cO$\\\hline
                 $3$&$\cO(-2)$& $\alpha_j-\sum_{i\in I}\alpha_i<-1$, where $D_{I^c}=p_j$&$\cO(1)$\\\hline
                 $4$&$\cO(-3)$& $\sum_{i=1}^4\alpha_i>1$&$\cO$\\\hline
        \end{tabular}
\end{equation}

The five cases are described succinctly in following proposition.
\begin{Proposition}\label{prop:generalsplit}
Consider a stable $\C^*$-fixed point $(\cE(\alpha),\Phi)=\left(\cL_1(\beta_1) \oplus \cL_2(\beta_2), \begin{pmatrix} 0 & 0 \\ \phi_0 & 0 \end{pmatrix}\right)$. Let $D_I\subset D$ be the effective subdivisor determined by the points $p\in D$ for which $F_p=\cL_2|_p$. Then, 
\begin{equation}
        \label{eq bundle for fixed}\cL_1\oplus\cL_2\cong \cO(d-3)\oplus \cO(-1 - d ),
\end{equation}
where $d=\left\lfloor \frac{\deg(D_I)}{2}\right\rfloor,$
and $\phi_0$ is a nonzero section of $\Hom(\cL_1,\cL_2)\otimes K(D_I)\cong\cO(\deg(D_I)-2 d)$.
\end{Proposition}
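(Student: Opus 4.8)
The plan is to unwind the definitions and use the constraints from Table \eqref{eq table 4 puct}. First I would recall that for a $\C^*$-fixed point in rank two the underlying bundle on $\CP^1$ splits as a sum of two line bundles, and any line bundle on $\CP^1$ is $\cO(m)$ for some $m\in\Z$; combined with the condition $\det(\cE(\alpha))\cong\cO(-4)(D)\cong\cO(0)$ (which on the level of the bare bundle reads $\deg(\cL_1)+\deg(\cL_2)=-4$), it suffices to pin down $\deg(\cL_2)$ in each of the five cases indexed by $\deg(D_I)\in\{0,1,2,3,4\}$. From the stability analysis preceding the proposition — precisely, the two inequalities in \eqref{eq stability of hodge}, namely $\deg(\cL_2(\beta_2))<0$ and $\deg(\cL_1^*\cL_2 K(D_I))\geq 0$ — one reads off from \eqref{eq table 4 puct} that $\deg(\cL_2)=-1$ when $\deg(D_I)\in\{0,1\}$, $\deg(\cL_2)=-2$ when $\deg(D_I)\in\{2,3\}$, and $\deg(\cL_2)=-3$ when $\deg(D_I)=4$. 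So $\deg(\cL_2)=-1-\lfloor \deg(D_I)/2\rfloor = -1-d$ uniformly, and $\deg(\cL_1)=-4-\deg(\cL_2)=d-3$, which is exactly \eqref{eq bundle for fixed}.

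Next I would verify the statement about $\phi_0$. By Definition \ref{def parabolic morph}, the Higgs off-diagonal entry $\phi_0$ is a parabolic morphism $\cL_1(\beta_1)\to\cL_2(\beta_2)\otimes K_X(D)$; concretely a meromorphic section of $\cL_1^*\otimes\cL_2\otimes K_X$ with at worst simple poles on $D$ and with vanishing residue at every $p_i$ with $\beta_1(p_i)>\beta_2(p_i)$. Using \eqref{eq fixedpoint weights}, $\beta_1(p_i)>\beta_2(p_i)$ holds exactly when $i\in I^c$, i.e.\ when $p_i\in D_{I^c}$; hence $\phi_0$ is allowed a pole only at the points of $D_I$, so $\phi_0\in H^0(\cL_1^*\cL_2 K(D_I))$. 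Now compute the degree: $\deg(\cL_1^*\cL_2 K_X(D_I)) = -\deg(\cL_1)+\deg(\cL_2)+\deg(K_X)+\deg(D_I) = -(d-3)+(-1-d)+(-2)+\deg(D_I)=\deg(D_I)-2d$. Since every line bundle on $\CP^1$ is determined by its degree, $\cL_1^*\cL_2 K(D_I)\cong\cO(\deg(D_I)-2d)$, and $\phi_0$ is a nonzero section of this bundle (nonzero by the definition of a $\C^*$-fixed point with $\Phi\neq 0$, cf.\ Example \ref{C* fixed points in rank 2}). Finally, I would note that $\deg(D_I)-2d\in\{0,1\}$, so the target bundle is effective and nonzero sections indeed exist, consistently with the last column of \eqref{eq table 4 puct}.

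I do not expect a genuine obstacle here — the proposition is essentially a bookkeeping consequence of \eqref{eq stability of hodge} and \eqref{eq table 4 puct} together with the triviality of $\Pic(\CP^1)$ beyond the degree. The only point requiring a little care is the case-by-case extraction of $\deg(\cL_2)$ from the stability inequalities: one must check that in each of the five rows of \eqref{eq table 4 puct} the two inequalities force a \emph{unique} value of $\deg(\cL_2)$, ruling out, say, $\deg(\cL_2)\leq -4$ or $\deg(\cL_2)\geq 0$. This follows because $\deg(\cL_1^*\cL_2 K(D_I))\geq 0$ gives $2\deg(\cL_2)\geq -2-\deg(D_I)$, i.e.\ $\deg(\cL_2)\geq -1-d$, while $\deg(\cL_2(\beta_2))<0$ with $0<\alpha_i<1/2$ forces $\deg(\cL_2)\leq -1-d$ (using $\deg(\cL_2(\beta_2)) = \deg(\cL_2)+\#I^c\cdot(\text{small})+\ldots$ and that the weight contribution has absolute value $<2$); hence equality $\deg(\cL_2)=-1-d$. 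Assembling these gives the claimed isomorphism, completing the proof.
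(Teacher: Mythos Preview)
Your proof is correct and matches the paper's approach: the paper does not give a separate proof but presents the proposition as a direct summary of Table \eqref{eq table 4 puct}, itself derived from the stability inequalities \eqref{eq stability of hodge}, and you have simply made those deductions explicit. One small point: your parenthetical justification of the upper bound $\deg(\cL_2)\leq -1-d$ (``the weight contribution has absolute value $<2$'') is imprecise---the clean argument is that $\sum_p\beta_2(p)=\sum_{i\in I}(1-\alpha_i)+\sum_{j\in I^c}\alpha_j>\deg(D_I)/2$ since each $1-\alpha_i>1/2$ and each $\alpha_j>0$, whence $\deg(\cL_2)<-\deg(D_I)/2$ and integrality gives $\deg(\cL_2)\leq -1-d$.
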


\subsubsection{The conformal limit for the central sphere}
The central sphere in Figure \ref{fig: nilpotentcone} either consists of stable parabolic bundles with zero Higgs field or corresponds to the case where $\deg(D_I)=1$ or $\deg(D_I)=3$ in Proposition \ref{prop:generalsplit}. Of the 24 chambers in the space of weights, 16 have stable parabolic bundles as their central sphere. From Table \eqref{eq table 4 puct}, these occur when $0 < - \alpha_k + \sum_{i \neq k} \alpha_i <1$ for all $k$.

In the remaining eight chambers, there are no stable parabolic bundles $\cE(\alpha)$. Instead, the central sphere consists of Higgs bundles described in Table \eqref{eq table 4 puct}; take $\deg(D_I)=1$ if there is a weight $\alpha_{k}$ with $-\alpha_{k}+\sum_{i\neq k}\alpha_i<0$ or $\deg(D_I)=3$ if there is a weight $\alpha_{k}$ with $\alpha_{k}-\sum_{i\neq k}\alpha_i>-1$.  
In both cases, the central sphere is parameterized by $\P(H^0(\cO(1))\setminus \{0\}).$
For $u \in \CP^1$, the bundle is $\cO(d-3)\oplus\cO(-1-d)$, where $d=\lfloor \frac{\deg(D_I)}{2}\rfloor$ and the Higgs field is determined by a nonzero section $\phi_u$ of $\cO(2-2d)K(D_I)\cong\cO(1)$ which vanishes at $u$ and is defined up to scale. 

In these cases, the \BB-slice is given by 
\[(\bar\partial_E,\Phi)=\left(\begin{pmatrix}
        \bar\partial_{d-3}&b\\0&\bar\partial_{-d-1}
\end{pmatrix},\begin{pmatrix}
        \varphi_1&\varphi_2\\\phi_0&-\varphi_{1}
\end{pmatrix}\right),\]
where $b$ and $\varphi_i$ satisfy \eqref{eq slice eq}. The $\hbar$-conformal limit of such a Higgs bundle is given by  \eqref{eq confomral limit rk 2}.
\begin{Remark}\label{remark b not zero in I odd}
        Note that for these fixed points, the term $b$ is always nonzero. Indeed, by Remark \ref{Remark comple mass implies dol deformation}, $b=0$ implies $\varphi_1=0$ and $\varphi_2$ is holomorphic. Hence, $\varphi_2$ is a holomorphic map 
        \[\varphi_2:\cO(-d-1)\to\cO(d-3)\otimes K(D_{I^c}).\] But $d+1+d-3+-2+\deg(D_{I^c})=-1$, so such a map must be zero. 
\end{Remark}

\subsection{Exterior fixed points: Hitchin sections and opers}  Each of the four exterior fixed points in Figure \ref{fig: nilpotentcone} is described by a case in Table \eqref{eq table 4 puct} where $\deg(D_I)$ is even. The condition on the parabolic weights  in \eqref{eq stability of hodge} implies that there is exactly one fixed point with $\deg(D_I)=0$ or $\deg(D_I)=4$, determined by the sign of $-1+\sum\alpha_i$, and three fixed points with $\deg(D_I)=2$, determined by the three effective subdivisors $D_I$ with $-\sum_{i\in I}\alpha_i+\sum_{j\in I^c}\alpha_j<0$.
In particular, each stratum in $\cW^{ext}$ from \eqref{eq strata 4 punct}
is labeled by the subdivisor $D_I$. Denote the corresponding stratum by
$\cW^{ext_I}.$ The $\C^*$-fixed point which labels the stratum
$\cW^{ext_I}$ is
\begin{equation}
        \label{eq fixedpoint}(\cE(\alpha),\Phi)=\left(\cO(d-3)\oplus\cO(-1-d), \begin{pmatrix}
        0&0\\\phi_0&0
\end{pmatrix}\right),
\end{equation}
where $d=\frac{\deg(D_I)}{2}$ and $\phi_0$ is a nowhere zero section of $\cO(2-2d)K(D_I)\cong\cO$, defined up to scale. 

In these cases, the \BB-slice parameterizes $\cW_0^{ext_I}$ and is given
by
\[(\bar\partial_E,\Phi)=\left(\begin{pmatrix}
        \bar\partial_{d-3}&b\\0&\bar\partial_{-d-1}
\end{pmatrix},\begin{pmatrix}
        \varphi_1&\varphi_2\\\phi_0&-\varphi_{1}
\end{pmatrix}\right),\]
where $b$ and $\varphi_i$ satisfy \eqref{eq slice eq}. 
Unlike the case when $\deg(D_I)$ is odd in Remark \ref{remark b not zero in I odd}, the $b=0$ locus of the slice is nontrivial when $\deg(D_I)$ is even. In fact, the $b=0$ locus of each slice defines a section 
\[s_I:H^0(K^2(D))\to \cS\cP_0(\alpha)\]of the Hitchin map for the strongly parabolic moduli space. Namely, for $q\in H^0(K^2(D))$ and $\phi_0:\cO(d-3)\to \cO(-d-1)\otimes K(D_I)$ nonzero, we have $q/\phi_0$ defines a holomorphic section of $\cO(2d-2)\otimes K(D_{I^c})$. For the exterior fixed point labeled by $D_I$, the Hitchin section $s_I$ is  
\[s_I(b)=(\cO(d-3)\oplus \cO(-d-1), \begin{pmatrix}
        0&\frac{q}{\phi_0}\\\phi_0&0
\end{pmatrix}).\] 
\begin{Remark}
        The difference between the $\deg(D_I)=0$ and $\deg(D_I)=4$ cases is which component of the Higgs field can vanish. 
        The bundle, flag structure and form of the Higgs field are the same.
\end{Remark}

The $\hbar$-conformal limit of such a Higgs bundle is given by 
\[\CL_\hbar\left(\begin{pmatrix}
        \bar\partial_{d-3}&b\\0&\bar\partial_{-d-1}
\end{pmatrix},\begin{pmatrix}
        \varphi_1&\varphi_2\\\phi_0&-\varphi_{1}
\end{pmatrix}\right)=(\hbar,\begin{pmatrix}
        \bar\partial_{1}&\phi_0^{*_h}+b\\0&\bar\partial_2
\end{pmatrix}, \begin{pmatrix}
        \hbar\partial^h_1+\varphi_1&\varphi_2\\\phi_0&\hbar\partial^h_2-\varphi_1
\end{pmatrix}).\]
Let $(\cV(\alpha),\nabla_\hbar)$ denote the corresponding parabolic bundle and parabolic logarithmic $\hbar$-connection of the conformal limit. From the form of the conformal limit, $\cV(\alpha)$ is an extension
\[\cO(d-3)\to \cV(\alpha)\to \cO(-1-d).\]
Moreover, the parabolic map 
\[\xymatrix{\cO(d-3)\ar[r]^{\nabla_\hbar}&\cV\otimes K(D)\ar[r]& \cO(-d-1)\otimes K(D)}\]
is given by $\phi_0$, and hence can only have simple poles at $D_I$. In particular, the parabolic logarithmic $\hbar$-connection induces an isomorphism $\cO(d-3)\cong\cO(-d-1)\otimes K(D_I)$.  
In the nonparabolic case, this is the property that defines the set of opers, and opers are exactly the the image of the Hitchin section under the conformal limit \cite{GaiottoLimitsOPERS}. 
As a result, such objects can be viewed as parabolic opers. 
\begin{Remark}
        Note that if we forget the parabolic structure and just consider the associated logarithmic $\hbar$-connection, then only the case $D_I=D$ satisfies the oper condition. Indeed, $\phi_0$ is an isomorphism between the subbundle and the quotient twisted by $K(D)$ only when $D=D_I.$ 

Similarly, forgetting the parabolic structure on the Higgs bundle defines a $K(D)$-twisted Higgs bundle on $\C\P^1$ with determinant $\cO(-4)$, i.e., $\Phi:\cE\to\cE\otimes K(D)$ is holomorphic. There is a natural notion of stability for such objects and a corresponding moduli space $\cM_{K(D)}$. Taking $-\det(\Phi)$ defines a Hitchin map $\cM_{K(D)}\to  H^0(K^2(2D))$, which has a natural section 
\begin{equation}\label{eq hitchin seciton K(D)twisted}
s_{K(D)}:\xymatrix{H^0(K^2(2D))\ar[r]& \Mm_{K(D)}}
\end{equation}
defined by $s_{K(D)}(q)=\left[\cO(-1)\oplus \cO(-3),\begin{pmatrix}
        0&q\\1&0
\end{pmatrix}\right]~.$
\end{Remark}

We prove that the intersection of each stratum $\cW_0^{ext_I}$ with the
fixed complex mass moduli spaces $\cP_0(\alpha,\mu)$ parameterizes a section of
the Hitchin map $-\det:\cP_0(\alpha,\mu)\to \cB(\mu)$.

\begin{Theorem}\label{Thm hitchin sections} 
Consider the stratum $\cW_0^{ext_I}$ of any exterior fixed point.  Then, for
each fixed complex mass vector $\mu=(\mu_1,\mu_2,\mu_3,\mu_4)$, there is a
section
\[s_{I}^\mu:\cB(\mu)\to\cP_0(\alpha,\mu)\] 
of the Hitchin map whose image is $\cW_0^{ext_I}\cap \cP_0(\alpha,\mu)$.
Furthermore,  the underlying $K(D)$-twisted Higgs bundle of $s_{I}^\mu(q)$ is
given by $s_{K(D)}(q)$ from \eqref{eq hitchin seciton K(D)twisted} when
$\deg(D_I)=4$.
\end{Theorem}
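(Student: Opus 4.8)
The plan is to construct the section $s_I^\mu$ explicitly using the \BB-slice description and the analytic identification of $\cW_0^{ext_I}$ with the slice $\Scal^{par,+}_\delta(A,\Phi)$ through the exterior fixed point \eqref{eq fixedpoint}. First I would recall from Proposition \ref{prop:pH} that $\cW_0^{ext_I}$ is biholomorphic to the \BB-slice, and that the residue map $\mathrm{Res}$ restricted to this slice is a holomorphic submersion (Corollary \ref{cor:residue}, Proposition \ref{prop:res-submersion}); hence $\cW_0^{ext_I}\cap\cP_0(\alpha,\mu)$ is a smooth complex surface. The strategy to produce the section is to show that the composition
\[
\cW_0^{ext_I}\cap\cP_0(\alpha,\mu)\xrightarrow{\ -\det\ }\cB(\mu)
\]
is a biholomorphism. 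Since both sides have complex dimension two (Theorem \ref{thm:hyperkahler} gives $\dim\cP_0(\alpha,\mu)=2$, and Proposition \ref{prop:pH} says the \BB-slice intersected with a fixed-mass leaf is Lagrangian, hence half-dimensional in the symplectic leaf, so again dimension two; $\cB(\mu)$ is affine over $H^0(K^2(D))\cong\C$, hence also dimension two since it is a torsor over $H^0(K^2(2D))\cong\C$... here I need to be careful — $\cB(\mu)$ is cut out in $H^0(K^2(2D))$ by fixing four leading coefficients, so I should double-check its dimension matches), the claim will follow from showing the map is injective with surjective differential, or alternatively by writing down an explicit inverse.

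The cleaner route is the explicit one. Working in the chart $\CP^1\setminus\{\infty\}$, for $q\in\cB(\mu)$ I would write $q/\phi_0$ as a meromorphic section of $\cO(2d-2)\otimes K(D_{I^c})$ — this requires checking that the residue condition on $q$ at the points of $D_{I^c}$ (forced by the definition of $\cB(\mu)$ and the vanishing of $\res\phi_0$ there for $i\in I^c$) exactly cancels the pole of $q/\phi_0$ at those points, so that $q/\phi_0$ is genuinely a holomorphic section with the correct parabolic behavior. This is the analogue of the computation in the strongly parabolic case ($\mu=0$) already sketched before the theorem, but now with nonzero complex masses the diagonal entries $\varphi_1$ of the Higgs field must be nonzero and chosen so that $\lim_{z\to p_i}\varphi_1 = \mu_i$ (by Example \ref{ex:BB-residue}), and the off-diagonal entry $b$ in $\bar\partial_E$ must be nonzero as well (Remark \ref{Remark comple mass implies dol deformation}); the slice equations \eqref{eq slice eq} then determine $b$, $\varphi_1$, $\varphi_2$ uniquely given $q$ and $\mu$. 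I would set
\[
s_I^\mu(q)=\left(\cO(d-3)\oplus\cO(-d-1),\ \begin{pmatrix}\varphi_1 & q/\phi_0 \\ \phi_0 & -\varphi_1\end{pmatrix}\right),
\]
with $\varphi_1$ the unique holomorphic section of $\cO(2D_{I^c})\otimes(\text{appropriate line bundle})$ with prescribed residues $\mu_i$ at $p_i$ and determinant condition built in, verify this is a stable parabolic Higgs bundle lying in $\cW_0^{ext_I}$ (its $\C^*$-limit is \eqref{eq fixedpoint} since setting the "nilpotent-breaking" terms to zero recovers the fixed point), and check $-\det(\Phi)=q$, and that $\mathrm{Res}=\mu$. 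Finally, for the last clause about $\deg(D_I)=4$, since then $d=2$ and $\phi_0$ is a nowhere-vanishing section of $\cO$, the bundle is $\cO(-1)\oplus\cO(-3)$ and the off-diagonal is literally $\begin{pmatrix}0&q\\1&0\end{pmatrix}$ after rescaling $\phi_0\equiv 1$, matching $s_{K(D)}(q)$ from \eqref{eq hitchin seciton K(D)twisted} on the nose.

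The main obstacle I anticipate is the bookkeeping of parabolic weights and residues when $\mu\neq 0$: I need to verify that the off-diagonal entry $q/\phi_0$ really defines a \emph{parabolic} Higgs field for the weights $\beta=\alpha$ (with the flag at each $p_i$ being the eigenspace of the residue with eigenvalue $\mu_i$, not the subbundle $\cL_2$ as at the fixed point — the flag "rotates" away from the splitting exactly because $\varphi_1\neq 0$), and that the resulting object has the right image under $\mathrm{Res}$. Concretely, one must track how the entries of $\res_{p_i}\Phi$ relate to $\mu_i$ and $\alpha_i$ and confirm the flag at $p_i$ is preserved; this is a local computation at each puncture analogous to Example \ref{ex:phi} and Example \ref{ex:BB-residue}, but the interplay between the trace-zero condition, the fixed eigenvalue $\mu_i$, and the parabolic weight being preserved (rather than shifted, as it would be under nonabelian Hodge) is where errors are easy to make. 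Once the local picture at each puncture is pinned down, injectivity of $-\det$ on the image follows because the Higgs field is reconstructed from $q$ and $\mu$ by the slice equations, and surjectivity follows by dimension count (or directly, since the construction works for every $q\in\cB(\mu)$).
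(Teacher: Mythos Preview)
Your proposal has two genuine gaps.

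First, the explicit formula you write down cannot be correct as stated: with $\Phi=\begin{pmatrix}\varphi_1 & q/\phi_0\\ \phi_0 & -\varphi_1\end{pmatrix}$ one gets $-\det\Phi=\varphi_1^2+q$, not $q$. The off-diagonal entry must be $(q-\varphi_1^2)/\phi_0$, and then the content of the proof is to show that this quotient makes sense as a section of the right line bundle. This is not automatic: you need $q-\varphi_1^2$ to vanish to the correct order at the zeros and poles of $\phi_0$, and this is exactly where the case analysis on $\deg(D_I)$ enters.

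Second, and more seriously, your assumption that $\varphi_1$ can always be taken as ``the unique holomorphic section with prescribed residues $\mu_i$'' fails. For $\deg(D_I)=0$ (so $d=0$, bundle $\cO(-3)\oplus\cO(-1)$), the diagonal entry is a section of $K(D)$, and the residue theorem forces $\sum_i\mu_i=0$; for generic $\mu$ no such $\varphi_1$ exists. The paper handles this by allowing the underlying \emph{holomorphic} bundle of $s_I^\mu(q)$ to differ from that of the fixed point: when $\sum\mu_i\neq 0$ it is a nontrivial extension (equivalently, $\cO(-2)\oplus\cO(-2)$ in a different holomorphic splitting), and the Higgs field is built from three sections $t_1,t_2,t_3\in H^0(K(D))$ whose residues are determined by requiring all flags $F_i$ to lie in a common $\cO(-3)$ subbundle. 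A similar explicit residue calculation is needed for $\deg(D_I)=2$. Your uniform ansatz on the fixed bundle $\cO(d-3)\oplus\cO(-d-1)$ cannot accommodate this. (Your dimension count is also off: both $\cW_0^{ext_I}\cap\cP_0(\alpha,\mu)$ and $\cB(\mu)$ are one-dimensional, not two.)

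The paper's approach is thus rather different from yours: it abandons the \BB-slice coordinates entirely and instead constructs $s_I^\mu(q)$ directly as an explicit holomorphic parabolic Higgs bundle, case by case in $\deg(D_I)$ (and, for $\deg(D_I)=0$, sub-case by whether $\sum\mu_i=0$). Stability and membership in $\cW_0^{ext_I}$ are then checked by applying the diagonal gauge transformation $g_\lambda=\diag(\lambda^{1/2},\lambda^{-1/2})$ to $(\bar\partial_E,\lambda\Phi)$ and showing the result lands in a neighborhood of the fixed point as $\lambda\to 0$. Your final clause about $\deg(D_I)=4$ is correct and matches the paper.
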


\begin{Remark}
The maps $s_I^\mu$ do not assemble to define section of the Hitchin map for the full moduli space $\cP_0(\alpha)$. This is because the base $\cB(\mu)$ only determines $\mu_i^2$ at each $p_i\in D$. Hence, fixing $\cB(\mu)$ and $D_I$ defines $2^k$ sections $s_I^\mu:\cB(\mu)\to \cP_0(\alpha)$, where $k$ is the number of $p_i\in D$ with $\mu_i\neq 0.$ 
Alternatively, let $\hat\cB$ be the image of the product map
\[(\mathrm{Res},-\det ):\cP_0(\alpha)\to\hat\cB\subset \C^4\times H^0(K^2(2D)),\] and $\pi:\hat\cB\to H^0(K^2(2D))$ be the natural projection map. Then, the sections $\pi^*s_I^\mu$ do assemble to define a section $S_I:\hat\cB\to\cP_0(\alpha)$. Namely, for $(\mu,q)\in\hat\cB\subset\C^4\times \cB$ we have 
\[S_I(\mu,q)=s_I^\mu(q).\]  
The space $\hat\cB$ is biholomorphic to $\C^5$ and $\pi:\hat\cB\to H^0(K^2(2D))$ is a $16$ to $1$ branched cover ramified at the points where $\mu_i=0$ for some $i.$ This perspective is discussed further in \cite{FMSWfourpunctured}. A similar subtlety motivates Bridgeland--Smith's moduli space of framed quadratic differentials \cite{BridgelandSmith}.
\end{Remark}

The parameterization of $s_I^\mu$ given in the proof is not by the \BB-slice.
However, since both objects parameterize $\cW_0^{ext_I}$ we have the following
corollary.
\begin{Corollary}
        For each exterior stratum $\cW_0^{ext_I}$ and
        each complex mass vector $\mu,$ the intersection of
        the \BB-slice with
$\cP_0(\alpha,\mu)$ parameterizes the section $s_I^\mu.$ Hence, the conformal
limit of the points in the image of the sections $s_I^\mu$ can be viewed as
parabolic opers.
\end{Corollary}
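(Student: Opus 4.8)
The statement to prove is the Corollary: for each exterior stratum $\cW_0^{ext_I}$ and each complex mass vector $\mu$, the intersection of the $\BB$-slice with $\cP_0(\alpha,\mu)$ parameterizes the section $s_I^\mu$.

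The plan is to exploit the two independent parameterizations of the same stratum $\cW_0^{ext_I}$. On the one hand, Proposition \ref{prop:pH} (the analog of \cite[Cor.\ 4.3]{CollierWentworth:19}) gives a biholomorphism $p_\H:\Scal_\delta^{par,+}(A,\Phi)\isorightarrow \cW_0(A,\Phi)=\cW_0^{ext_I}$ from the $\BB$-slice at the exterior fixed point. On the other hand, Theorem \ref{Thm hitchin sections} produces, for each fixed $\mu$, a section $s_I^\mu:\cB(\mu)\to\cP_0(\alpha,\mu)$ whose image is exactly $\cW_0^{ext_I}\cap\cP_0(\alpha,\mu)$. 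First I would invoke Proposition \ref{prop:pH} together with Proposition \ref{prop:res-submersion} (which says $\mathrm{Res}=\res_\lfrak$ is a holomorphic submersion), so that the preimage $(p_\H)^{-1}(\cP_0(\alpha,\mu))\subset\Scal_\delta^{par,+}(A,\Phi)$ is cut out by the equation $\res_\lfrak=\mu$; concretely, in the rank-two slice coordinates \eqref{eq sliced elements rank 2}, Example \ref{ex:BB-residue} identifies $\res_{p_i}([\dbar_A,\Phi])=\lim_{z\to p_i}\varphi_1(z)$, so the locus in question is the subset of the $\BB$-slice where the diagonal Higgs component $\varphi_1$ has prescribed boundary values $\mu_i$ at each $p_i$. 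This exhibits the $\BB$-slice intersected with $\cP_0(\alpha,\mu)$ as a submanifold of the slice, biholomorphic via $p_\H$ to $\cW_0^{ext_I}\cap\cP_0(\alpha,\mu)$.

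The next step is to compose with the biholomorphism $\Mbold^{par,s}_{\Dol}(\alpha,\delta)\isorightarrow\cP_0^s(\alpha)$ from Theorem \ref{thm:analytic-moduli}(3), under which $\cW_0^{ext_I}\cap\cP_0(\alpha,\mu)$ in the analytic moduli space corresponds to the set $s_I^\mu(\cB(\mu))$ in the algebraic moduli space, by Theorem \ref{Thm hitchin sections}. Since both $p_\H$ restricted to the $\res_\lfrak=\mu$ locus and $s_I^\mu$ are biholomorphisms onto the same image $\cW_0^{ext_I}\cap\cP_0(\alpha,\mu)$, the composite $(s_I^\mu)^{-1}\circ p_\H$ is a biholomorphism from the sliced locus onto $\cB(\mu)$. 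Thus the intersection of the $\BB$-slice with $\cP_0(\alpha,\mu)$ is, via the Hitchin map, a parameterization of $s_I^\mu$; this is exactly the assertion of the Corollary. The final sentence — that the conformal limits of points in the image of $s_I^\mu$ can be viewed as parabolic opers — then follows by combining this identification with Proposition \ref{prop:conformallimit} and the explicit formula \eqref{eq confomral limit rk 2}, exactly as in the discussion following \eqref{eq fixedpoint}: the conformal limit of a point in $\cW_0^{ext_I}$ yields a parabolic logarithmic $\hbar$-connection $(\cV(\alpha),\nabla_\hbar)$ whose off-diagonal map $\cO(d-3)\to\cO(-d-1)\otimes K(D_I)$ is the isomorphism $\phi_0$, the defining oper-type condition discussed there.

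The main obstacle is bookkeeping rather than a deep difficulty: one must check that the several identifications are mutually compatible — specifically, that the biholomorphism of Theorem \ref{thm:analytic-moduli}(3) carries the analytic $\C^*$-fixed point underlying the $\BB$-slice to the algebraic $\C^*$-fixed point \eqref{eq fixedpoint} labeling $\cW^{ext_I}$, and carries the analytic stable manifold $\Wcal_0(A,\Phi)$ to the algebraic stratum $\cW^a$ of \eqref{eq bb strat}. This is where one uses that $p_\H$ is $\C^*$-equivariant in the appropriate sense and that the Bia\l ynicki--Birula decomposition \eqref{eq bb strat intro} is intrinsic; once these compatibilities are in place, matching the $\res_\lfrak=\mu$ slice with $\cP_0(\alpha,\mu)=\mathrm{Res}^{-1}(\mu)$ is immediate from Proposition \ref{prop:analytic-higgs} (which says the two residue maps agree). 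I do not expect any new analytic input beyond what is already established in \S\ref{sec:analytic} and \S\ref{sec:strat}; the Corollary is essentially a formal consequence of Proposition \ref{prop:pH}, Theorem \ref{Thm hitchin sections}, and the uniqueness of the object being parameterized.
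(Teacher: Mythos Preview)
Your proposal is correct and follows essentially the same approach as the paper: the paper's entire argument is the single sentence preceding the Corollary, namely that both the \BB-slice (via Proposition \ref{prop:pH}) and the section $s_I^\mu$ (via Theorem \ref{Thm hitchin sections}) parameterize $\cW_0^{ext_I}$, hence their restrictions to $\cP_0(\alpha,\mu)$ coincide. You have simply spelled out the ingredients and compatibility checks more explicitly than the paper does.
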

\begin{Remark} In \cite{GaiottoConj}, Gaiotto conjectures that in the
conformal limit, the image of ``a canonical complex Lagrangian submanifold which
is a section of the torus fibration''  is a complex Lagrangian submanifold of a
certain complex manifold. The above Corollary is in the spirit of Gaiotto's
conjecture, as indeed the image of the \BB-slice in $\cP_0(\alpha,\mu)$ is  a
holomorphic Lagrangian submanifold (Theorem \ref{thm:coisotropic}), and the
image of the section $s_I^\mu$ in the Corollary is a holomorphic Lagrangian
inside of $\cP_{\hbar}(\alpha, \hbar \alpha + \mu)$.  Note that
\cite{GaiottoConj} concerns $\PU(2)$-Hitchin moduli spaces rather than
$\SU(2)$-Hitchin moduli spaces; $\PU(2)$-Hitchin moduli spaces on the
four-punctured sphere have a single Hitchin section.
\end{Remark}

\begin{proof}[Proof of Theorem \ref{Thm hitchin sections}]
        There are three cases determined by $\deg(D_I)=0,2,4.$ First assume $\deg(D_I)=4$. Denote the restriction of the section $s$ from \eqref{eq hitchin seciton K(D)twisted} to $\cB(\mu)$ by $s_\mu^I(q)$.
       This $K(D)$-twisted Higgs bundle  admits a parabolic structure such that the parabolic Higgs bundle is stable and defines a point in $\cP_0(\alpha,\mu)$. 
        Indeed, define the flag $F_i\subset(\cO(-1)\oplus\cO(-3))|_{p_i}$ with weight $1-\alpha_i$  by 
        \[F_i=\text{the $\mu_i$-eigenspace of }
                \res_{p_i}\left(\begin{pmatrix}
                        0&q\\1&0
                \end{pmatrix}\right) .\]

       Now assume $\deg(D_I)=2$, without loss of generality assume $D_I=\{p_3,p_4\}$. 
       To define the section $s_\mu^I$, we first show that each $q\in \cB_\mu$ and $\phi\in H^0(K(D_I))$ determines unique sections $s_1\in H^0(K^2(D))$ and $s_2\in H^0(\cO(D_{I^C})\otimes \cO(p_4))$ such that  
       \begin{equation}\label{eq for quad diff}
               q=s_1^2+\phi s_2,
       \end{equation}       
    
       Since $s_1$ is uniquely determined by specifying its residue at three points, define $s_1$ by
       \[\xymatrix{\res_{p_1}(s_1)=\mu_1,&\res_{p_2}(s_1)=\mu_2&\text{and}&\res_{p_3}(s_1)=-\mu_3}.\]
       Hence, $\res_{p_4}(s_1)=\mu_3-\mu_1-\mu_2.$ For $s_1$ and $s_2$ to solve \eqref{eq for quad diff}, we must have
       \[\res_{p_4}(s_2)=-\frac{(\mu_3-\mu_2-\mu_1)^2+\mu_4^2}{\res_{p_4}(\phi)}~.\]
       For such an $s_2$ and any $q\in \cB(\mu),$ we have $q-(s_1^2+\phi s_2)\in H^0(K^2(D))$. Since $B(\mu)$ is affine over $H^0(K^2(D)),$ for each $q$ there is a unique $s_2$ with the above residue at $p_4$ which solves \eqref{eq for quad diff}. 

       For $q\in\cB(\mu)$ and $s_1,s_2$ as above, define the following $K(D)$-twisted Higgs bundle 
       \[s_I^\mu(q)=\left(\cO(-2)\oplus\cO(-2),\begin{pmatrix}
               s_1&s_2\\\phi&-s_1
       \end{pmatrix}\right).\]
       Denote the Higgs field by $\Phi,$ and note that  $-\det(\Phi)=q$ by \eqref{eq for quad diff}. The parabolic structure so that $s_I^\mu(q)\in \cP_0(\alpha,\mu)$ is defined as follows.
        By construction, we have
        \[\res_{p_1}(\Phi)=\begin{pmatrix}
                \mu_1&\res_{p_1}(s_2)\\0&-\mu_1
        \end{pmatrix} , \ \ \ \res_{p_2}(\Phi)=\begin{pmatrix}
                \mu_2&\res_{p_2}(s_2)\\0&-\mu_2
        \end{pmatrix},\ \ \ \res_{p_3}(\Phi)=\begin{pmatrix}
                -\mu_3&0\\\res_{p_3}(\phi)&\mu_3
        \end{pmatrix}.\]
        Thus, the flag $F_i\subset\cO(-2)\oplus \cO(-2)$ must be the first summand if $i=1,2$ and the second summand for $i=3.$ For $p_4$, we have 
        \[\res_{p_4}(\Phi)=\begin{pmatrix}
                \mu_3-\mu_1-\mu_2&\res_{p_4}(s_2)\\\res_{p_4}(\phi)&\mu_1+\mu_2-\mu_3
        \end{pmatrix},\]
        with eigenvalues $\pm\mu_4$. Hence, the subspace $F_4$ must be the $\mu_4$-eigenspace of $\res_{p_4}(\Phi)$. The parabolic weights on the subspaces $F_i$ are $1-\alpha_i$. 

        Finally, assume $\deg(D_I)=0$, a nonzero $\phi:\cO(-3)\to \cO(-1) K$ determines the associated fixed point. If $\sum\mu_i=0,$ then there is a unique section $t_1\in H^0(K(D))$ with $\res_{p_i}(t_1)=\mu_i$ for all $i$. Since $\phi$ has no poles, there is a unique $t_2\in H^0(K^2(D))$ with $q-t_1^2=\phi t_2.$ Define $s_I^\mu$ by
        \[s_I^\mu(q)=\left(\cO(-3)\oplus\cO(-1),\begin{pmatrix}
                t_1&t_2\\\phi&-t_1
        \end{pmatrix}\right).\]
        Since the residue at each $p_i$ is upper triangular, for all $i$, take $F_i=\cO(-3)|_{p_i}$ with weights $1-\alpha_i$.

        If $\sum\mu_i\neq0,$ then the section will be given by 
        \[s_I^\mu(q)=(\cO(-2)\oplus\cO(-2),\begin{pmatrix}
                t_1&t_2\\t_3&-t_1
        \end{pmatrix}),\]
        where $t_j\in H^0(K(D))$ for $j=1,2,3$ and $q=t_1^2+t_2t_3$. Each $t_j$ is determined by its residue at 3 points, and these residues are uniquely defined by the condition that all $F_i$ are contained in a fixed subbundle isomorphic to $\cO(-3)$ and $q\in \cB(\mu).$  Consider the holomorphic subbundle
        \[\cO(-3)\xrightarrow{(z-p_1,z-p_2)}\cO(-2)\oplus\cO(-2).\]
        For $i=1,2$, the condition that $\res_{p_i}(\Phi)$ has $\cO(-3)$ as its $\mu_i$-eigenspace implies
        \[\xymatrix{\res_{p_1}(t_1)=-\mu_1~,&\res_{p_1}(t_3)=0&\res_{p_2}(t_1)=\mu_2&\text{and}& \res_{p_2}(t_2)=0}.\]
        Moreover, the condition $q=t_1^2+t_2t_3$ fixes $\res_{p_1}(t_2)$ and $\res_{p_2}(t_3)$. 
        For $i=3$, $\res_{p_i}(\Phi)$ has the fixed $\cO(-3)$ as $\mu_3$-eigenspace implies 
        \[\xymatrix{\res_{p_3}(t_2)=(\mu_3-\res_{p_3}(t_1))\cdot\dfrac{p_3-p_1}{p_3-p_2}&\text{and}&\res_{p_3}(t_3)=(\mu_3+\res_{p_3}(t_1))\cdot\dfrac{p_3-p_2}{p_3-p_1}}~.\]
        Hence, specifying $\res_{p_3}(t_1)$ and $q=t_1^2+t_2t_3$ determines all $t_j.$ But the residue of $q$ and $t_1^2+t_2t_3$ are the same for any choice of $\res_{p_3}(t_1)$. Thus, $q=t_1^2+t_2t_3$ uniquely determines all $t_j.$

        It remains to show that the Higgs bundles described above are stable and
in the correct stratum $\cW_0^{ext_I}$. First suppose  $\deg(D_I)=4$. For
$\lambda\in\C^*$, consider the scaled Higgs field $\lambda\Phi$ of $s_I^\mu(q).$
The gauge transformation
$g_\lambda=\diag(\lambda^{\frac{1}{2}},\lambda^{-\frac{1}{2}})$ is holomorphic
and acts on $\lambda\Phi$ as
        \[g_\lambda(\lambda \Phi) g_\lambda^{-1}=\begin{pmatrix}0&\lambda^2 q\\1&0
        \end{pmatrix}~,\]
        Since $F_i\neq \cO(-1)|_{p_i}$ for all $p_i\in D$, we have $\lim_{\lambda\to 0}g_\lambda \cdot F_i=\cO(-3)|_{p_i}.$
        For $\lambda$ sufficiently small, this Higgs bundle is in an open
neighborhood of the $\C^*$ fixed point in $\cW_0^{ext_I}$. Since stability is
open, preserved by the $\C^*$ action and gauge transformation, $s_I(q)$ is
stable and in $\cW_0^{ext_I}.$

        The cases $\deg(D_I)=2$ and $\deg(D_I)=0$ with $\sum\mu_i=0$ are similar to the $\deg(D_I)=4$ case. 
        Finally, suppose $\deg(D_I)=0$ with $\sum\mu_i\neq0$. In this case, the holomorphic bundle is a nonsplit extension of $\cO(-1)$ by $\cO(-3).$ In a smooth splitting $\cO(-3)\oplus\cO(-1)$ the Higgs bundle is given by 
        \[(\bar\partial_E,\Phi)=\left(\begin{pmatrix}
                \bar\partial_{-3}&b\\0&\bar\partial_{-1}
        \end{pmatrix}, \begin{pmatrix}
                \varphi_1&\varphi_2\\\phi&-\varphi_1
        \end{pmatrix}\right),\]
        where $\phi:\cO(-3)\to\cO(-1)\otimes K(D)$ is nonzero and holomorphic. The gauge transformation $g_\lambda=\diag(\lambda^{\frac{1}{2}},\lambda^{-\frac{1}{2}})$ acts on $(\bar\partial_E,\lambda\Phi)$ as
        \[g_\lambda\cdot(\bar\partial_E,\lambda\Phi)=\left(\begin{pmatrix}
                \bar\partial_{-3}&\lambda b\\0&\bar\partial_{-1}
        \end{pmatrix}, \begin{pmatrix}
                \lambda \varphi_1&\lambda^2\varphi_2\\\phi&-\lambda\varphi_1
        \end{pmatrix}\right)~.\]
Moreover, $g_\lambda\cdot F_i=F_i$ since $F_i=\cO(-3)|_{p_i}$ for all $i.$ By
the same arguement as the $\deg(D_I)=4$ case, it follows that $s_I^\mu(q)$ is
stable and in $\cW_0^\mu$.
\end{proof}
\begin{Remark}
        Switching the roles of $p_3,p_4$ in the $\deg(D_I)=2$ case, or choosing a different $\cO(-3)$ subbundle in the $\deg(D_I)=0$ case with $\sum\mu_i\neq 0$ defines isomorphic parabolic Higgs bundles.
\end{Remark}

We end the paper by showing that, analogous to the Hitchin section in the nonparabolic case, the harmonic metric at an exterior fixed point comes from a constant negative curvature metric on $\C\P^1$ with conical singularities at each $p_i\in D$. 
As above, write the parabolic bundle associated to an exterior $\C^*$-fixed point \eqref{eq fixedpoint} as $\cL_1(\beta_1)\oplus\cL_2(\beta_2)$, where $\beta_1$ and $\beta_2$ are defined in \eqref{eq fixedpoint weights}. Then 
\[\cL_1(\beta_1)^2\cong \cO(-2)(\gamma)=K(\gamma),\ \ \ \ \text{where}  \ \ \ \ \gamma(p_i)=\begin{dcases}
        2\alpha_i&p_i\in D_I\\1-2\alpha_i&p_i\in D_{I^c} 
\end{dcases}.\]
Since $\deg(K(\gamma))=2\deg(\cL_1(\beta_1))>0$, the harmonic metric on $K^{-1}(-\gamma)$ is a constant negative curvature singular metric $g$ on the tangent bundle $K^{-1}$ which is smooth on $\CBbb\PBbb^1\setminus D$ and $|z|^{2\gamma}h$ extends as a smooth metric across $D$. The hermitian metric $g$ gives a Riemannian of constant negative curvature on $\CBbb\PBbb^1$ which has conical singularities at $D$ with 
\[\text{cone angle of $g$}=\begin{dcases}
        2\pi(1-2\alpha_i)& \text{if }p_i\in D_I\\
        4\pi\alpha_i& \text{if }p_i\in D_{I^c}
\end{dcases}.\]

Consider the square root $K(\gamma)^\frac{1}{2}=\cO(-1)(\frac{\gamma}{2})$, its dual is  given by 
\[K(\gamma)^{-\frac{1}{2}}=\cO(1)(-\frac{\gamma}{2})\cong(\cO(1)\otimes \cO_D^{-1})(1-\frac{\gamma}{2})\cong\cO(-3)(1-\frac{\gamma}{2}).\]
With this setup, the following lemma is immediate.
\begin{Lemma}
  Let $d=\frac{\deg(D_I)}{2}$ and $K(\gamma)^\frac{1}{2}$ be as above. Consider the parabolic line bundle $\cL_3(\beta_3)$, where $\cL_3=\cO(-2+d)$ and $\beta_3(p_i)=0$ if $p_i\in D_I$ and $\beta_3(p_i)=\frac{1}{2}$ if $p_i\in D_{I^c}.$ 
Then the parabolic bundle for the exterior fixed point from Proposition \ref{prop:generalsplit} is given by  
\[\cO(d-3)(\beta_1)\oplus\cO(-1-d)(\beta_2)\cong(\cL_3(\beta_3)\otimes K(\gamma)^\frac{1}{2})\oplus (\cL_3(\beta_3)^{-1}\otimes K(\gamma)^{-\frac{1}{2}}). \]  
\end{Lemma}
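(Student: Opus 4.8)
The plan is to verify the asserted isomorphism by comparing the two sides summand by summand as parabolic line bundles on $(\CP^1,D)$. The left side is already written as a direct sum of two parabolic line bundles, and each factor on the right side is a tensor product of line bundles, hence again a line bundle with an induced parabolic structure; so it suffices to produce isomorphisms of the matching pieces. Recall that a parabolic line bundle over $(\CP^1,D)$ is determined up to isomorphism by its underlying holomorphic line bundle — equivalently by a single integer, its degree — together with the weight at each $p_i\in D$, and that under tensor product weights add while under dual they negate, with the proviso that whenever a resulting weight leaves the normalizing interval $[0,1)$ one subtracts the appropriate integer and compensates by the corresponding twist of the underlying line bundle by a multiple of the punctures. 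I would therefore just compute the underlying bundles and weights of the two right-hand summands and match them against $\cO(d-3)(\beta_1)$ and $\cO(-1-d)(\beta_2)$, using the explicit data $\gamma(p_i)=2\alpha_i$ on $D_I$ and $\gamma(p_i)=1-2\alpha_i$ on $D_{I^c}$, together with $\deg D_I=2d$, $\deg D_{I^c}=4-2d$, the formulas \eqref{eq fixedpoint weights} for $\beta_1,\beta_2$, and the definition of $\beta_3$.

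For the first summand: $K(\gamma)^{1/2}=\cO(-1)(\tfrac{\gamma}{2})$ carries the weight $\alpha_i$ at $p_i\in D_I$ and $\tfrac12-\alpha_i$ at $p_i\in D_{I^c}$, so tensoring with $\cL_3(\beta_3)=\cO(d-2)$ (weights $0$ on $D_I$, $\tfrac12$ on $D_{I^c}$) gives underlying bundle $\cO(d-2)\otimes\cO(-1)=\cO(d-3)$ and weights $\alpha_i$ on $D_I$, $1-\alpha_i$ on $D_{I^c}$; these already lie in $[0,1)$, no renormalization is needed, and they coincide with $\beta_1$, so $\cL_3(\beta_3)\otimes K(\gamma)^{1/2}\cong\cO(d-3)(\beta_1)$. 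For the second summand: $\cL_3(\beta_3)^{-1}$ has underlying bundle $\cO(2-d)\otimes\cO(-D_{I^c})=\cO(d-2)$ (the twist by $\cO(-D_{I^c})$ coming from the weight $\tfrac12>0$ at those points) with weights $0$ on $D_I$, $\tfrac12$ on $D_{I^c}$, while $K(\gamma)^{-1/2}\cong\cO(-3)(1-\tfrac{\gamma}{2})$, as recorded just above the Lemma, has weights $1-\alpha_i$ on $D_I$ and $\tfrac12+\alpha_i$ on $D_{I^c}$. Tensoring gives the provisional underlying bundle $\cO(d-2)\otimes\cO(-3)=\cO(d-5)$ with weights $1-\alpha_i$ on $D_I$ (in $[0,1)$) and $1+\alpha_i$ on $D_{I^c}$; renormalizing the latter to $\alpha_i$ twists the underlying bundle by $\cO(D_{I^c})$, of degree $4-2d$, producing $\cO(d-5)\otimes\cO(D_{I^c})=\cO(-1-d)$ with weights $1-\alpha_i$ on $D_I$, $\alpha_i$ on $D_{I^c}$, which is exactly $\beta_2$. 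Hence $\cL_3(\beta_3)^{-1}\otimes K(\gamma)^{-1/2}\cong\cO(-1-d)(\beta_2)$, and summing the two isomorphisms yields the statement, identifying it with the parabolic bundle of Proposition \ref{prop:generalsplit}.

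I do not expect a genuine obstacle — as the text says, the Lemma is immediate — but the one point requiring care is the renormalization in the second summand: a naive degree count gives the wrong answer $\cO(d-5)$, and one must observe that the weight wraps past an integer precisely at the points of $D_{I^c}$ and contributes exactly the shift $\deg D_{I^c}=4-2d$, which is where the evenness of $\deg D_I$ at an exterior fixed point (so that $d=\tfrac12\deg D_I$, per Proposition \ref{prop:generalsplit}) is used. Keeping the dual and tensor conventions for parabolic line bundles consistent with those fixed in \S\ref{sec:parabolic-bundles} is the only bookkeeping involved.
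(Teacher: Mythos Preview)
Your proof is correct and is exactly the direct verification the paper has in mind: the text declares the lemma ``immediate'' from the preceding setup and gives no further argument, so your weight-and-degree bookkeeping (including the renormalization at $D_{I^c}$ in the second summand) is precisely what is needed.
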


Denote the harmonic metric on $\cL_3(\beta_3)$ by $h_3$. Since $\deg(\cL_3(\beta_3))=0,$ $h_3$ is flat. Note that $g^\frac{1}{2}$ defines a metric on $K(\gamma)^{-\frac{1}{2}}(-\frac{\gamma}{2})$ and $g^{-\frac{1}{2}}$ defines a metric on $K(\gamma)^\frac{1}{2}$. The trivial metric on trivial parabolic bundle $\cO(0)$ defines a metric $h_{\det}$ on the weighted line bundle $\cO_D^{-1}(1)$, and $g^\frac{1}{2}h_{\det}$ is a compatible metric on the parabolic line bundle $\cO(-3)(1-\frac{\gamma}{2})$.

\begin{Proposition}With the notation above, the harmonic metric on the exterior $\C^*$-fixed point
\[\left(\cO(d-3)(\beta_1)\oplus\cO(-1-d)(\beta_2),\begin{pmatrix}
        0&0\\\phi&0
\end{pmatrix}\right)\]
is given by $h=(h_3\cdot g^{-\frac{1}{2}})\oplus (h_3^{-1}\cdot g^{\frac{1}{2}}\cdot h_{\det})$.
\end{Proposition}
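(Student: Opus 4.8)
The plan is to appeal to the uniqueness of the harmonic metric in Theorem~\ref{thm:hk-metric} (together with the nonabelian Hodge correspondence, Theorem~\ref{thm hitchin eq}): since the $\C^*$-fixed point is stable, it suffices to exhibit \emph{one} acceptable metric, compatible with the parabolic bundle $\cO(d-3)(\beta_1)\oplus\cO(-1-d)(\beta_2)$, solving the Hitchin equation $F_A^\perp+[\Phi,\Phi^{*_h}]=0$, and then to recognize that the displayed $h=(h_3\cdot g^{-1/2})\oplus(h_3^{-1}\cdot g^{1/2}\cdot h_{\det})$ is such a metric. Observe that $h$ is block diagonal with respect to the holomorphic splitting $\cL_1\oplus\cL_2$, so its Chern connection is block diagonal and $F_A=\mathrm{diag}(F_{h_1},F_{h_2})$, with $h_1=h_3\,g^{-1/2}$ on $\cL_1=\cO(d-3)$ and $h_2=h_3^{-1}\,g^{1/2}\,h_{\det}$ on $\cL_2=\cO(-1-d)$.

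First I would check compatibility, which is a local statement near each $p_i\in D$. Using the preceding Lemma, $\cL_1(\beta_1)\cong\cL_3(\beta_3)\otimes K(\gamma)^{1/2}$ and $\cL_2(\beta_2)\cong\cL_3(\beta_3)^{-1}\otimes K(\gamma)^{-1/2}$; comparing \eqref{eq fixedpoint weights} with the definitions of $\gamma$ and $\beta_3$ one verifies $\beta_1=\beta_3+\tfrac{\gamma}{2}$ and $\beta_2\equiv-\beta_3-\tfrac{\gamma}{2}\pmod 1$. Since $g$ is by construction the harmonic metric on $K^{-1}(-\gamma)$, the metric $|z|^{2\gamma(p_i)}g$ is smooth and nonzero near $p_i$, so $g^{\mp1/2}$ is an adapted metric of weight $\pm\tfrac{\gamma(p_i)}{2}$ on $K(\gamma)^{\pm1/2}$; tensoring with the flat (hence adapted) metric $h_3$ of weight $\beta_3(p_i)$, and with the auxiliary metric $h_{\det}$ on $\cO_D^{-1}(1)$ — which supplies exactly the renormalization needed to pull the weight of the second summand back into $[0,1)$ at those $p_i$ where $-\beta_3(p_i)-\tfrac{\gamma(p_i)}{2}<0$ — one finds that $h$ induces precisely the weights $\beta_1,\beta_2$. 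Acceptability then follows, since all the curvatures in sight are smooth on $X^\times$ with the usual mild singularity at $D$.

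Second I would verify the Hitchin equation. With $\Phi=\smtrx{0&0\\\phi&0}$ one computes $[\Phi,\Phi^{*_h}]=\mathrm{diag}(-\phi^{*_h}\phi,\ \phi\phi^{*_h})$, and since $\det(\cE(\alpha))\cong\cO(0)$ is trivialized by the flat metric we have $F_{h_1}+F_{h_2}=0$; hence $F_A^\perp=\mathrm{diag}(F_{h_1},-F_{h_1})$ and the whole equation reduces to the scalar identity $F_{h_1}=\phi^{*_h}\phi$. Because $\deg(\cL_3(\beta_3))=0$ the metric $h_3$ is flat, and $h_{\det}$ is flat on $X^\times$ (it descends from the trivial metric on the trivial parabolic bundle $\cO(0)$), so $F_{h_1}=F_{g^{-1/2}}=-\tfrac12 F_g$. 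On the other side, the flat factors cancel up to a constant in $h_1^{-1}h_2=h_3^{-2}\,h_{\det}\cdot g$, so using that $\phi$ is a nowhere vanishing section of $\cL_1^*\cL_2K(D_I)\cong\cO$ and that $\cL_1(\beta_1)^2\cong K(\gamma)$, the $(1,1)$-form $\phi^{*_h}\phi$ equals a constant multiple of the area form of the Riemannian metric $g$. As $g$ has constant negative curvature, that area form is a constant multiple of $-F_g$; rescaling $\phi$ (defined only up to scale) to fix this constant at $\tfrac12$ yields $F_{h_1}=\phi^{*_h}\phi$, and the verification is complete.

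I expect the main obstacle to be the weight bookkeeping in the second paragraph — precisely tracking the contributions of $g^{\pm1/2}$, $h_3^{\pm1}$, and especially the auxiliary factor $h_{\det}$ on $\cO_D^{-1}(1)$, and confirming the normalized weights land in $[0,1)$ — together with pinning down the numerical constants so that the scalar Hitchin equation exactly matches the chosen normalization of ``constant negative curvature'' for $g$. Once these are settled, the identity $F_{h_1}=\phi^{*_h}\phi$ is just the classical Liouville/uniformization equation for a hyperbolic metric with conical singularities, in complete parallel with the Hitchin section in the unpunctured case, and the uniqueness statement of Theorem~\ref{thm:hk-metric} then identifies $h$ with the harmonic metric.
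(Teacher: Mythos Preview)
Your proposal is correct and follows essentially the same route as the paper: reduce the Hitchin equation for the diagonal metric $h$ to a single scalar identity relating the curvature of $g$ to $\phi\wedge\phi^{\ast_h}$, using that $h_3$ and $h_{\det}$ are flat. The only real difference is cosmetic: the paper fixes a specific trivialization and computes the constants explicitly (finding $2\phi\wedge\phi^\ast=-4i\omega_g$ with Gauss curvature $K_g=-4$), whereas you argue that $\phi^{\ast_h}\phi$ is a constant multiple of the area form and absorb the constant by rescaling $\phi$, which is legitimate since $\phi$ is only defined up to scale at a $\C^\ast$-fixed point. The paper omits your explicit compatibility check for the parabolic weights, presumably regarding it as implicit in the preceding discussion of $g$, $h_3$, and $h_{\det}$.
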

\begin{proof}
         Since the metrics $h_{\det}$ and $h_3$ are flat, and the harmonic metric is diagonal at $\C^*$-fixed points, the metric $h$ solves the Hitchin equations $F_{A_h}+[\Phi,\Phi^{*_h}]=0$ if and only if 
        \[0=F_{A_g} + 2\phi \wedge \phi^*
        =F_{A_g} + 2gh_3^{-2}h_{\det} \phi \wedge \bar \phi.\]
The curvature of the Levi-Civita connection on the holomorphic tangent bundle is $-4 i(\frac{1}{4} K_{g} \omega_{g})$, where $\omega_{g}$ is the K\"ahler form and $K_g$ is the Gauss curvature of $g$ which is $-4$. To complete the proof, we show $2\phi\wedge\phi^*=-4i\omega_g$.  Note that if $g=\lambda^2 \; \de z \otimes \de \zbar$, then $\omega_g= \lambda^2 \; \frac{i}{2}  \de z \wedge \de \zbar$.
In the trivialization of $\cO(d-3)\oplus\cO(-1-d)$ given by $(\de z)^\frac{3-d}{2} \oplus (\de z)^\frac{1+d}{2}$, the Higgs field is given by $\phi = \de z^{d-1} \otimes \prod_{p_i\in D_I}(z-p_i)^{-1} \de z$ and its adjoint is 
 \[\phi^*=g h_3^{-2}h_{\det}\bar\phi=\underbrace{\lambda^2 \; \de z \otimes \de \bar z}_g \underbrace{\prod_{p_i\in D}|z-p_i|^{-2} |\de z|^{-2(d-2)}}_{h_3^{-2}( \de z^{d-2}, \de z^{d-2})} 
  \underbrace{\prod_{p_i\in D_{I^c}}|z-p_i|^{2}|\de z|^{-4}}_{h_{\mathrm{det}}(\de z^2, \de z^2)} \left(\de \bar z^{d-1}\otimes \prod_{p_i \in D_I}(\overline{z-p_i})^{-1} \de\bar z\right).\]
Hence $\phi\wedge\phi^*=\lambda^2 \; \de z \wedge \de \zbar =-2 i\omega_g.$
\end{proof}

\begin{appendix}
 \section{Simpson's stratification}\label{appendix Simpson}In this appendix we
show that Simpson's iterative process of \cite{SimpsonDeRhamStrata} generalizes
to parabolic logarithmic $\lambda$-connections. The main difference with
\cite{SimpsonDeRhamStrata} is that stable parabolic connections are not always
irreducible. When the parabolic logarithmic connections are irreducible, the
associated stratifications were studied in \cite{SimpsonDeRhamStrata}
and\cite{ClosureOfDeRhamStrata4P1}.
 We note that the discussion below also applies to Higgs bundles in both the parabolic and nonparabolic setting. For additional details on Simpson's construction we follow \cite[\S2.2.1]{PengfeiThesis}.

Let $(\cE(\alpha),\nabla)$ be a parabolic logarithmic $\lambda$-connection. A filtration 
\[\cE(\alpha)=\cA^0(\alpha)\supset \cA^1(\alpha)\supset\cdots\supset \cA^\ell(\alpha)\supset 0\]
is called Griffiths transverse (with respect to $\nabla$) if $\nabla(\cA^j(\alpha))\subset \cA^{j-1}(\alpha)\otimes K(D)$ for all $j.$ 
For example, if $\cA^1(\alpha)$ is the maximal destabilizing subbundle of $\cE(\alpha)$, then $\cA^1(\alpha)\subset\cA^0(\alpha)$ is a Griffiths transverse filtration.  Given such a filtration, the associated graded is 
\[Gr_\cA(\cE)=\bigoplus_{j=1}^\ell \cA_j(\alpha)~\ \ \ \text{where} \ \ \ \cA_j(\alpha)=\cA^j(\alpha)/\cA^{j+1}(\alpha).\]
The parabolic connection $\nabla$ induces a parabolic morphism $\phi_j:\cA_j(\alpha)\to\cA_{j-1}(\alpha))\otimes K(D),$ and hence defines a system of Hodge bundles 
\begin{equation}\label{eq ass Hodge bundle}
        (Gr_\cA(\cE)(\alpha),\Phi_\cA)=\Big(\cA_\ell(\alpha)\oplus\cdots\oplus\cA_0(\alpha)~ , ~ \begin{pmatrix}
    0&\\\phi_{\ell}&0\\&\ddots&\ddots\\&&\phi_{1}&0
\end{pmatrix}\Big).
\end{equation}
As in \cite[Lemma 4.1]{SimpsonDeRhamStrata}, we have the following proposition. 
\begin{Proposition} \label{prop:simpson-griffiths}
       If the system of Hodge bundles $(Gr_\cA(\cE)(\alpha),\Phi_\cA)$ from \eqref{eq ass Hodge bundle} is semistable, then 
       \[\lim_{\xi\to0}[\xi\lambda,\cE(\alpha),\xi\nabla)]=[Gr_\cA(\cE)(\alpha),\Phi_\cA]~.\] 
\end{Proposition}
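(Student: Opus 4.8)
The plan is to adapt Simpson's iterative argument from \cite[Lemma 4.1]{SimpsonDeRhamStrata} (in the formulation of \cite[\S2.2.1]{PengfeiThesis}) to the parabolic logarithmic $\lambda$-connection setting. The essential point is that the $\C^*$-action $\xi\cdot[\lambda,\cE(\alpha),\nabla]=[\xi\lambda,\cE(\alpha),\xi\nabla]$ rescales the off-diagonal blocks of $\nabla$ relative to the Griffiths-transverse filtration, so that in a fixed smooth trivialization the family $\xi\nabla$ converges as $\xi\to 0$ to the operator which acts as $\nabla$ on each graded piece and as the $\phi_j$ between consecutive pieces. Concretely, first I would fix the Griffiths-transverse filtration $\cA^\bullet(\alpha)$ and choose a $C^\infty$ splitting $\cE=\bigoplus_j \cA_j$ compatible with the parabolic structure at each $p\in D$ (using that the filtration is by parabolic subbundles, so the flags in $\cE_p$ are compatible with the splitting). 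With respect to this splitting write $\nabla = \nabla_{\mathrm{diag}} + \phi + N$, where $\nabla_{\mathrm{diag}}$ preserves the grading, $\phi=\sum_j \phi_j$ lowers the grading by one, and $N$ is the ``strictly-lower-by-$\geq 2$'' part together with the grading-lowering part of $\nabla$ beyond $\phi$ — the Griffiths transversality condition $\nabla(\cA^j)\subset \cA^{j-1}\otimes K(D)$ is exactly what guarantees $\nabla$ has no components raising the grading.

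Second, I would exhibit the explicit one-parameter family of (complex) gauge transformations $g_t = \mathrm{diag}(t^{-j})_j$ (suitably normalized to have determinant one, as in the rank-two computations in \S\ref{sec conf rk 2} and the proof of Theorem \ref{Thm hitchin sections}), so that $g_t\cdot(t\nabla)$ conjugates the grading-lowering-by-one part $t\phi$ to $\phi$, fixes $\nabla_{\mathrm{diag}}$, and sends the remaining components of $tN$ to terms carrying strictly positive powers of $t$. Hence $\lim_{t\to 0} g_t\cdot(t\lambda, \cE(\alpha), t\nabla) = (0, Gr_\cA(\cE)(\alpha), \Phi_\cA)$ as a point in the configuration space, with the parabolic structure and $\lambda$-component behaving correctly (the $\lambda$-component is $t\lambda\to 0$, and $\phi_j$ being a parabolic morphism ensures the limit is an honest parabolic Higgs bundle — compare Definition of parabolic Higgs bundle and the $\C^*$-fixed point description in \S\ref{sec:moduli}). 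Since $g_t \in \Gcal$ (the relevant gauge group — it is smooth and determinant one after normalization), this family lies in a single $\Gcal$-orbit, so it descends to a path in the moduli space with limit $[Gr_\cA(\cE)(\alpha),\Phi_\cA]$.

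Third — and this is where the semistability hypothesis of the proposition enters — I would invoke properness of the Hitchin-type map on $\cP(\alpha)$ (via \cite{YokogawaParModuli, AlfayalambdaModuli}, as used in \S\ref{sec:moduli} to deduce existence of $\C^*$-limits) to conclude that $\lim_{\xi\to 0}[\xi\lambda,\cE(\alpha),\xi\nabla]$ exists in $\cP(\alpha)$; then the computed path above identifies this limit with the $\cS$-equivalence class of $(Gr_\cA(\cE)(\alpha),\Phi_\cA)$, which by semistability is a well-defined point of $\cP_0(\alpha)$ equal to $[Gr_\cA(\cE)(\alpha),\Phi_\cA]$. The main obstacle will be the bookkeeping at the punctures: one must check that the chosen smooth splitting can be taken compatible with all the parabolic flags simultaneously and that the gauge transformations $g_t$ preserve the parabolic weights (they are constant diagonal, so this is fine), and more subtly that the limiting object really has the residue/complex-mass data of a system of Hodge bundles rather than acquiring spurious off-diagonal residue terms — this is controlled by the fact that $\nabla$ preserves the flags (condition (2) in the definition of parabolic logarithmic $\lambda$-connection) and that $\phi_j$ is \emph{parabolic} as a morphism $\cA_j(\alpha)\to\cA_{j-1}(\alpha)\otimes K(D)$. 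A secondary subtlety, flagged in the appendix's opening, is that stable parabolic connections need not be irreducible, so one cannot simply quote the irreducible case of \cite{SimpsonDeRhamStrata}; but the gauge-theoretic limit argument above does not use irreducibility, only the existence of the Griffiths-transverse filtration, so this causes no real difficulty.
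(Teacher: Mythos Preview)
The paper does not provide its own proof here; it simply cites \cite[Lemma 4.1]{SimpsonDeRhamStrata}, and your outline reconstructs exactly that argument, so the approach is correct. Two points should be cleaned up.

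First, you have the role of Griffiths transversality reversed. In the $C^\infty$ splitting $\cE\cong\bigoplus_j\cA_j$, the condition $\nabla(\cA^j)\subset\cA^{j-1}\otimes K(D)$ forbids components $\cA_j\to\cA_k\otimes K(D)$ with $k\leq j-2$; it does \emph{not} forbid components with $k\geq j+1$. Thus $N$ consists of the \emph{grading-raising} blocks (strictly upper triangular in the matrix convention of \eqref{eq ass Hodge bundle}); the ``lower-by-$\geq2$'' part you describe is already zero by transversality. Your conclusion that $g_t\cdot(tN)$ carries strictly positive powers of $t$ is nonetheless correct once $N$ is properly identified, and the same $g_t$ kills the off-diagonal (necessarily raising) part of $\dbar_\cE$, so the holomorphic structure limits to the split one on $Gr_\cA(\cE)$ --- a point your outline leaves implicit but which is essential for identifying the limiting parabolic bundle.

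Second, invoking properness of a Hitchin-type map on $\cP(\alpha)$ is both unnecessary and circular. Properness is recorded in \S\ref{sec:moduli} only for $\cP_0(\alpha)$, and the purpose of this appendix (culminating in Proposition~\ref{prop Griff trans filtration}) is precisely to \emph{establish} that $\C^*$-limits exist in $\cP(\alpha)$; the present proposition is the base step of that argument, not a consequence of it. The gauge-transformed family already converges explicitly to $(Gr_\cA(\cE)(\alpha),\Phi_\cA)$, and the semistability hypothesis enters only to guarantee that this explicit limit is a point of the moduli space.
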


If the parabolic system of Hodge bundles \eqref{eq ass Hodge bundle} is not semistable, let $(\bigoplus\hat \cA_j(\alpha),\Phi_{\hat\cA})$ be the maximal destabilizing system of Hodge bundles. 
Define the following two invariants of a Griffiths transverse filtration $\cA^\bullet$ whose associated system of Hodge bundles is not semistable: 
\[\xymatrix{\zeta(\cA^\bullet)= \mu(\bigoplus_j\hat\cA_j(\alpha))&\text{and}&\eta(\cA^\bullet)=\rk(\bigoplus_j\hat\cA_j(\alpha))~.}\]
Following \cite{SimpsonDeRhamStrata}, define a new filtration $\cB^\bullet$ of $\cE$ by \[\cB^j=\ker(\cE\to (\cE/\Aa^j)/\hat\cA_{j-1})\]
This new filtration is again Griffiths transverse, and the summands $\cB_j(\alpha)=\cB^j(\alpha)/\cB^{j+1}(\alpha)$ of the associated graded  fit in an exact sequence
\begin{equation}
        \label{eq exact sequence of gr}0\to \cA_j(\alpha)/\hat\cA_j(\alpha)\to \cB_j(\alpha)\to \hat\Aa_{j-1}(\alpha)\to 0.
\end{equation}
If the parabolic system of Hodge bundles $(Gr_\cB(\cE)(\alpha),\Phi_\cB)$ is semistable, then we have identified the limit $\xi\to 0$, if it is not, then the process can be repeated to obtain a new system of Hodge bundles. 
Simpson's key observation is that $(\zeta,\eta)$ decreases at each step of the above iterative process. 

\begin{Proposition}\label{Prop: decreasing inv}
        Suppose $(Gr_\cB(\cE)(\alpha),\Phi_\cB)$ is not semistable and let $\bigoplus_j\hat\cB_j$ be the maximal destabilizing system of Hodge bundles. Then
        \begin{enumerate}
                \item $\zeta(\cB^\bullet)\leq \zeta(\cA^\bullet)$,
                \item if $\zeta(\cB^\bullet)= \zeta(\cA^\bullet)$, then $\eta(\cB^\bullet)\leq \eta(\cA^\bullet)$, and
                \item if $\zeta(\cB^\bullet)\leq \zeta(\cA^\bullet)$ and $\eta(\cB^\bullet)\leq \eta(\cA^\bullet)$, then $\hat\cB_j(\alpha)\cong\hat\cA_{j-1}(\alpha)$ for all $j$.
        \end{enumerate}
\end{Proposition}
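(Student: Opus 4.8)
The plan is to follow the iterative scheme of \cite[\S4]{SimpsonDeRhamStrata} verbatim, adapting the intermediate cohomological estimates to the parabolic setting (where parabolic degrees and parabolic slopes replace their ordinary counterparts, and all Hom/End sheaves are understood as parabolic Hom/End). The three inequalities are precisely the content of \cite[Lemma 4.2, Lemma 4.3]{SimpsonDeRhamStrata}; the key observation is that the only place where irreducibility of the connection is used in \emph{loc.\ cit.}\ is to conclude that the Higgs field maps between the graded pieces are nonzero, and this is not needed for the three numerical statements (1)--(3), which only concern the destabilizing subobjects and the exact sequence \eqref{eq exact sequence of gr}.

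First I would record the structural input: the associated graded $Gr_\cB(\cE)(\alpha)=\bigoplus_j \cB_j(\alpha)$ sits, pieces-by-piece, in the exact sequences \eqref{eq exact sequence of gr}, and the Higgs field $\Phi_\cB$ is compatible with these in the sense that it restricts to $\Phi_{\hat\cA}$ on the sub $\bigoplus_j \hat\cA_{j-1}(\alpha)$ and induces $\Phi_{\cA}$ (mod destabilizer) on the quotient $\bigoplus_j \cA_j(\alpha)/\hat\cA_j(\alpha)$. Since all parabolic degrees in \eqref{eq exact sequence of gr} are additive and parabolic rank is additive, the parabolic slope of $\bigoplus_j \cB_j(\alpha)$ equals the parabolic slope of $\cE(\alpha)$, exactly as in the nonparabolic case. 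For (1), I would argue that the maximal destabilizing system of Hodge bundles $\bigoplus_j \hat\cB_j \subset Gr_\cB(\cE)(\alpha)$ gives, by pulling back along the projections $\cB_j(\alpha)\to \hat\cA_{j-1}(\alpha)$ in \eqref{eq exact sequence of gr} and intersecting with the kernels, a $\Phi$-invariant parabolic subsheaf of $Gr_\cA(\cE)(\alpha)$; its parabolic slope is at least $\mu(\bigoplus_j\hat\cB_j)=\zeta(\cB^\bullet)$, but it cannot exceed $\zeta(\cA^\bullet)=\mu(\bigoplus_j\hat\cA_j)$ by maximality of the latter. This yields $\zeta(\cB^\bullet)\leq\zeta(\cA^\bullet)$. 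For (2), assuming equality of the $\zeta$'s, the comparison subobject constructed above must itself be (contained in) the maximal destabilizer of $Gr_\cA(\cE)(\alpha)$, and tracking parabolic ranks through \eqref{eq exact sequence of gr} forces $\eta(\cB^\bullet)=\rk\bigoplus_j\hat\cB_j\leq\rk\bigoplus_j\hat\cA_j=\eta(\cA^\bullet)$. For (3), when both invariants are preserved, the comparison maps are isomorphisms onto the destabilizers, and chasing \eqref{eq exact sequence of gr} (the $\cA_j/\hat\cA_j$ term being "too stable" to contribute to the destabilizer) gives the shift $\hat\cB_j(\alpha)\cong\hat\cA_{j-1}(\alpha)$.

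The main obstacle I anticipate is purely bookkeeping: one must verify that the parabolic structures behave correctly under the sub/quotient and kernel operations used to build the filtration $\cB^\bullet$ and the comparison subobjects --- specifically that the induced parabolic weights on $\cB_j(\alpha)$ given by \eqref{eq exact sequence of gr} are the ones for which parabolic degree is additive, and that the parabolic Hom computations (which detect the destabilizer) are insensitive to whether we work with $\cA^\bullet$ or $\cB^\bullet$ at the points where the weights have not changed. Once the additivity of parabolic degree and rank in \eqref{eq exact sequence of gr} is in hand, the three inequalities are formal consequences of the maximality of the destabilizing Hodge systems, exactly as in Simpson's argument. I would therefore organize the write-up so that the parabolic additivity is isolated as a short preliminary observation, after which the proof reduces to citing the structure of \cite[Lemma 4.2, 4.3]{SimpsonDeRhamStrata} with the dictionary (degree $\rightsquigarrow$ parabolic degree, Hom $\rightsquigarrow$ parabolic Hom) applied throughout.
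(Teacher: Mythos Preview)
Your proposal is correct and takes essentially the same approach as the paper: both defer to the nonparabolic argument (the paper cites \cite[\S2.2.1]{PengfeiThesis}, which is itself a detailed exposition of Simpson's Lemmas 4.2--4.3) and observe that the only change is to work in the parabolic category throughout, using parabolic degree and parabolic slope. Your write-up is in fact more detailed than the paper's own proof, which is a one-line reduction; the ``bookkeeping'' obstacle you flag---additivity of parabolic degree in the exact sequence \eqref{eq exact sequence of gr}---is exactly the content of the phrase ``consider all objects and destabilizing subobjects in the parabolic category.''
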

\begin{proof}
     For the nonparabolic setting, see \cite[\S2.2.1]{PengfeiThesis}. The only changes that need to be made are to consider all objects and destabilizing subobjects in the parabolic category.
\end{proof}

\begin{Proposition}\label{prop Griff trans filtration}
Let $(\cE(\alpha),\nabla)$ be a semistable parabolic $\lambda$-connection, then there exists a Griffiths transverse filtration such that the associated parabolic system of Hodge bundles is semistable. In particular, the limit $\lim_{\xi\to0}[\xi\lambda,\cE(\alpha),\xi\nabla)]$ exists. 
\end{Proposition}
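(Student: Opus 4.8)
\textbf{Proof proposal for Proposition \ref{prop Griff trans filtration}.}

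The plan is to run Simpson's iterative construction in the parabolic category and argue that it terminates after finitely many steps. First I would set up the base case: given a semistable parabolic $\lambda$-connection $(\cE(\alpha),\nabla)$, choose any Griffiths transverse filtration $\cA^\bullet$ — for instance, the trivial filtration $\cE(\alpha)\supset 0$, or better, the Harder--Narasimhan filtration of the underlying parabolic bundle, which Proposition \ref{prop:simpson-griffiths} and the discussion preceding it show is automatically Griffiths transverse. If the associated parabolic system of Hodge bundles $(Gr_\cA(\cE)(\alpha),\Phi_\cA)$ is already semistable, then by Proposition \ref{prop:simpson-griffiths} the limit $\lim_{\xi\to 0}[\xi\lambda,\cE(\alpha),\xi\nabla]$ exists and equals $[Gr_\cA(\cE)(\alpha),\Phi_\cA]$, and we are done.

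If $(Gr_\cA(\cE)(\alpha),\Phi_\cA)$ is not semistable, I would apply the modification procedure described in the excerpt: let $(\bigoplus_j\hat\cA_j(\alpha),\Phi_{\hat\cA})$ be the maximal destabilizing sub-system of Hodge bundles, form the new Griffiths transverse filtration $\cB^j=\ker(\cE\to(\cE/\cA^j)/\hat\cA_{j-1})$, and consider the new system $(Gr_\cB(\cE)(\alpha),\Phi_\cB)$, whose graded pieces sit in the exact sequences \eqref{eq exact sequence of gr}. By Proposition \ref{Prop: decreasing inv}, the pair of invariants $(\zeta(\cA^\bullet),\eta(\cA^\bullet))$ strictly decreases in lexicographic order at each step where the new system is still unstable, unless parts (1) and (2) are both equalities — but in that case part (3) forces $\hat\cB_j(\alpha)\cong\hat\cA_{j-1}(\alpha)$, which (exactly as in \cite[\S 4]{SimpsonDeRhamStrata}) cannot persist indefinitely because the filtration length is bounded and the shift $j\mapsto j-1$ of the destabilizing pieces can only happen finitely many times before the destabilizer is pushed out of the filtration. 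Hence after finitely many iterations we reach a Griffiths transverse filtration whose associated parabolic system of Hodge bundles is semistable, and Proposition \ref{prop:simpson-griffiths} again gives the existence of the limit.

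The main obstacle I expect is the termination argument in the borderline case where $\zeta$ and $\eta$ are preserved: one must verify that the possibilities compatible with Proposition \ref{Prop: decreasing inv}(3) genuinely exhaust after boundedly many steps. The key quantitative input is that the $\lambda$-connection's Hitchin-type map (or, in the Higgs case, the properness of the Hitchin fibration used in \cite{YokogawaParModuli} and cited in the excerpt) bounds the relevant discrete data — lengths, ranks, and parabolic degrees — so the lexicographically decreasing sequence $(\zeta,\eta)$ takes values in a discrete, bounded-below set and therefore stabilizes. Once stabilized, the isomorphisms $\hat\cB_j(\alpha)\cong\hat\cA_{j-1}(\alpha)$ combined with the finite length $\ell$ of the filtration force the destabilizing system to vanish, i.e.\ semistability of the graded. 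Since the parabolic category is abelian with the same formal properties (maximal destabilizing subobjects, slope additivity in short exact sequences) used by Simpson, each individual step of the argument is identical to \cite[\S 4]{SimpsonDeRhamStrata} and \cite[\S 2.2.1]{PengfeiThesis} after substituting parabolic degrees for ordinary degrees; the only care needed is that all sub- and quotient-objects are taken with their induced parabolic structures, as noted in the proof of Proposition \ref{Prop: decreasing inv}.
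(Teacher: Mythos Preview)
Your overall strategy matches the paper's: run Simpson's iteration and argue the lexicographic invariant $(\zeta,\eta)$ eventually forces termination. The finiteness of the value set for $(\zeta,\eta)$ is fine, though the paper argues it more directly than via properness: $\eta$ is a rank, and $\zeta$ is a parabolic slope bounded between $\mu(\cE(\alpha))$ and the slope of the Harder--Narasimhan maximal destabilizer, with only finitely many possible values since the weights are fixed.

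The real gap is in your handling of the borderline case where $(\zeta,\eta)$ stays constant. You claim the shift $\hat\cB_j\cong\hat\cA_{j-1}$ eventually ``pushes the destabilizer out'' or forces it to ``vanish.'' This is not what happens: the destabilizer has fixed positive rank $\eta$ throughout, and the filtration length is not a fixed $\ell$ (it can grow under the iteration), so there is no obvious exhaustion. The paper's argument is different and crucially uses semistability of the original $\lambda$-connection $(\cE(\alpha),\nabla)$, which your argument never invokes. After enough shifts the associated graded takes the form
\[
\cE_k\oplus\cdots\oplus\cE_{k-\ell}\oplus\cE_{k-\ell-2}\oplus\cdots\oplus\cE_0,
\]
with the destabilizer occupying the top block $\cE_k\oplus\cdots\oplus\cE_{k-\ell}$ and a \emph{gap} at degree $k-\ell-1$. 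That gap forces the Higgs component $\phi_{k-\ell-1}$ to vanish, and by Griffiths transversality the corresponding subbundle of $\cE$ is then $\nabla$-invariant. Since its parabolic slope is $\zeta>\mu(\cE(\alpha))$, this contradicts semistability of $(\cE(\alpha),\nabla)$. You should replace the ``pushed out/vanishes'' claim with this $\nabla$-invariance contradiction.
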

\begin{proof}
We will show that the semistability assumption implies that the iterative process described above terminates. By Proposition \ref{Prop: decreasing inv}, the invariants $(\eta,\zeta)$ decrease in lexicographically at each step. We will show $(\eta,\zeta)$ can only take finitely many values and that semistability implies $(\eta,\zeta)$ can remain unchanged by only finitely many consecutive step of the process. 

The invariant $\eta$ can clearly only take finitely many values. To see that $\zeta$ can also only take finitely many values, note that $\zeta$ is bounded above by the slope of the maximal destabilizing subbundle of $\cE(\alpha)$ and bounded below by the slope of $\cE(\alpha)$. Since the parabolic weights are fixed, the parabolic degree of the maximal destabilizing subbundle can only take finitely many values, and hence $\zeta$ can take only finitely many values. 

Let $\cA^\bullet$ be a Griffiths transverse filtration of $\cE$, suppose the associated system of Hodge bundles $(\bigoplus_j \cA_j,\Phi_\cA)$ given by \eqref{eq ass Hodge bundle} is unstable. Consider the new system of Hodge bundles $(\bigoplus_j\cB_j,\Phi_\cB)$ given by \eqref{eq exact sequence of gr}. Let $\bigoplus_j\hat\cA_{j}(\alpha)$ and $\bigoplus_j\hat\cB_{j}(\alpha)$ be the maximal destabilizing subobjects. 
Suppose $\eta(\cB^\bullet)= \eta(\cA^\bullet)$ and $\zeta(\cB^\bullet)= \zeta(\cA^\bullet)$. Then $\hat\cB_j(\alpha)\cong\hat\Aa_{j-1}(\alpha)$ by part (3) of Proposition \ref{Prop: decreasing inv}. Hence, the exact sequence \eqref{eq exact sequence of gr} splits and 
\[\cB_j(\alpha)\cong\Aa_j(\alpha)/\hat\Aa_j(\alpha)\oplus \hat\Aa_{j-1}(\alpha)\]
with maximal destabilizing subobject $\hat\cA_{j-1}.$ In particular, the maximal destabilizing subobject is shifted to the left 1-step in the grading \eqref{eq ass Hodge bundle}. If the invariants $(\eta,\zeta)$ remain unchanged for infinitely many consecutive steps, the associated grade will eventually be of the form
\[\cE_k\oplus\cE_{k-1}\oplus\cdots \oplus\cE_{k-\ell}\oplus \cE_{k-\ell-2}\oplus\cdots\oplus\cE_0,\]
where $\bigoplus\hat\cA_j\cong\cE_k\oplus\cE_{k-1}\oplus\cdots \oplus\cE_{k-\ell}$. Since there is a gap in the grading, the $\phi_{k-\ell-1}$ term in the Higgs field vanishes. By Griffiths transversality, this means the maximal destabilizing subbundle is $\nabla$-invariant, contradicting $(\cE(\alpha),\nabla)$ being semistable. 
\end{proof}
\end{appendix}

 \bibliography{mybib}
 \bibliographystyle{plain}
 \end{document}